\numberwithin{equation}{section}
\tikzset{Equal/.style={-,double line with arrow={-,-}}} 
\tikzset{double line with arrow/.style args={#1,#2}{decorate,decoration={markings,
mark=at position 0 with {\coordinate (ta-base-1) at (0,1pt);
\coordinate (ta-base-2) at (0,-1pt);},
mark=at position 1 with {\draw[#1] (ta-base-1) -- (0,1pt);
\draw[#2] (ta-base-2) -- (0,-1pt);
}}}}
\newtheoremstyle{note}
{\topsep}
{\topsep}
{}
{0pt}
{\bfseries}
{.}
{0.5em }
{}
\theoremstyle{plain}
\newtheorem{Thm}{Theorem}[section]
\newtheorem*{Thm*}{Theorem}
\newaliascnt{prop}{Thm}
\newtheorem{Prop}[prop]{Proposition}
\newaliascnt{lemma}{Thm}
\newtheorem{Lemma}[lemma]{Lemma}
\newaliascnt{coro}{Thm}
\newtheorem{Coro}[coro]{Corollary}
\newaliascnt{conjecture}{Thm}
\newtheorem{Lemma*}{Lemma}
\newtheorem*{Prop*}{Proposition}
\newtheorem*{Coro*}{Corollary}
\theoremstyle{definition}
\newaliascnt{def}{Thm}
\newtheorem{Def}[def]{Definition}
\newtheorem*{Def*}{Definition}
\newaliascnt{Eg}{Thm}
\newtheorem{eg}[Eg]{Example}
\newtheorem*{eg*}{Example}
\theoremstyle{remark}
\newaliascnt{rmk}{Thm}
\newtheorem{RMK}[rmk]{Remark}
\newtheorem*{RMK*}{Remark}
\theoremstyle{plain}
\newtheorem{iThm}{Theorem}[section]
\newtheorem*{iThm*}{Theorem}
\newaliascnt{iprop}{iThm}
\newaliascnt{iCoro}{iThm}
\newtheorem{iCoro}[iCoro]{Corollary}
\newaliascnt{iconjecture}{iThm}
\theoremstyle{definition}
\newaliascnt{idef}{iThm}
\newtheorem*{iDef*}{Definition}
\newaliascnt{iEg}{iThm}
\newtheorem*{ieg*}{Example}
\theoremstyle{remark}
\newaliascnt{irmk}{iThm}
\newtheorem*{iRMK*}{Remark}
\newcommand{\bQ} { {\mathbb{Q}}}
\newcommand{\bZ} { {\mathbb{Z}}}
\newcommand{\bR} { {\mathbb{R}}}
\newcommand{\bK} { {\mathbb{K}}}
\newcommand{\bN} { {\mathbb{N}}}
\newcommand{\cD} { {\mathcal{D}}}
\newcommand{\cK} { {\mathcal{K}}}
\newcommand{\RHOM}{{ \operatorname{RHom}  }}
\newcommand{\HOM}{{ \textnormal{Hom}  }}
\newcommand{\dotimes}{{\overset{L}\otimes }}
\newcommand{\dboxtimes}{{\overset{L}\boxtimes }}
\newcommand{\pt}{{\operatorname{pt}}}
\newcommand{\bq} { {\textbf{q}}}
\newcommand{\bp} { {\textbf{p}}}
\newcommand{\tnT}{{\operatorname{T}}}
\newcommand{\tnR}{{\operatorname{R}}}
\newcommand{\id}{{\operatorname{Id}}}
\newcommand{\Mod}{{\operatorname{Mod}}}
\newcommand{\fpw}[1]{{\llbracket #1\rrbracket}}
\newcommand\boxstar{\mathrel{\boxcls@{\star}}}
\newcommand{\boxcls@}[1]{%
  \vphantom{\Box}
  \mathpalette\boxcls@@{#1}%
}
\newcommand{\boxcls@@}[2]{%
  \ooalign{$\m@th#1\Box$\cr
  \hidewidth\boxcls@fix{#1}\hbox{$\m@th#1#2\mkern 2.4mu$}\cr}}
\newcommand\boxcls@fix[1]{
  \ifx#1\displaystyle
    \raise.225ex
  \else
    \ifx#1\textstyle
      \raise.225ex
    \else
      \ifx#1\scriptstyle
        \raise.180ex
      \else
        \raise.150ex
      \fi
    \fi
  \fi
}
\title{Idempotence of microlocal kernels and $S^1$-equivariant Chiu-Tamarkin invariant}
\author{Bingyu Zhang}
\date{\today}
\begin{document}
\maketitle

\begin{abstract}
In this article, we present algebraic constructions about the Chiu-Tamarkin invariant motivated by the idempotence of microlocal kernels: (1) an $S^1$-equivariant Chiu-Tamarkin invariant; (2)  a graded commutative product on the non-equivariant Chiu-Tamarkin invariant; and we also give a natural explanation for the definition of the $\bZ/\ell$-equivariant Chiu-Tamarkin invariant using the idempotence. As applications, we: (1) construct a sequence of symplectic capacities $(\overline{c}_k)_{k\in \bN}$ using the $S^1$-equivariant Chiu-Tamarkin invariant, and prove that it coincides with the symplectic capacities $({c}_k)_{k\in \bN}$ we defined using the $\bZ/\ell$-equivariant Chiu-Tamarkin invariant under certain conditions; and (2) prove a Viterbo isomorphism that relates the Chiu-Tamarkin invariant of disk bundles with string topology of the base. In the appendix, we prove the existence of microlocal kernels for all open sets in a cotangent bundle under the setup of triangulated categories.
\end{abstract}

\section{Introduction}
The microlocal theory of sheaves, introduced and developed by Kashiwara and Schapira \cite{KS90}, has proven to be an important tool for studying symplectic and contact geometry, as demonstrated in pioneering works \cite{nadler2009constructible,tamarkin2013}.

To study quantitative symplectic geometry of $T^*X$ using sheaf theory, Tamarkin defines a category $\mathcal{D}(X)$ as a Verdier quotient of the derived category of sheaves $D(X\times \bR)$ by a microlocal condition in \cite{tamarkin2013}. We will call it the Tamarkin category. The Tamarkin category admits the following natural transformations between endofunctors: 
\[\tau_c: \id_{\cD(X)}\rightarrow \tnT_{c*},\quad c\geq 0,\] 
where $\tnT_{c*}$ is induced by the direct image functor of the translation map $\tnT_c(\bq,t)=(\bq,t+c)$ on $X\times \bR$. We then define the numerical invariant (\autoref{sheafenergy}) \[e(F)=\inf\{c \geq 0: \tau_c(F)=0\},\quad F \in \cD(X) .\] Tamarkin's work and subsequent works (for example, \cite{asano2017persistence,asanoike2022complete,GV2022viterboconj,guillermouviterbo_gammasupport}) show that $e(F)$ and its variants are effective tools for quantitative symplectic geometry. 

Chiu's proof of the contact non-squeezing theorem \cite{chiu2017} is a highlight of the theory of the Tamarkin category. He introduces ${\bZ/\ell}$-equivariant homological invariants $C_{T}^{\bZ/\ell}(U,\bK)$ for $T\geq 0$ and open sets $U$ in cotangent bundles. We will call it ${\bZ/\ell}$-equivariant Chiu-Tamarkin invariant. The main ingredients of his construction are: 
\begin{itemize}[fullwidth]
    \item For every open set $U \subset T^*X$, we can associate an object $P_U\in \cD(X^2)$ that we call the microlocal kernel of $U$. The microlocal kernel is {\it idempotent} in the sense $P_U\star P_U\cong P_U$, where $\star$ is the convolution in $\cD(X^2)$ (see \autoref{def: convolution}).
    \item A {\it ${\bZ/\ell}$-action} of $P_U^{\boxtimes \ell}$ that is given by the cyclic permutation of factors.
\end{itemize}
One feature of Chiu's proof for the contact non-squeezing theorem is that we do not care specifically about the value of $\ell$, only that it is sufficiently large. In \cite{Capacities2021}, motivated by the idea that the ${\bZ/\ell}$-actions approximate an $S^1$-action, we constructed a sequence of symplectic capacities $(c_k)_{k\in \bN}$ using the ${\bZ/\ell}$-equivariant Chiu-Tamarkin invariant that behaves like $S^1$-equivariant capacities constructed using the $S^1$-equivariant symplectic (co)homology (for example, the Gutt-Hutchings capacities \cite{gutthutchings2018capacities}). 

In this paper, we construct a genuine algebraic $S^1$-action to study the similarities between the $\bZ/\ell$-equivariant capacities and the $S^1$-equivariant capacities. The idempotence of $P_U$ plays a key role in the construction. We will also introduce a graded commutative product on the non-equivariant Chiu-Tamarkin invariant, as well as providing a natural explanation for the definition of the ${\bZ/\ell}$-equivariant Chiu-Tamarkin invariant. Subsequently, we will explain some geometric applications of our constructions concerning symplectic capacities and string topology.

\subsection{Algebraic structures of Chiu-Tamarkin invariants}The idempotence of the microlocal kernel, i.e. $P_U\star P_U\cong P_U$, implies that $P_U$ is a {\it monoid object} in the monoidal category $(\cD(X^2),\star)$. Motivated by this observation, we construct two algebraic structures based on the idempotence of the microlocal kernels: (1)An $S^1$-equivariant Chiu-Tamarkin invariant; (2)A graded commutative product on the non-equivariant Chiu-Tamarkin invariant.

\begin{enumerate}[fullwidth]
\item In \autoref{section: s1-theory}, we will define an object $C_T^{S^1}(U,\bK) $ in the derived category of mixed complexes $ D_{dg}(\bK[\epsilon]-\Mod)$, which we refer to as the {\em $S^1$-equivariant Chiu-Tamarkin invariant}, for a field $\bK$, $T\geq 0$, and an open set $U$ in cotangent bundles. We prove the following:
\begin{iThm}\label{intro-thm: prop of S^1-thy}The $S^1$-equivariant Chiu-Tamarkin invariant $C_T^{S^1}(U,\bK)$ satisfies the following properties:
\begin{enumerate}[fullwidth,label=$(${\alph*}$)$]
\item (\autoref{prop: s1 persistence}) The map $T\rightarrow  H^*C^{S^1}_T(U,\bK)$ is a persistence $\bK[u]$-module. 

\item Under the forgetful functor $For:D_{dg}(\bK[\epsilon]-\Mod)\rightarrow D_{dg}(\bK-\Mod)$, we have (\eqref{equation: forgetful S^1 to nonequivariant})
\[For(C_{T}^{S^1}(U,\bK))\cong C_{T}(U,\bK)\quad\in\quad D_{dg}(\bK-\Mod),\]
where $C_{T}(U,\bK)$ denotes $C_{T}^{\bZ/\ell}(U,\bK)$ with $\ell=1$.
\item We have a Gysin sequence (\eqref{Gysin long exact sequence})
    \[H^{p-2}C_{T}^{S^1}(U,\bK)\xrightarrow{\cdot u}H^{p}C_{T}^{S^1}(U,\bK)\rightarrow H^pC_{T}(U,\bK)\xrightarrow{+1}.\]

And we have \autoref{invariance1}:
\item Let $U_1\xhookrightarrow{i} U_2$ be an inclusion, there is a morphism $C^{S^1}_T(U_2,\bK) \xrightarrow{i^*} C^{S^1}_T(U_1,\bK)$, which is functorial with respect to inclusions of open sets.

\item For a compactly supported Hamiltonian homeomorphism $\varphi:T^*X  \rightarrow T^*X $ in the sense of \cite{Oh-MullerHOMEO}, there exists an isomorphism $\Phi^{S^1}_{T}:C^{S^1}_T(U,\bK) \xrightarrow{\cong} C^{S^1}_T\left(\varphi(U),\bK\right)$.
\end{enumerate}
\end{iThm}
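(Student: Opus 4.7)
The guiding idea is to exploit the idempotence $P_U\star P_U\cong P_U$, which says $P_U$ is an idempotent (hence commutative, in a homotopy-coherent sense) monoid in $(\cD(X^2),\star)$, and to feed this into a cyclic bar construction whose totalization naturally carries an $S^1$-action---equivalently, a mixed-complex structure. I would define $C_T^{S^1}(U,\bK)$ as the $\bK$-linear realization of this cyclic object and then read off each of (a)--(e) from functoriality together with standard homological algebra of mixed complexes.

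More concretely, I plan to mimic Chiu's construction of $C_T^{\bZ/\ell}(U,\bK)$ using $P_U^{\boxtimes\ell}$ equipped with the cyclic permutation in each arity $\ell$, but organize these into a single cyclic object $\{P_U^{\boxtimes\ell}\}_{\ell\geq 1}$, with simplicial face maps coming from iterated multiplication $P_U\star P_U\to P_U$ (available from the idempotence) and cyclic rotations exactly as in the $\bZ/\ell$-theory. Applying the microlocal invariant used in Chiu's definition turns this cyclic object into a cyclic complex of $\bK$-modules; the associated normalized totalization equipped with Connes' boundary $B$ is the mixed complex $C_T^{S^1}(U,\bK)\in D_{dg}(\bK[\epsilon]-\Mod)$.

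With this in hand, (b) is essentially tautological: the underlying chain complex of a mixed complex forgets $B$, and the idempotence $P_U\star P_U\cong P_U$ collapses the normalized bar complex onto the $\ell=1$ stage, which by construction is $C_T(U,\bK)$. Item (c) is the standard Gysin sequence for mixed complexes, coming from the cofiber sequence $\bK\to\bK[\epsilon]\to\bK[2]$ in $\bK[\epsilon]$-modules after tensoring with $C_T^{S^1}(U,\bK)$, with $u$ the degree $2$ generator. Item (a) follows because $\tau_c$ acts on each $P_U^{\boxtimes\ell}$ compatibly with the cyclic structure, so the whole construction is natural in $T$ and the $\bK[u]$-module structure descends from the $\bK[\epsilon]$-action on the mixed complex. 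Items (d) and (e) are both pure naturality: an inclusion $U_1\hookrightarrow U_2$ yields a monoidal map $P_{U_2}\to P_{U_1}$ of microlocal kernels, and a Hamiltonian homeomorphism $\varphi$ yields a monoidal isomorphism $P_U\cong P_{\varphi(U)}$ (smoothly via GKS-type kernels, and extended to homeomorphisms in the same way as for the $\bZ/\ell$-equivariant invariant in \cite{Capacities2021}); applying the cyclic bar construction promotes these to the desired maps of mixed complexes.

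The main obstacle, as is typical for cyclic/$S^1$-equivariant constructions at the level of triangulated categories, is \emph{coherence}: a cyclic bar object of an idempotent monoid inside a triangulated category is a priori only defined up to a web of isomorphisms, and to land in $D_{dg}(\bK[\epsilon]-\Mod)$ one must upgrade this data to something strict enough to produce a genuine mixed complex, rather than merely a compatible family of $\bZ/\ell$-equivariant pieces. The idempotence $P_U\star P_U\cong P_U$ is the key technical lever, since it trivializes most of the simplicial complexity of the bar complex; I would carry this out by working in an explicit dg-enhancement of $\cD(X^2)$ and choosing a representative of $P_U$ whose multiplication is strictly associative up to a prescribed system of homotopies, after which (a)--(e) follow from the naturality considerations above.
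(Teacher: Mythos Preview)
Your high-level plan matches the paper's motivation, and you have correctly identified the main obstacle as coherence. However, there are two genuine gaps.

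\textbf{The coherence fix you propose does not work for $P_U$.} You suggest working in a dg-enhancement of $\cD(X^2)$ and choosing a representative of $P_U$ whose multiplication $P_U\star P_U\to P_U$ is strictified. But the only available model structure on $C(X^2\times\bR)$ is the injective one, so fibrant replacements go \emph{into} objects, not out of them. This means the slice category $C(X^2\times\bR)/\bK_{\Delta_{X^2}\times\{0\}}$ (which is what you need to organize the maps $P_U\to\bK_{\Delta}$ and hence the bar faces) has no useful model structure, and the paper states explicitly that it is not known how to make a chain-level construction work in this direction. The paper's solution is to switch to $Q_U$: the map $\bK_{\Delta_{X^2}\times\{0\}}\to Q_U$ lives in the slice category $\bK_{\Delta}/C(X^2\times\bR)$, which \emph{does} inherit a model structure, and from a fibrant resolution $Q_U\to IQ_U$ one builds a strict pre-\emph{cocyclic} complex $F^{out}_\bullet(U,\bK)_T$. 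The invariant $F^{S^1}_\bullet(U,\bK)_T$ is then \emph{defined} as the cocone of $F^{S^1}_\bullet(T^*X,\bK)_T\to F^{S^1,out}_\bullet(U,\bK)_T$, motivated by the tautological triangle; it is not built directly from $P_U$ at all. Your items (a)--(c) then go through essentially as you say, but only after this redirection.

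\textbf{Item (e) is not pure naturality.} A Hamiltonian homeomorphism does \emph{not} give a monoidal isomorphism $P_U\cong P_{\varphi(U)}$; what one has is $Q_{\varphi(U)}\cong\cK\star Q_U\star\overline{\cK}$, which is conjugation, not a monoid map. Moreover, the paper notes that the adjunction/six-functor argument used for the $\bZ/\ell$-invariance does not transfer to the $S^1$ case, because there are no $S^1$-equivariant sheaves on which to run it. The actual proof constructs, at the chain level with explicit resolutions, a morphism $\Phi_\bullet$ of pre-cocyclic complexes using a cyclic-invariance-of-trace identity (\autoref{cyclic invariance of trace}), and then shows it is an isomorphism by checking it after the forgetful functor (which is conservative) against the known non-equivariant isomorphism $F_1^{out}(\varphi(U),\bK)_T\cong F_1^{out}(U,\bK)_T$. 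This is substantially more work than your sketch suggests.
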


\begin{iRMK*}The results (a) and (b) of \autoref{intro-thm: prop of S^1-thy} are also proven for $C_{T}^{\bZ/\ell}(U,\bK)$ in \autoref{section: Z/l CT complex} ((d) and (e) for $C_{T}^{\bZ/\ell}(U,\bK)$ are Theorem 2.5 of \cite{Capacities2021}). In particular, the ${\bZ/\ell}$-version of (b) explains how Chiu's construction (\cite[Definition 4.6]{chiu2017}) is naturally obtained. However, the Gysin sequence (c) is only proven for $C_{T}^{S^1}(U,\bK)$, and we don't know if similar Gysin sequences exist for $C_{T}^{\bZ/\ell}(U,\bK)$.   
\end{iRMK*}

Here, let me explain the idea of the construction. For an associative algebra $(A,m)$, we consider its Hoschchild chain complex
\[\begin{tikzcd}
\cdots \arrow[r, shift left=1] \arrow[r]  \arrow[r, shift right=1] & A^{\otimes \ell} \arrow[r, shift left=1] \arrow[r]  \arrow[r, shift right=1] & \cdots \arrow[r, shift left=1] \arrow[r]  \arrow[r, shift right=1] & A^{\otimes 3} \arrow[r, "m\otimes \id", shift left] \arrow[r, "\id\otimes m"', shift right] & A^{\otimes 2} \arrow[r, "m"] & A.
\end{tikzcd}\]
We can moreover consider the cyclic permutation on $A^{\otimes \ell}$ to obtain the cyclic chain complex. Then, the cyclic homology of $A$ is defined as the homotopy fixed point of the cyclic chain complex.

Now, since $P_U$ is an associative algebra object in the monoidal category $(\cD(X^2),\star)$, it is straightforward to construct a similar ``cyclic chain complex"
\[\begin{tikzcd}
\cdots \arrow[r, shift left=1] \arrow[r]  \arrow[r, shift right=1] & P_U^{\star \ell} \arrow[r, shift left=1] \arrow[r]  \arrow[r, shift right=1] & \cdots \arrow[r, shift left=1] \arrow[r]  \arrow[r, shift right=1] & P_U^{\star 3} \arrow[r, shift left] \arrow[r,  shift right] & P_U^{\star 2} \arrow[r] & P_U.
\end{tikzcd}\]
However, the commutative relations are only true in the homotopy commutative sense since $P_U$ is an associative algebra object in a triangulated Verdier quotient $\cD(X^2)$. Consequently, we lack sufficient data to define an algebraic $S^1$-action. The correction of this idea requires more technical discussion, which we postpone to \autoref{section: cyclic structure}. Upon constructing the correct algebraic $S^1$-action, we define the $S^1$-equivariant Chiu-Tamarkin invariant as the (linear dual of the) homotopy fixed point of the algebraic $S^1$-action.
\item 
Recall that $C_T(U,\bK)=C_{T}^{\bZ/\ell}(U,\bK)$ for $\ell=1$, which is the non-equivariant version of the Chiu-Tamarkin invariant. The idempotence helps us identify the non-equivariant Chiu-Tamarkin with the (shifted) endomorphism algebra of the microlocal kernel $P_U$ (see \autoref{non-equivariant CT and yoneda form}) when $X$ is orientable. Then, we can define a (shifted) cup product on $H^qC_T(U,\bK)$ using the (shifted) Yoneda product for orientable $X$ (\autoref{definition: cup product}). The following theorem follows by a version of the argument: the endomorphism algebra of an associative object is commutative.
\begin{iThm}[{\autoref{theorem: graded commutative}}]\label{intro theorem: graded commutative}The (shifted) cup product on $H^qC_T(U,\bK)$ is unital, associative and graded commutative.
\end{iThm}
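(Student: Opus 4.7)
The plan is to run a graded Eckmann--Hilton argument on the endomorphism ring of $P_U$, equipped with two distinct products: the Yoneda (composition) product and a second product coming from the monoid structure on $P_U$.

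By the identification cited immediately before the statement (see \autoref{non-equivariant CT and yoneda form}), there is an isomorphism
\[
H^q C_T(U,\bK) \;\cong\; \HOM_{\cD(X^2)}\bigl(P_U,\,P_U[q+d]\bigr)
\]
for some orientation shift $d$, under which the cup product of \autoref{definition: cup product} corresponds (up to this fixed shift) to the Yoneda product on the right-hand side. Unitality (with unit $\id_{P_U}$) and associativity of the Yoneda product are formal in any triangulated category, so what remains is graded commutativity.

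Along with Yoneda composition $\circ$, I would introduce a second graded product $\boxstar$ on $\HOM^*(P_U,P_U)$: for $f$ of degree $p$ and $g$ of degree $q$, set
\[
f\boxstar g \;:=\; m\circ (f\star g)\circ e,
\]
where $m\colon P_U\star P_U\xrightarrow{\cong} P_U$ is the idempotence isomorphism (equivalently, the associative multiplication on the monoid $P_U$) and $e=m^{-1}$. Two facts must then be established. First, $\id_{P_U}$ is a two-sided unit for $\boxstar$: this reduces to the equality $m\circ (\id\star g) = g\circ m$, which holds because idempotence forces every endomorphism of $P_U$ to be automatically a $P_U$-bimodule map (the full subcategory of $P_U$-modules in $\cD(X^2)$ is reflective). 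Second, the graded interchange law
\[
(f_1\boxstar g_1)\circ (f_2\boxstar g_2) \;=\; (-1)^{|g_1|\,|f_2|}\,(f_1\circ f_2)\boxstar (g_1\circ g_2)
\]
holds, thanks to the bifunctoriality of $\star$ equipped with the Koszul sign rule together with the cancellation $e\circ m = \id_{P_U\star P_U}$ in the middle of the composite. Given these, Eckmann--Hilton is formal: specializing to $f_2 = g_1 = \id$ yields $f_1\boxstar g_2 = f_1\circ g_2$, so $\boxstar$ coincides with $\circ$; specializing to $f_1 = g_2 = \id$ yields $f_2\boxstar g_1 = (-1)^{|g_1||f_2|}\,g_1\circ f_2$; combining these gives $f\circ g = (-1)^{|f||g|}\,g\circ f$, which is graded commutativity of the cup product.

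The delicate step, and where I expect the main technical work to lie, is verifying the interchange law and the unit compatibility at the level of the triangulated Verdier quotient $\cD(X^2)$, since Koszul signs are naturally defined on a dg-model and are subtle to carry through a quotient. I would handle this either by lifting $P_U$ and its monoid structure to a dg-enhancement of the underlying derived category $D(X^2\times \bR)$ -- where $\star$ is a genuine dg bifunctor, and the computation above is just the usual manipulation in the dg algebra $\operatorname{End}^{\operatorname{dg}}(P_U)$ -- and then descending to the Tamarkin category, or by working directly in the triangulated setting using that $\star$ is a graded exact bifunctor and that the monoid structure on $P_U$ persists to any chosen model. Conceptually, the argument is the derived shadow of the classical fact that an idempotent monoid in a symmetric monoidal category is automatically commutative and that its endomorphism object coincides with the monoid itself; graded commutativity of the cup product is exactly this statement translated into degree-shifted cohomology.
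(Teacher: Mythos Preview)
Your approach is essentially identical to the paper's: both run the graded Eckmann--Hilton argument using the shifted Yoneda product $\bullet$ and the convolution product $\alpha\,\underline{\star}\,\beta = \nu\circ(\alpha\star\beta)\circ\nu^{-1}$ coming from the idempotence isomorphism $\nu\colon P_U\star P_U\cong P_U$. Two minor corrections: the identification in \autoref{non-equivariant CT and yoneda form} is $C_T(U,\bK)\cong\RHOM(P_U,\tnT_{T*}P_U)$ (a translation shift in the $\bR$-direction, not a cohomological shift by $d$), and your worry about needing a dg-enhancement is unnecessary---the paper verifies the unit compatibility and interchange law directly in the triangulated category using only that $\star$ is a bifunctor together with the standard anti-commutativity of the shift functor $[1]$ on $\textnormal{Ext}$ groups.
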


\end{enumerate}

\subsection{Geometric applications}
The first geometric application of the $S^1$-equivariant Chiu-Tamarkin invariant concerns symplectic capacities. \autoref{intro-thm: prop of S^1-thy} allows us to define a sequence of functions $(\overline{c}_k)_{k\in \bN}$ using the $S^1$-equivariant Chiu-Tamarkin invariant (see \autoref{definition of capacities}) for orientable $X$:
\begin{iThm}[\autoref{capacity property symplectic} and \autoref{prop: c1=e}] The functions $\overline{c}_k:\text{Open}(T^*X)\rightarrow [0,\infty]$ are symplectic capacities. For all open sets $U$, we have $\overline{c}_1(U)= e(P_U)$.
\end{iThm}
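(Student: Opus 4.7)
The plan is to verify the symplectic capacity axioms for each $\overline{c}_k$ using the properties of $C^{S^1}_T(U,\bK)$ collected in Theorem~A, and then to match the $k=1$ case with Tamarkin's sheaf energy $e(P_U)$ through the non-equivariant comparison.

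\textbf{Capacity axioms.} Monotonicity under an inclusion $U_1 \hookrightarrow U_2$ follows from the restriction morphism in item (d), which is compatible with the persistence $\bK[u]$-module structure of item (a) (being natural in $T$ and $u$-equivariant by construction of the $S^1$-equivariant theory). Consequently the spectral invariants $\overline{c}_k$ defined from this persistence module are monotone in $U$. Invariance under compactly supported Hamiltonian homeomorphisms follows from item (e), which again preserves the persistence and $\bK[u]$-structures. Conformality $\overline{c}_k(\lambda U) = \lambda \overline{c}_k(U)$ for $\lambda>0$ reduces to the observation that the fiberwise rescaling $(\bq,\bp)\mapsto(\bq,\lambda\bp)$ of $T^*X$ rescales Tamarkin's $\bR$-parameter by $\lambda$, so that $C^{S^1}_T(U,\bK)\cong C^{S^1}_{\lambda T}(\lambda U,\bK)$ as persistence $\bK[u]$-modules. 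For the normalization one must show $0<\overline{c}_k(B)<\infty$ on a standard ball $B$; this uses the Gysin long exact sequence (c) together with the known non-equivariant computation of $C_T$ on balls (as in \cite{Capacities2021}), and a spectral-sequence chase to extract the $S^1$-equivariant values.

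\textbf{Identification $\overline{c}_1(U)=e(P_U)$.} By construction $\overline{c}_1(U)$ is the first birth time of the unit class in the persistence $\bK[u]$-module $T\mapsto H^*C^{S^1}_T(U,\bK)$. Using the forgetful relation (b) and the Gysin sequence (c), this birth time coincides with the corresponding birth time in the non-equivariant persistence module $T\mapsto H^*C_T(U,\bK)$, since the unit class lies in the image of the forgetful map and is not killed by multiplication by $u$ at its birth. On the other hand, the identification of $C_T(U,\bK)$ with (a shift of) $\HOM(P_U,\tnT_{T*}P_U)$ discussed before Theorem~B, together with the fact that the structure map in $T$ is postcomposition with Tamarkin's natural transformations $\tau_T$, shows that the unit class $\mathrm{id}_{P_U}$ persists in $H^*C_T(U,\bK)$ exactly as long as $\tau_T(P_U)\neq 0$. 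Hence the birth/death threshold of the unit is precisely $\inf\{T\ge 0 : \tau_T(P_U)=0\}=e(P_U)$.

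\textbf{Main obstacle.} Monotonicity, Hamiltonian invariance, and conformality are all essentially formal consequences of items (a), (d), and (e). The serious step will be non-triviality and finiteness of $\overline{c}_k$ on balls for general $k$: one must compute enough of $C^{S^1}_T(B,\bK)$ to locate the $k$-th $u$-divisible class in the persistence filtration, which requires running the Gysin spectral sequence against the non-equivariant model and tracking $u$-divisibility carefully. A subtler but localized point is to check that in the identification of the $k=1$ case with $e(P_U)$, the class whose birth time defines $\overline{c}_1$ really is (the image of) the unit $\mathrm{id}_{P_U}$ in the Yoneda algebra from Theorem~B, so that no earlier torsion class can lower the threshold.
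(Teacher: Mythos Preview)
Your overall architecture matches the paper's, but there is one substantive gap and one point where your argument is garbled.

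\textbf{The gap: the representing property.} The paper's \autoref{capacity property symplectic} includes property (5): when $U=\{H<1\}$ is bounded with contact-type boundary and $\overline{c}_k(U)<\infty$, then $\overline{c}_k(U)$ is the action of a closed characteristic on $\partial U$. You do not mention this, and it is precisely the property whose proof differs from the $\bZ/\ell$ case in \cite{Capacities2021}. The old proof used a microsupport estimate on $F_\ell(U,\bK)$ that is unavailable here. Instead one proves \autoref{S1 actionspectrumestimate}: if the interval $(T,T']$ misses the action spectrum, then the persistence structure map $F^{S^1}_\bullet(U,\bQ)_{T'}\to F^{S^1}_\bullet(U,\bQ)_T$ is an isomorphism in $D_{dg}(\bQ[\epsilon]\text{-Mod})$. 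This is shown by using conservativity of the forgetful functor \eqref{equation: forgetful functor} to reduce to the non-equivariant structure map $F_1(U,\bQ)_{T'}\to F_1(U,\bQ)_T$, where the old microsupport estimate applies. Your ``main obstacle'' (normalization on balls) is not the issue; positivity (6) is handled as in \cite{Capacities2021}, and the paper does not assert finiteness on balls as part of the theorem.

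\textbf{The muddled step: $\overline{c}_1=e(P_U)$.} Your phrasing ``birth time of the unit class'' and ``not killed by multiplication by $u$ at its birth'' does not match the definition. By \autoref{definition of capacities}, $\overline{c}_1(U)=\inf\{T:\eta^{S^1}_T(U,\bQ)\in u\cdot H^*C^{S^1}_T(U,\bQ)\}$. The Gysin sequence \eqref{Gysin long exact sequence} says that the forgetful map $H^0C^{S^1}_T\to H^0C_T$ has kernel exactly $u\cdot H^{-2}C^{S^1}_T$, and under it $\eta^{S^1}_T\mapsto\eta_T$. Hence $\eta^{S^1}_T$ is $u$-divisible if and only if $\eta_T=0$. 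Then, via \autoref{non-equivariant CT and yoneda form}, one identifies $\eta_T$ with $\tau_T(P_U)$, giving $\overline{c}_1(U)=\inf\{T:\tau_T(P_U)=0\}=e(P_U)$. You have all the pieces but the logical chain (Gysin exactness $\Rightarrow$ $u$-divisibility of $\eta^{S^1}_T$ $\Leftrightarrow$ vanishing of $\eta_T$) should be stated cleanly; it is not about persistence ``birth'' at all.

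A minor point: your conformality uses fiberwise rescaling and a linear factor, whereas the paper's (4) is for $X=\bR^d$ with $\overline{c}_k(rU)=r^2\overline{c}_k(U)$; make sure your conventions match.
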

The construction of $(\overline{c}_k)_{k\in \bN}$ is similar to the $\bZ/\ell$-version $({c}_k)_{k\in \bN}$ that we defined \cite{Capacities2021}. However, we can prove $\overline{c}_1(U)=e(P_U)$ using the Gysin sequence, while we do not know if ${c}_1(U)=e(P_U)$ is true in general due to the absence of Gysin sequences in the $\bZ/\ell$-theory.

Therefore, it is worth comparing $(\overline{c}_k)_{k\in \bN}$ (constructed using the $S^1$-theory) and $({c}_k)_{k\in \bN}$ (constructed using the $\bZ/\ell$-theory). In \autoref{section: Z coefficient}, we consider a certain class of open sets, which we call {\em open set admitting well-behaved microlocal kernels} (see \autoref{def: well-behaved microlocal kernels} for its precise meaning), and then we demonstrate that
\begin{iThm}[\autoref{c_k=cbar_k}]
For an open set $U$ that admitting well-behaved microlocal kernels, if $H^qC_T^{S^1}(U,\bZ)$ is finitely generated for all $T\geq 0$ and all $q$, then \[\overline{c}_k(U)=c_k(U).\]
\end{iThm}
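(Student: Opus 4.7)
The strategy is to use the well-behaved microlocal kernel hypothesis to realize both $C_T^{S^1}(U,\bZ)$ and $C_T^{\bZ/\ell}(U,\bZ)$ on a common chain model, to construct a degreewise comparison map between them, and then to deduce that the two capacities coincide because the persistence-theoretic thresholds defining them agree.

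First, I would unpack the definitions: both $\overline{c}_k(U)$ and $c_k(U)$ are persistence thresholds, namely the infimum of $T\geq 0$ for which a distinguished class built from $u^{k-1}$ (respectively its $\bZ/\ell$-counterpart) is nonzero in $H^* C_T^{S^1}(U,\bZ)$ (respectively $H^* C_T^{\bZ/\ell}(U,\bZ)$). The natural link between the two structures is the restriction of the homotopy $S^1$-action on the cyclic complex associated to $P_U$ along the cyclic subgroup $\bZ/\ell \hookrightarrow S^1$, which corresponds at the level of classifying spaces to the reduction map $H^*(BS^1;\bZ)\to H^*(B\bZ/\ell;\bZ)$ sending $u$ to its mod-$\ell$ image.

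Next, using the well-behaved kernel assumption to produce explicit perfect models, I would construct a chain-level comparison morphism
\[C^{S^1}_T(U,\bZ)\longrightarrow C^{\bZ/\ell}_T(U,\bZ)\]
intertwining the $\bZ[u]$-action on the left with its $\bZ/\ell$-analogue on the right. The finite generation of $H^q C_T^{S^1}(U,\bZ)$ in every degree then allows a universal coefficient argument: for each fixed $T$ and each fixed $k$, one can choose $\ell$ sufficiently large so that all $\ell$-torsion obstructing the identification of the two persistence modules, in the range of degrees needed to read off the capacities, vanishes. It follows that the class $u^{k-1}$ is nonzero in $H^* C_T^{S^1}(U,\bZ)$ if and only if its image is nonzero in $H^* C_T^{\bZ/\ell}(U,\bZ)$ for all sufficiently large $\ell$, giving $\overline{c}_k(U)=c_k(U)$ once one checks that the definition of $c_k$ in \cite{Capacities2021} is independent of the choice of such large $\ell$.

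The main obstacle is the absence of a Gysin sequence on the $\bZ/\ell$-side, which forces the comparison to be built by hand from the chain model rather than derived from abstract triangulated-category principles; the well-behaved kernel hypothesis is tailored precisely to supply such a model. A secondary subtlety is the torsion control with integer coefficients, for which the finite-generation hypothesis on $H^qC^{S^1}_T(U,\bZ)$ is essential: without it, mod-$\ell$ pathologies could persist for arbitrarily large $\ell$ and the degreewise stabilization step would fail, blocking the identification of the $u^{k-1}$-threshold on the two sides.
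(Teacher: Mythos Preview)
Your overall strategy — restrict from $S^1$ to $\bZ/\ell$, use finite generation to choose a large prime $\ell$ avoiding torsion — matches the paper. But two points need correction.

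First, the capacities are not defined by nonvanishing of $u^{k-1}$: both $\overline{c}_k$ and $c_k$ are the infimum of $T$ for which the \emph{fundamental class} $\eta_T$ lies in $u^k H^*$ (\autoref{definition of capacities}). So the argument must track $\eta_T$ through the comparison and in particular show that the restriction map carries $\eta^{S^1}_T$ to $\eta^{\bZ/\ell}_T$; this is \eqref{eq: finite field reduction-fundamental class} in the paper.

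Second, your ``universal coefficient argument'' is too vague to do the work. The paper uses the well-behaved hypothesis not merely to produce a chain model for the restriction map, but to realize $C^{S^1}_T$ and $C^{\bZ/\ell}_T$ as the equivariant Borel--Moore homology of a genuine $S^1$-space $|\mathcal{W}_\bullet|$ (\autoref{prop: computation 2}). This topological interpretation is the real payoff of the hypothesis here: it lets one invoke a classical comparison between $S^1$- and $\bZ/\ell$-equivariant cohomology of an $S^1$-space (\cite[Appendix]{viterbo1997}, recorded as \autoref{prop: finite field reduction}): for a prime $\mu$ with no $\mu$-torsion in $H^*C^{S^1}_T(U,\bZ)$,
\[H^*C^{\bZ/\mu}_T(U,\bZ/\mu)\cong H^*C^{S^1}_T(U,\bZ)\otimes H^*(S^1,\bZ/\mu),\qquad H^*C^{S^1}_T(U,\bQ)\cong H^*C^{S^1}_T(U,\bZ)\otimes\bQ.\]
Finite generation then lets one pick such a prime $\ell=\mu$, and the paper verifies directly that $\eta^{S^1}_T(U,\bQ)\in u^k H^*$ iff $\eta^{S^1}_T(U,\bZ)\in u^k H^*$ iff $\eta^{\bZ/\ell}_T(U,\bZ/\ell)\in u^k H^*$. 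Note also the coefficients: $\overline{c}_k$ is defined over $\bQ$ and $c_k$ over $\bF_\ell$, with $\bZ$ as the bridge — not $\bZ$ throughout as you propose.
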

In particular, for a convex toric domain $U=X_\Omega$, the conditions here are satisfied, and the combinatorial formula for $c_k(X_\Omega)$ (\cite[Theorem 3.7]{Capacities2021}) is valid for $\overline{c}_k(X_\Omega)$.

Our second geometry application is a Viterbo isomorphism that relates the Chiu-Tamarkin invariant of the disk bundle and the string topology of the base.
\begin{iThm}[{\autoref{Viterbo isomorphism statement} and \autoref{theorem: viterbo isomorphism-product}}]\label{intro thm: viterbo isomorphism}
For a compact manifold $X$ and $T\in [0,\infty]$, we have
\begin{equation*}
H^{q}C_{T}^{G}(D^*X,\bK) \cong H_{d-q}^{G}(\mathcal{L}_{\leq T}X,\bK)
\end{equation*}
for $G={\bZ/\ell,\,S^1}$, where $\mathcal{L}_{\leq T}X=\{\gamma\in \mathcal{L}X: L(\gamma)\leq T\}$ is the free loop space of loops that have length at most $T$.
In particular, when $\ell=1$, the isomorphism intertwines the cup product on the Chiu-Tamarkin cohomology and the filtered version of the loop product on the string topology.
\end{iThm}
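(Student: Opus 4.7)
My plan is to model the microlocal kernel $P_{D^*X}$ via sheaf quantization of the cogeodesic flow, identify its iterated convolution powers with moduli of piecewise-geodesic loops, and transport both the cyclic and the multiplicative structures across this dictionary to the Morse-theoretic model of $\mathcal{L}_{\leq T}X$. I would establish the non-equivariant case (i.e.\ $\ell=1$) first, upgrade to $G = \bZ/\ell$ and $G = S^1$ using the cyclic structure, and treat the product compatibility at the end.

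For the non-equivariant case, fix an auxiliary Riemannian metric on $X$ and apply the Guillermou-Kashiwara-Schapira sheaf quantization to the Hamiltonian flow of the kinetic energy on $T^*X$. The resulting object of $\cD(X^2)$ has singular support along the boundary of $D^*X$ and should coincide with $P_{D^*X}$ by the uniqueness of microlocal kernels proved in the appendix. Unwinding the $\RHom$ defining $C_T(D^*X,\bK)$ and pulling back to the diagonal of $X^2$, one obtains a complex computing the chains on the moduli of geodesic loops of length at most $T$; Morse theory for the energy functional on $\mathcal{L}X$ then identifies this with $H_{d-\ast}(\mathcal{L}_{\leq T}X,\bK)$, the shift by $d=\dim X$ being the usual Poincar\'e-duality shift accounting for the index of constant loops. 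For $G=\bZ/\ell$, the essential point is that $P_{D^*X}^{\star \ell}$ is modelled by $\ell$-tuples of concatenated geodesic segments and that cyclic permutation of the tensor factors corresponds exactly to the restriction to $\bZ/\ell \subset S^1$ of the loop-rotation action on $\mathcal{L}X$; taking appropriate (homotopy) fixed points gives the $\bZ/\ell$-equivariant isomorphism.

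For $G=S^1$, I would match the cyclic object underlying the construction of $C_T^{S^1}$ in \autoref{section: cyclic structure} with Jones's cyclic model for $H^{S^1}_\ast(\mathcal{L}X)$, using the Gysin sequence of \autoref{intro-thm: prop of S^1-thy}(c) together with its classical counterpart on the loop-space side to reduce to the non-equivariant comparison. For the product compatibility at $\ell=1$, combine \autoref{non-equivariant CT and yoneda form} with the fact that composition of geodesic paths models the Chas-Sullivan loop product on $\mathcal{L}X$, so that the Yoneda product on $\RHom(P_{D^*X},P_{D^*X})$ transports directly to the filtered loop product. The principal obstacle I anticipate is the $S^1$-equivariant identification: aligning the cyclic structure of \autoref{section: cyclic structure} with Jones's cyclic object on the free loop space demands more coherence than is visible in the triangulated category $\cD(X^2)$ alone, so one will likely have to work inside a dg or $\infty$-categorical enhancement of the Tamarkin category to make the matching genuinely precise.
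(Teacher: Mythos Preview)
Your overall strategy matches the paper's closely: GKS quantization of the geodesic flow gives an explicit formula for $P_{D^*X}$, which then identifies the Chiu--Tamarkin invariant with chains on a discrete piecewise-geodesic loop space, and the classical Milnor homotopy equivalence transports this to $\mathcal{L}_{\leq T}X$. Two points of divergence are worth noting.

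First, a minor one: the paper quantizes $H(\bq,\bp)=|\bp|_g$ rather than the kinetic energy, since the homogeneous sheaf quantization needs a degree-one Hamiltonian; the length (not energy) filtration on $\mathcal{L}X$ then appears directly.

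Second, and more substantively, your anticipated obstacle for $S^1$ is exactly right, but the paper's resolution is different from the dg/$\infty$-enhancement you propose. The paper introduces the notion of \emph{well-behaved microlocal kernels} (\autoref{def: well-behaved microlocal kernels}): open sets $U$ for which $P_U$ can be written as $\tnR\pi_!\bK_W$ for an explicit locally closed $W$ in some auxiliary space. The explicit formula $P_{D^*X}|_{\{t\leq N\}}\cong \tnR\pi_!\bK_{\mathcal{M}_0^N X}$ (\autoref{kernel of unit disk bundle}) shows $D^*X$ is of this type. The payoff is \autoref{prop: computation} and \autoref{prop: computation 2}: for such $U$ one builds an honest pre-cocyclic \emph{space} $\mathcal{W}_\bullet$ whose Alexander--Spanier cochains compute $F^{S^1}_\bullet(U,\bK)_T$, so the coherence problem is solved at the level of topological spaces rather than through an abstract enhancement. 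The $S^1$-equivariant isomorphism then follows from a levelwise homotopy equivalence of pre-cocyclic spaces $\mathcal{L}^N_{\bullet,T}X\to \mathcal{L}_{\leq T}X_\bullet$ (\autoref{Zl loop space homotopy}) plus the Jones/Loday identification of cyclic cochains of a cocyclic space with $S^1$-equivariant cohomology of its realization. Your Gysin-reduction idea would also work once the equivariant comparison map is in hand, but the paper proceeds directly. For the product, the paper uses the explicit cup-product formula \eqref{formula: cup product} and tracks the Gysin/Thom-collapse construction through the discrete model, which is essentially what you outline.
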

Consequently, we can compare the Chiu-Tamarkin invariant with the symplectic homology in the case of disk bundles based on a filtered version of Viterbo isomorphism for symplectic homology \cite[Theorem 5.3]{Cieliebak-Hingston-Oancea}.
\begin{iCoro}\label{intro coro: SH=CT}For a compact connected Spin manifold $X$, we have an isomorphism that intertwines the cup product on the Chiu-Tamarkin cohomology and the pair-of-pants product on symplectic homology
\begin{equation*}
   H^{q}C_{T}(D^*X,\bZ) \cong H_{d-q}(\mathcal{L}_{\leq T}X,\bZ)\cong SH_{-q}^{(-\infty,T)}(\overline{D}^*X,\bZ).
\end{equation*}
 \end{iCoro}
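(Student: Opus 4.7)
The plan is to obtain the corollary by composing two filtered Viterbo-type isomorphisms and checking that the ring structures match through the composition.

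First, I would invoke \autoref{intro thm: viterbo isomorphism} in the non-equivariant case, that is with $G=\bZ/\ell$ and $\ell=1$, so that $C_T^G(D^*X,\bZ)=C_T(D^*X,\bZ)$. This gives
\[
H^q C_T(D^*X,\bZ)\;\cong\; H_{d-q}(\mathcal{L}_{\leq T}X,\bZ),
\]
and the second half of the cited theorem asserts that this isomorphism intertwines the cup product of \autoref{intro theorem: graded commutative} with the filtered Chas--Sullivan loop product on string topology. Here the required orientability of $X$ needed to define the cup product and to state the result of \autoref{intro thm: viterbo isomorphism} over $\bZ$ is a consequence of the Spin hypothesis.

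Second, I would apply the filtered Viterbo isomorphism of Cieliebak--Hingston--Oancea (Theorem~5.3 of the cited paper) to the Liouville domain $\overline{D}^*X$. Under the hypothesis that $X$ is compact, connected and Spin, their result produces
\[
H_{d-q}(\mathcal{L}_{\leq T}X,\bZ)\;\cong\; SH^{(-\infty,T)}_{-q}(\overline{D}^*X,\bZ),
\]
and intertwines the Chas--Sullivan loop product with the pair-of-pants product on symplectic homology. The Spin condition is precisely the input that allows their construction to be carried out with integer coefficients, by providing coherent orientations on the relevant moduli spaces. Composing these two isomorphisms yields the stated chain of isomorphisms, and the compatibility of products is automatic because the middle term $H_{d-q}(\mathcal{L}_{\leq T}X,\bZ)$ with its Chas--Sullivan loop product is identified by each theorem separately.

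The only genuine point to spell out is the compatibility of the three filtration parameters: the Chiu--Tamarkin parameter $T$ should correspond under \autoref{intro thm: viterbo isomorphism} to the length filtration $L(\gamma)\leq T$ on $\mathcal{L}X$, and under Cieliebak--Hingston--Oancea this same length filtration matches the Hamiltonian action filtration cutoff $T$ on $\overline{D}^*X$. Since both filtered Viterbo isomorphisms are natural with respect to filtration inclusions, the composition respects filtrations, and hence the products at every filtration level agree. I would not expect any hard obstacle beyond this matching; the corollary is essentially a formal combination of \autoref{intro thm: viterbo isomorphism} with the Cieliebak--Hingston--Oancea theorem, and the role of the Spin hypothesis is only to license integer coefficients in the second step.
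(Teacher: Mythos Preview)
Your approach is correct and matches the paper's: the corollary is obtained by composing \autoref{intro thm: viterbo isomorphism} (at $\ell=1$) with the filtered Viterbo isomorphism of Cieliebak--Hingston--Oancea, the loop product serving as the common middle structure. Two refinements from the paper are worth noting: first, the filtration in \cite{Cieliebak-Hingston-Oancea} is by the square root of the energy functional rather than by length, so one needs Anosov's result that the length-filtered and $\sqrt{\text{energy}}$-filtered loop spaces are homotopy equivalent to make the filtration parameters genuinely agree; second, the Spin hypothesis does slightly more than license integer coefficients---it forces a certain local system concentrated in degree $-d$ to be trivial, which is what accounts for the degree shift $d$ in the middle term.
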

Noticed that the filtration on the free loop space in \cite{Cieliebak-Hingston-Oancea} is given by the square root of the energy functional. However, it follows a result of Anosov \cite{Anosov}, the free loop space filtered by the length is homotopy equivalent to the free loop space filtered by the square root of energy. Also, the degree shifting $d$ comes from the triviality of a local system concentrated in degree $-d$ since $X$ is Spin.
\begin{iRMK*}The Viterbo isomorphism between symplectic homology and string topology was first proposed by Viterbo in \cite{viterbo-functorsI}, and was proven using generating function homology in \cite{viterbo-functorsII}. Later, Abbondandolo-Schwarz and Salamon-Weber proved the Viterbo isomorphism using different methods (see \cite{abbondandolo2006floer,salamon2006floer}). Kragh emphasized the role of the Spin structure of the base in \cite{kragh2018viterbo}. For a survey of the Viterbo isomorphism for symplectic cohomology, see \cite{abouzaid2015symplectic}. Here, we abuse the term Viterbo isomorphism in \autoref{intro thm: viterbo isomorphism} since we expect that the Chiu-Tamarkin cohomology and symplectic homology are isomorphic in general cases. Further discussion is provided below.

Lastly, we want to highlight that the proof of \autoref{intro thm: viterbo isomorphism} relies on an explicit formula for the microlocal kernel $P_{D^*X}$ (see \autoref{kernel of unit disk bundle}), which would have independent interests."
\end{iRMK*}

\subsection{Related topics and further directions} 
\begin{itemize}[fullwidth]
    \item The $C^0$-naturality of Chiu-Tamarkin invariants: First, the Chiu-Tamarkin invariant can be defined for all open sets in cotangent bundles. Second, the Chiu-Tamarkin invariants are invariant under Hamiltonian homeomorphisms in the sense of \cite{Oh-MullerHOMEO}. A straightforward application is that we can provide a conceptual proof for the Gromov non-squeezing theorem for Hamiltonian homeomorphisms. We hope that the Chiu-Tamarkin invariant can play a role in further studies of $C^0$-symplectic geometry.
    
    \item In many cases,  it has been proven that the sheaf invariants and Floer invariants are equivalent (for example, \cite{nadler2009constructible,GPS3,viterbo2019sheafquantizationoflagrangian}). Therefore, it is natural to expect an isomorphism between the Chiu-Tamarkin invariant and symplectic (co)homology. 
    
    We have two pieces of evidence: 1) Our capacity for convex toric domains is the same as the Gutt-Hutchings capacities defined using $S^1$-equivariant symplectic homology. 2) \autoref{intro coro: SH=CT} actually shows Chiu-Tamarkin cohomology is isomorphic to symplectic homology with respect to filtration. 

    To construct such an isomorphism, we can consider the following two different approaches: 1)The work of \cite{kuo2021wrapped} provides a wrapping formula for microlocal kernels using the GKS sheaf quantization. If we establish a connection between the (global section of) GKS sheaf quantization and the Hamiltonian Floer theory based on the same idea of \cite{viterbo2019sheafquantizationoflagrangian}, we can relate the cohomology of the Chiu-Tamarkin invariant and symplectic homology.  2)We will express the Chiu-Tamarkin invariants as the Hochschild/Cyclic homology of the Tamarkin category in a work in progress. Therefore, the isomorphism will follow from a filtered equivalence between the Tamarkin category and the wrapped Fukaya category (i.e. a filtered version of \cite{GPS3}), and a filtered version of \cite{GanatraS1CY}. The second approach requires additional ingredients: The definition of the filtered Fukaya category is not completely clear as well as a filtered equivalence with Tamarkin categories (see \cite{TPF,FilteredFukayaAmbrosioni,IK-NovikovTamarkinCategory} for some attempts and discussions). Furthermore, the filtered version of \cite{GanatraS1CY} has not been explored in the literature to date. However, the advantage of the second approach is its potential for easy generalization to Weinstein manifolds, based on \cite{NS20Weinstein}."
    \end{itemize}

\subsection{Organization and Conventions of the Paper}
This paper is structured as follows: In \autoref{section: preliminary}, we present basic definitions and results for microsupport, Tamarkin category, and microlocal kernels. In \autoref{section: Z/l CT complex}, we review the $\bZ/\ell$-equivariant Chiu-Tamarkin invariant and explain some new observations and constructions. The main part of the paper is \autoref{section: s1-theory}, where we begin with an algebraic preparation in the first two subsections, followed by the definition of the $S^1$-equivariant Chiu-Tamarkin invariant. The rest of \autoref{section: s1-theory} focuses on proving the basic properties we need, with the capacities studied at the end of the section. Finally, the last part focuses on computations for disk bundles, in particular the proof of our Viterbo isomorphism. In \autoref{appendix: existence kernel}, we prove the existence of microlocal kernels for all open sets in a cotangent bundle under the setup of triangulated categories.

Next, we introduce some notation: We take $\bN=\bZ_{> 0}$ and $\bN_0=\bZ_{\geq  0}$.

We use subscripts to represent elements in sets. For example, to emphasize $a\in A$, we use the notation $A_a$. For the Cartesian product $A^n$, we define $\Delta_{A^n}:A\rightarrow A^n$ as the diagonal map, and its image is also denoted by $\Delta_{A^n}$. Projection maps are always denoted by $\pi$. Usually, the subscript encodes the fiber of the projection. For example $\pi_Y=\pi_y: X_x\times Y_y \rightarrow X_x.$ We will use $s_t^n$ to denote the summation map $\bR^n\rightarrow \bR$ and its base change.

\subsection*{Acknowledgements}Most of the work is part of the author's thesis, and I would like to express my heartfelt gratitude to Stéphane Guillermou for his generous guidance and support. I also thank Claude Viterbo for his consistent encouragement throughout the thesis. Special thanks go to Alexandru Oancea for introducing the notion of pre-cyclic objects to me, which proved to be crucial for this work. Thank for Bernhard Keller for the helpful discussion on triangulated derivators. Thank for Vivek Shende for helpful suggestions for the exposition. I am grateful for the interest and feedback of Zhen Gao, Yuichi Ike, Tatsuki Kuwagaki, and Nicolas Vichery. 

This work is supported by the ANR projects MICROLOCAL (ANR-15CE40-0007-01), COSY (ANR-21-CE40-0002), and the Novo Nordisk Foundation grant NNF20OC0066298.

\section{Sheaves and Tamarkin category}\label{section: preliminary}
In this section, we will review the concepts and tools of sheaves that we will use. Let $\bK$ be a commutative ring, such as fields or $\bZ$. For a manifold $X$, which is assumed to be connected in this article, we use $\bq\in X$ to represent both points and local coordinates of $X$. Correspondingly, points and the canonical Darboux coordinate of $T^*X$ will be denoted by $(\bq,\bp)$. We denote $D(X)$ as the derived category of complexes of sheaves of $\bK$-modules over $X$. It is worth noting that we generally do not specify the boundedness of the complexes that we use. However, in most of our applications, the complexes are locally bounded, which means that their restrictions on relatively compact open sets are bounded.

\subsection{Microsupport of Sheaves}For a locally closed inclusion $i:Z\subset X$ and $F\in D(X)$, we set
\[F_Z=i_!i^{-1}F, \quad {\tnR }\Gamma_ZF=i_*i^!F.\]
\begin{Def}[{\cite[Definition 5.1.2]{KS90}}]For $F \in D(X)$, the microsupport of $F$ is defined by
\begin{equation*}
SS(F)= \overline{\left\lbrace(\bq,\bp)\in T^*X: \begin{aligned}
    &\text{There is a }C^1\text{-function }f\text{ near }\bq\text{ such that}\\
    &f(\bq)=0\text{, }df(\bq)=\bp\text{ and }\left
({\tnR }\Gamma_{\{f\geq 0\}}F\right)_\bq\neq 0.
\end{aligned} \right\rbrace}.    
\end{equation*}
 \end{Def}
By definition, $SS(F)$ is a closed subset of $T^*X$, conic with respect to the $\mathbb{R}_{>0}$-action along fibers. It is convenient to define $\dot{T}^*X = T^*X \setminus 0_X$ and $\dot{SS}(F) = SS(F) \cap \dot{T}^*X$.

There is a triangulated inequality for the microsupport: for a distinguished triangle $A \rightarrow B \rightarrow C \xrightarrow{+1}$, we have $SS(A) \subset SS(B) \cup SS(C)$.

Regarding the necessary microsupport estimates under 6-operations, we refer to \cite[Theorem 5.4]{KS90} and \cite[Corollary 3.4]{tamarkin2013}.

Let $X_1, X_2, X_3$ be three manifolds. Recall that $\pi_X: X \times Y \rightarrow Y$ is a projection with the fiber $X$ for any arbitrary $Y$. We denote $s_t^n$ the summation map $\bR^n\rightarrow \bR$, we also abuse the notation $s_t^n$ to represent $\id_X\times s_t^n$ for all $X$.

\begin{Def}\label{def: convolution}For $F\in D(X_1\times X_2\times \bR_{t_1})$ and $G\in D(X_2\times X_3\times \bR_{t_2})$, the convolution is defined as
\[F\star G\coloneqq {\tnR }s_{t!}^2{\tnR }\pi_{X_2!}(\pi_{(X_3,t_2)}^{-1}F\dotimes \pi_{(X_1,t_1)}^{-1}G)\in D(X_1\times X_3\times \bR_t) .\]
In particular, when $X_2$ is a point, we use the notation $F_1 \boxstar F_2$ to emphasize.
\end{Def}
For $0\in \bR$ and $F\in D(X\times \bR)$, we have $ \bK_{0}\star F \cong F$, where $\bK_0\coloneqq \bK_{\{0\}}$. So, the functor $ \bK_{0}\star$ serves as the identity functor. Additionally, $\star$ satisfies the following monoidal identities:
\begin{align}\label{monoidal identities}
\begin{aligned}
&(F_1\star F_2)\star F_3 \cong F_1\star (F_2\star F_3),\quad 
& F_1 \star F_2 \cong F_2 \star F_1.
\end{aligned}
\end{align}
Here, commutative identities are induced by the identification $X_1 \times X_3 \cong X_3 \times X_1$. Their proof relies solely on the proper base change and the projection formula. Therefore, the isomorphisms are natural with respect to all sheaves involved. Consequently, the category $D(X \times X \times \bR)$ forms a symmetric monoidal category under convolution, with the unit $\mathbb{K}_{\Delta_{X^2} \times \{0\}}$. It acts on $D(X \times Y \times \bR)$ by convolution.

\subsection{Tamarkin category}\label{section Tamarkin cat}To collect numerical information, we should lift exact Lagrangians in the cotangent bundle to Legendrians in the $1$-jet bundle. Therefore, instead of considering sheaves over $X$, we should consider sheaves over $X\times\bR$, since
\[T^*(X\times \bR)=\bR_+ J^1X \sqcup J^1X\times \{0\} \sqcup \bR_{-}J^1X.\]
Here, $\bR_{\pm} J^1X$ denotes the two copies of the symplectization of $J^1X$ in $T^*(X\times\bR)$, where we consider the $1$-jet space $J^1X = T^*X\times \mathbb{R}_t$ as a contact manifold equipped with the contact form $\alpha = dt + \bp d\bq$. If we write $T^*(X\times \bR)=T^*X\times \bR_t\times \bR_{\tau}$, and the symplectic form is $\omega = d\bp\wedge d\bq + d\tau\wedge dt$, then we can write $\bR_+ J^1X=\{\tau>0\}$. The symplectization map can be represented by the following map $q$. The composition of $q$ with the Lagrangian projection is the symplectic reduction of $T^*X\times T^*_{\tau>0}\bR_t$ with respect to the hypersurface ${\tau=1}$, denoted by $\rho$. Explicitly, we have
\begin{align*}
    q:&\, \bR_+ J^1X=\{\tau>0\} \rightarrow J^1X,\, (\bq,\bp,t,\tau) \mapsto (\bq,\bp/\tau,t),\\
     \rho:\,&\bR_+ J^1X=\{\tau>0\} \rightarrow T^*X,\, (\bq,\bp,t,\tau) \mapsto (\bq,\bp/\tau),
\end{align*}
and the following commutative diagram
\begin{equation*}
 \begin{tikzcd}
\bR_+ J^1X=\{\tau>0\} \arrow[r,"q"] \arrow[rr, "\rho", bend right] & J^1X \arrow[r] & T^*X.
\end{tikzcd}    
\end{equation*}

Recall that the microsupport is a closed conic subset of cotangent bundles. As $q$ is a $\bR_{>0}$-invariant quotient map, we can address the conicity issue by introducing the following two sets, which are not necessarily conic: For sheaves $F\in D(X\times \bR_t)$, we define
\begin{align}\label{mu supp}
\begin{aligned}
        {\mu}s_L(F)=q\left(SS(F)\cap \{\tau> 0\}\right)\subset J^1X,\\
    {\mu}s(F)=\rho\left(SS(F)\cap \{\tau> 0\}\right)\subset T^*X.
\end{aligned}
\end{align}
We refer to $\mu s_L(F)$ as the Legendre microsupport and $\mu s(F)$ as the sectional microsupport.

Meanwhile, the information for $\tau \leq 0$ will not be taken into account in ${\mu}s_L(F)$ and ${\mu}s(F)$. To address this, we have two options. First, we can categorically localize sheaves that are microsupported in $\{\tau \leq 0\}$. Alternatively, we can utilize the concept of $\gamma$-sheaves and the microlocal cut-off lemma (\cite[Proposition 5.2.3]{KS90}) to cut off the negative part of the microsupport. Using either of these methods, we can ensure $SS(F)\subset \{\tau \geq 0\}$. In this case, the microlocal Morse lemma \cite[Theorem 5.4.5(ii)(c)]{KS90} guarantees that we do not lose significant information. Now we will explain both approaches.

Consider the full subcategory $\mathcal{C}=\left\{F: SS(F)\subset \{\tau \leq 0\}\right\}$ of $D(X\times\bR)$. The triangulated inequality of microsupport shows that $\mathcal{C}$ is a thick triangulated subcategory of $D(X\times\bR)$.

\begin{Def}\label{tamarkincategory} We define the Tamarkin category as the following Verdier localization:
\[\mathcal{D}(X)= D(X\times \bR) /\left\{F: SS(F)\subset \{\tau \leq 0\}\right\}.\]\end{Def}
As a Verdier localization, the Tamarkin category $\mathcal{D}(X)$ is itself a triangulated category.

By the definition of localization, if $F$ and $F'$ are two representatives in $D(X^2\times \bR)$ for an object $[F]$ in $\mathcal{D}(X)$, there exists a distinguished triangle $F \rightarrow F' \rightarrow G \xrightarrow{+1}$ such that $SS(G)\subset \{\tau \leq 0\}$. Therefore, it makes sense to define
\[{\mu}s_L([F]) \coloneqq {\mu}s_L(F),\qquad {\mu}s([F]) \coloneqq {\mu}s(F).\]
However, computing $\HOM$ in $\mathcal{D}(X)$ is not straightforward. To address this issue, Tamarkin proved the following theorem:
\begin{Thm}[\cite{tamarkin2013}]\label{Tamarkin projector}
The functors $F \mapsto\bK_{[0,\infty)}\star F $ and $F \mapsto \bK_{(0,\infty)}[1]\star F$ on $D(X \times\bR_t)$, along with the excision triangle
$\bK_{[0,\infty)} \rightarrow \bK_{0} \rightarrow  \bK_{(0,\infty)}[1]\xrightarrow{+1}$, 
provide a left semi-orthogonal decomposition of $D(X\times\bR_t)$ associated with the subcategory $\left\{F: SS(F)\subset \{\tau \leq 0\}\right\}$.

 In other words, for $F\in D(X \times\bR_t)$, we have a distinguished triangle
\begin{equation}
\bK_{[0,\infty)}\star F\rightarrow F \rightarrow \bK_{(0,\infty)}[1]\star F \xrightarrow{+1}, \label{Tamarkinorthgonalcomplement}
\end{equation}
where $\bK_{[0,\infty)}\star F\in {^\perp} \left\{F: SS(F)\subset \{\tau \leq 0\}\right\}$, $\bK_{(0,\infty)}[1]\star  F\in \left\{F: SS(F)\subset \{\tau \leq 0\}\right\}$.
\end{Thm}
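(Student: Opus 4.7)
For the distinguished triangle, I would convolve the excision triangle $\bK_{[0,\infty)} \to \bK_0 \to \bK_{(0,\infty)}[1] \xrightarrow{+1}$ on $\bR$ with $F$. Since $-\star F$ is an exact functor (built from pullbacks, $\dotimes$, and proper pushforward), the result is a distinguished triangle, and the unit identity $\bK_0 \star F \cong F$ from \eqref{monoidal identities} identifies the middle term and yields \eqref{Tamarkinorthgonalcomplement}.

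For the microsupport estimate on the right-hand term, a direct calculation from the definition (testing against $f(t) = \pm t$ at $t = 0$) gives $SS(\bK_{(0,\infty)}) \subset \{\tau \leq 0\}$. The standard microsupport estimate for convolution then implies $SS(\bK_{(0,\infty)}[1] \star F) \subset \{\tau \leq 0\}$, since the $\tau$-coordinate of any point in $SS(A \star B)$ must lie simultaneously in the $\tau$-projections of $SS(A)$ and $SS(B)$.

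The left orthogonality is the heart of the argument. I would deduce it from the general principle that an idempotent exact endofunctor $L$ with a natural transformation $L \to \id$ such that $L^2 \to L$ is an isomorphism constitutes a (Bousfield) localization of a triangulated category, yielding a semi-orthogonal decomposition $D(X \times \bR) = \langle \mathrm{Im}(L), \ker L \rangle$ with $\mathrm{Im}(L) = {}^\perp \ker L$. For $L = \bK_{[0,\infty)} \star (-)$ with the natural transformation induced by $\bK_{[0,\infty)} \to \bK_0$, this reduces to three inputs: (a) the idempotence $\bK_{[0,\infty)} \star \bK_{[0,\infty)} \cong \bK_{[0,\infty)}$, verified by an explicit stalk computation via proper base change (the fibre of the sum map over $t \geq 0$ is a compact interval and over $t < 0$ is empty); (b) the vanishing $\bK_{[0,\infty)} \star \bK_{(0,\infty)}[1] \cong 0$, obtained by convolving the excision triangle with $\bK_{[0,\infty)}$ and invoking (a); and (c) the identification $\ker L = \{G : SS(G) \subset \{\tau \leq 0\}\}$.

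The main obstacle is (c). The inclusion $\supseteq$ is immediate: if $LG \cong 0$, then $G \cong \bK_{(0,\infty)}[1] \star G$ by the distinguished triangle, so $SS(G) \subset \{\tau \leq 0\}$. The converse, that $SS(G) \subset \{\tau \leq 0\}$ forces $\bK_{[0,\infty)} \star G \cong 0$, uses the microsupport estimate together with $SS(\bK_{[0,\infty)}) \subset \{\tau \geq 0\}$ to pin $SS(\bK_{[0,\infty)} \star G) \subset \{\tau = 0\}$, so $\bK_{[0,\infty)} \star G$ is locally constant along $\bR_t$. A direct stalk computation identifies $(\bK_{[0,\infty)} \star G)_{(q, t_0)}$ with $\tnR \Gamma_c((-\infty, t_0], G|_{\{q\} \times \bR})$, and the microlocal Morse lemma applied to $\phi(t) = t$ (non-characteristic because $SS(G|_{\{q\} \times \bR}) \subset \{\tau \leq 0\}$) together with the vanishing of compactly supported cohomology on half-lines forces these stalks to vanish, completing the characterization of $\ker L$ and hence the semi-orthogonal decomposition.
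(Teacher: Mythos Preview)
The paper does not prove this theorem; it is stated with attribution to \cite{tamarkin2013} and then used as input. Your outline is essentially the standard argument (idempotence of $\bK_{[0,\infty)}\star(-)$, microsupport bound for $\bK_{(0,\infty)}[1]\star(-)$, identification of $\ker L$), so there is nothing in the paper to compare against beyond agreement in spirit.

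That said, the last paragraph has a gap. You assert $SS(G|_{\{q\}\times\bR})\subset\{\tau\le 0\}$ and call this ``non-characteristic''. The inclusion $i_q:\bR\hookrightarrow X\times\bR$ is in general \emph{not} non-characteristic for $G$: its conormal bundle is $\{(q,t,\bp,0):\bp\in T^*_qX\}$, which can meet $SS(G)\subset\{\tau\le 0\}$ outside the zero section. The conclusion $SS(i_q^{-1}G)\subset\{\tau\le 0\}$ is nonetheless true via the general pullback bound (the conormal lies in $\{\tau=0\}$ and $\{\tau\le 0\}+\{\tau=0\}\subset\{\tau\le 0\}$), but that is a different justification from the one you give. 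More seriously, even with this bound the Morse lemma only gives that $\tnR\Gamma_c((-\infty,t_0],i_q^{-1}G)$ is independent of $t_0$; you still owe an argument for why the common value is zero, and ``vanishing of compactly supported cohomology on half-lines'' does not supply it for general $i_q^{-1}G$.

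A cleaner finish, avoiding pointwise restriction: you already have $SS(\bK_{[0,\infty)}\star G)\subset\{\tau=0\}$, hence $\bK_{[0,\infty)}\star G\cong q^{-1}H$ for some $H\in D(X)$ with $q:X\times\bR\to X$ the projection. Apply $\tnR q_!$. On one side the projection formula gives $\tnR q_!\,q^{-1}H\cong H\otimes\tnR\Gamma_c(\bR,\bK)\cong H[-1]$; on the other, $\tnR q_!(\bK_{[0,\infty)}\star G)=\tnR\Gamma_c(\bR,\bK_{[0,\infty)})\otimes\tnR q_!G=0$ since $\tnR\Gamma_c([0,\infty),\bK)=0$. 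Thus $H=0$ and $\bK_{[0,\infty)}\star G=0$, completing the identification of $\ker L$.
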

Therefore, with the help of {\cite[Chapter 4 and Exercise 10.15]{KS2006}}, we establish that the functor
\[\cD(X)\rightarrow D(X^2\times \bR), [F]\rightarrow F\star \bK_{[0,\infty)}\]
is a well-defined fully faithful functor known as the Tamarkin projector. Here, $F\star \bK_{[0,\infty)}$ serves as a canonical representative for $[F]\in \cD(X)$. The essential image of this functor is the left orthogonal complement ${^\perp} \left\{F: SS(F)\subset \{\tau \leq 0\}\right\}$.

In practice, it is often convenient to identify $\cD(X)$ with ${^\perp} \left\{F: SS(F)\subset \{\tau \leq 0\}\right\}$ for computational purposes and related constructions. This allows us to directly compute $\HOM$ in $D(X^2\times \bR)$ for objects in $\cD(X)$ with their canonical representatives. Throughout this paper, we will primarily focus on the left orthogonal complement model without explicitly mentioning it. Specifically, we can say a sheaf $F\in D(X^2\times \bR)$ is an object of $\cD(X)$ if and only if $F\cong F\star \bK_{[0,\infty)}$. Consequently, the convolution functor $\bK_{\Delta_{X^2}\times [0,\infty)}\star$ induces the identity functor of the Tamarkin category $\mathcal{D}(X)$.

For the canonical representative $F\cong F\star \bK_{ [0,\infty)}$ of $[F]\in \mathcal{D}(X)$, one can show $SS(F)\subset \{\tau \geq 0\}$ using functorial estimates of microsupport, see \cite[Proposition 3.17]{GS2014}. It is also proven by \cite[Lemma 3.7]{tamarkin2013} that if $\mu s(F)$ is compact, then $SS(F)\cap \{\tau \leq 0\}=0_{X\times \bR}$.

On the other hand, we consider the product space $X\times \bR_\gamma$, where $\bR_\gamma$ denotes $\bR$ equipped with the $\gamma$-topology as defined in \cite[Section 3.5]{KS90} for $\gamma=(-\infty,0]$. According to the microlocal cut-off lemma (Proposition 5.2.3 in {\it loc.cit.}), we have the equivalence between $D(X\times \bR_\gamma)$ and the category $\{F\in D(X\times \bR): SS(F)\subset {\tau \geq 0}\}$ as well as $\{F\in D(X\times \bR): \bK_{[0,\infty)}\star_{np}F\cong F\}$, where $\bK_{[0,\infty)}\star_{np}F\coloneqq \tnR s_{t*}^2( \bK_{[0,\infty)}\boxtimes F)$. Objects in these equivalent categories are referred to as $\gamma$-sheaves. Consequently, $\cD(X)$ is equivalent to a full subcategory of $D(X\times \bR_\gamma)$.

Both approaches have advantages in applications. In this article, our main focus is on Tamarkin's approach. However, the microlocal cut-off approach is very useful when considering the quantization of the Reeb flow in the next subsection.

If $X$ admits a $G$-action and we equip the trivial action on $\bR$, we can define the equivariant Tamarkin category $\cD_G(X)\subset D_G(X\times \bR)$, where $D_G$ represents the equivariant derived category as defined in \cite{BernsteinLunts}. The construction follows the same principles since the microsupport of an equivariant sheaf is defined as the microsupport of the corresponding non-equivariant sheaf. We can also define the Tamarkin projector using the equivariant 6-operations, and the previous discussion applies to equivariant $\gamma$-sheaves as well.

\subsection{Quantization for the Reeb flow of the 1-jet }\label{section: reeb action}Consider, the translation map $\text{T}_c:X\times \bR\rightarrow X\times \bR$, $(x,t)\mapsto (x,t+c)$. Then the push forward map $\text{T}_{c*}: D(X\times \bR)\rightarrow D(X\times \bR)$ is a family of endfunctors. 

For $F\in D(X\times \bR_\gamma)$, we have $\tnT_{c*}\cong -\star_{np} \bK_{{c}} \cong -\star_{np} \bK_{[c,\infty)}$. Therefore, we obtain a family of natural transformations $\tau_c: \id\rightarrow \tnT_{c*}$, for $c\geq 0$, induced by the restriction map $\bK_{[0,\infty)}\rightarrow \bK_{[c,\infty)}$ in $D(X\times \bR_\gamma)$. For a $\gamma$-open set $U=U+\gamma$, the natural morphism $\tau_c(F)$ is induced by
\[\tnR \Gamma(U,F)\rightarrow \tnR \Gamma(U+c,F)\cong \tnR \Gamma(U,\tnT_{c*}F).\]
Therefore, $\tnT_{c*}$ and $\tau_c$ commute with the 6-operations and adjunctions on $D(X\times \bR_\gamma)$. For example, if $f:X\rightarrow Y$, consider ${f}_\bR=f\times\id_{\bR}: X\times \bR\rightarrow Y\times \bR$. Then ${f}_\bR$ is a continuous map on $X\times \bR_\gamma$, and we have
\[\tau_c(\tnR {f}_{\bR*}F)=\tnR {f}_{\bR*}(\tau_c(F)): \tnR {f}_{\bR*}F\rightarrow  \tnT_{c*}\tnR {f}_{\bR*}\cong \tnR {f}_{\bR*}\tnT_{c*}F. \]
For $F\in \cD(X)$, we also have $\tnT_{c*}\cong \bK_{\{c\}}\star_{np}-  \cong \bK_{\{c\}}\star-  \cong \bK_{[c,\infty)}\star- $. Then the natural transformation $\tau_c$ restricts to $\cD(X)$, and it also commutes with the 6-operations and adjunctions. In particular, $\cD(X)$ is a triangulated persistent category, as defined in \cite{TPC}. The same discussion applies if $X$ possesses a group action and $f$ is an equivariant map.

Geometrically, the functor $\tnT_{c*}$ quantizes the Reeb flow of the canonical contact form on $J^1X$. Moreover, it suggests that the additional variable $\bR_t$ associated with objects in the Tamarkin category $\cD(X)$ represents a form of energy (or action), and the natural transformation $\tau_c$ ``moves" Reeb chords in a homological manner. With this perspective, we introduce our first numerical invariant of sheaves.
\begin{Def}\label{sheafenergy}For $F\in \mathcal{D}(X)$, the sheaf energy is defined to be: 
\[e(F)=\inf\{c \geq 0: \tau_c(F)=0\}.\]
\end{Def}

\subsection{Tamarkin categories of subsets and microlocal projectors}In this subsection, we will study the categories of sheaves microsupported in an open set $U\subset T^*X$. Subsequently, we will construct the kernels of the projectors associated with these categories.

For the {\em closed} subset $Z=T^*X\setminus U \subset T^*X$, we define $\mathcal{D}_Z(X)$ as the full subcategory of $\mathcal{D}(X)$ consisting of sheaves satisfying $\mu s(F) \subset Z$. Additionally, we define $\mathcal{D}_{U}(X)$ as the Verdier localization $\mathcal{D}_{U}(X)=\cD(X)/\cD_Z(X)$.

Now we have a diagram of triangulated categories
\begin{equation}
 \mathcal{D}_{Z}(X) 
\hookrightarrow \mathcal{D}(X) \rightarrow \mathcal{D}_{U}(X) \label{inclusion diagram of categories supported in U}  
\end{equation}
\begin{Def}\label{defadmissibledomains}We say that $U$ is {\em $\bK$-admissible} if there exists a distinguished triangle 
\[ P_U\rightarrow {\bK}_{\Delta_{X^2} \times[0,\infty)} \rightarrow Q_U \xrightarrow{+1},\]
in $\cD(X^2)$ such that the convolution functor $\star P_U $ induces the left adjoint of the quotient functor $\mathcal{D}(X) \rightarrow \mathcal{D}_{U}(X)$, and $\star Q_U$
induces the left adjoint of the natural inclusion $\mathcal{D}_{Z}(X) 
\hookrightarrow \mathcal{D}(X)$.

The pair of sheaves $(P_U,Q_U)$, together with the distinguished triangle, provides an orthogonal decomposition of $\mathcal{D}(X)$. In particular, we can identify $\mathcal{D}_{U}(X)$ as the left orthogonal complement of $ \mathcal{D}_{Z}(X) $ in $\mathcal{D}(X)$.

We call the pair $(P_U,Q_U)$ as the {\em microlocal kernels} associated with $U$, and the distinguished triangle as the defining triangle of $U$. We say that $U$ is {\em admissible} if $U$ is $\bZ$-admissible.
\end{Def}

\begin{RMK}\label{remark: one kernel is enough}This definition differs slightly from the one given in \cite{Capacities2021}. However, with the aid of \autoref{all open sets admissible}, we can demonstrate the equivalence of two definitions. Furthermore, the existence of either $P_U$ or $Q_U$ is sufficient to ensure the existence of the other.
\end{RMK}

Chiu has proven that balls are admissible. This idea can be directly generalized to bounded open sets (\cite[Subsection 2.3]{zhangthesis}). In \autoref{appendix: existence kernel}, we will prove the following result:
\begin{Thm}\label{all open sets admissible}
All open sets in a cotangent bundle are admissible.
\end{Thm}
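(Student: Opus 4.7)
The plan is to construct the microlocal kernel $P_U$ for an arbitrary open $U \subset T^*X$ by exhausting $U$ by relatively compact (hence bounded) open subsets $U_1 \subset U_2 \subset \cdots$ with $\bigcup_n U_n = U$, for each of which the admissibility is already known (Chiu's construction generalized in \cite{zhangthesis}). By \autoref{remark: one kernel is enough}, producing $P_U$ together with a coherent map $P_U \to \bK_{\Delta_{X^2}\times[0,\infty)}$ in $\cD(X^2)$ suffices, since $Q_U$ is then defined as the cone.

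First I would fix inclusions $j_{n}: U_n \hookrightarrow U_{n+1}$ and observe that the functoriality of microlocalization in \eqref{inclusion diagram of categories supported in U} provides compatible unit maps $P_{U_n} \to \bK_{\Delta_{X^2}\times[0,\infty)}$ and therefore, by adjunction, canonical morphisms $P_{U_n} \to P_{U_{n+1}}$. Working in the ambient triangulated category $D(X^2\times\bR)$, which admits arbitrary small coproducts, I would then form the homotopy colimit via the telescope triangle
\begin{equation*}
\bigoplus_{n\geq 1} P_{U_n} \xrightarrow{\ \id - \text{shift}\ } \bigoplus_{n\geq 1} P_{U_n} \longrightarrow P_U \xrightarrow{+1},
\end{equation*}
and define $P_U$ to be this hocolim, equipped with the induced structure map to $\bK_{\Delta_{X^2}\times[0,\infty)}$. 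The fact that $P_U$ lies in the Tamarkin category $\cD(X^2)$ follows because $\star \bK_{[0,\infty)}$ is a coproduct-preserving idempotent functor (being convolution with a fixed object), so each canonical representative $P_{U_n}\cong P_{U_n}\star\bK_{[0,\infty)}$ assembles to a canonical representative of $P_U$.

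Next I would verify the microsupport condition $\mu s(P_U) \cap (T^*X^2 \setminus (\text{graph of }U)) = \emptyset$ using the behavior of microsupport under filtered homotopy colimits in the ambient category; concretely, each $P_{U_n}$ has sectional microsupport contained in a closed set sitting over $U_n$, and since the telescope triangle exhibits $P_U$ as a two-term extension of coproducts of the $P_{U_n}$, the triangulated inequality plus the limit behavior of closed sets forces $\mu s(P_U)$ to be contained in the closure of $\bigcup_n \mu s(P_{U_n})$, which lies over $U$. In parallel, the defining triangle $P_U \to \bK_{\Delta_{X^2}\times[0,\infty)} \to Q_U \xrightarrow{+1}$ is produced by functoriality of the cone on the system of unit maps, and the microsupport of $Q_U$ is shown to lie in $Z = T^*X\setminus U$ by taking a colimit of the triangles for $U_n$ and using that the $Q_{U_n}$ already satisfy $\mu s(Q_{U_n}) \subset T^*X\setminus U_n$.

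The main obstacle will be verifying the two adjunction properties: that $\star P_U$ gives the left adjoint to the quotient $\cD(X)\to \cD_U(X)$ and that $\star Q_U$ gives the left adjoint to the inclusion $\cD_Z(X)\hookrightarrow \cD(X)$. To do this cleanly, I would characterize the essential image of $\star P_U$ as the left orthogonal complement of $\cD_Z(X)$: for any test object $G\in\cD_Z(X)$, I would compute $\HOM(P_U\star F, G)$ by brute force using the telescope triangle, reducing to the fact that each bounded $U_n$ is admissible so $\HOM(P_{U_n}\star F,G)=0$. A Milnor-type $\lim^1$ exact sequence then forces the total Hom to vanish, whence $P_U\star F \in {}^\perp \cD_Z(X)$. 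Conversely, the cone on the unit map is built from the $Q_{U_n}$, which lie in $\cD_Z(X)$, giving the desired orthogonal decomposition. The delicate step is controlling the $\lim^1$ term, and for this I would rely on the fact that once $F$ has $\mu s(F)$ compact, the system $\{P_{U_n}\star F\}$ stabilizes for $n$ large (because the compact microsupport of $F$ lies inside some $U_n$), reducing the general case via an appropriate approximation of arbitrary $F$ by objects with compact sectional microsupport.
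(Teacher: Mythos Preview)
Your overall strategy---exhaust $U$ by bounded opens $U_n$, form $P_U$ as a telescope hocolim of the $P_{U_n}$, and verify the orthogonality---is exactly the paper's route. But you skip the one step that the paper identifies as the crux. You write that ``the defining triangle $P_U \to \bK_{\Delta_{X^2}\times[0,\infty)} \to Q_U \xrightarrow{+1}$ is produced by functoriality of the cone on the system of unit maps'' and that ``the cone on the unit map is built from the $Q_{U_n}$.'' In a bare triangulated category this is \emph{not} automatic: the Milnor colimit of a tower of distinguished triangles need not be a distinguished triangle, and the cone of $\mathrm{hocolim}(P_{U_n}) \to \bK_{\Delta}$ is not a priori $\mathrm{hocolim}(Q_{U_n})$. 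Without that identification, you have no handle on $\mu s(Q_U\star F)$, so you cannot conclude $Q_U\star F \in \cD_Z(X)$, and the semi-orthogonal decomposition breaks. The paper resolves this precisely by invoking the stable derivator associated to $C(X^2\times\bR)$ and the functorial-cone result of Keller--Nicol\'as: this produces modified inductive systems $(P'_n)$, $(Q'_n)$ with $P'_n\cong P_n$, $Q'_n\cong Q_n$ levelwise, for which the hocolim triangles \emph{are} distinguished. The refined microsupport estimate for Milnor colimits then applies to $\mathrm{hocolim}(Q'_n)$.

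Two smaller points. First, your $\lim^1$ worry is a red herring: for $G\in\cD_Z(X)$ every term $\RHOM(P_{U_n}\star F,G)$ already vanishes (since $Z\subset Z_n$), so the holim is zero with no $\lim^1$ to control; the stabilization argument via compact $\mu s(F)$ is unnecessary. Second, the condition you state for $P_U$---something about $\mu s(P_U)$ avoiding the complement of the graph of $U$---is not what admissibility asks. What you need is $F\star P_U \in {}^\perp\cD_Z(X)$ and $F\star Q_U \in \cD_Z(X)$ for all $F$; the paper checks exactly these two, and the second is where the derivator input is essential.
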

\begin{RMK}\label{remark: kernel in infinity category}Actually, Kuo's results in \cite{kuo2021wrapped} are sufficient to provide a proof of this fact within the framework of the stable $\infty$-category. This was missed in \cite{Capacities2021} due to a gap in the author's knowledge. Therefore, the main point of the proof presented in \autoref{appendix: existence kernel} is that we only rely on techniques from triangulated categories (and triangulated derivators).
\end{RMK}
Now, we state the basic properties of microlocal kernels that are crucial for our later discussions.
\begin{Lemma}[{\cite[Lemma 2.3]{Capacities2021}}]\label{functorial lemma}Suppose $U_1 \subset U_2 $ is a inclusion of two open subsets in $T^*X$, and their defining triangles are
    \[ P_{U_i}\xrightarrow{a_i} {\bK}_{\Delta_{X^2} \times[0,\infty)} \xrightarrow{b_i} Q_{U_i} \xrightarrow{+1},\qquad i=1,2.\]
\begin{enumerate}[fullwidth]\item We have $Q_{U_2}\star P_{U_1}\cong 0$, and the natural morphism
\[a_2\star P_{U_1} =[P_{U_2} \star P_{U_1}\rightarrow P_{U_1}],\]
is an isomorphism. In particular, we have $P_{U}\star P_{U}\cong P_{U}$ and $P_{U}\star Q_{U}\cong Q_{U}\star P_{U}\cong 0$ for all open sets.
\item  For any open set $U$ and for all $F,G \in D(X^2 \times \bR)$, we have the isomorphism:\[\HOM_{D(X^2\times \bR)}(F\star P_{U} ,G\star P_{U})\rightarrow  \HOM_{D(X^2\times \bR)}(F\star P_{U},G).\]
\item For all $c\geq 0$, we have $\RHOM(P_{U_1},\tnT_{c*}(Q_{U_2}))\cong 0$ and
\begin{equation}
  \RHOM(P_{U_1},\tnT_{c*}(a_2)):\,\RHOM(P_{U_1},\tnT_{c*}(P_{U_2})) \cong\RHOM(P_{U_1},{\bK}_{\Delta_{X^2} \times[c,\infty)}) .\end{equation}
    \end{enumerate}
\end{Lemma}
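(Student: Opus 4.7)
The three parts follow by exploiting the semiorthogonal decomposition $\cD(X) = (\cD_U(X), \cD_Z(X))$ coming from admissibility: the essential image of the convolution $\star P_U$ is $\cD_U(X) \cong {}^\perp \cD_Z(X)$, the essential image of $\star Q_U$ is $\cD_Z(X)$, and $\HOM_{\cD(X)}(\cD_U(X), \cD_Z(X)) = 0$ by definition of the left orthogonal. Each part transports these statements to the level of kernels in $\cD(X^2)$ via the convolution action.

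For (1), the inclusion $U_1 \subset U_2$ yields $Z_2 \subset Z_1$ and hence $\cD_{Z_2}(X) \subset \cD_{Z_1}(X)$. The essential image of $\star Q_{U_2}$ lies in $\cD_{Z_2}(X)$, which is killed by the projection $\star P_{U_1}$; evaluating on the identity kernel $\bK_{\Delta_{X^2}\times[0,\infty)}$ gives $Q_{U_2} \star P_{U_1} \cong 0$. Convolving the defining triangle of $U_2$ on the right with $P_{U_1}$ and using $\bK_{\Delta_{X^2}\times[0,\infty)} \star P_{U_1} \cong P_{U_1}$ then produces the distinguished triangle
\[
P_{U_2} \star P_{U_1} \xrightarrow{a_2 \star P_{U_1}} P_{U_1} \to 0 \xrightarrow{+1},
\]
so $a_2 \star P_{U_1}$ is an isomorphism. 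Specializing $U_1 = U_2 = U$ and using the commutativity in \eqref{monoidal identities} yields $P_U \star P_U \cong P_U$ and $P_U \star Q_U \cong Q_U \star P_U \cong 0$.

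For (2), apply $G \star -$ to the defining triangle of $U$ and then $\RHOM_{D(X^2\times\bR)}(F \star P_U, -)$. Since $F \star P_U \in \cD(X^2)$ is right-orthogonal to the cone of the Tamarkin projection $G \star \bK_{\Delta_{X^2}\times[0,\infty)} \to G$, the middle term of the resulting triangle identifies with $\RHOM(F \star P_U, G)$, and the claim reduces to $\RHOM(F \star P_U, G \star Q_U) \cong 0$. Interpret $F \star P_U$ and $G \star Q_U$ as kernels inducing functors $\cD(X) \to \cD(X)$ by right-convolution: the first lands in $\cD_U(X)$ and the second lands in $\cD_Z(X)$, so any morphism of kernels would yield, on every test object, a morphism in $\HOM_{\cD(X)}(\cD_U(X), \cD_Z(X)) = 0$ and must therefore vanish. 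Part (3) is then immediate: translation $\tnT_{c*}$ only shifts the $t$-coordinate, so it commutes with $\star Q_{U_2}$ and preserves $\mu s$; hence $\tnT_{c*}(Q_{U_2})$ still induces a functor with image in $\cD_{Z_2}(X) \subset \cD_{Z_1}(X)$, giving $\RHOM(P_{U_1}, \tnT_{c*}(Q_{U_2})) \cong 0$ by the same argument. Applying $\RHOM(P_{U_1}, \tnT_{c*}(-))$ to the defining triangle of $U_2$ and using $\tnT_{c*}(\bK_{\Delta_{X^2}\times[0,\infty)}) \cong \bK_{\Delta_{X^2}\times[c,\infty)}$ delivers the asserted isomorphism.

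The main obstacle is the careful bookkeeping between three layers: $\cD(X)$, where the semiorthogonal decomposition naturally lives; $\cD(X^2)$, where kernels live and convolution acts as composition of functors via a Yoneda-type identification; and $D(X^2 \times \bR)$, where $\RHOM$ is actually computed. One must check that every identification via the Tamarkin projection and every functor–kernel correspondence is compatible with the orthogonality statement used to kill the third term in parts (2) and (3).
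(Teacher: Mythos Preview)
Your plan for (1) is sound, and (3) follows once (2) is established. The gap is in (2): the claim that ``any morphism of kernels would yield, on every test object, a morphism in $\HOM_{\cD(X)}(\cD_U(X),\cD_Z(X))=0$ and must therefore vanish'' is not justified. Testing a morphism $\phi\colon F\star P_U\to G\star Q_U$ on objects $H\in\cD(X)$ only shows $H\star\phi=0$ for each $H$; this does not force $\phi=0$ in $D(X^2\times\bR)$, because the functors $H\star(-)$ are not jointly faithful on morphisms of kernels. (For a model of the failure, a nonzero class in $\textnormal{Ext}^1_{D(S^1)}(\bK_{S^1},\bK_{S^1})\cong\bK$ dies on every stalk.) You correctly flag the ``functor--kernel correspondence'' as needing care at the end of your plan, and this is exactly the step that does not go through as written.

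The repair is purely formal and uses only what you already proved in (1). Naturality of the transformation $(-)\star a$ (with $a\colon P_U\to\bK_{\Delta_{X^2}\times[0,\infty)}$) gives, for any $\phi\colon F\star P_U\to G\star Q_U$,
\[
\phi\circ\bigl((F\star P_U)\star a\bigr)\;=\;\bigl((G\star Q_U)\star a\bigr)\circ(\phi\star P_U).
\]
By (1) we have $P_U\star Q_U\cong 0$, so $(F\star P_U)\star a=F\star(P_U\star a)$ has zero cone and is an isomorphism; and $Q_U\star P_U\cong 0$, so the source of $(G\star Q_U)\star a$ is $G\star(Q_U\star P_U)=0$. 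Hence the right side vanishes and $\phi=0$, giving $\RHOM(F\star P_U,G\star Q_U)=0$. Parts (2) and (3) then follow exactly along the lines you sketched. (The paper itself does not supply a proof here, citing \cite{Capacities2021}.)
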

\begin{Coro}[{Idempotence of microlocal kernels}]\label{idempotence}For an open subset $U\subset T^*X$ and all $\ell\in \bN$, we have isomorphisms
\[P_U^{\star \ell }\xrightarrow{\cong} P_U, \qquad Q_U\xrightarrow{\cong} Q_U^{\star \ell }.\]
\end{Coro}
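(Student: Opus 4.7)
The proof will be a short induction on $\ell$, using only part (1) of \autoref{functorial lemma} and the defining triangle of $U$, so I expect no genuine obstacle; the work is in bookkeeping the canonical morphisms.

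\textbf{Plan for $P_U^{\star \ell} \cong P_U$.} I would argue by induction on $\ell$. The case $\ell = 1$ is trivial, and \autoref{functorial lemma}(1) gives the base step $P_U \star P_U \cong P_U$ via the natural map $a \star P_U$, where $a : P_U \to \bK_{\Delta_{X^2}\times[0,\infty)}$ is the first arrow in the defining triangle of $U$. For the inductive step, I use the associativity of $\star$ from \eqref{monoidal identities} to write
\[
P_U^{\star \ell} \;\cong\; P_U \star P_U^{\star(\ell-1)} \;\xrightarrow{\id \star \, \cong}\; P_U \star P_U \;\xrightarrow{\cong}\; P_U,
\]
where the middle isomorphism uses the inductive hypothesis and the last one is the base case. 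One then checks that the composite identifies with iterated application of $a$, so the claimed map $P_U^{\star \ell} \to P_U$ is indeed an isomorphism.

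\textbf{Plan for $Q_U \cong Q_U^{\star \ell}$.} I would reduce to the case $\ell = 2$ and then iterate. To get $Q_U \cong Q_U \star Q_U$, apply the exact endofunctor $- \star Q_U$ to the defining triangle
\[
P_U \xrightarrow{a} \bK_{\Delta_{X^2}\times[0,\infty)} \xrightarrow{b} Q_U \xrightarrow{+1}.
\]
This yields the distinguished triangle
\[
P_U \star Q_U \longrightarrow \bK_{\Delta_{X^2}\times[0,\infty)} \star Q_U \xrightarrow{b \star \id} Q_U \star Q_U \xrightarrow{+1}.
\]
By \autoref{functorial lemma}(1) the first term is $0$, and $\bK_{\Delta_{X^2}\times[0,\infty)}$ is the unit for $\star$ in $\cD(X^2)$, so the middle term is canonically $Q_U$. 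The triangle therefore collapses to an isomorphism $b \star \id_{Q_U} : Q_U \xrightarrow{\cong} Q_U \star Q_U$.

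\textbf{Induction and assembling the map.} For general $\ell$, I induct: assuming $Q_U \xrightarrow{\cong} Q_U^{\star(\ell-1)}$, tensor with $Q_U$ on the right (or use associativity) to obtain
\[
Q_U \;\xrightarrow{\cong}\; Q_U \star Q_U \;\xrightarrow{\cong \star \id}\; Q_U^{\star(\ell-1)} \star Q_U \;=\; Q_U^{\star \ell},
\]
with the final equality given by associativity. The only subtlety is to identify this isomorphism with the natural candidate coming from iterating $b \star \id$; this is a straightforward diagram chase using naturality of associators and of $b \star -$, and there is no serious obstacle beyond setting up the notation carefully.
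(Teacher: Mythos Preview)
Your proposal is correct and matches the paper's approach. The corollary is stated without explicit proof in the paper, but the later proof of \autoref{cyclicstructure step1} reveals that the intended argument for $P_U^{\star\ell}\xrightarrow{\cong}P_U$ is exactly yours: apply $a$ to $\ell-1$ of the factors to identify $P_U^{\star\ell}$ with $\bK_{\Delta_{X^2}\times[0,\infty)}^{\star(\ell-1)}\star P_U\cong P_U$, which is the same as your iterated use of $a\star P_U$; your argument for $Q_U$ via convolving the defining triangle with $Q_U$ and using $P_U\star Q_U\cong 0$ is likewise the natural dual and is what the paper has in mind.
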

\begin{Prop}[{Functorial of microlocal kernels, see \cite[Lemma 2.4]{Capacities2021}}]\label{functorial}For any inclusion $U_1 \subset U_2 \subset T^*X$ between open subsets and their defining triangles are
    \[ P_{U_i}\xrightarrow{a_i} {\bK}_{\Delta_{X^2} \times[0,\infty)} \xrightarrow{b_i} Q_{U_i} \xrightarrow{+1},\]then we have a morphism between the defining triangles:
    \begin{equation*}
        \begin{tikzcd}
P_{U_1} \arrow[d, "a"] \arrow[r, "a_1"] & {{\bK}_{\Delta_{X^2}\times[0,\infty)}} \arrow[r, "b_1"] \arrow[d, "\id"] & Q_{U_1} \arrow[d, "b"] \arrow[r, "+1"] & {} \\
P_{U_2} \arrow[r, "a_2"]                & {{\bK}_{\Delta_{X^2}\times[0,\infty)}} \arrow[r, "b_2"]                  & Q_{U_2} \arrow[r, "+1"]                &  { .}
\end{tikzcd}
    \end{equation*}
These morphisms $a,b$ are natural with respect to inclusions of open sets. In particular, when $U_1=U_2$ (but $P_{U_1}$ and $P_{U_2}$ are a priori not the same), the morphism of the defining triangles is an isomorphism of distinguished triangles. \end{Prop}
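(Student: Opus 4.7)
The plan is to construct $a$ and $b$ as unique lifts supplied by the vanishing in \autoref{functorial lemma}(3), and then to deduce naturality and the isomorphism assertion purely from uniqueness. To build $a$, I apply $\RHOM(P_{U_1},-)$ to the defining triangle of $U_2$. \autoref{functorial lemma}(3) at $c=0$ gives $\RHOM(P_{U_1},Q_{U_2})\cong 0$, so the associated long exact sequence of $\HOM$-groups degenerates and composition with $a_2$ becomes an isomorphism
\[
(a_2)_*\colon \HOM(P_{U_1},P_{U_2})\xrightarrow{\ \sim\ }\HOM\bigl(P_{U_1},\bK_{\Delta_{X^2}\times[0,\infty)}\bigr).
\]
I define $a\colon P_{U_1}\to P_{U_2}$ as the unique preimage of $a_1$, so that $a_2\circ a=a_1$ by construction and the left square commutes.

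\textbf{Construction of $b$.} The axiom TR3 completes $(a,\mathrm{id})$ to a morphism of distinguished triangles, producing some $b\colon Q_{U_1}\to Q_{U_2}$. To pin $b$ down uniquely, I apply $\RHOM(-,Q_{U_2})$ to the defining triangle of $U_1$: the same vanishing $\RHOM(P_{U_1},Q_{U_2})\cong 0$ forces
\[
(b_1)^*\colon \HOM(Q_{U_1},Q_{U_2})\xrightarrow{\ \sim\ }\HOM\bigl(\bK_{\Delta_{X^2}\times[0,\infty)},Q_{U_2}\bigr),
\]
so there is a unique $b$ with $b\circ b_1=b_2$, and this is consistent with the right square of the resulting morphism of triangles.

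\textbf{Naturality and the isomorphism claim.} For $U_1\subset U_2\subset U_3$, both $a_{23}\circ a_{12}$ and $a_{13}$ are preimages of $a_1$ under $(a_3)_*$, which is an isomorphism by the construction applied to $U_1\subset U_3$; hence they coincide, and analogously $b_{23}\circ b_{12}=b_{13}$. When $U_1=U_2$ but two a priori different pairs of kernels are chosen, applying the construction in both directions yields $a,a'$ such that $a'\circ a$ and $\mathrm{id}_{P_{U_1}}$ both lift $a_1$, forcing $a'\circ a=\mathrm{id}$ and similarly $a\circ a'=\mathrm{id}$; the analogous statement for $b$ then implies that all three components of the induced morphism between the two defining triangles are invertible, hence it is an isomorphism of triangles. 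The only substantive input is \autoref{functorial lemma}(3); granted that, the argument is a formal consequence of triangulated Yoneda, and the main (minor) obstacle is simply organizing the uniqueness arguments cleanly.
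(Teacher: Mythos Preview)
Your proof is correct and is essentially the standard argument: the vanishing $\RHOM(P_{U_1},Q_{U_2})\cong 0$ from \autoref{functorial lemma}(3) forces both $a$ and $b$ to be uniquely characterized, and naturality plus the isomorphism claim then follow formally from uniqueness. Note that the paper does not supply its own proof of this proposition but simply cites \cite[Lemma~2.4]{Capacities2021}; your argument is exactly the expected one and almost certainly coincides with the proof in that reference.
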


\section{\texorpdfstring{$\bZ/\ell$-equivariant Chiu-Tamarkin invariant}{}}\label{section: Z/l CT complex}
In this section, we study the $\bZ/\ell$-equivariant Chiu-Tamarkin invariant for open sets $U\subset T^*X$. We fix a defining triangle of $U$:
\[ P_U\rightarrow {\bK}_{\Delta_{X^2} \times[0,\infty)} \rightarrow Q_U \xrightarrow{+1}.\]

\subsection{Non-equivariant Chiu-Tamarkin invariant}Consider the following maps:
\[\bR \xleftarrow{\pi_X} X\times \bR \xrightarrow{\Delta_{X^2}} X^2\times \bR.\]
We define 
\begin{equation}
    F_1(U,\bK)\coloneqq \tnR\pi_{X!}\Delta_{X^2}^{-1}P_U,\quad F_1^{out}(U,\bK)\coloneqq \tnR\pi_{X!}\Delta_{X^2}^{-1}Q_U.
\end{equation}
\begin{Def}\label{definition of non-eq CT}For an open set $U$ and $T\geq 0$, we define an object of $D({\bK}-\Mod)$ that we call the non-equivariant Chiu-Tamarkin invariant  
\begin{align*}
    C_{T}(U,\bK)
    &=\RHOM\left((F_{1}(U,\bK))_T, \bK[-d]\right).
\end{align*}
 
We define the positive Chiu-Tamarkin invariant, also in $D({\bK}-\Mod)$,  
\begin{align*}
    C^{out}_{T}(U,\bK)&=\RHOM\left((F^{out}_{1}(U,\bK))_T,\bK[-d]  \right).
\end{align*}
\end{Def}
The definition also appears in \cite[Section 4.7]{zhang2020quantitative}.

Combining the defining triangle and the definition, we have the following tautological triangle 
\begin{equation}\label{tautological triangle: non eq}
C_{T}^{out}(U,\bK)\rightarrow \tnR \Gamma(X,\omega_X)\rightarrow C_{T}(U,\bK)  \xrightarrow{+1},
\end{equation}
where $\omega_X$ is the dualizing sheaf of $X$.

As we have a colimit formula for microlocal kernels (\autoref{coro: colimit formula for kernel in general}), and $\tnR\pi_{X!}\Delta_{X^2}^{-1}$ is a cocontinuous functor, we obtain the following limit formula for the Chiu-Tamarkin invariant:
\begin{Prop}\label{coro: colimit formula}For an exhaustion of open sets $(U_n)_{n\in \bN}$ of $U$, i.e. $\overline{U_n}\subset U_{n+1}$ and $U=\bigcup_n U_n$, we have
\[C_T(U,\bK)=\textnormal{holim}_nC_T(U_n,\bK), \qquad C_T^{out}(U,\bK)=\textnormal{holim}_nC_T^{out}(U_n,\bK).\]    
\end{Prop}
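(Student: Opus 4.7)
The plan is to propagate the homotopy colimit coming from the exhaustion through the sequence of functors that define the Chiu-Tamarkin invariants. First, the colimit formula for microlocal kernels (\autoref{coro: colimit formula for kernel in general}) provides an isomorphism $P_U \cong \operatorname{hocolim}_n P_{U_n}$ in $\cD(X^2)$; applying the homotopy colimit to the defining triangles of the $U_n$ (and using the exactness of hocolim plus the fact that the constant diagram $\bK_{\Delta_{X^2}\times [0,\infty)}$ is its own hocolim) also yields $Q_U \cong \operatorname{hocolim}_n Q_{U_n}$.

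Next, I would check cocontinuity of each intermediate functor. The composite $\tnR\pi_{X!}\Delta_{X^2}^{-1}$ is the composition of two left adjoints ($\Delta_{X^2}^{-1}$ left adjoint to $\tnR(\Delta_{X^2})_*$ and $\tnR\pi_{X!}$ left adjoint to $\pi_X^!$), so it commutes with homotopy colimits. The subsequent restriction $(-)_T = i_! i^{-1}(-)$ for $i\colon \{T\} \hookrightarrow \bR$ (following the $F_Z = i_! i^{-1} F$ convention from \autoref{section: preliminary}) is again the composition of two left adjoints ($i^{-1} \dashv i_*$ and $i_! \dashv i^!$), hence also cocontinuous. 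Combining, we obtain
\[(F_1(U,\bK))_T \cong \operatorname{hocolim}_n (F_1(U_n,\bK))_T,\]
and likewise for $F_1^{out}$.

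Finally, I would apply the contravariant functor $\RHOM(-,\bK[-d])$, which sends homotopy colimits to homotopy limits, to conclude
\[C_T(U,\bK) \cong \operatorname{holim}_n C_T(U_n,\bK), \qquad C_T^{out}(U,\bK) \cong \operatorname{holim}_n C_T^{out}(U_n,\bK),\]
exactly as claimed.

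The main obstacle here is bookkeeping rather than substance: the proposition lives in the triangulated setting, where homotopy (co)limits and the statement \emph{left adjoints preserve them} are not automatic. The paper's implicit framework is that of triangulated derivators (as acknowledged in the introduction), which is precisely what is needed to make each of the steps above rigorous at the level at which the colimit formula for microlocal kernels is already proven. Once one grants the derivator-level colimit formula for $P_{U_n}$, all the functors in the chain are evidently left adjoints, so no additional technical input is required.
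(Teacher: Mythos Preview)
Your proposal is correct and follows essentially the same approach as the paper: the paper's argument is the one-sentence remark immediately preceding the proposition, invoking the colimit formula for microlocal kernels and the cocontinuity of $\tnR\pi_{X!}\Delta_{X^2}^{-1}$, which you have simply unpacked in more detail. Note that the colimit formula in the appendix already gives $Q_U \cong \operatorname{hocolim}_n Q_{U_n}$ directly, so your separate derivation of it is not needed.
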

Recall that symplectic homology is defined directly for Liouville domains \cite{CFH,CFHW,viterbo-functorsI,OanceaSHsurvey} or Liouville sectors \cite{GPS2}, and then the definition for general Liouville manifolds is obtained as a limit of symplectic homology of domains that exhaust it \cite{seidel2006biased}. This property allows us to draw an analogy with sheaves.

\begin{Prop}\label{non-equivariant CT and yoneda form}If $X$ is orientable, we have 
\[C_{T}(U,\bK)\cong \RHOM(P_{U},\tnT_{T*}(P_{U})).\]    
\end{Prop}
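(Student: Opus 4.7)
The plan is to run a short chain of adjunctions that rewrites the right-hand side $\RHOM(P_U,\tnT_{T*}P_U)$ as an $\RHOM$ computed on $X$, and then to invoke Verdier duality on the orientable manifold $X$ to recover the defining formula of $C_T(U,\bK)$.

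First, by \autoref{functorial lemma}(c), $\RHOM(P_U,\tnT_{T*}P_U)\cong \RHOM(P_U,\bK_{\Delta_{X^2}\times[T,\infty)})$. Setting $\delta=\Delta_{X^2}\times \id_\bR$, which is a closed embedding, we have $\bK_{\Delta_{X^2}\times[T,\infty)}=\delta_*\bK_{X\times[T,\infty)}$; the adjunction $\delta^{-1}\dashv \delta_*$ then gives
\[
\RHOM(P_U,\tnT_{T*}P_U)\cong \RHOM_{D(X\times\bR)}(A,\bK_{X\times[T,\infty)}),\qquad A:=\Delta_{X^2}^{-1}P_U.
\]

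The crux is to replace $\bK_{X\times[T,\infty)}$ by $\bK_{X\times\{T\}}$. To this end, I claim that $A$ itself sits inside $\cD(X)\subset D(X\times\bR)$ in its left-orthogonal-complement model. Indeed $P_U\cong P_U\star \bK_{[0,\infty)}$ because $P_U\in \cD(X^2)$, and $\Delta_{X^2}^{-1}$ acts on the $X^2$-direction, so it commutes with the $\bR$-direction convolution; hence $A\cong A\star \bK_{[0,\infty)}$. On the other hand, the sheaf $\bK_{X\times(T,\infty)}$ has microsupport contained in $\{\tau\leq 0\}$, and therefore lies in the thick subcategory along which $\cD(X)$ is the left orthogonal. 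Consequently $\RHOM(A,\bK_{X\times(T,\infty)})=0$, and the distinguished triangle $\bK_{X\times(T,\infty)}\to \bK_{X\times[T,\infty)}\to \bK_{X\times\{T\}}\xrightarrow{+1}$ yields
\[
\RHOM(A,\bK_{X\times[T,\infty)})\cong \RHOM(A,\bK_{X\times\{T\}}).
\]

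For the closed inclusion $\iota_T\colon X\times\{T\}\hookrightarrow X\times\bR$, the adjunction $\iota_T^{-1}\dashv \iota_{T*}$ rewrites $\RHOM(A,\bK_{X\times\{T\}})\cong \RHOM_{D(X)}(A_T,\bK_X)$ with $A_T:=\iota_T^{-1}A$, while proper base change along $X\times\{T\}\to X\times\bR\xrightarrow{\pi_X}\bR$ identifies $\tnR\Gamma_c(X,A_T)$ with $(\tnR\pi_{X!}A)_T=(F_1(U,\bK))_T$. Since $X$ is orientable of dimension $d$, $\omega_X\cong \bK_X[d]$, so Verdier duality on $X$ gives
\[
\RHOM_{D(X)}(A_T,\bK_X)=\RHOM_{D(X)}(A_T,\omega_X)[-d]\cong \RHOM_\bK(\tnR\Gamma_c(X,A_T),\bK[-d])=C_T(U,\bK),
\]
which closes the chain. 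The only non-routine point is identifying $A=\Delta_{X^2}^{-1}P_U$ with an object of the Tamarkin category $\cD(X)$, since this is what produces the key vanishing $\RHOM(A,\bK_{X\times(T,\infty)})=0$; everything else is a sequence of standard six-operation adjunctions combined with Verdier duality on the orientable $d$-manifold.
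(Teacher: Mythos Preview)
Your proof is correct and takes essentially the same approach as the paper, just run in the reverse direction: the paper starts from the definition of $C_T(U,\bK)$, applies the right adjoints of $i_T^{-1}$, $\tnR\pi_{X!}$, $\Delta_{X^2}^{-1}$ all at once to reach $\RHOM(P_U,\Delta_{X^2*}\pi_X^!\bK_{\{T\}}[-d])\cong\RHOM(P_U,\bK_{\Delta_{X^2}\times\{T\}})$ (orientability enters here via $\pi_X^!\cong\pi_X^{-1}[d]$), and then invokes \autoref{functorial lemma}. Your explicit step replacing $[T,\infty)$ by $\{T\}$ via Tamarkin orthogonality, using $A=\Delta_{X^2}^{-1}P_U\in\cD(X)$, is precisely the maneuver the paper leaves implicit when it passes from $\bK_{\Delta_{X^2}\times[T,\infty)}$ (the target in \autoref{functorial lemma}(c)) to $\bK_{\Delta_{X^2}\times\{T\}}$ using $P_U\in\cD(X^2)$.
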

\begin{proof}Using adjunction, we have the isomorphism 
\[N: C_{T}(U,\bK)
    \cong \RHOM\left(P_U, \Delta_{X^2*}\pi_{X}^{!}\bK_{\{T\}}[-d]\right).\]
However, when $X$ is orientable, we have
$\Delta_{X^2*}\pi_{X}^{!}\bK_{\{T\}}[-d]\cong {\bK}_{\Delta_{X^2} \times \{T\}}$. Then the result follows from \autoref{functorial lemma}.
\end{proof}

\subsection{\texorpdfstring{$\bZ/\ell$-equivariant Chiu-Tamarkin invariant}{}}Now, let me explain how to establish the $\bZ/\ell$-naturality of the Chiu-Tamarkin invariant(\cite{chiu2017,Capacities2021}). We refer to \cite{BernsteinLunts} for the definition of equivariant derived category and 6-functors formalism. 

The manifold $(X^2\times \bR_t)^{\ell}$ admits a $\bZ/\ell$-action induced by the cyclic permutation of factors. Applying the Steenrod's construction \cite[Section 2.2]{lonergan_2021}, we can lift the object $P_U^{\dboxtimes \ell}$ from $D((X^2\times \bR_t)^\ell)$ to an object $St_D(P_U)$ in the equivariant derived category $D_{\bZ/\ell}((X^2\times \bR_t)^\ell)$. We will still denote this lifting by $P_U^{\dboxtimes \ell}$. Consequently, we have $P_U^{\boxstar \ell}=\tnR s_{t!}^\ell P_U^{\dboxtimes \ell} \in D_{\bZ/\ell}((X^2)^\ell \times \bR_t)$.

Consider the $\bZ/\ell$-equivariant maps
\begin{align*}
   \pi_{\underline{\bq}}&: X^\ell \times \bR\rightarrow \bR,\\
    \Tilde{\Delta}_X&: X^\ell \times \bR\rightarrow X^{2\ell} \times \bR,\\
    \Tilde{\Delta}_X(\bq_1,\dots,\bq_\ell,t)&=(\bq_\ell,\bq_1,\bq_1,\dots,\bq_{\ell-1},\bq_{\ell-1},\bq_\ell,t),\\
    i_T&: \pt\rightarrow \bR.
\end{align*}
There exists an adjoint pair $(\alpha_{\ell,T,X},\beta_{\ell,T,X})$:
\begin{equation*}
 \begin{tikzcd}
F\in D_{\bZ/\ell}((X^2\times \bR_t)^\ell) \arrow[rr, "\alpha_{\ell,T,X}", shift left] &  & D_{\bZ/\ell}(\pt) \ni G, \arrow[ll, "\beta_{\ell,T,X}"]
\end{tikzcd}   
\end{equation*}
defined by:
\begin{equation}\label{definition of adjoint loop functor}
\alpha_{\ell,T,X}(F)=i_T^{-1}{\tnR }\pi_{\underline{\bq}!}   \Tilde{\Delta}_X^{-1}{\tnR }s_{t!}^\ell\left(F\right),\quad\beta_{\ell,T,X}(G)=s_t^{\ell!}\Tilde{\Delta}_{X*} \pi_{\underline{\bq}}^!i_{T*} G.
\end{equation}
Now, we define a functor 
\begin{equation}\label{definition of F}
    F_{\ell,X}={\tnR }\pi_{\underline{\bq}!}   \Tilde{\Delta}_X^{-1}{\tnR }s_{t!}^\ell: D_{\bZ/\ell}((X^2\times \bR_t)^\ell) \rightarrow D_{\bZ/\ell}(\bR).
\end{equation}
Then $\alpha_{\ell,T,X}=i_T^{-1} F_{\ell,X}$.

Similarly, we define another adjoint pair $(\alpha'_{\ell,T,X},\beta'_{\ell,T,X})$:
\begin{equation*}
  \begin{tikzcd}
F\in D_{\bZ/\ell}((X^2)^\ell\times \bR_t) \arrow[rr, "\alpha'_{\ell,T,X}", shift left] &  & D_{\bZ/\ell}(\pt) \ni G, \arrow[ll, "\beta'_{\ell,T,X}"]
\end{tikzcd}  
\end{equation*}
\begin{equation}\label{definition of adjoint loop functor2}\alpha'_{\ell,T,X}(F)=i_T^{-1}{\tnR }\pi_{\underline{\bq}!}\Tilde{\Delta}_X^{-1}\left(F\right)\quad \beta'_{\ell,T,X}(G)=\Tilde{\Delta}_{X*} \pi_{\underline{\bq}}^!i_{T*} G.
\end{equation}
We also define a functor 
\begin{equation}\label{definition of F'}
    F'_{\ell,X}={\tnR }\pi_{\underline{\bq}!}  \Tilde{\Delta}_X^{-1}: D_{\bZ/\ell}(X^{2\ell}\times \bR_t) \rightarrow D_{\bZ/\ell}(\bR).
\end{equation}
Then $\alpha'_{\ell,T,X}=i_T^{-1} F'_{\ell,X}$.

We denote $(\alpha_{\ell,T,X},\beta_{\ell,T,X})=(\alpha_T,\beta_T)$ and $(\alpha'_{\ell,T,X},\beta'_{\ell,T,X})=(\alpha'_T,\beta'_T)$ for simplicity. We will frequently use $\alpha_{\ell,T,X}, \beta_{\ell,T,X}$ ($\alpha'_{\ell,T,X}, \beta'_{\ell,T,X}$), and $ F_{\ell,X}$ ($ F'_{\ell,X}$) in the non-equivariant categories. We denote them by the same notation later.
\begin{Def}\label{definition of CT}With the notation above, we define an object of $D_{\bZ/\ell}(\pt)\simeq D({\bK}[{\bZ/\ell}]-\Mod)$ that we call the $\bZ/\ell$-equivariant Chiu-Tamarkin invariant 
\begin{align*}
    C_{T}^{\bZ/\ell}(U,\bK)&=\RHOM_{\bZ/\ell}\left(\alpha_{\ell,T,X}(P_U^{\dboxtimes \ell}),\bK[-d]  \right)\\
    &=\RHOM_{\bZ/\ell}\left((F_{\ell}(U,\bK))_T, \bK[-d]\right)\\
    &\cong\RHOM_{\bZ/\ell}\left(P_U^{\dboxtimes \ell},\beta_{\ell,T,X}\bK[-d]   \right), 
\end{align*}
where $F_{\ell}(U,\bK)=F_{\ell,X}(P_U^{\dboxtimes \ell})=F'_{\ell,X}(P_U^{\scriptstyle\boxstar \ell})$.

We define the positive $\bZ/\ell$-equivariant Chiu-Tamarkin invariant, also in $D_{\bZ/\ell}(\pt)$ 
\begin{align*}
    C^{\bZ/\ell,out}_{T}(U,\bK)&=\RHOM_{\bZ/\ell}\left(\alpha_{\ell,T,X}(Q_U^{\dboxtimes \ell}),\bK[-d]  \right)\\
    &=\RHOM_{\bZ/\ell}\left((F^{out}_{\ell}(U,\bK))_T, \bK[-d]\right)\\
    &\cong\RHOM_{\bZ/\ell}\left(Q_U^{\dboxtimes \ell},\beta_{\ell,T,X}\bK[-d]   \right), 
\end{align*}
where $F_{\ell}^{out}(U,\bK)=F_{\ell,X}(Q_U^{\dboxtimes \ell})=F'_{\ell,X}(Q_U^{\boxstar \ell})$.
\end{Def}
When $\ell=1$, i.e. the non-equivariant case, we recover $C_{T}(U,\bK)= C^{\bZ/1}_{T}(U,\bK)$ and $C^{out}_{T}(U,\bK)= C^{\bZ/1,out}_{T}(U,\bK)$. 

The cohomology groups $H^*C^{\bZ/\ell}_{T}(U,\bK)$ and $H^*C^{\bZ/\ell,out}_{T}(U,\bK)$ are graded modules over the $\bK$- algebra $\text{Ext}_{\bZ/\ell}^*(\bK,\bK)\cong H^*(B\bZ/\ell,\bK)$ via the Yoneda product.

Now, we can explain the first application of the idempotence (\autoref{idempotence}):
\begin{Prop}\label{cyclicstructure step1}For an open set $U$, we have isomorphisms in the non-equivariant derived category $D(\bR)$:
\[F_{\ell}(U,\bK)\cong F_{1}(U,\bK)\qquad F^{out}_{1}(U,\bK)\cong F^{out}_{\ell}(U,\bK).\]
\end{Prop}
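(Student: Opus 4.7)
\emph{Plan.} The strategy is to identify $F_{\ell}(U,\bK) = F_{\ell,X}(P_U^{\dboxtimes\ell})$ with $\tnR\pi_{X!}\Delta_{X^2}^{-1}P_U^{\star\ell}$, and then invoke the idempotence $P_U^{\star\ell}\cong P_U$ from \autoref{idempotence} to deduce $F_\ell(U,\bK)\cong F_1(U,\bK)$. The same scheme using $Q_U\cong Q_U^{\star\ell}$ gives the isomorphism for $F^{out}$.

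The key geometric observation is that the cyclic diagonal $\Tilde{\Delta}_X\colon X^\ell\to X^{2\ell}$ decomposes as $\Tilde{\Delta}_X = \Delta^{comp}\circ\phi$, where $\phi\colon X^\ell\to X^{\ell+1}$, $(\bq_1,\dots,\bq_\ell)\mapsto(\bq_\ell,\bq_1,\dots,\bq_{\ell-1},\bq_\ell)$, doubles the last strand to close the cycle (a ``trace insertion''), and $\Delta^{comp}\colon X^{\ell+1}\to X^{2\ell}$, $(\bq_0,y_1,\dots,y_{\ell-1},\bq_\ell)\mapsto(\bq_0,y_1,y_1,\dots,y_{\ell-1},y_{\ell-1},\bq_\ell)$, identifies the adjacent pairs corresponding to the intermediate variables of successive convolutions. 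The resulting square
\[\begin{tikzcd}
X^\ell \arrow[r,"\phi"]\arrow[d,"\pi_{\bq_\ell}"'] & X^{\ell+1}\arrow[d,"\pi"]\\
X \arrow[r,"\Delta_{X^2}"'] & X^2
\end{tikzcd}\]
is Cartesian, where $\pi$ is the projection onto the first and last $X$-factors. Simultaneously, iterating the definition of $\star$ (and using that $\tnR s_{t!}^\ell$ commutes with the $X$-direction operations) yields the Fubini-type identity $P_U^{\star\ell}\cong \tnR\pi_!(\Delta^{comp})^{-1}P_U^{\boxstar\ell}$, with all maps implicitly extended trivially in the $\bR_t$-direction.

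Combining these ingredients via proper base change on the Cartesian square and functoriality of the 6-operations, one computes
\[\tnR\pi_{X!}\Delta_{X^2}^{-1}P_U^{\star\ell}
\;\cong\; \tnR\pi_{X!}\tnR\pi_{\bq_\ell!}\phi^{-1}(\Delta^{comp})^{-1}P_U^{\boxstar\ell}
\;=\; \tnR\pi_{\underline{\bq}!}\Tilde{\Delta}_X^{-1}P_U^{\boxstar\ell} \;=\; F_\ell(U,\bK),\]
and then \autoref{idempotence} finishes the proof, verbatim for both $P_U$ and $Q_U$. The main technical point is the careful bookkeeping of the various diagonals and projections; once the factorization $\Tilde{\Delta}_X=\Delta^{comp}\circ\phi$ is recognized as separating the ``composition'' part already built into $\star$ from the ``trace'' part that closes the cycle, everything reduces to standard base-change and Fubini manipulations.
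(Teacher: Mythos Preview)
Your proof is correct and follows essentially the same approach as the paper: both identify $F_\ell(U,\bK)$ with $F_{1,X}(P_U^{\star\ell})$ via base change along the factorization of $\Tilde{\Delta}_X$ through the ``composition diagonal'' (your $\Delta^{comp}$, the paper's $d$), then invoke idempotence. The only minor remark is that your notation $\pi_{\bq_\ell}$ conflicts with the paper's convention (subscript denotes the fiber, not the target), and the paper additionally tracks that the isomorphism arises from the specific morphism $P_U^{\dboxtimes\ell}\to \bK_{\Delta_{X^2}\times[0,\infty)}^{\dboxtimes(\ell-1)}\dboxtimes P_U$, a detail used in the subsequent proof of the equivariant tautological triangle.
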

\begin{proof}Only prove the first one, the second one is proven in the same way. As \autoref{idempotence} proven, we have $P_U^{\star \ell}\cong P_U$. Then we only need to show that $F_{\ell, X}(P_U^{\dboxtimes \ell})  {\cong} F_{1,X}(P_U^{\star \ell})$. Consider
\[d: X^{\ell+1}\times\bR_t\rightarrow X^{2\ell}\times\bR_t,\quad d(\bq_0,\dots,\bq_{\ell},t)=(\bq_0,\bq_1,\bq_1,\dots,\bq_{\ell-1},\bq_{\ell-1},\bq_{\ell},t).\]
We have
\begin{equation}\label{Long expression of convolution}
H_1\star H_2\star \cdots H_\ell \cong \tnR \pi_{(\bq_1,\dots,\bq_{\ell-1})!}d^{-1}\tnR s_{t!}^\ell(H_1\dboxtimes H_2\dboxtimes \cdots \dboxtimes H_\ell).
\end{equation}
On the other hand, recall the proof of $P_U^{\star \ell}\xrightarrow{\cong} P_U$, which is given by $P_U^{\star \ell}\xrightarrow{\cong} \bK_{\Delta_{X^2}\times [0,\infty)}^{\star (\ell -1)} \star P_U$. Then the isomorphism is obtained by applying $G=\tnR \pi_{(\bq_1,\dots,\bq_{\ell-1})!}d^{-1}\tnR s_{t!}^\ell$ to the following morphism
\[P_U^{\dboxtimes \ell}\rightarrow \bK_{\Delta_{X^2}\times [0,\infty)}^{\dboxtimes (\ell -1)} \dboxtimes P_U.\]
But the projection formula and the base change formula provide a natural isomorphism of functors $F_{1,X}\circ G\cong F_{\ell,X}$.
Then the desired isomorphism follows. 
\end{proof}
Consequently, under the forget functor $For_{\bZ/\ell}:D_{\bZ/\ell}\rightarrow D$, we have
\begin{equation}\label{eq: equivariant lifting}
For_{\bZ/\ell}( C^{\bZ/\ell}_{T}(U,\bK))\cong  C_{T}(U,\bK),\qquad For_{\bZ/\ell}( C^{\bZ/\ell,out}_{T}(U,\bK))\cong C^{out}_{T}(U,\bK)).    \end{equation}
This implies that, as expected, the $\bZ/\ell$-equivariant Chiu-Tamarkin invariant is the equivariant lifting of the non-equivariant Chiu-Tamarkin invariant.

We also anticipate an equivariant version of the tautological triangle \eqref{tautological triangle: non eq}. However, since Steenrod's construction is not a triangulated functor, this is not immediately apparent.
\begin{Prop}For an open set $U$, we have a distinguished triangle in the equivariant derived category:
\begin{equation}\label{pre Tautological exact triangle}
F_\ell(U,\bK)\rightarrow \tnR \Gamma_c(X,\bK)\rightarrow F^{out}_\ell(U,\bK)\xrightarrow{+1}.
\end{equation}
Then a distinguished triangle for the Chiu-Tamarkin invariants follows:
\begin{equation}\label{Tautological exact triangle}
C^{\bZ/\ell,out}_{T}(U,\bK)\rightarrow \tnR \Gamma(X,\omega_X^{\bZ/\ell})\rightarrow C^{\bZ/\ell}_{T}(U,\bK)  \xrightarrow{+1}.
\end{equation}
We call them the {\em tautological exact triangles} for the Chiu-Tamarkin invariant.
\end{Prop}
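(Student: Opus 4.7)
The plan is to build the first triangle by equivariantly lifting the triangle obtained from applying the triangulated functor $F_{1,X}$ to the defining triangle of $U$, and then to deduce the Chiu-Tamarkin triangle from it by applying $\RHOM_{\bZ/\ell}$. The obstruction mentioned in the statement---that Steenrod's construction is not triangulated---is bypassed at the level of the $F_{\ell,X}$-applied outputs, where the identifications of \autoref{cyclicstructure step1} transport the triangulated data from the $\ell=1$ setting into the equivariant world.

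I would first apply the non-equivariant triangulated functor $F_{1,X}=\tnR\pi_{X!}\Delta_{X^2}^{-1}$ to the defining triangle
\[
P_U\to \bK_{\Delta_{X^2}\times[0,\infty)}\to Q_U \xrightarrow{+1},
\]
obtaining
\[
F_1(U,\bK)\to \tnR\pi_{X!}\bK_{X\times[0,\infty)}\to F_1^{out}(U,\bK)\xrightarrow{+1}
\]
in $D(\bR)$, and identify the middle term via base change as a constant object of value $\tnR\Gamma_c(X,\bK)$ supported on $[0,\infty)$. To promote this to an equivariant triangle in $D_{\bZ/\ell}(\bR)$, the middle term carries the trivial $\bZ/\ell$-action (its source is not an $\ell$-fold product), while the outer terms acquire their equivariant structure by factoring $F_{\ell,X}$ through Steenrod: the isomorphism $F_{\ell,X}(H^{\dboxtimes\ell})\cong F_{1,X}(H^{\star\ell})\cong F_{1,X}(H)$ from the proof of \autoref{cyclicstructure step1} is intrinsically equivariant---every intermediate map is $\bZ/\ell$-equivariant with respect to the cyclic action---and the idempotence isomorphisms $H^{\star\ell}\cong H$ for $H\in\{P_U,\bK_{\Delta_{X^2}\times[0,\infty)},Q_U\}$ transport the triangulated data from the $\ell=1$ picture into the equivariant picture.

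The delicate remaining step is to verify that the two morphisms $F_\ell(U,\bK)\to \tnR\Gamma_c(X,\bK)\to F_\ell^{out}(U,\bK)$, obtained by applying $F_{\ell,X}$ to the Steenrod images of the defining triangle, agree up to equivariant isomorphism with the morphisms produced by the non-equivariant triangle under the identifications of the previous paragraph. This is a diagram chase after unpacking the compatibility of Steenrod's construction with the natural transformation $F_{1,X}\circ G\cong F_{\ell,X}$, where $G=\tnR\pi_{(\bq_1,\dots,\bq_{\ell-1})!}d^{-1}\tnR s_{t!}^\ell$ is the convolution functor of \eqref{Long expression of convolution}; naturality of Steenrod on morphisms and of the idempotence isomorphisms supplies the needed compatibility data.

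The Chiu-Tamarkin triangle then follows by applying the contravariant triangulated functor $\RHOM_{\bZ/\ell}(i_T^{-1}(-),\bK[-d])$, which reverses the direction of the triangle. The outer terms read as $C^{\bZ/\ell,out}_T(U,\bK)$ and $C^{\bZ/\ell}_T(U,\bK)$ directly from \autoref{definition of CT}, while the middle term is identified with $\tnR\Gamma(X,\omega_X^{\bZ/\ell})$ by Verdier duality on $X$ extended equivariantly for the trivial action. I expect the main obstacle to be the morphism-matching described above, namely ensuring that the Steenrod-induced morphisms are compatible with the triangulated morphisms coming from the non-equivariant setting; this is precisely the point where triangulated data has to be transported across the non-triangulated Steenrod construction, and it is the reason why the proposition requires more than a direct application of Steenrod to the defining triangle.
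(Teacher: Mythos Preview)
Your proposal contains a genuine gap at the crucial step. The isomorphisms $F_{\ell,X}(H^{\dboxtimes\ell})\cong F_{1,X}(H^{\star\ell})\cong F_{1,X}(H)$ from \autoref{cyclicstructure step1} are established only in the \emph{non-equivariant} derived category $D(\bR)$; the proposition explicitly says so. Your assertion that these isomorphisms are ``intrinsically equivariant'' is not correct: the object $F_{1,X}(H^{\star\ell})$ has no $\bZ/\ell$-structure (the convolution $H^{\star\ell}$ lives in $D(X^2\times\bR)$, not in an equivariant category), so there is no equivariant isomorphism to transport. More fundamentally, even granting equivariant morphisms $F_\ell(U,\bK)\to\tnR\Gamma_c(X,\bK)\to F_\ell^{out}(U,\bK)$ whose underlying non-equivariant sequence is distinguished, this does \emph{not} produce an equivariant distinguished triangle: you still need an equivariant $+1$ morphism, and the forgetful functor does not reflect distinguished triangles in this way.

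The paper's argument avoids this trap by working in the opposite direction. It builds the distinguished triangle \emph{inside} the equivariant category from the start: take the cone $\mathcal{C}_\ell$ of the equivariant morphism $a^{\boxstar\ell}:P_U^{\boxstar\ell}\to\bK_{\Delta_{X^2}\times[0,\infty)}^{\boxstar\ell}$, then use $b^{\boxstar\ell}a^{\boxstar\ell}=0$ to produce an equivariant comparison $\psi:\mathcal{C}_\ell\to Q_U^{\boxstar\ell}$. The problem is now reduced to showing that $F'_\ell(\psi)$ is an \emph{equivariant} isomorphism. Here conservativity of the forgetful functor $D_{\bZ/\ell}\to D$ is invoked, so it suffices to check this non-equivariantly---but one must show that $For(F'_\ell(\psi))$ agrees with a known isomorphism, not merely that source and target are abstractly isomorphic. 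The paper does this via a TR3 argument and then uses the orthogonality $\HOM(P_U^{\star\ell}[1],Q_U^{\star\ell})\cong\HOM(P_U[1],Q_U)\cong 0$ to force the two candidate comparison maps to coincide. This vanishing is the key technical point your outline misses entirely; without it there is no control over which morphism one has produced.
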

\begin{proof}Take the defining triangle of $U$, say:
\[P_U\xrightarrow{a} \bK_{\Delta_{X^2}\times [0,\infty)} \xrightarrow{b} Q_U\xrightarrow{+1},\]
then we have $ba=0$. 

By taking Steenrod operation and $\tnR s_{t!}^\ell$, we have two morphisms in the equivariant derived categories: \[P_U^{\boxstar\ell }\xrightarrow{a^{\boxstar\ell }} \bK_{\Delta_{X^2}\times [0,\infty)}^{\boxstar\ell } \xrightarrow{b^{\boxstar\ell }} Q_U^{\boxstar\ell },\]
and their composition is $0$.

Let us take the cone of $a^{\boxstar\ell }$ in the equivariant category, then we have a distinguished triangle in the equivariant category:
\begin{equation}\label{dt of a ell}
P_U^{\boxstar\ell }\xrightarrow{a^{\boxstar\ell }} \bK_{\Delta_{X^2}\times [0,\infty)}^{\boxstar\ell } \xrightarrow{c^\ell} \mathcal{C}_\ell \xrightarrow{+1}.   \end{equation}Since $b^{\boxstar\ell }a^{\boxstar\ell }=0$, by applying the cohomological functor $\HOM_{\bZ/\ell}(-,Q_U^{\boxstar\ell })$, we have a morphism $\psi:\mathcal{C}_\ell \rightarrow Q_U^{\boxstar\ell }$ that fits into the commutative diagram in the equivariant category:
\begin{equation*}
 \begin{tikzcd}
P_U^{\boxstar\ell } \arrow[r, "a^{\boxstar\ell }"] & {\bK_{\Delta_{X^2}\times [0,\infty)}^{\boxstar\ell } } \arrow[r, "c^\ell"] \arrow[rd, "b^{\boxstar\ell }"] & \mathcal{C}_\ell \arrow[d, "\psi", dashed] \arrow[r, "+1"] & {} \\
                                                       &                                                                                                                & Q_U^{\boxstar\ell }.                             &   
\end{tikzcd}   
\end{equation*}
However, we do not know if $\psi$ is an isomorphism. But we can show that $F'_\ell(\psi)$ (see \eqref{definition of F'}) is an isomorphism in the equivariant category. Then the distinguished triangle \eqref{pre Tautological exact triangle} follows.

We will argue that $F'_\ell(\psi)$ is an isomorphism in the following steps. Let us consider the forgetful functor $For_{\bZ/\ell}: D_{\bZ/\ell}\rightarrow D$. We will show that {\em there exists an isomorphism $\phi: For_{\bZ/\ell}(F'_\ell(\mathcal{C}_\ell))\rightarrow For_{\bZ/\ell}(F'_\ell(Q_U^{\boxstar\ell }))$ such that $For(F'_\ell(\psi))=\phi$ in the non-equivariant derived category $D$. }However, one can prove that $For_{\bZ/\ell}$ is a conservative functor. Then $F'_\ell(\psi)$ is an isomorphism in the equivariant derived category $D_{\bZ/\ell}$.

Therefore, we will only consider the non-equivariant derived category and then we omit the functor $For_{\bZ/\ell}$ in the following.

Using the morphism $P_U^{\dboxtimes \ell}\rightarrow \bK_{\Delta_{X^2}\times [0,\infty)}^{\dboxtimes \ell -1} \dboxtimes P_U$, we embed \eqref{dt of a ell} into the following diagram:
\begin{equation*}
    \begin{tikzcd}
P_U^{\boxstar\ell } \arrow[r, "a^{\boxstar\ell }"] \arrow[d]                        & {\bK_{\Delta_{X^2}\times [0,\infty)}^{\boxstar\ell }} \arrow[r, "c^\ell"] \arrow[d, "="] & \mathcal{C}_\ell \arrow[r, "+1"]                                                          & {} \\
{\bK_{\Delta_{X^2}\times [0,\infty)}^{\boxstar \ell -1} \boxstar P_U} \arrow[r] & {\bK_{\Delta_{X^2}\times [0,\infty)}^{\boxstar\ell }} \arrow[r]                          & {\bK_{\Delta_{X^2}\times [0,\infty)}^{\boxstar \ell -1} \boxstar Q_U} \arrow[r, "+1"] & {}
\end{tikzcd}  
\end{equation*}
Applying $G'=\tnR \pi_{(\bq_1,\dots,\bq_{\ell-1})!}d^{-1}$ (see \eqref{Long expression of convolution}) to the diagram, we obtain a diagram below. To be precise, we explicitly give the morphisms for distinguished triangles.
\begin{equation*}
 \begin{tikzcd}
P_U^{\star\ell } \arrow[r, "a^{\star\ell }"] \arrow[d, "\cong"] & {\bK_{\Delta_{X^2}\times [0,\infty)}} \arrow[r, "G'(c^\ell)"] \arrow[d, "="] & G'(\mathcal{C}_\ell) \arrow[d, dashed] \arrow[r, "e"] & P_U^{\star\ell }[1] \arrow[d, "\cong"] \\
P_U \arrow[r, "a"]                                              & {\bK_{\Delta_{X^2}\times [0,\infty)}} \arrow[r, "b"]                        & Q_U \arrow[r, "d"]                 & P_U[1]
\end{tikzcd}   
\end{equation*}
So TR3 shows that there exists an isomorphism $G'(\mathcal{C}_\ell)\rightarrow Q_U$ which fills the diagram into a commutative diagram. Then we take $\psi'$ to be the composition:
\[G'(\mathcal{C}_\ell)\rightarrow  Q_U \rightarrow Q_U^{\star \ell},\]
which is an isomorphism.
Now, we have the commutative diagram
\begin{equation*}
 \begin{tikzcd}
{\bK_{\Delta_{X^2}\times [0,\infty)}} \arrow[r, "G'(c^\ell)"] \arrow[d, "b"] & G'(\mathcal{C}_\ell) \arrow[d, "\psi'"] \arrow[ld] \\
Q_U \arrow[r]                                                               & Q_U^{\star\ell}.              \end{tikzcd}   
\end{equation*}
So we obtain one factorization $b^{\star \ell}=\psi'G'(c^\ell)$.

On the other hand, applying $G'$ to the factorization $b^{\boxstar\ell}=\psi c^\ell$, we obtain another factorization $b^{\star \ell}=G'(\psi) G'(c^\ell)$. Consequently, $(\psi'-G'(\psi))G'(c^\ell)=0$ determines an element $u\in \HOM(P_U^{\star \ell}[1],Q_U^{\star \ell})$ such that \[\psi'-G'(\psi)=ue.\]
However, we have
\[\HOM(P_U^{\star \ell}[1],Q_U^{\star \ell}) \cong \HOM(P_U[1],Q_U)\cong 0.\]
Then we have 
\[\psi'=G'(\psi).\]
Consequently, we have
\[\phi=F_1(\psi')=F_1(G'(\psi))=F_\ell'(\psi),\]
and $\phi=F_1(\psi')$ is an isomorphism since $\psi'$ is an isomorphism.\end{proof}

\subsection{Persistence module}\label{subsection Z/l persistence}A persistence module $M$ is a functor from $(\bR, \leq)$ to the category of $R$-modules. Alternatively, we can describe it as a family of $R$-modules $M_T$ and a family of morphisms $M(T\leq T'):M_T\rightarrow M_{T'}$ satisfying $M(T\leq T'')=M(T'\leq T'')\circ M(T\leq T')$ and the identity property $M(T= T)=\id_{M_T}$. For more information on persistence modules, we refer to \cite{2020topological_persistence}.

Here, we will study a family of morphisms $\{M(T,T')\}_{T'\geq T}$ (where we set $c=T'-T\geq 0$) in the equivariant derived category $D_{Z/\ell}(\pt)$:
\[M(T,T+c):C_{\ell,T}(U,\bK)\rightarrow C_{\ell,T+c}(U,\bK)\]
functorially with respect to $T$. Subsequently, we take the cohomology $H^q$ or $H^*$ to construct persistence modules over $R=\mathbb{K}$ or $R=\text{Ext}_{\bZ/\ell}^*(\bK,\bK)$.

We first focus on the case $\ell=1$. Since $P_U\in \cD(X^2)$, there exists $\tau_c(P_U): P_U\rightarrow \tnT_{c*} P_U$ for $c\geq 0$. Applying $\alpha_{1,X,T}\circ \tnT_{-c*}$, we have 
\[F_{1}(U,\bK)_{T+c} \cong \alpha_{1,X,T}(\tnT_{-c*}P_U)  \rightarrow F_{1}(U,\bK)_{T}.\]
Then, we have a family of morphisms for $c\geq0$,
\begin{equation}\label{persistent struc non-eq}
C_{T}(U,\bK)\rightarrow C_{T+c}(U,\bK).    
\end{equation}
It is functorial with respect $T$ since $\tau_c\circ \tau_d = \tau_{c+d}$ for $c,d\geq 0$.

For $\ell \geq 2$, we apply the same idea. Since $P_U\in \mathcal{D}(X^2)$, we have that $P_U^{\boxstar \ell}\in \cD_{\bZ/\ell}(X^{2\ell})$. Therefore, for all $c\geq 0$, we have $\tau_{c}(P_U^{\boxstar  \ell}):P_U^{\boxstar \ell}\rightarrow \tnT_{c*}P_U^{\boxstar \ell}$ in $\cD_{\bZ/\ell}(X^{2\ell})$ induced by $\tau_{c/\ell}(P_U): P_U\rightarrow \tnT_{c/\ell*} P_U$.
So, we obtain an equivariant morphism
\[F_{\ell}(U,\bK)_{T+c}=\alpha'_{\ell,X,T+c}(P_U ^{\boxstar \ell})\cong  \alpha'_{\ell,X,T}(\tnT_{-c*}P_U^{\boxstar \ell}) \rightarrow \alpha'_{\ell,X,T}(P_U^{\boxstar \ell})\cong F_{\ell}(U,\bK)_{T}.\]
Then we have a family of morphisms functorial with respect $T\geq 0$:
\begin{equation}
C_{\ell,T}(U,\bK)\rightarrow C_{\ell,T+c}(U,\bK).    
\end{equation}
The parameter $T$ indicates the action of Reeb orbits, and the translations $\tnT_{c*}$ quantize the Reeb flow. Consequently, we can define various Chiu-Tamarkin invariants for different action windows by employing different cut-offs of $T$.
\begin{Def}When $T=\infty$, we define \begin{equation}
    H^*C^{\bZ/\ell}_{\infty}(U,\bK)\coloneqq \varinjlim_{T\geq 0}H^*C^{\bZ/\ell}_{T}(U,\bK).
\end{equation}
When we have an action window $(T,T']$, we define
\begin{align}
\begin{aligned}
    C^{\bZ/\ell}_{(T,T']}(U,\bK)\coloneqq& \RHOM_{{\bZ/\ell}}\left(F_\ell(U,\bK),\bK_{[T,T')}[1-d]\right)\\
   \cong   & cone (C^{\bZ/\ell}_{T}(U,\bK)\rightarrow C^{\bZ/\ell}_{T'}(U,\bK)) & 
\end{aligned}
\end{align}
\end{Def}
Immediately, we have a distinguished triangle for $0\leq T<T'$, say:
\begin{equation}\label{Action triangle of Chiu-Tamarkin complex}
 C^{\bZ/\ell}_{T}(U,\bK)\rightarrow C^{\bZ/\ell}_{T'}(U,\bK) \rightarrow C^{\bZ/\ell}_{(T,T']}(U,\bK)\xrightarrow{+1}.    
\end{equation}
We refer to it as the {\em action exact triangle} of the Chiu-Tamarkin invariant. The discussions here also apply to positive versions of Chiu-Tamarkin invariants.
 
Let us compute an example when $U=T^*X$. 
Recall $P_{T^*X}=\bK_{\Delta_{X^2}\times [0,\infty)}$. Therefore, we have $\Tilde{\Delta}^{-1}\left(P_{T^*X}^{\boxstar \ell}\right)= \bK_{\Delta_{X^{\ell}}\times [0,\infty) }$, and
\[F_{\ell}(T^*X,\bK)= {\tnR }\pi_{\underline{\bq}!}(\bK_{\Delta_{X^{\ell}}\times [0,\infty) })=E_{[0,\infty)}, \]
where $E=R\Gamma_c(\Delta_{X^{\ell}},\bK)$ and ${\bZ/\ell}$ acts on $E={\tnR }\Gamma_c(\Delta_{X^{\ell}},\bK)\cong {\tnR }\Gamma_c(X,\bK)$ trivially. 

Since ${\bZ/\ell}$ acts on $E$ trivially, we have, for $T\geq 0$,
\begin{equation}\label{F of the cotangent bundle}
C_{T}^{\bZ/\ell}(T^*X,\bK)\cong  \RHOM_{\bZ/\ell}(\bK,\bK)\dotimes {\tnR }\Gamma(X,or_{X}). 
\end{equation}Since $Q_{T^*X}\cong 0$, we have $F^{out}_{\ell}(T^*X,\bK)=0$ and $C_{T}^{{\bZ/\ell},out}(T^*X,\bK)=0$. Also, for $0\leq T<T' $, we have $C_{(T,T']}^{{\bZ/\ell},out}(T^*X,\bK)=0$.

\subsection{Product}\label{sec: cup product}
In this subsection, we assume that $X$ is orientable. As we proved in \autoref{non-equivariant CT and yoneda form}, we have that
\[C_{T}(U,\bK)\cong \RHOM(P_{U},\tnT_{T*}(P_{U})).\]
Using this formula, we observe that the non-equivariant Chiu-Tamarkin invariant is nothing but the endomorphism of $P_U$ in the triangulated persistent category $\cD(X^2)$. Notably, there exists a shifted Yoneda product on $\RHOM(P_{U},\tnT_{T*}(P_{U}))$, which we will further elaborate on in this subsection.
\begin{Def}For $\alpha \in\textnormal{Ext}^a(P_U,\tnT_{A*}P_U) $, $\beta \in\textnormal{Ext}^b(P_U,\tnT_{B*}P_U)$, we define the shifted Yoneda product $\alpha \bullet \beta\in \textnormal{Ext}^{a+b}(P_U,\tnT_{(A+B)*}P_U)$ to be the composition:
\[ P_U\xrightarrow{\beta} {\tnT_{B*}P_U} \xrightarrow{\tnT_{A*}\alpha} \tnT_{(A+B)*}P_U. \]
\end{Def}
The shifted Yoneda product is a shifted version of the usual Yoneda product on $\textnormal{Ext}^a(P_U,P_U)$. This product has also been discussed in \cite[Lemma 6.4.4]{TPC}. We will show that it is unital and associative. Moreover, since $P_U$ is a bialgebra in the symmetric monoidal category $(\mathcal{D}(X^2),\star)$ by the idempotence (\autoref{idempotence}), we have:
\begin{Thm}\label{theorem: graded commutative}The shifted Yoneda product is unital, associative, and graded commutative.
\end{Thm}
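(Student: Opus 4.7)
My plan is to prove the three properties separately.

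Unitality and associativity are immediate from the definition. The unit is $\id_{P_U} \in \textnormal{Ext}^0(P_U, \tnT_{0*} P_U)$: unwinding gives $\alpha \bullet \id_{P_U} = \tnT_{0*}(\alpha) \circ \id_{P_U} = \alpha$ and $\id_{P_U} \bullet \alpha = \tnT_{B*}(\id_{P_U}) \circ \alpha = \alpha$. Associativity will follow from the additivity of translations $\tnT_{B*}\tnT_{C*} = \tnT_{(B+C)*}$ combined with associativity of morphism composition.

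The main content is graded commutativity, which I will establish via an Eckmann--Hilton style argument. The key input is that the idempotence $P_U \star P_U \cong P_U$ (\autoref{idempotence}) promotes $P_U$ to a commutative algebra object in $(\cD(X^2), \star)$. The multiplication $m \colon P_U \star P_U \xrightarrow{\cong} P_U$ can be presented, after the canonical identifications $\bK_{\Delta_{X^2}\times[0,\infty)} \star P_U \cong P_U \cong P_U \star \bK_{\Delta_{X^2}\times[0,\infty)}$, as either $a \star \id_{P_U}$ or $\id_{P_U} \star a$. Using \autoref{functorial lemma} to pin down morphisms out of $P_U \star P_U$, I will argue that these two presentations coincide, which is exactly the commutativity $m \circ \tau_{P_U,P_U} = m$ for the symmetry $\tau$ of the monoidal structure.

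With commutativity of $m$ in hand, I will introduce the auxiliary convolution product
\[\alpha \ast \beta \coloneqq (\tnT_{(A+B)*} m) \circ (\alpha \star \beta) \circ m^{-1},\]
using the canonical isomorphism $\tnT_{A*} P_U \star \tnT_{B*} P_U \cong \tnT_{(A+B)*}(P_U \star P_U)$. The Koszul sign rule in a symmetric monoidal setting yields $\tau \circ (\alpha \star \beta) = (-1)^{ab} (\beta \star \alpha) \circ \tau$, which combined with $m \circ \tau = m$ gives $\alpha \ast \beta = (-1)^{ab} \beta \ast \alpha$. The final step will be to identify $\alpha \ast \beta$ with $\alpha \bullet \beta$: decomposing $\alpha \star \beta = (\alpha \star \id) \circ (\id \star \beta)$ and invoking bifunctoriality of $\star$ together with $m = a \star \id = \id \star a$ reduces the auxiliary product to $\tnT_{B*}(\alpha) \circ \beta$, which is $\alpha \bullet \beta$ by definition.

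The hard part will be justifying $m \circ \tau = m$ within a triangulated category, since we do not have a dg or $\infty$-categorical enhancement readily available. This hinges on the orthogonality provided by \autoref{functorial lemma}, which forces any two maps $P_U \star P_U \to P_U$ arising from the algebra structure to agree. Once this step is secured, the Koszul signs are formal consequences of the symmetric monoidal structure on convolution, and the remaining Eckmann--Hilton interchange is routine.
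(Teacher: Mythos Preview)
Your proposal is correct and follows essentially the same Eckmann--Hilton strategy as the paper, with a minor organizational difference. The paper defines the auxiliary product $\alpha\underline{\star}\beta$ exactly as your $\alpha\ast\beta$, then verifies the shared unit $\id_{P_U}\underline{\star}\alpha=\alpha\underline{\star}\id_{P_U}=\alpha$ and the graded interchange law $(\alpha_1\bullet\alpha_2)\underline{\star}(\alpha_3\bullet\alpha_4)=(-1)^{a_2a_3}(\alpha_1\underline{\star}\alpha_3)\bullet(\alpha_2\underline{\star}\alpha_4)$, and runs the classical Eckmann--Hilton argument to obtain $\underline{\star}=\bullet$ and graded commutativity simultaneously. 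You instead prove $m\circ\tau=m$ up front via orthogonality, deduce graded commutativity of $\ast$ from the Koszul sign rule, and separately identify $\ast=\bullet$ by decomposing $\alpha\star\beta=(\alpha\star\id)\circ(\id\star\beta)$.

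Both routes ultimately rely on the same ingredient: the two candidate multiplications $a\star\id_{P_U}$ and $\id_{P_U}\star a$ coincide, which is what \autoref{functorial lemma} provides (post-composition with $a$ is injective on $\HOM(P_U\star P_U,P_U)$, and both multiplications have the same image $a\star a$). The paper uses this implicitly to get both unit identities for $\underline{\star}$; you make it explicit as $m\circ\tau=m$. Your version isolates the commutativity of the algebra object as a standalone fact, which is conceptually clean; the paper's version is a bit more economical since standard Eckmann--Hilton does not require proving commutativity of $m$ separately.
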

\begin{proof}The identity $\textnormal{Id}_{P_U}$ is the unit of the shifted Yoneda product. The associativity follows from the associativity of the usual Yoneda product and the functor isomorphism $\tnT_{(A+B+C)*} \cong \tnT_{A*}\circ \tnT_{B*}\circ \tnT_{C*}$. Let us prove the graded commutativity. 

We use \autoref{idempotence} for $\ell=2$, and denote the isomorphism by $\nu: P_U\star P_U \xrightarrow{\cong} P_U$. Now, we can define another product using $\nu$. 

The convolution $\star$ is a bifunctor, so we have 
\[\alpha \star \beta : P_U\star P_U \rightarrow \tnT_{A*}P_U \star \tnT_{B*}P_U\cong \tnT_{(A+B)*}(P_U\star P_U) \]
for $\alpha \in\textnormal{Ext}^a(P_U,\tnT_{A*}P_U) $, $\beta \in\textnormal{Ext}^b(P_U,\tnT_{B*}P_U)$. We define
\[\alpha \underline{\star} \beta =\tnT_{(A+B)*}( \nu )\circ (\alpha \star \beta) \circ \nu^{-1}. \] 
We will apply the Eckmann-Hilton argument below to the shifted Yoneda product and the convolution to prove that
\[\text{(1): }\alpha \underline{\star} \beta=\alpha \bullet \beta,\qquad\text{(2): }\alpha \bullet \beta \text{ is graded commutative}.\]
To apply the Eckmann-Hilton argument we first notice the following identities:
\begin{itemize}
    \item $\textnormal{Id}_{P_U}\underline{\star} \alpha =\alpha \underline{\star} \textnormal{Id}_{P_U}=\alpha$,
    \item For $\alpha_i \in \textnormal{Ext}^{a_i}(P_U,\tnT_{A_i*}P_U)$, $i=1,2,3,4$, we have
    \[(\alpha_1\bullet \alpha_2)\underline{\star} (\alpha_3\bullet \alpha_4)=(-1)^{a_2a_3}(\alpha_1\underline{\star} \alpha_3)\bullet (\alpha_2\underline{\star} \alpha_4).\]
\end{itemize}
To prove them, let us notice that $\star$ is a bifunctor, and that the graded shifting $[1]$ defines isomorphisms $\textnormal{Ext}^*(F,G[1])\cong \textnormal{Ext}^*(F,G)[1]\cong \textnormal{Ext}^*(F[-1],G)$ such that the following diagram is anti-commutative for all $F,G$:
\begin{equation*}
  \begin{tikzcd}
{\textnormal{Ext}^*(F[1],G[1])} \arrow[d] \arrow[r] & {\textnormal{Ext}^*(F,G[1])[-1]} \arrow[d] \\
{\textnormal{Ext}^*(F[1],G)[1]} \arrow[r]     & {\textnormal{Ext}^*(F,G)}.           
\end{tikzcd}  
\end{equation*}
Next, the Eckmann-Hilton argument uses the two above identities to conclude as follows:
\begin{gather*}
  \alpha\underline{\star} \beta = (\alpha\bullet\textnormal{Id}_{P_U})\underline{\star}(\textnormal{Id}_{P_U}\bullet \beta)=(\alpha\underline{\star}\textnormal{Id}_{P_U})\bullet(\textnormal{Id}_{P_U}\underline{\star} \beta)=\alpha\bullet \beta\\
  \alpha\bullet \beta =\alpha\underline{\star} \beta = (\textnormal{Id}_{P_U}\bullet\alpha)\underline{\star}(\beta\bullet\textnormal{Id}_{P_U} )=(-1)^{ab}(\textnormal{Id}_{P_U}\underline{\star} \beta)\bullet(\alpha \underline{\star} \textnormal{Id}_{P_U})=(-1)^{ab}\beta\bullet \alpha.\qedhere
\end{gather*}
\end{proof}
Now, since $X$ is orientable, we set the cohomology of the isomorphism in \autoref{non-equivariant CT and yoneda form} by:
\begin{align}\label{algebra isomorphisms formula}
\begin{aligned}
    \Theta:\textnormal{Ext}^*(P_U,\tnT_{T*}P_U) &\xrightarrow{\cong} \textnormal{Ext}^*(P_U,\bK_{\Delta_{X^2}\times \{T\}}) {\cong} H^*C_{T}(U,\bK)\\
    [ P_U\xrightarrow{\alpha} \tnT_{T*}P_U] &\mapsto \widetilde{\alpha}=[ P_U\xrightarrow{\alpha} \tnT_{T*}P_U \rightarrow \bK_{\Delta_{X^2}\times \{T\}} ]\mapsto N(\widetilde{\alpha}).
\end{aligned}
\end{align}

We would like to express $\widetilde{\alpha\bullet\beta}=\widetilde{\alpha\underline{\star}\beta}$ in a different form in the non-equivariant Chiu-Tamarkin invariant.

\begin{Def}\label{definition: cup product}
For ${\alpha}\in H^aC_{A}(U,\bK)$ and ${\beta} \in H^bC_{B}(U,\bK)$, we define the cup product $\alpha \cup \beta$ in such a way that the isomorphism of $\bK$-modules $\Theta$ in \eqref{algebra isomorphisms formula} becomes an isomorphism of $\bK$-algebras.
\end{Def}
The cup product we define here is closely related to the usual cup product on the cohomology ring. In fact, we have $H^*C_{T}(U,\bK)\cong \textnormal{Ext}^*(\bK_X,\bK_X) \cong H^*(X,\bK)$ for $T\geq 0$. Furthermore, the shifted Yoneda product is given by the Yoneda product on $\textnormal{Ext}^*(\bK_X,\bK_X)$, which in turn corresponds to the cup product on $H^*(X,\bK)$. Therefore, it is appropriate to refer to our defined product as a cup product.

On the other hand, the usual cup product on $H^*(X,\bK)$ can be defined by the diagonal pullback of the cross product of classes. A similar idea can also be applied here, and we will provide an alternative way to describe the cup product.

For classes $\alpha, \beta\in \textnormal{Ext}^*(P_U,\bK_{\Delta_{X^2}\times \{T\}})$, consider the external tensor
\[\alpha\dboxtimes \beta : P_U^{\dboxtimes 2}\rightarrow \bK_{\Delta_{X^2}\times \{A\} \times \Delta_{X^2}\times \{B\}}[a+b].\]
We apply the functor $\Tilde{\Delta}_X^{-1}$, where $\Tilde{\Delta}_X: X^2 \times \bR^2 \rightarrow X^4\times \bR^2$ is the twisted diagonal for $\ell=2$, to obtain
\[\Tilde{\Delta}_X^{-1}(\alpha\dboxtimes \beta) : \Tilde{\Delta}_X^{-1}(P_U^{\dboxtimes 2})\rightarrow \bK_{\Delta_{X^2}\times \{(A,B)\} }[a+b].\]
Next, we apply the Gysin morphism associated with the pair $(X^2,\Delta_{X^2})$. In fact, the Gysin morphism is given by composition with the Euler class, which is given by the following isomorphisms:
\begin{align*}
      \textnormal{Ext}^d(\bK_{\Delta_{X^2}\times  \{(A,B)\}}, \bK_{X^2\times \{(A,B)\}})
\cong \textnormal{Ext}^d(\bK_{\Delta_{X^2}} , \bK_{X^2})\cong  {H}^d_{\Delta_{X^2}}({X^2},\bK)
\cong  H^0(X,\bK).
\end{align*}
The last isomorphism comes from the excision and the Thom isomorphism. Then $1\in H^0(X,\bK)$ corresponds to the Euler class $e:\bK_{\Delta_{X^2} \times \{(A,B)\}}\rightarrow \bK_{X^2\times \{(A,B)\}}[d]$. We compose the Euler class to obtain 
\[e\circ \Tilde{\Delta}_X^{-1}(\alpha\dboxtimes \beta) \in \textnormal{Ext}^{a+b+d}(\Tilde{\Delta}_X^{-1}(P_U^{\dboxtimes 2}),\bK_{X^2\times \{(A,B)\}}).\]
Then we apply $\tnR s^2_{t!}$ to $e\circ \Tilde{\Delta}_X^{-1}(\alpha\dboxtimes \beta)$ to obtain 
\[\tnR s^2_{t!}(e\circ \Tilde{\Delta}_X^{-1}(\alpha\dboxtimes \beta))\in \textnormal{Ext}^{a+b+d}(\tnR s^2_{t!}\Tilde{\Delta}_X^{-1}P_U^{\dboxtimes 2},\bK_{X^2\times \{A+B\}}).\]
On the other hand, since $X^2$ is orientable, we have $\bK_{X^2\times \{A+B\}}\cong \pi_{X^2}^!\bK_{\{A+B\}}[-2d] $, so \autoref{cyclicstructure step1} shows that
\[ \textnormal{Ext}^{a+b+d}(\tnR s^2_{t!}\Tilde{\Delta}_X^{-1}P_U^{\dboxtimes 2},\bK_{X^2\times \{A+B\}})\cong \textnormal{Ext}^{a+b}(F_2(U,\bK)_{A+B},\bK[-d])\cong H^{a+b}C_{A+B}(U,\bK) .\]
Then we have 
\[\tnR s^2_{t!}(e\circ \Tilde{\Delta}_X^{-1}(\alpha\dboxtimes \beta))\in H^{a+b}C_{A+B}(U,\bK),\]
and the process verifies that 
\begin{equation}\label{formula: cup product}
    \tnR s^2_{t!}(e\circ \Tilde{\Delta}_X^{-1}(\alpha\dboxtimes \beta))=\alpha\cup \beta.
\end{equation}

\section{\texorpdfstring{$S^1$-equivariant Chiu-Tamarkin invariant}{}}\label{section: s1-theory}
So far, we have defined the $\bZ/\ell$-equivariant Chiu-Tamarkin invariant for all $\ell$. It is natural to know in what sense we have an $S^1$-equivariant theory. In \cite{Capacities2021}, we construct a sequence of symplectic capacities using a certain idea of ``numerical approximation" of $S^1$ through all the $\bZ/\ell$-equivariant Chiu-Tamarkin invariants. However, we will now explain how to construct a genuine algebraic $S^1$-equivariant theory. To achieve this, we will utilize the structure maps of microlocal kernels to establish an algebraic $S^1$ action, i.e. a mixed complex \cite{kassel1987cyclic}. Subsequently, we define an $S^1$-equivariant Chiu-Tamarkin invariant.

Let us begin with a discussion of the motivations behind this construction. For a cyclic object in a category $\mathcal{C}$, we mean a simplicial object in $\mathcal{C}$ together with cyclic permutations on $n$-simplex, subject to certain compatibility conditions. For pre-simplicial/cyclic objects, also known as semi-simplicial/cyclic objects, we simply drop degeneracy maps from the definition of simplicial/cyclic objects. For further details, we refer to \cite{Jones1987} and \cite[Chapter 6]{LodayCyclic}. We set $[n]=\{1,\dots,n\}$ for $n\in \bN$ and $[n]_0=\{0,1,\dots,n\}$ for $n\in \bN_{0}$.

The proof of \autoref{cyclicstructure step1} allows us to define a pre-cyclic object in $D(\bR)$ using $F_n(U,\bK)$ for all $n\in \bN$. Precisely, the morphism \[P_U^{\dboxtimes n}\rightarrow \bK_{\Delta_{X^2}\times [0,\infty)} \dboxtimes P_U^{\dboxtimes n -1},\]
together with cyclic permutations, induce morphisms
\begin{align*}
    d_i:& F_{n+1}(U,\bK) \rightarrow F_{n}(U,\bK),\\
    t_n:&F_{n+1}(U,\bK) \rightarrow F_{n+1}(U,\bK),
\end{align*}
for $i\in [n]$. One can verify that $(F_{n+1}(U,\bK),d_i,t_n)_{n\in \bN_0}$ forms a pre-cyclic object in $D(\bR)$. Similarly, we can define a pre-cocyclic object of $D(\bR)$ using $Q_U$. Taking the stalk at $T$, we will obtain a pre-(co)cyclic object of $D(\bK-\Mod)$. 

However, a pre-(co)cyclic object is not enough to define an algebraic $S^1$-action since commutative diagrams in the triangulated derived category do not necessarily come from a homotopy coherent diagram. So, our strategy is lifting microlocal kernels and structure maps to chain level. 

We will proceed in the following: In \autoref{section: cyclic structure}, we will functorially construct a pre-cocyclic object from a chain map $\bK_{\Delta_{X^2}\times \{0\}}\rightarrow Q$. In \autoref{subsection: s1-theory}, we apply the functor to microlocal kernels. To do so, it is necessary to use the kernel $Q_U$ because of the direction of the arrows. Therefore, we first define the positive $S^1$-equivariant Chiu-Tamarkin invariant $F^{S^1,out}_\bullet(U,\bK)_T$. Subsequently, motivated by the tautological triangle \eqref{Tautological exact triangle}, we define the $S^1$-equivariant Chiu-Tamarkin invariant $F^{S^1}_\bullet(U,\bK)_T$ by taking cocone for a certain morphism. 

After introducing the definition, we will explain some constructions and prove some properties in the remaining part of this section.

\subsection{\texorpdfstring{Cyclic complex associated with $\bK_{\Delta_{X^2}\times \{0\}}\rightarrow Q$}{}}\label{section: cyclic structure} 
Let us start with a discussion on chain-level categories. In the following, we require some basic concepts of model categories. We refer to \cite[Chapter 2]{Cisinski-homotopical-algebra} for a quick introduction.

Consider the Grothendieck abelian category $C(X^2\times \bR)$ of complexes of sheaves on $X^2\times \bR$. We can equip it with an injective model structure such that fibrant objects of the model structure are K-injective and component-wise injective chain complexes of sheaves over $X^2\times \mathbb{R}$, and all objects are cofibrant. Now, for $\bK_{\Delta_{X^2}\times \{0\}}$, which is treated as a chain complex of sheaves concentrated at degree $0$, we consider the slice category $ {^{\triangleright}C(X^2\times\bR)}\coloneqq \bK_{\Delta_{X^2}\times \{0\}} / C(X^2\times\bR)$. It is known that ${^{\triangleright}C(X^2\times \mathbb{R})}$ also admits a model structure, in which a chain map $\bK_{\Delta_{X^2}\times \{0\}}\rightarrow Q$ is fibrant if and only if $Q$ is fibrant in $C(X^2\times\bR)$ \cite[Proposition 2.2.6]{Cisinski-homotopical-algebra}.
\begin{RMK}Noticed that here the direction of arrows for the slice category is crucial because we ONLY have an injective model structure on $C(X^2\times\bR)$. It forces us to consider the slice category $\bK_{\Delta_{X^2}\times \{0\}} / C(X^2\times\bR)$, nor the slice category $C(X^2\times\bR)/\bK_{\Delta_{X^2}\times \{0\}} $, since we do not have meaningful model structure for the later. So far, we still do not know how to figure out a chain-level construction if we start from $C(X^2\times\bR)/\bK_{\Delta_{X^2}\times \{0\}}$.\end{RMK}
Now, let us recall some morphisms and define something new: For $n\in \bN$, we set
\begin{align*}
   \pi_{n}=\pi_{\underline{\bq}}&: X^n \times \bR\rightarrow \bR,\\
    \Tilde{\Delta}_n=\Tilde{\Delta}_X&: X^n \times \bR\rightarrow (X^{2})^n \times \bR,\\
    \Tilde{\Delta}_n(\bq_1,\dots,\bq_n,t)&=(\bq_n,\bq_1,\bq_1,\dots,\bq_{n-1},\bq_{n-1},\bq_n,t).
\end{align*}
Here, we will only emphasize the index $n$ as usually $X$ is fixed. Points in the $i$-th copy of $X^2$ are denoted as $(\bq_i^1,\bq_i^2)$, and the definition of $\widetilde{\Delta}_n$ implies that $\bq_i^2=\bq_{i+1}^1=\bq_i$.

Besides, we define the partial diagonals $\delta_i:X^n\times\bR\rightarrow X^{n+1}\times\bR$, 
\begin{align*}
\delta_i(\bq_1,\dots,\bq_n,t)=&(\bq_1,\dots,\bq_{i+1},\bq_{i+1},\dots,\bq_n,t), \, i\in [n-1]_0,\\
   \delta_n(\bq_1,\dots,\bq_n,t)=&(\bq_1,\bq_2,\dots,\bq_{n},\bq_1,t), 
\end{align*}
and the cyclic permutation
\[\tau_n:X^{n+1}\times\bR\rightarrow X^{n+1}\times\bR,\, (\bq_1,\bq_2,\dots,\bq_n,\bq_{n+1},t)\mapsto (\bq_2,\bq_3,\dots,\bq_{n+1},\bq_1,t).\]
Then one has $\delta_j\delta_i=\delta_i\delta_{j-1}$ for $i<j$, $\tau_n\delta_i=\delta_{i-1}\tau_{n-1}$ for $i\in [n]$ and $\tau_n\delta_0=\delta_{n}$. So $(X^{n+1}\times \bR,\delta_i,\tau_n)$ form a pre-cocyclic space, i.e. a pre-cocyclic object in the category of topological spaces \cite[Example 1.2]{Jones1987}.

\begin{Lemma}\label{lemma 3.19}For a field $\bK$, $n\in \bN$ and $Q \in C(X^2\times \bR)$, with the above notation, we have isomorphisms of complexes of sheaves over $\bR$:
\[\pi_{n !}(\Tilde{\Delta}_{n}^{-1}s_{t!}^n(Q^{\boxtimes n}))   \cong \pi_{(n+1)!}(\Tilde{\Delta}_{n+1}^{-1}(s_{t!}^{n+1}([i]Q^{\boxtimes n+1}) )) ,\]
where $[i]Q^{\boxtimes n+1}=Q^{\boxtimes i+1}\boxtimes \bK_{\Delta_{X^2}\times \{0\}} \boxtimes Q^{\boxtimes n-i-1}$ for $i\in [n-1]_0$ and $[n]Q^{\boxtimes n+1}=\bK_{\Delta_{X^2}\times \{0\}} \boxtimes Q^{\boxtimes n}$. \end{Lemma}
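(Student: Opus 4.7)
The plan is to establish the isomorphism by recognising that the inserted factor $\bK_{\Delta_{X^2}\times\{0\}}$ is itself a lower-$!$ pushforward along a closed embedding, and then repeatedly applying proper base change until both sides are expressed as the same pushforward from $X^n\times\bR^n$.

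First I would write $\bK_{\Delta_{X^2}\times\{0\}} = j_!\bK_X$, where $j:X\hookrightarrow X^2\times\bR$ is the closed embedding $x\mapsto(x,x,0)$. By compatibility of the exterior tensor product with $!$-pushforward along closed embeddings, this gives
\[
[i]Q^{\boxtimes n+1} \;=\; J_{i!}\bigl(Q^{\boxtimes i+1}\boxtimes\bK_X\boxtimes Q^{\boxtimes n-i-1}\bigr),
\]
where $J_i=\id\times j\times\id$ inserts $j$ at the $(i+2)$-th slot (at slot $1$ for $i=n$, via the cyclic convention). A second, routine, input is that $\bar{\Tilde{\Delta}}_{n+1}$ acts only on $X$-factors while $s_t^{n+1}$ acts only on $\bR$-factors, giving a trivial Cartesian square and hence the identity $\Tilde{\Delta}_{n+1}^{-1}s_{t!}^{n+1} = s_{t!}^{n+1}\bar{\Tilde{\Delta}}_{n+1}^{-1}$.

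Next I would form the Cartesian square whose right column is $J_i$ and whose bottom row is $\bar{\Tilde{\Delta}}_{n+1}$. A direct check identifies the upper-left corner with the subvariety
\[
Z_i \;=\; \{\,(\bq'_1,\dots,\bq'_{n+1},t_1,\dots,t_{n+1}) : \bq'_{i+1}=\bq'_{i+2},\ t_{i+2}=0\,\}\;\subset\;X^{n+1}\times\bR^{n+1},
\]
which carries a canonical isomorphism $Z_i\cong X^n\times\bR^n$ by suppressing $\bq'_{i+2}$ and $t_{i+2}$. Under this relabeling, a case-by-case inspection of the slot-assignment in $\bar{\Tilde{\Delta}}_{n+1}$ shows that the pullback of $Q^{\boxtimes i+1}\boxtimes\bK_X\boxtimes Q^{\boxtimes n-i-1}$ to $Z_i$ coincides with $\bar{\Tilde{\Delta}}_n^{-1}(Q^{\boxtimes n})$: the $\bK_X$-factor contributes the constant sheaf, and the remaining $Q$-slots, indexed cyclically by pairs $(\bq_{k-1},\bq_k)$, reassemble exactly into the $n$-fold cyclic diagonal after collapsing positions $i+1$ and $i+2$. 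Proper base change then yields $\bar{\Tilde{\Delta}}_{n+1}^{-1}[i]Q^{\boxtimes n+1}\cong \iota_{i!}\bar{\Tilde{\Delta}}_n^{-1}(Q^{\boxtimes n})$, where $\iota_i:Z_i\hookrightarrow X^{n+1}\times\bR^{n+1}$ is the inclusion.

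Finally I would push through $s_{t!}^{n+1}$ and $\pi_{(n+1)!}$ and observe two factorizations: $s_t^{n+1}\circ\iota_i=(\sigma_i\times\id_\bR)\circ s_t^n$, where $\sigma_i:X^n\hookrightarrow X^{n+1}$ duplicates the $(i+1)$-th coordinate at position $i+2$; and $\pi_{n+1}\circ(\sigma_i\times\id_\bR)=\pi_n$. Combining,
\[
\pi_{(n+1)!}\Tilde{\Delta}_{n+1}^{-1}s_{t!}^{n+1}\bigl([i]Q^{\boxtimes n+1}\bigr)\;\cong\;\pi_{n!}\Tilde{\Delta}_n^{-1}s_{t!}^n(Q^{\boxtimes n}),
\]
which is the claim. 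The main obstacle is the bookkeeping in the central identification of $\tilde\alpha^{-1}$ with $\bar{\Tilde{\Delta}}_n^{-1}$: one has to verify, particularly in the $i=n$ case where the wrap-around identification $\bq'_0:=\bq'_{n+1}$ becomes the collapsing equation, that the cyclic adjacency pattern $(\bq_{k-1},\bq_k)$ survives intact under the relabeling. Once this geometric verification is carried out, the rest is a clean chain of proper base-change isomorphisms.
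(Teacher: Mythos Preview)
Your proof is correct and uses essentially the same mechanism as the paper: recognize $\bK_{\Delta_{X^2}\times\{0\}}$ as a $!$-pushforward along a closed embedding and then apply proper base change / projection formula to collapse the extra slot. The only organizational difference is that the paper treats the single case $i=n$ explicitly (first pushing forward the extra $\bR$-factor via a decomposition $s_t^{n+1}=s_t^2\circ(\id\times s_t^n)$, then using the projection formula for the partial diagonal $\delta_n$) and reduces the remaining $i$'s to this case by conjugating with the cyclic permutations $\tau_n$, whereas you handle all $i$ uniformly via a single Cartesian square. The paper's route has the side benefit that it makes the pre-cocyclic identities $t_nd_i=-d_{i-1}t_{n-1}$ manifest, which are used immediately afterward; in your approach those compatibilities would require a separate (still routine) verification.
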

\begin{proof}At the beginning, we assume $i=n$. Recall that we are working over a field $\bK$, so there is no need to derive the tensor product. Moreover, the projection formula holds in the non-derived sense over fields. Additionally, the proper base change theorem is always true in the non-derived sense. Therefore, we can utilize the Künneth isomorphism in the non-derived sense over fields.

Take $t'=t_0$, $t''=t_1+\cdots+t_{n}$, and $t=t'+t''$. Then we have a decomposition of $s_t^{n+1}=s_t^2\circ (\id_{t_0}\times  s_t^n)$. In this way, the Künneth isomorphism and the projection formula show that
\[s_{t!}^{n+1}(\bK_{\Delta_{X^2}\times \{0\}} \boxtimes Q^{\boxtimes n})\cong s_{t!}^2(\bK_{\Delta_{X^2}\times \{0\}}\boxtimes s_{t!}^n Q^{\boxtimes n} )\cong \bK_{\Delta_{X^2}}\boxtimes s_{t!}^n Q^{\boxtimes n} .\]
Now, we have
\[\Tilde{\Delta}_{n+1}^{-1}( s_{t!}^{n+1}([0]Q^{\boxtimes n+1}) )\cong \Tilde{\Delta}_{n+1}^{-1}(\bK_{\Delta_{X^2}}\boxtimes s_{t!}^n Q^{\boxtimes n}) \cong ((d')^{-1}\bK_{\Delta_{X^2}})\otimes (d^{-1}s_{t!}^{n}Q^{\boxtimes n}),\]
where $d(\bq_1,\bq_2,\dots,\bq_{n+1},t)=(\bq_1,\bq_2,\bq_2,\dots,\bq_{n},\bq_{n},\bq_{n+1},t)$ and $d'(\bq_1,\bq_2,\dots,\bq_{n+1},t)=(\bq_{n+1},\bq_1)$.

Notice that the isomorphism
$(d')^{-1}\bK_{\Delta_{X^2}}=\delta_{n!}\bK_{X^n\times \bR}$ can be thought of as an isomorphism of complexes. Then the projection formula for $\delta_n$ implies the isomorphism of complexes
\begin{align*}     \delta_{n!}\bK_{X^n\times \bR} \otimes(d^{-1}s_{t!}^{n}Q^{\boxtimes n})
\cong \delta_{n!}( (d\delta_n)^{-1}s_{t!}^{n}Q^{\boxtimes n} )
\cong \delta_{n!}( \Tilde{\Delta}_{n}^{-1}s_{t!}^{n}Q^{\boxtimes n} ).
\end{align*}
Therefore, we conclude by $\pi_{(n+1)!}=\pi_{n!}\delta_{n!}$ in the non-derived sense.

For $i\in [n-1]_0$, we can apply $\tau_n$ several times and use the relations $\tau_n\delta_i=\delta_{i-1}\tau_{n-1}$ for $i\in [n]$, and $\tau_n\delta_0=\delta_{n}$. Then we conclude using the isomorphisms $\tau_n^{-1}\Tilde{\Delta}_{n}^{-1}s_{t!}^{n}Q^{\boxtimes n} \cong \Tilde{\Delta}_{n}^{-1}s_{t!}^{n}Q^{\boxtimes n} $, and $\pi_{(n+1)!}=\pi_{n+1!}\tau_{n!}$\end{proof}

To simplify the notation, we set $F_{n}^{nd}(Q)=\pi_{n !}(\Tilde{\Delta}_{n}^{-1}s_{t!}^n(Q^{\boxtimes n}))\in C(\bR)$, where the superscript {\em nd} indicates that the chain complex is not derived.

Now, we consider a further situation. We take an object in ${^{\triangleright}C(X^2\times\bR)}$, i.e., a chain map: $\bK_{\Delta_{X^2}\times \{0\}}\rightarrow Q$. Then for each $i\in [n]$, we can define a morphism using \autoref{lemma 3.19}:
\begin{align*}
 d_i:&F_{n}^{nd}(Q)\cong \pi_{(n+1)!}(\Tilde{\Delta}_{n+1}^{-1}(s_{t!}^{n+1}([i]Q^{\boxtimes n+1}) ))  \\ &\rightarrow \pi_{(n+1)!}(\Tilde{\Delta}_{n+1}^{-1}(s_{t!}^{n+1}(Q^{\boxtimes n+1}) ))=F_{n+1}^{nd}(Q). \end{align*}
We also set the following automorphism $t_n$ of $F_{n}^{nd}(Q)$:
\[t_n=(-1)^n \pi_{(n+1)!}(\Tilde{\Delta}_{n+1}^{-1}(\tau_n^{-1}))   :F_{n+1}^{nd}(Q)\rightarrow F_{n+1}^{nd}(Q).\]
One can check the following identities
\begin{align*}
  &d_jd_i=d_id_{j-1},\quad i<j,\\
  &t_n d_i=-d_{i-1}t_{n-1},\quad i\in [n],\\
  &t_n d_0=(-1)^n d_{n}.
\end{align*}
Consequently, when $\bK$ is a field, the data
\begin{equation}
 F_\bullet^{nd}(\bK_{\Delta_{X^2}\times \{0\}}\rightarrow Q)  \coloneqq (   F_{n+1}^{nd}(Q),d_i,t_n)_{n\in \bN_0} 
\end{equation}
form a pre-cocyclic complex of sheaves over $\bR$. If we take stalk at $T\geq 0$, we get a pre-cocyclic complex of $\bK$-vector spaces:
\begin{equation}
 F_{\bullet,T}^{nd}(\bK_{\Delta_{X^2}\times \{0\}}\rightarrow Q)  \coloneqq (   F_{n+1}^{nd}(Q)_T,d_i,t_n)_{n\in \bN_0} .
\end{equation}
Moreover, if we denote the dg category of pre-cocyclic complexes of $\bK$-vector spaces by $\Delta C_{0,dg}(\bK-\Mod)$, we actually define a functor
\[ F_{\bullet,T}^{nd}: {^{\triangleright}C(X^2\times \bR) }\rightarrow \Delta C_{0,dg}(\bK-\Mod).\]
As $\Delta C_{0,dg}(\bK-\Mod)$ is dg, we can compose the functor with the dg-quotient functor:
\[{^{\triangleright}C(X^2\times\bR)}\xrightarrow{F_{\bullet,T}^{nd}} \Delta C_{0,dg}( \bK-\Mod)  \xrightarrow{Q}  D_{dg}(\Delta C_{0,dg}(\bK-\Mod))). \]
However, one can check that weak equivalences in ${^{\triangleright}C(X^2\times\bR)}$ induce isomorphisms in $D_{dg}(\Delta C_0( \bK-\Mod))$. Then one can define the derived functor $F_{\bullet,T}$ of $F_{\bullet,T}^{nd}$ as the left Kan extension of $Q\circ F_{\bullet,T}^{nd}$ along the localization functor $\gamma: {^{\triangleright}C(X^2\times\bR)}\rightarrow ho({^{\triangleright}C(X^2\times\bR)})$, where $ho({^{\triangleright}C(X^2\times\bR)})$ is the homotopy category of the model category ${^{\triangleright}C(X^2\times\bR)}$. In particular, we have the following commutative diagram:
\begin{equation*}
 \begin{tikzcd}
{^{\triangleright}C(X^2\times\bR)} \arrow[rr, "F_{\bullet,T}^{nd}"] \arrow[d, "\gamma"] \arrow[rrd] &  & \Delta C_{0,dg}( \bK-\Mod)  \arrow[d, "Q"] \\
ho({^{\triangleright}C(X^2\times\bR)}) \arrow[rr, "F_{\bullet,T}"]                                     &  & D_{dg}(\Delta C_{0,dg}(\bK-\Mod)).             
\end{tikzcd}   
\end{equation*}
Notice that the functors $F^{nd}_{\bullet,T}$ and $F_{\bullet,T}$ are additive functors such that a morphism on the left side is mapped to a degree $0$ closed morphism.
\begin{RMK}
Now, the functor $F_{\bullet,T}^{nd}$ is defined over fields to ensure that we can use the projection formula in the non-derived sense. Therefore, our derived functor $F_{\bullet,T}$ is also defined over fields. However, notice that the chain-level construction still works if $Q$ is flat, and we can define the Kan extension locally. Therefore, it is possible to define $F_{\bullet,T}^{nd}$ for special situations directly.
\end{RMK}

\subsection{Mixed complexes}\label{subsection: mixed complexes}
In the previous subsection, we discussed pre-cocyclic complexes. In this subsection, let us discuss mixed complexes. Here, we take $R$ to be a commutative ring without assuming that $R$ is a field.

Consider the dg-algebra $R[\epsilon]$, where $|\epsilon|=-1$ and $\epsilon^2=0$. A dg-module over $R[\epsilon]$, also called a mixed complex \cite{kassel1987cyclic}, is defined as a triple $(M,b,B)$, where $(M,b)$ is a cochain complex, $|b|=-|B|=1$, and $b^2=B^2=Bb+bB=0$. The derived category of dg-modules over $R[\epsilon]$ is called the mixed derived category. In our subsequent applications, we will make use of the dg derived category for dg algebras.

On the other hand, consider the dg-algebra $R[u]$, where $|u|=2$ and $d=0$. We have the following Koszul duality (see \cite{goresky-kottwitz-macpherson-koszulduality}):
\begin{equation}\label{Koszul duality of mixed complexes}
    D_{dg}(R[\epsilon]-\Mod)\cong D_{dg}(R[u]-\Mod),\quad (M,b,B)\mapsto M^{hS^1}=(M\fpw{u}, \delta=b+uB),
\end{equation}
where $M\fpw{u}$ denotes the $(u)$-adic completion of $M\otimes R[u]$. Using the following free resolution of $R$:
\[\cdots\xrightarrow{\epsilon} R[\epsilon] \xrightarrow{\epsilon}\cdots\xrightarrow{\epsilon} R[\epsilon] \xrightarrow{\epsilon} R[\epsilon] \xrightarrow{\epsilon=0}  R,\]
we find that $\RHOM_{ R[\epsilon]}( R,M)\cong M^{hS^1}=(M\fpw{u}, \delta=b+uB)$ represents the homotopy fixed points of the mixed complex $(M,b,B)$.

The functor $(M,b,B)\mapsto (M,b)$ defines a forgetful functor
\begin{equation}\label{equation: forgetful functor}
    D_{dg}(R[\epsilon]-\Mod) \rightarrow 
D_{dg}(R-\Mod).
\end{equation}
The forgetful functor is conservative in virtue of \cite[Corollary 2.4]{zhaoperiodic}.
\begin{RMK}In the literature, $A_\infty$-modules over $R[\epsilon]$ are referred to as $S^1$-complexes, multicomplexes, or $\infty$-mixed complexes (see \cite{DSV,BO2016,zhaoperiodic,GanatraS1CY} and references therein). The mixed complex, as defined earlier, is known as the strict $S^1$-complex. In terms of module theory over $R[\epsilon]$, we primarily follow the exposition in \cite{GanatraS1CY}, although the author's focus is on $A_\infty$-modules. For additional variants involving the completion $R\fpw{u}$, we refer to \cite{Eightflavors}.\end{RMK}

We will explain how to construct a mixed complex from a pre-cocyclic chain complex. This idea was first discovered by Tsygan in \cite{Tsygan-homology-Lie-algebra} and independently by Loday-Quillen in \cite{Loday-Quillen-cyclic}.

Take a pre-cocyclic complex $N=(N_p,d_i,t_p,\partial_p)$ in $\Delta C_{0,dg}( R-\Mod)$, where $d_i$ are face maps, $t_p$ are cyclic permutations, and $\partial_p$ is the differential of the complex $N_p$. First, we define 
\[d=\sum_{i=0}^n(-1)^id_i,\quad d'=\sum_{i=0}^{n-1}(-1)^id_i.\]
They satisfies $d^2=(d')^2=0$, and $d\partial=\partial d,\,d'\partial=\partial d'$. So $(N_p^q,d,(-1)^{p+q+1}\partial)$ and $(N_p^q,d',(-1)^{p+q+1}\partial)$ are two bicomplexes. Next, consider the product totalization of these two bicomplexes. Specifically, we set $Tot(N)_n=\prod_{q+p=n}N_p^q$ and the differential $b=d+(-1)^{n+1}\partial$. Then the totalization is $(Tot(N),b=d+(-1)^{n+1}\partial)$. Similarly, we have $(Tot(N),b'=d'+(-1)^{n+1}\partial)$.

On the other hand, the cyclic permutation satisfies the relations $(1-t)d=d'(1-t)$. Therefore, we have a chain map:
\[(Tot(N),b=d+(-1)^{n+1}\partial)\rightarrow (Tot(N),b'=d'+(-1)^{n+1}\partial).\]
The mapping cone of the morphism is $Cone(N)=(Tot(N)\oplus Tot(N)[-1], \overline{b})$. We also define a degree $-1$ morphism $\overline{B}:Cone(N)\rightarrow Cone(N)$. Precisely, we have
\[\overline{b}=\begin{pmatrix}  b & 0 \\  1-t & -b'\end{pmatrix},\quad \overline{B}=\begin{pmatrix}  0 & B \\  0 & 0\end{pmatrix}, \]
where $B=1+t+\cdots+t^n$. 
It is direct to see that $\overline{B}^2=\overline{B}\,\overline{b}+\overline{b}\,\overline{B}=0$. So, we have that
\[CC^*(N)\coloneqq (Cone(N),\overline{b},\overline{B})\] form a mixed complex.

If $N$ comes from a cocyclic complex $C=(N_p,d_i,s_i,t,\partial)$ (satisfies some other relations about $s_i$), then we have $b's+sb'=\id$ where $s=s_{n}$, and the following morphism is a homotopy equivalence of mixed complexes
\[\begin{pmatrix}  1 & s(1-t) \end{pmatrix}^t:(Tot(N),b,B)\rightarrow (Cone(N),\overline{b},\overline{B}),\]
where $B=Ns(1-t)$. 

The mixed complex $CC^*(N)= (Cone(N),\overline{b},\overline{B})$, which is called the cyclic cochain of $N$, is functorial with respect to $N$. It is known that the functor descent to derived categories, yielding the following functor:
\begin{equation}
CC: D_{dg}(\Delta C_{0,dg}(R-\Mod)) \rightarrow D_{dg}( R[\epsilon]-\Mod).
\end{equation}
\begin{RMK}\label{remark: simplicial cyclic chain}
   We also remark that the construction also works for pre-cyclic cochains and (homological convention) chain complexes. We will use the same notation without explicit emphasis.
\end{RMK}

\subsection{\texorpdfstring{$S^1$-equivariant Chiu-Tamarkin invariant}{}}\label{subsection: s1-theory}
Now, let us discuss microlocal kernels. For an open set $U$, we have a morphism $\bK_{\Delta_{X^2}\times [0,\infty)}\rightarrow Q_U$ in $\cD(X^2)$. We take a fibrant resolution of $Q_U$, say $Q_U\rightarrow IQ_{U}$. Then $IQ_{U}$ is K-injective and component-wise injective. In particular, $IQ_{U}$ is a complex of c-soft sheaves over $X^2\times\bR$. To our later discussion, we also take a flat and c-soft resolution of $\bK_{X}$, say $\bK_{X}\rightarrow I$ (for example, the Godement resolution), where $I\in C^{\geq 0}(X)$. Then for $\Delta=\Delta_{X^2}$, we have $\bK_{\Delta_{X^2}\times [0,\infty)}=\Delta_{!}\bK_X\boxtimes \bK_{[0,\infty)}\rightarrow \Delta_{!}I\boxtimes \bK_{[0,\infty)}$ is a flat and c-soft (relative to $X^2$) resolution of $\bK_{\Delta_{X^2}\times [0,\infty)}$. 

Since $Q_U\rightarrow IQ_{U}$ is a fibrant resolution, we have the following isomorphisms of functors,
\begin{equation}\label{Q functors isomorphisms}\HOM_{D}(-,Q_U)\cong 
\HOM_{D}(-,IQ_{U}) \cong \HOM_{K}(-,IQ_{U}),    
\end{equation}
where $D=D(X^2\times \bR)$ and $K=K(X^2\times \bR)$.

Now, we apply the isomorphisms \eqref{Q functors isomorphisms} to the diagram 
\begin{equation*}
 \bK_{\Delta_{X^2}\times \{0\}} \leftarrow {\bK_{\Delta_{X^2}\times [0,\infty)}} \xrightarrow{qis} \Delta_{!}I\boxtimes \bK_{[0,\infty)}.
\end{equation*}
The resulting diagram is a commutative diagram of isomorphisms since $IQ_{U}\cong Q_U\in D(X^2 \times \bR)$. Consequently, we have
\begin{equation}\label{chain representation}
\HOM_{D}(\bK_{\Delta_{X^2}\times [0,\infty)},Q_U)\cong \HOM_{K}(\bK_{\Delta_{X^2}\times \{0\}},IQ_{U})\cong \HOM_{K}(\Delta_{!}I\boxtimes \bK_{[0,\infty)},IQ_{U}).    
\end{equation}
Then we can take two chain maps 
\begin{equation}\label{two chain maps}
  \bK_{\Delta_{X^2}\times \{0\}} \rightarrow IQ_{U} \leftarrow \Delta_{!}I\boxtimes \bK_{[0,\infty)},   
\end{equation}
representing $\bK_{\Delta_{X^2}\times [0,\infty)}\rightarrow Q_U$ in $D(X^2 \times \bR)$, using the isomorphisms \eqref{chain representation}.
In particular, we have the following commutative diagram of chain maps:
\begin{equation}\label{diagram of Q}
    \begin{tikzcd}
\bK_{\Delta_{X^2}\times \{0\}} \arrow[r]                             & IQ_{U}                                                 \\
{\bK_{\Delta_{X^2}\times [0,\infty)}} \arrow[u] \arrow[r] \arrow[ru] & {\Delta_{!}I\boxtimes \bK_{[0,\infty)}} \arrow[u]
\end{tikzcd}
\end{equation}
One can verify that two choices of chain representatives $\bK_{\Delta_{X^2}\times \{0\}} \rightarrow IQ_{U}$ define the same object in $ho({^{\triangleright}C(X^2\times\bR)})$. We denote this object by $[\bK_{\Delta_{X^2}\times \{0\}} \rightarrow Q_U] \in ho({^{\triangleright}C(X^2\times\bR)})$. It should be noted that we abuse the notation $\bK_{\Delta_{X^2}\times \{0\}} \rightarrow Q_U$ here, which can represent either a morphism in the triangulated derived category or an object in the homotopy category of a model category. The latter represents a chain map that represents the former (which is an equivalence class of roofs of homotopy classes of chain maps) in the triangulated derived category. It will be clear how to distinguish between them in the following discussion.

We introduce the following proposition, which serves as a motivation for our construction.
\begin{Prop}\label{proposition: derived comparision}If $\bK$ is a field, then for each $n\in \bN$, the $n$-th component of $F_{\bullet,T}(\bK_{\Delta_{X^2}\times \{0\}} \rightarrow Q_U)$ is isomorphic to $F_n^{out}(U,\bK)_T$ in $D(\bK-\Mod)$, and the morphism $d_i$ represents the isomorphism $F_n^{out}(U,\bK)_T\cong F_{n+1}^{out}(U,\bK)_T$, of \autoref{cyclicstructure step1}, in $D(\bK-\Mod)$.
\end{Prop}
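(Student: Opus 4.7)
The plan is to unfold the chain-level construction and check that it represents the expected derived functor, and then identify the face maps $d_i$ with the natural isomorphisms of \autoref{cyclicstructure step1}. So the argument has two parts: a chain-vs-derived comparison for each component, and a naturality check for the structure maps.

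First I would identify the $n$-th component. By construction, it is the stalk at $T$ of $F_n^{nd}(IQ_U) = \pi_{n!}\widetilde{\Delta}_n^{-1}s_{t!}^n(IQ_U^{\boxtimes n})$, and one must show this represents $F_n^{out}(U,\bK)_T = i_T^{-1}\tnR\pi_{n!}\widetilde{\Delta}_n^{-1}\tnR s_{t!}^n(Q_U^{\dboxtimes n})$ in $D(\bK-\Mod)$. Since $\bK$ is a field, external tensor product is exact, so $IQ_U^{\boxtimes n}$ represents $Q_U^{\dboxtimes n}$ in the derived category. The pullback $\widetilde{\Delta}_n^{-1}$ is always exact. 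For the two proper pushforwards, one uses that $IQ_U$ is component-wise injective, hence flabby, hence c-soft; over a field this property is preserved under external tensor products, under pullback along the closed embedding $\widetilde{\Delta}_n$, and under $s_{t!}^n$, so $\pi_{n!}$ computes its own derived version. Taking stalk at $T$ is exact, yielding the desired identification in $D(\bK-\Mod)$.

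Next I would analyze the face map $d_i$. By \autoref{lemma 3.19} there is a chain-level isomorphism
\[
F_n^{nd}(IQ_U)_T \;\cong\; \pi_{(n+1)!}\widetilde{\Delta}_{n+1}^{-1}s_{t!}^{n+1}\bigl([i]IQ_U^{\boxtimes n+1}\bigr)_T,
\]
and $d_i$ is, by definition, the morphism induced from the chain map $\bK_{\Delta_{X^2}\times\{0\}}\to IQ_U$ inserted at the $(i+1)$-st factor, i.e.\ from the obvious $[i]IQ_U^{\boxtimes n+1}\to IQ_U^{\boxtimes n+1}$. Passing to $D(\bK-\Mod)$ through the identification of the first step, $d_i$ becomes the natural derived morphism obtained by inserting the derived morphism $\bK_{\Delta_{X^2}\times[0,\infty)}\to Q_U$ at position $i$ and applying $F'_{n+1,X}$.

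Finally, I would match this with the isomorphism of \autoref{cyclicstructure step1}. The $Q_U$-version of that result says that the natural morphism $\bK_{\Delta_{X^2}\times[0,\infty)}^{\boxstar(n-1)}\boxstar Q_U\to Q_U^{\boxstar n}$ (together with its cyclic permutations) becomes an isomorphism after applying $F'_{n,X}$, via exactly the same $\pi_{n!}\widetilde{\Delta}_n^{-1}$-argument used there, together with $Q_U\cong Q_U^{\star n}$ from \autoref{idempotence}. Since our $d_i$ arises from precisely such an insertion, the two morphisms agree in $D(\bK-\Mod)$, so $d_i$ is an isomorphism representing the one from \autoref{cyclicstructure step1}.

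The main obstacle is the last step: tracking that the chain-level face map, which inserts the unit into a specified slot, assembles to precisely the derived isomorphism of \autoref{cyclicstructure step1} rather than a cyclic twist of it. This requires a careful bookkeeping via the cyclic permutations $\tau_n$ introduced in the proof of \autoref{lemma 3.19} and a compatibility check with the canonical representative chosen in \eqref{two chain maps} and \eqref{diagram of Q}; otherwise the identification in the derived category is immediate from the functoriality of $F'_{n,X}$.
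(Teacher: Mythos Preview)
Your proposal is correct and follows essentially the same approach as the paper: use that the fibrant replacement $IQ_U$ is component-wise injective hence c-soft, that over a field external tensor powers of c-soft sheaves remain c-soft, and conclude that each underived functor in $F_n^{nd}$ already computes its derived version; for the face maps, observe that the chain representative $\bK_{\Delta_{X^2}\times\{0\}}\to IQ_U$ was chosen precisely to represent $\bK_{\Delta_{X^2}\times[0,\infty)}\to Q_U$ in the derived category, so inserting it at a slot descends to the morphism of \autoref{cyclicstructure step1}. The paper's proof is terser and does not linger on the cyclic-permutation bookkeeping you flag as an obstacle; it simply invokes that the object $[\bK_{\Delta_{X^2}\times\{0\}}\to Q_U]$ in $ho({^{\triangleright}C(X^2\times\bR)})$ represents the derived morphism, which makes the identification of $d_i$ immediate from the functoriality built into \autoref{lemma 3.19}.
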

\begin{proof}We take a fibrant resolution of $Q_U$, say $Q_U\rightarrow IQ_U$, for computation. Since $IQ_U$ is component-wise c-soft and $\bK$ is a field, it follows that $IQ_U^{\boxtimes n}$ is c-soft \cite[Exercise II.14]{BredonSheaf}. Therefore, $IQ_U^{\boxtimes n}$ is both relative c-soft with respect to projection and summation. By examining the proof of \autoref{lemma 3.19}, one can observe that each step of the construction descends to the derived category.  Eventually, we obtain $\pi_{n !}(\Tilde{\Delta}_{n}^{-1}s_{t!}^n(IQ_U^{\boxtimes n}))\cong F_n^{out}(U,\bK)$, since all the functors involved have finite cohomological dimension.

The second statement follows that the object $[\bK_{\Delta_{X^2}\times \{0\}} \rightarrow Q_U]$ in $ho({^{\triangleright}C(X^2\times\bR)})$ represents the morphism $\bK_{\Delta_{X^2}\times [0,\infty)} \rightarrow Q_U$ in $D(X^2\times \bR)$.
\end{proof}
Therefore, for $[\bK_{\Delta_{X^2}\times \{0\}} \rightarrow Q_U] \in ho({^{\triangleright}C(X^2\times\bR)})$, we {\em define}
\begin{equation}
 F_{\bullet}^{out}(U,\bK)_T\coloneqq  F_{\bullet,T} (\bK_{\Delta_{X^2}\times \{0\}} \rightarrow Q_U)  \in D_{dg}(\Delta C_{0,dg}(\bK-\Mod)).
\end{equation}
On the other hand, let us define another pre-cocyclic $\bK$-module $F^{nd}_\bullet(T^*X,\bK)_T$ as follows: Recall that we took a resolution $\bK_{X}\xrightarrow{qis} I$, then we have a resolution ${\bK_{\Delta_{X^2}\times [0,\infty)}} \xrightarrow{qis} \Delta_{!}I\boxtimes \bK_{[0,\infty)}$. 

We first set $F^{nd}_{n+1}(T^*X,\bK)_T= \left(\pi_{(n+1)!}(\Tilde{\Delta}_{n+1}^{-1}(s_{t!}^{n+1}(\Delta_! I\boxtimes \bK_{[0,\infty)})^{\boxtimes n+1}))\right)_T$ for $n\in \bN_0$, which is isomorphic to $\left(\pi_{(n+1)!}(\Tilde{\Delta}_{n+1}^{-1}((\Delta_! I)^{\boxtimes n+1}\boxtimes \bK_{[0,\infty)}))\right)_T$ as chain complexes. Next, the morphism $d_i$ is induced by $\bK_{\Delta_{X^2}\times [0,\infty)}\rightarrow \Delta_!I\boxtimes \bK_{[0,\infty)}$. In this case, it is by construction that $(\Delta_! I\boxtimes \bK_{[0,\infty)})^{\boxtimes n+1}$ is already $s_{t!}^{n+1}$-acyclic and $\pi_{n+1}$-c-soft. Therefore, we would like to use $\bK_{\Delta_{X^2}\times [0,\infty)}$ rather than $\bK_{\Delta_{X^2}\times \{0\}}$ to maintain the correct direction of the arrows. The definition of $t_n$ is the same. We also have that $F^{nd}_{n+1}(T^*X,\bK)_T$ represents $F_{n+1}(T^*X,\bK)_T$ in $D(\bK-\Mod)$. Notice that $I$ is flat for all rings $\bK$ regardless requiring $\bK$ to be fields. However, to be consistent with the definition of $F_{\bullet}^{out}(U,\bK)_T$, we continue to assume that $\bK$ is a field here.

We can also verify that if we take two different resolutions of $\bK_{X}\rightarrow I,I'$, we will obtain the same object in $D_{dg}(\Delta C_{0,dg}(\bK-\Mod))$. Therefore, we denote the resulting object in $D_{dg}(\Delta C_{0,dg}(\bK-\Mod))$ by $F_\bullet(T^*X,\bK)_T$. By direct computation, we can find that $F^{S^1}_{\bullet}(T^*X,\bK)_{T}$ is isomorphic to $\tnR\Gamma_c(X,\bK_X)$ with the trivial action for all $T\geq 0$. 
 
Next, we construct a morphism $F_\bullet(T^*X,\bK)_T \rightarrow F_\bullet^{out}(U,\bK)_T$. At the chain level, we have already constructed a morphism $\Delta_!I\boxtimes \bK_{[0,\infty)}\rightarrow IQ_U$  along with the diagram \eqref{diagram of Q}. Using this, we can directly construct a morphism of pre-cocyclic complexes of $\bK$-vector spaces:
\[F_\bullet^{nd}(T^*X,\bK)_T \rightarrow F_{\bullet,T}^{nd}(\bK_{\Delta_{X^2}\times \{0\}} \rightarrow IQ_U ).\]
It can be verified that this morphism is independent of the chosen resolutions, thereby defining a degree $0$ closed morphism
\begin{equation}\label{equation: structure morphism of S^1}
  F_\bullet(T^*X,\bK)_T \rightarrow F_\bullet^{out}(U,\bK)_T \quad   \in D_{dg}(\Delta C_{0,dg}(\bK-\Mod)).
\end{equation}
Now, we apply the cyclic cochain functor to \eqref{equation: structure morphism of S^1}, we have
\begin{Prop}If $\bK$ is a field, then for an open set $U$ and $T\geq 0$,
\begin{align*}
 F_{\bullet}^{S^1,out}(U,\bK)_{T}\coloneqq CC^*(F_{\bullet}^{out}(U,\bK)_T),\qquad 
 F_{\bullet}^{S^1}(T^*X,\bK)_{T}\coloneqq CC^*(F_{\bullet}(T^*X,\bK)_T),
 \end{align*}
 define objects of $D_{dg}(\bK[\epsilon]-\Mod)$.
 The degree $0$ closed morphism
 \begin{equation}\label{equation: defining morphism of CT}
    [F_{\bullet}^{S^1}(T^*X,\bK)_{T} \rightarrow  F_{\bullet}^{S^1,out}(U,\bK)_{T}] \coloneqq CC^*(F_\bullet(T^*X,\bK)_T \rightarrow F_\bullet^{out}(U,\bK)_T) 
 \end{equation}
defines an object in $\mathcal{M}\mathit{or}(D_{dg}(\bK[\epsilon]-\Mod))$.
\end{Prop}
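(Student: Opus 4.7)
The plan is to verify that everything in the statement is obtained by straightforward application of the two functors constructed in \autoref{section: cyclic structure} and \autoref{subsection: mixed complexes}; no new ingredient is required.

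First, for the object statements, I would invoke the derived functor
\[F_{\bullet,T}\colon ho({^{\triangleright}C(X^2\times\bR)})\longrightarrow D_{dg}(\Delta C_{0,dg}(\bK-\Mod))\]
established in \autoref{section: cyclic structure}. By construction, $F_{\bullet}^{out}(U,\bK)_T=F_{\bullet,T}([\bK_{\Delta_{X^2}\times\{0\}}\to Q_U])$, and an entirely parallel argument — replacing the fibrant resolution $Q_U\to IQ_U$ by the flat c-soft resolution $\bK_{\Delta_{X^2}\times[0,\infty)}\to \Delta_!I\boxtimes \bK_{[0,\infty)}$ and noting that $(\Delta_!I\boxtimes\bK_{[0,\infty)})^{\boxtimes n+1}$ is already $s_{t!}^{n+1}$-acyclic and $\pi_{n+1}$-c-soft — places $F_{\bullet}(T^*X,\bK)_T$ in $D_{dg}(\Delta C_{0,dg}(\bK-\Mod))$ as well. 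Applying the cyclic cochain functor $CC\colon D_{dg}(\Delta C_{0,dg}(\bK-\Mod))\to D_{dg}(\bK[\epsilon]-\Mod)$ from \autoref{subsection: mixed complexes} to these two objects produces $F^{S^1,out}_\bullet(U,\bK)_T$ and $F^{S^1}_\bullet(T^*X,\bK)_T$ as objects of $D_{dg}(\bK[\epsilon]-\Mod)$, which settles the first assertion.

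For the morphism statement, I would first check at the chain level that the map $F_\bullet^{nd}(T^*X,\bK)_T\to F_{\bullet,T}^{nd}(\bK_{\Delta_{X^2}\times\{0\}}\to IQ_U)$ induced by the commutative diagram of chain maps in (the diagram containing $\Delta_!I\boxtimes\bK_{[0,\infty)}\to IQ_U$) commutes strictly with the face maps $d_i$ and the cyclic automorphisms $t_n$. This is a direct verification using the naturality of $\pi_{n!}$, $\widetilde{\Delta}_n^{-1}$ and $s_{t!}^n$ together with the fact that the underlying map of sheaves over $X^2\times\bR$ is the same in every tensor slot; the identities $d_j d_i=d_id_{j-1}$ and $t_nd_i=-d_{i-1}t_{n-1}$ are preserved because they come from the spatial relations $\delta_j\delta_i=\delta_i\delta_{j-1}$ and $\tau_n\delta_i=\delta_{i-1}\tau_{n-1}$ which hold independently of the sheaf being pushed. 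Hence we have a closed degree~$0$ morphism in $\Delta C_{0,dg}(\bK-\Mod)$, well-defined up to the quasi-isomorphism class of the chosen resolutions, and therefore a closed degree~$0$ morphism in $D_{dg}(\Delta C_{0,dg}(\bK-\Mod))$.

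Finally, since $CC$ is an additive dg-functor, it sends this closed degree~$0$ morphism to a closed degree~$0$ morphism in $D_{dg}(\bK[\epsilon]-\Mod)$, which by definition is an object of $\mathcal{M}\mathit{or}(D_{dg}(\bK[\epsilon]-\Mod))$; this is the content of the displayed equation \eqref{equation: defining morphism of CT}. The one step that requires care — and which I would treat as the main obstacle — is the independence of the chain-level morphism from the choices of resolutions $\bK_X\to I$ and $Q_U\to IQ_U$. The standard argument proceeds by choosing a common fibrant refinement of any two such resolutions and invoking the fact that weak equivalences in ${^{\triangleright}C(X^2\times\bR)}$ are inverted by $F_{\bullet,T}$, so that any two chain-level representatives of the same morphism $\bK_{\Delta_{X^2}\times[0,\infty)}\to Q_U$ in $\cD(X^2)$ produce the same object in $\mathcal{M}\mathit{or}(D_{dg}(\Delta C_{0,dg}(\bK-\Mod)))$, and hence the same morphism after applying $CC$.
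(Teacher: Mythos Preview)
Your proposal is correct and follows essentially the same approach as the paper: the proposition has no separate proof in the paper either, as it is simply the statement that results from applying the cyclic cochain functor $CC$ to the morphism \eqref{equation: structure morphism of S^1} already constructed in the preceding text. One minor point: the paper does not literally obtain $F_\bullet(T^*X,\bK)_T$ by applying the slice-category functor $F_{\bullet,T}$, since the arrow $\bK_{\Delta_{X^2}\times[0,\infty)}\to \Delta_!I\boxtimes\bK_{[0,\infty)}$ points away from the unit rather than toward it; instead the paper builds this object by a direct parallel construction, exactly as you describe, so the content is the same.
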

Currently, although we cannot directly define $F^{S^1}_{\bullet}(U,\bK)_T$ from resolutions of $P_U$ for all open sets $U$, the situation for $U=T^*X$ is quite straightforward as we have presented. The tautological triangle \eqref{Tautological exact triangle} suggests that we should expect $F^{S^1}_{\bullet}(U,\bK)$ to be the cocone of the morphism \eqref{equation: defining morphism of CT}. Therefore, we {\em define}
\begin{equation}\label{equation: defining CT}
F_{\bullet}^{S^1}(U,\bK)_{T}\coloneqq cocone(F_{\bullet}^{S^1}(T^*X,\bK)_{T}\rightarrow F_{\bullet}^{S^1,out}(U,\bK)_{T}). \end{equation}
\begin{Def}\label{def: s1 CT Complex}For a field $\bK$, $T\geq 0$ and an open set $U\subset T^*X$, we define the $S^1$-equivariant Chiu-Tamarkin invariant to be
\begin{align*}
C^{S^1,out}_{T}(U,\bK)\coloneqq&\RHOM_{\bK[\epsilon]}(F_{\bullet}^{S^1,out}(U,\bK)_{T},\bK[-d]) , \\
C^{S^1}_{T}(U,\bK)\coloneqq& \RHOM_{\bK[\epsilon]}(F_{\bullet}^{S^1}(U,\bK)_{T},\bK[-d]),
\end{align*}
where we equip $\bK$ with the trivial mixed complex structure, and $\RHOM_{\bK[\epsilon]}$ denotes the derived hom in the dg mixed derived category.
\end{Def}
\begin{RMK}\label{cyclic definition remark}
In the dg derived category $D_{dg}(\bK[\epsilon]-\Mod)$, the definition of $F_{\bullet}^{S^1}(U,\bK)_{T}$ and $C^{S^1}_{T}(U,\bK)$ is canonical. If we work in the triangulated derived category, $C^{S^1}_{T}(U,\bK)$ is not canonical, but its cohomology, what we really care about, is well defined. 

It is worth mentioning that in previous works, our focus was on the linear dual $C^{\bZ/\ell}_{T}(U,\bK)$ rather than $F_\ell(U,\bK)_{T}$ because we were unable to prove the invariance (\cite[Theorem 4.7]{chiu2017}) for $F_\ell(U,\bK)_{T}$ directly. However, at present, we can directly establish the invariance for all versions of $F^{S^1}_\bullet(U,\bK)_{T}$ without resorting to the linear dual $C^{S^1}_{T}(U,\bK)$. This also holds true for the $\bZ/\ell$ versions.
\end{RMK}
Tautologically, the definition gives rise to the {\em tautological exact triangles}:
\begin{equation}\label{S^1 Tautological exact triangle}
\begin{split}
F^{S^1}_\bullet(U,\bK)_{T}\rightarrow \tnR\Gamma_c(X,\bK_X^{S^1})\rightarrow  F^{S^1,out}_\bullet(U,\bK)_{T}\xrightarrow{+1},\\
      C_{T}^{S^1,out}(U,\bK)\rightarrow\tnR \Gamma(X,\omega_X^{S^1})\rightarrow C_{T}^{S^1}(U,\bK) \xrightarrow{+1}.
\end{split}
\end{equation}
By definition, we have that $H^*C^{S^1}_{T}(U,\bK)$ and $H^*C^{S^1,out}_{T}(U,\bK)$ are left modules over $\textnormal{Ext}^*_{S^1}(\bK,\bK)\cong \bK[u]$. As we pointed out before, one has $H^*C^{S^1}_{T}(T^*X,\bK)\cong H_{d-*}^{BM}(X,\bK)\otimes \bK[u]$ for all $T\in [0,\infty]$, with the trivial $\bK[u]$ action on $H_{d-*}^{BM}(X,\bK)$.

Lastly, it is worth noting that although we define our positive $S^1$-equivariant Chiu-Tamarkin invariant in the mixed derived category, it can be computed in the derived category of pre-cocyclic complexes. Precisely, we have
\begin{Prop}\label{prop: computation in cyclic category}For a field $\bK$, $T\geq 0$ and an open set $U\subset T^*X$, we have
\begin{align*}
C^{S^1,out}_{T}(U,\bK)&\cong \RHOM(F_{\bullet}^{out}(U,\bK)_{T},\bK[-d]) ,\\
C^{S^1}_{T}(T^*X,\bK)&\cong \RHOM(F_{\bullet}(T^*X,\bK)_{T},\bK[-d])
,
\end{align*}
and 
\[C^{S^1}_{T}(U,\bK)\cong cone( \RHOM(F_{\bullet}^{out}(U,\bK)_{T},\bK[-d]) \rightarrow  \RHOM(F_{\bullet}(T^*X,\bK)_{T},\bK[-d])) . \]
where $\RHOM$ here are computed in ${D_{dg}(\Delta C_{0,dg}(\bK-\Mod))}$.
\end{Prop}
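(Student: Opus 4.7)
The plan is to compare the two derived $\RHOM$ computations through the cyclic cochain construction $CC^*$. The key step is to establish a natural isomorphism
\[\RHOM_{\bK[\epsilon]}(CC^*(N), M) \cong \RHOM_{\Delta C_{0}}(N, \iota(M)),\]
where $\iota \colon D_{dg}(\bK[\epsilon]-\Mod) \to D_{dg}(\Delta C_{0,dg}(\bK-\Mod))$ is the ``constant pre-cocyclic complex'' functor, $\iota(M)_p = M$ for every $p\in \bN_0$ with all face maps $d_i$ and cyclic permutations $t_p$ equal to the identity. Note that $\iota(\bK[-d])$ is exactly the pre-cocyclic complex $\bK[-d]$ appearing on the right-hand sides of the proposition.

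At the chain level this is a direct inspection of the bicomplex model $CC^*(N) = (Tot(N) \oplus Tot(N)[-1], \bar b, \bar B)$: when the target has trivial $B$-operator, a chain map $CC^*(N) \to M$ commuting with the $B$-operators decomposes into a compatible family of maps $N_p \to M$ respecting face maps (since the horizontal differential $\bar b$ encodes $\sum (-1)^i d_i$) and cyclic permutations (since $\phi \circ \bar B = 0$ enforces invariance under the norm operator $1 + t + \cdots + t^n$). Taking $M = \bK[-d]$ and substituting $N = F^{out}_\bullet(U,\bK)_T$, respectively $N = F_\bullet(T^*X,\bK)_T$, yields the first two isomorphisms.

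For the third isomorphism, apply the contravariant exact functor $\RHOM_{\bK[\epsilon]}(-, \bK[-d])$ to the defining cocone \eqref{equation: defining CT}. This produces a distinguished triangle in which $C^{S^1}_T(U,\bK)$ is identified with the cone of the induced morphism $C^{S^1,out}_T(U,\bK) \to C^{S^1}_T(T^*X,\bK)$, and combining with the first two isomorphisms yields the stated formula.

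The main technical obstacle is to make the natural isomorphism rigorous at the derived level. The chain-level version is essentially formal, but one must verify that both functors have well-behaved derived versions: $\iota$ is exact (immediate), while $CC^*$ is left-derivable via its construction as a totalization of a half-plane bicomplex, so the chain-level adjunction descends to the derived level. An alternative route, bypassing the abstract adjunction, is to invoke Koszul duality between $\bK[\epsilon]$ and $\bK[u]$ together with an explicit bar-style resolution of $\bK[-d]$ in the pre-cocyclic derived category, and match both $\RHOM$'s term by term as bicomplexes.
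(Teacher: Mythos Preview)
Your deduction of the third isomorphism from the first two via the defining cocone is correct and matches the paper's one-line remark that ``the proof of the first two is functorial.'' The gap is in your argument for the first two.

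The chain-level inspection does not establish the claimed bijection. Recall $CC^*(N)=Tot(N)\oplus Tot(N)[-1]$, so a map of mixed complexes $\phi\colon CC^*(N)\to M$ (with $B_M=0$) carries \emph{two} components $(\phi_1,\phi_2)$. Unwinding $\phi\,\overline{b}=b_M\phi$ and $\phi\,\overline{B}=0$ gives
\[
\phi_2 b'=-b_M\phi_2,\qquad \phi_1 b-b_M\phi_1=-\phi_2(1-t),\qquad \phi_1\circ(1+t+\cdots+t^n)=0,
\]
so $\phi_2$ is a genuine chain map for the $b'$-complex and $\phi_1$ is only a homotopy-type datum. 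This package does not collapse to a single family $\{\psi_p\colon N_p\to M\}$; moreover the $\overline{B}$-condition says $\phi_1$ \emph{annihilates the norm}, whereas a morphism to $\iota(M)$ requires $\psi_p(1-t_p)=0$, i.e.\ invariance. These are different conditions. (A related point: your $\iota$ forgets $B_M$, so it cannot be right adjoint to $CC^*$ as a functor into $\bK[\epsilon]$-modules; the left side of your ``adjunction'' depends on $B_M$ while the right side does not.) Hence the underived $\HOM$'s are not in bijection, and there is no cheap passage to the derived statement.

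The paper instead invokes Loday's Theorem~6.2.8/6.2.9: one writes down an explicit projective biresolution of the trivial module $\bK^\natural$ in the (pre-)cyclic category, built from free $\Delta C_0$-modules using only face maps and cyclic operators, and observes that $\HOM_{\Delta C_0}(N,-)$ applied to this resolution \emph{is} the cyclic bicomplex of $N$; taking totalization identifies $\RHOM_{\Delta C_0}(N,\bK[-d])$ with $\RHOM_{\bK[\epsilon]}(CC^*(N),\bK[-d])$. Since degeneracies never enter, the argument carries over verbatim to the pre-cocyclic setting. This is precisely the ``alternative route'' you mention at the end, and it is the one you should pursue; your first approach cannot be repaired without essentially reproducing it.
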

\begin{proof}
For the first two isomorphisms, a similar statement is presented for the cyclic cohomology of cocyclic modules in \cite[6.2.9]{LodayCyclic} with the same proof for the cyclic homology in Theorem 6.2.8 of loc. cit. The same method works for pre-cocyclic complex here since the proof therein did not use degeneracy maps. For the second, we only need to notice that the proof of the first two is functorial. 
\end{proof}

\subsection{Persistence module}\label{s1 persistence} The persistence module structure still exists in the $S^1$-theory.

Recall that the translation $\tnT_c$ is a diffeomorphism, so $\tnT_{c*}$ defines an isomorphism of the abelian categories of chain complexes for all $c\in \bR$. In particular, for the fibrant resolution $Q_U\rightarrow IQ_U$ of $Q_U$, we have that $\tnT_{c*}Q_U\rightarrow \tnT_{c*}IQ_U$ is a fibrant resolution of $\tnT_{c*}Q_U$ for all $c\in \bR$. When $c\geq 0$, we have a morphism $Q_U\rightarrow \tnT_{c*}Q_U$ in the derived category. By the functoriality of fibrant resolutions, we have a chain map $IQ_U\rightarrow \tnT_{c*}IQ_U$ that represents $Q_U\rightarrow \tnT_{c*}Q_U$.

Now, we can construct the following diagram using similar arguments as \autoref{lemma 3.19}:
\begin{equation*}
  \begin{tikzcd}
\delta_{n!}\Tilde{\Delta}_{n}^{-1}s_{t!}^n(IQ_U^{\boxtimes n}) \arrow[d] \arrow[r, "\cong"]                  & { \Tilde{\Delta}_{n+1}^{-1}(s_{t!}^{n+1}([n]IQ_U^{\boxtimes n+1}) )} \arrow[r] \arrow[d] \arrow[r]          &  \Tilde{\Delta}_{n+1}^{-1}(s_{t!}^{n+1}(IQ_U^{\boxtimes n+1}) ) \arrow[d]                               \\
\delta_{n!}\Tilde{\Delta}_{n}^{-1}s_{t!}^n(\tnT_{c/n*}IQ_U^{\boxtimes n}) \arrow[r, "\cong"] \arrow[d, "\cong"] & { \Tilde{\Delta}_{n+1}^{-1}(s_{t!}^{n+1}([n](\tnT_{c/n*}IQ_U^{\boxtimes n+1}) )} \arrow[r] \arrow[d, "\cong"] &  \Tilde{\Delta}_{n+1}^{-1}(s_{t!}^{n+1}(IQ_U\boxtimes \tnT_{c/n*}IQ_U^{\boxtimes n+1}) ) \arrow[d, "\cong"] \\
 \tnT_{c*}\delta_{n!}\Tilde{\Delta}_{n}^{-1}s_{t!}^n(IQ_U^{\boxtimes n}) \arrow[r, "\cong"]                     & {  \tnT_{c*}\Tilde{\Delta}_{n+1}^{-1}(s_{t!}^{n+1}([n]IQ_U^{\boxtimes n+1}) )} \arrow[r]                       &  \tnT_{c*}\Tilde{\Delta}_{n+1}^{-1}(s_{t!}^{n+1}(IQ_U^{\boxtimes n+1}) )   .                               
\end{tikzcd}   
\end{equation*}Now, after applying the direct image functor $\pi_{(n+1)!}$, we construct a morphism of pre-cosimplicial complexes of c-soft sheaves over $\bR$ for $c\geq 0$, say:
\begin{equation}\label{equation: persistence structure sheaf level}
 (F_{n+1}^{nd}(IQ_U),d_i)_{n\in \bN_0}  \rightarrow (\tnT_{c*}F_{n+1}^{nd}(IQ_U),d_i)_{n\in \bN_0}  .      
\end{equation}
As we choose a particular morphism $[n](\tnT_{c/n*}IQ_U)^{\boxtimes n+1} \rightarrow IQ_U\boxtimes (\tnT_{c/n*}IQ_U)^{\boxtimes n+1}$, we should be careful that if it yields a morphism of pre-cocyclic complexes of sheaves. Fortunately, the summation maps and projections $\pi_{n+1}$ are invariant under cyclic permutations, and thus, the morphism \eqref{equation: persistence structure sheaf level} does define a morphism of pre-cocyclic complexes of sheaves over $\bR$. Subsequently, by taking stalks at $T+c$ and $T$ for $T, c\geq 0$, \eqref{equation: persistence structure sheaf level} gives us a morphism of pre-cocyclic complexes of $\bK$-vector spaces:
\[F_{\bullet,T+c}^{nd} (\bK_{\Delta_{X^2}\times \{0\}} \rightarrow IQ_U) \rightarrow  F_{\bullet,T}^{nd} (\bK_{\Delta_{X^2}\times \{0\}} \rightarrow IQ_U).\]
Therefore, we obtain a degree $0$ closed morphism in ${D_{dg}(\Delta C_{0,dg}(\bK-\Mod))}$, and then a degree $0$ closed morphism in $D_{dg}(\bK[\epsilon]-\Mod)$ after applying the cyclic cochain functor:
\[F_{\bullet}^{S^1,out}(U,\bK)_{T+c} \rightarrow F_{\bullet}^{S^1,out}(U,\bK)_{T}.\]
We can check that we have similar morphisms for $F_{\bullet}^{S^1}(T^*X,\bK)_{T}$, and we have a commutative diagram: 
\begin{equation*}
 \begin{tikzcd}
{F_{\bullet}^{S^1}(T^*X,\bK)_{T+c}} \arrow[d] \arrow[r] & {F_{\bullet}^{S^1,out}(U,\bK)_{T+c}} \arrow[d] \\
{F_{\bullet}^{S^1}(T^*X,\bK)_{T}} \arrow[r]             & {F_{\bullet}^{S^1,out}(U,\bK)_{T}}  .          
\end{tikzcd}   
\end{equation*}
Consequently, by utilizing the functoriality of the cone in dg categories, we obtain the following morphism:
\begin{equation}\label{equation: structure maps of persistence module}
  F_{\bullet}^{S^1}(U,\bK)_{T+c} \rightarrow F_{\bullet}^{S^1}(U,\bK)_{T}.  
\end{equation}
Subsequently, our construction can be summarized as:
\begin{Prop}\label{prop: s1 persistence}The maps $T\mapsto H^*C^{S^1}_{T}(U,\bK)$ and $T\mapsto H^*C^{S^1,out}_{T}(U,\bK)$ are two persistence module over $\bK[u]$ where $|u|=2$.    
\end{Prop}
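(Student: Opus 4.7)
The plan is to dualize the morphism \eqref{equation: structure maps of persistence module} and then verify the axioms of a persistence module. First, I apply the contravariant dg functor $\RHOM_{\bK[\epsilon]}(-,\bK[-d])$ to the degree-zero closed morphism $F^{S^1}_{\bullet}(U,\bK)_{T+c}\to F^{S^1}_{\bullet}(U,\bK)_{T}$, obtaining
\[C^{S^1}_{T}(U,\bK)\longrightarrow C^{S^1}_{T+c}(U,\bK),\qquad c\geq 0,\]
in the mixed derived category $D_{dg}(\bK[\epsilon]\text{-}\Mod)$, and analogously for the positive version. Taking cohomology yields the candidate persistence structure maps. The $\bK[u]$-module structure on $H^*C^{S^1}_{T}(U,\bK)$ is then inherent from the Koszul duality recalled in \eqref{Koszul duality of mixed complexes}: every object of $D_{dg}(\bK[\epsilon]\text{-}\Mod)$ is canonically an object of $D_{dg}(\bK[u]\text{-}\Mod)$, and every morphism there becomes $\bK[u]$-linear on passing to cohomology.

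It remains to check the persistence axioms. For the identity axiom, setting $c=0$ in the construction preceding \eqref{equation: persistence structure sheaf level} reduces tautologically to the identity chain map $IQ_U\to \tnT_{0*}IQ_U=IQ_U$, so $\mathrm{pers}_0=\id$ through all subsequent functors. For the compositional axiom $\mathrm{pers}_{c_1+c_2}=\mathrm{pers}_{c_1}\circ \mathrm{pers}_{c_2}$ with $c_1,c_2\geq 0$, the starting point is the functoriality of translation, $\tnT_{c_1*}\circ \tnT_{c_2*}=\tnT_{(c_1+c_2)*}$, together with the compatibility $\tau_{c_1+c_2}=\tnT_{c_1*}\tau_{c_2}\circ \tau_{c_1}$ of the natural transformations recalled in \autoref{section: reeb action}. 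This gives strict compatibility between the pre-cocyclic morphisms \eqref{equation: persistence structure sheaf level} indexed by $c_1$, $c_2$ and $c_1+c_2$ in the derived category of pre-cocyclic complexes of sheaves over $\bR$, and applying $CC^*$ followed by $\RHOM_{\bK[\epsilon]}(-,\bK[-d])$ preserves this compatibility.

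The main subtlety I expect is that the chain representative $\phi_c:IQ_U\to\tnT_{c*}IQ_U$ of $\tau_c(Q_U)$ is canonical only up to chain homotopy, so the cocycle identity $\phi_{c_1+c_2}=\tnT_{c_1*}\phi_{c_2}\circ \phi_{c_1}$ holds only up to homotopy at the chain level. To control this, I would invoke \autoref{proposition: derived comparision} and its analogue with translations: the functors $\pi_{(n+1)!}\Tilde{\Delta}_{n+1}^{-1}s_{t!}^{n+1}$ applied to c-soft resolutions compute the corresponding derived functors, so these homotopies descend to the genuine equality $\tau_{c_1+c_2}=\tnT_{c_1*}\tau_{c_2}\circ\tau_{c_1}$ in $D(\bR)$, hence in $D_{dg}(\Delta C_{0,dg}(\bK\text{-}\Mod))$. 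The cyclic cochain functor $CC^*$ and the linear dual $\RHOM_{\bK[\epsilon]}(-,\bK[-d])$ both being additive and functorial, the equality persists in $D_{dg}(\bK[\epsilon]\text{-}\Mod)$, and becomes a strict equality on $H^*$. Finally, the tautological triangle \eqref{S^1 Tautological exact triangle} is functorial in $T$ by construction (the persistence structure on $F^{S^1}_{\bullet}(T^*X,\bK)_T$ is defined by the same recipe), and the functoriality of cocone in the dg category $D_{dg}(\bK[\epsilon]\text{-}\Mod)$ transports the persistence structure from $F^{S^1,out}_\bullet(U,\bK)_{T}$ to $F^{S^1}_\bullet(U,\bK)_{T}$ while preserving both axioms.
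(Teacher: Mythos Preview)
Your proposal is correct and follows essentially the same approach as the paper: the paper treats this proposition as a direct summary of the construction preceding it (the chain-level morphism \eqref{equation: persistence structure sheaf level}, its cyclic equivariance, and the cocone functoriality), without separately spelling out the identity and composition axioms. Your explicit verification of these axioms via $\tau_{c_1+c_2}=\tnT_{c_1*}\tau_{c_2}\circ\tau_{c_1}$ and your discussion of the homotopy-coherence subtlety are a reasonable expansion of what the paper leaves implicit.
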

We can take limits to define $H^*C^{S^1}_{\infty}(U,\bK)$ and $H^*C^{S^1,out}_{\infty}(U,\bK)$. Moreover, we can define $H^*C^{S^1}_{(T,T']}(U,\bK)$ and $H^*C^{S^1,out}_{(T,T']}(U,\bK)$ as in \autoref{subsection Z/l persistence}.

\subsection{Gysin sequence}\label{section: gysin}
An important ingredient in the theory of mixed complexes is the Connes long exact sequence, also known as the Gysin long exact sequence.

Specifically, for a trivial mixed complex $\bK$, we have $\textnormal{Ext}^*_{\bK[\epsilon]}(\bK,\bK)\cong \bK[u]$, where $\bK[u]$ is a polynomial ring with $|u|=2$. In this context, there exists a morphism $u:\bK\rightarrow \bK[2]$. This morphism can be embedded into the distinguished triangle:
\[\bK[\epsilon] \xrightarrow{\epsilon=0} \bK\xrightarrow{u}\bK[2] \xrightarrow{+1}.\]
For any mixed complex $M=(M,b,B)$, applying the functor $\RHOM_{\bK[\epsilon]}(-,M)$, we obtain the distinguished triangle:
\[M\fpw{u}/uM\fpw{u}\rightarrow  M\fpw{u}\xrightarrow{u} M\fpw{u}  \xrightarrow{+1}\]
of $\bK[u]$-modules. Then it induces long exact sequence of $\bK$-vector spaces:\[ \textnormal{Ext}^{p-2}_{\bK[u]}(M\fpw{u},\bK)\xrightarrow{\cdot u}\textnormal{Ext}^p_{\bK[u]}(M\fpw{u},\bK)\rightarrow\textnormal{Ext}^p_{\bK[u]}(M\fpw{u}/uM\fpw{u},\bK)\xrightarrow{+1}.\]
Using the Koszul duality \eqref{Koszul duality of mixed complexes}, we have the following long exact sequence
\[ \textnormal{Ext}^{p-2}_{\bK[\epsilon]}(M,\bK)\xrightarrow{\cdot u}\textnormal{Ext}^p_{\bK[\epsilon]}(M,\bK)\rightarrow\textnormal{Ext}^p_{\bK}(M,\bK)\xrightarrow{+1}.\]
The third term follows from $\textnormal{Ext}^p_{\bK[u]}(M\fpw{u}/uM\fpw{u},\bK)\cong  \textnormal{Ext}^p_{\bK}(M,\bK)$, where we treat $M=(M,b)$ as a chain complex (rather than a mixed complex) on the right hand side.

Now, we can take $M=F_\bullet^{S^1,out}(U,\bK)$ and $M=F_\bullet^{S^1}(U,\bK)$ to establish the following long exact sequences:
\begin{align*}
 &H^{p-2}C_{T}^{S^1,out}(U,\bK)\xrightarrow{\cdot u}H^{p}C_{T}^{S^1,out}(U,\bK)\rightarrow\textnormal{Ext}^p_{\bK}(F_{\bullet}^{S^1,out}(U,\bK)_T,\bK)\xrightarrow{+1},\\
 &H^{p-2}C_{T}^{S^1}(U,\bK)\xrightarrow{\cdot u}H^{p}C_{T}^{S^1}(U,\bK)\rightarrow\textnormal{Ext}^p_{\bK}(F_{\bullet}^{S^1}(U,\bK)_T,\bK)\xrightarrow{+1}.
\end{align*}
Let us now proceed to compute $\textnormal{Ext}^p_{\bK}(F_{\bullet}^{S^1,out}(U,\bK)_T,\bK)$ and $\textnormal{Ext}^p_{\bK}(F_{\bullet}^{S^1}(U,\bK)_T,\bK)$. We begin by taking a fibrant resolution of $Q_U$, denoted as $Q_U\rightarrow IQ_U$. Consequently, $F_{\bullet}^{out}(U,\bK)_T$ is represented by $F^{nd}_{\bullet,T}(\bK_{\Delta_{X^2}\times \{0\}} \rightarrow IQ_U) $. By forgetting cyclic permutations in $F_{\bullet,T}(\bK_{\Delta_{X^2}\times \{0\}} \rightarrow IQ_U) $, we obtain a pre-cosimplicial complex of $\bK$-vector spaces denoted as $F=(F_{n+1}^{nd}(IQ_U)_T, d_i )_{n\in \bN_0}$. Therefore, $F_{\bullet}^{S^1,out}(U,\bK)_T$ is isomorphic to the non-normalized chain complex of $F$ in the derived category $D(\bK-\Mod)$.

Consider the pre-cosimplicial complex $G=(G_{n+1}, 0)_{n\in \bN_0}$ defined by $G_1=F_{1}^{nd}(IQ_U)_T$ and $G_{n+1}=0$ for $n>0$. In other words, $G$ represents the 0-coskeleton of $F$. We have a cosimplicial map $F\rightarrow G$. Since we already know that all $d_i$ represent isomorphisms in the derived category of sheaves $D(\bR)$ (as stated in Proposition \autoref{proposition: derived comparision}), it follows that $d_i$ are quasi-isomorphisms of chain complexes. Hence, the non-normalized chain complex functor yields a quasi-isomorphism $(Tot(F),b+\partial)\rightarrow(Tot(G),0+\partial)$. However, since $G$ is concentrated in the level $n=0$, we have $(Tot(G),\partial)= F^{nd}_1(IQ_U)$ as a chain complex. Consequently, we obtain
\[F^{S^1,out}_{\bullet}(U,\bK)_T\cong (Tot(G),\partial)=F^{nd}_1(IQ_U)\cong F^{out}_1(U,\bK)\,\in D(\bK-Mod).\] 
For $F_{\bullet}^{S^1}(U,\bK)_T$, upon applying the forgetful functor, we have a distinguished triangle in $D(\bK-\Mod)$:
\[ F_{\bullet}^{S^1}(U,\bK)_{T}\rightarrow \tnR\Gamma_c(X,\bK) \rightarrow  F_{\bullet}^{S^1,out}(U,\bK)_{T} \xrightarrow{+1}. \]
However, we already know that $F_{\bullet}^{S^1,out}(U,\bK)_{T}\cong F^{out}_1(U,\bK)_{T}$ in $D(\bK-\Mod)$. Therefore, we can conclude that $F_{\bullet}^{S^1}(U,\bK)_{T}\cong F_1(U,\bK)_{T}$ in $D(\bK-\Mod)$ by utilizing the tautological triangle \eqref{pre Tautological exact triangle} of $F_1$.

In particular, we prove that under the forgetful functor $For:D_{dg}(\bK[\epsilon]-\Mod)\rightarrow D_{dg}(\bK-\Mod)$, we have 
\begin{equation}\label{equation: forgetful S^1 to nonequivariant}
   For(C_{T}^{S^1}(U,\bK))\cong C_{T}(U,\bK)\quad \in\quad D_{dg}(\bK-\Mod) .
\end{equation}
And, we obtain the Gysin long exact sequence for the $S^1$-equivariant Chiu-Tamarkin cohomology:
\begin{equation}\label{Gysin long exact sequence}
\begin{split}
    &H^{p-2}C_{T}^{S^1,out}(U,\bK)\xrightarrow{\cdot u}H^{p}C_{T}^{S^1,out}(U,\bK)\rightarrow H^pC^{out}_{T}(U,\bK)\xrightarrow{+1},\\
    &H^{p-2}C_{T}^{S^1 }(U,\bK)\xrightarrow{\cdot u}H^{p}C_{T}^{S^1 }(U,\bK)\rightarrow H^pC _{T}(U,\bK)\xrightarrow{+1}.
\end{split}
\end{equation}

\subsection{Restrict to cyclic groups}\label{subsection: Restriction morphisms}For $\ell \geq 1$, we will construct a module morphism \[H^qC_{T}^{S^1}(U,\bK) \rightarrow H^qC^{\bZ/\ell}_{T}(U,\bK).\]
When $\ell=1$, we can directly use the forgetful functor \eqref{equation: forgetful functor}. Then combining the discussion in \autoref{section: gysin}, we have
\[H^qC_{T}^{S^1}(U,\bK) \rightarrow For(H^qC^{S^1}_{T}(U,\bK))=H^qC_{T}(U,\bK).\]

Next, we assume $\ell \geq 2$. Given a pre-cocyclic complex $(C_{n+1},d_i,t_n)_{n\in \bN_0}$, it is clear that $(C_\ell,t_{\ell-1})$ forms a chain complex of  $\bZ/\ell$-modules since $t_{\ell-1}^\ell=1$. Therefore, we obtained a dg functor
\[\Delta C_{0,dg}(\bK-\Mod) \rightarrow  C_{dg}(\bK[\bZ/\ell]-\Mod),\qquad (C_{n+1},d_i,t_n)_{n\in \bN_0}\mapsto (C_\ell,t_{\ell-1}).\]
It is clear we can descent the dg functor to derived categories
\[R_\ell: D_{dg}(\Delta C_{0,dg}(\bK-\Mod)) \rightarrow  D_{dg}(\bK[\bZ/\ell]-\Mod)\cong D_{\bZ/\ell,dg}(\pt).\]
Now, we apply the dg functor $R_\ell$ to $F_\bullet(T^*X,\bK)_T$, $F_\bullet^{out}(U,\bK)_T$ and the morphism
\[F_\bullet(T^*X,\bK)_T \rightarrow F_\bullet^{out}(U,\bK)_T.\]
By definition, we have the following isomorphisms and commutative diagram in $D_{dg}((\bK[\bZ/\ell])-\Mod)\cong D_{\bZ/\ell,dg}(\pt)$:
\[R_\ell(F_\bullet(T^*X,\bK)_T) \cong F_\ell(T^*X,\bK)_T,\qquad R_\ell(F_\bullet^{out}(U,\bK)_T) \cong F_\ell^{out}(U,\bK)_T,\]
and
\begin{equation*}
   \begin{tikzcd}
{R_\ell(F_\bullet(T^*X,\bK)_T) } \arrow[d, "\cong"] \arrow[r] & {R_\ell(F_\bullet^{out}(U,\bK)_T)} \arrow[d, "\cong"] \\
{F_\ell(T^*X,\bK)_T} \arrow[r]                                & { F_\ell^{out}(U,\bK)_T}  .                                
\end{tikzcd} 
\end{equation*}
Moreover, since $R_\ell$ is a functor, we have a morphism
\begin{align*}
    & \RHOM_{D_{dg}(\Delta C_{0,dg}(\bK-\Mod))} (F_\bullet^{out}(U,\bK)_T,\bK[-d])  \\
 \rightarrow   & \RHOM_{D_{\bZ/\ell,dg}(\pt)} (R_\ell(F_\bullet^{out}(U,\bK)_T),R_\ell(\bK[-d]))\\
 \cong & \RHOM_{D_{\bZ/\ell,dg}(\pt)}  (F_\ell^{out}(U,\bK)_T,\bK[-d]).
\end{align*}
Similarly, we can replace $F_\bullet^{out}(U,\bK)_T$ with $F\bullet(T^*X,\bK)_T$ and consider the canonical morphism between them. Then, in light of \autoref{prop: computation in cyclic category}, we obtain the following morphism:
\begin{equation}\label{equation: restriction morphism 2}
\begin{split}
    H^qC_{T}^{S^1,out}(U,\bK) &\rightarrow H^qC^{\bZ/\ell,out}_{T}(U,\bK),\\
     H^qC_{T}^{S^1}(U,\bK) &\rightarrow H^qC^{\bZ/\ell}_{T}(U,\bK).
\end{split}
\end{equation}
\begin{RMK}If we set $\ell=1$, the procedure still works, and we can observe that the resulting morphism is the same as the morphism constructed by the forgetful functor approach we explained at the beginning of this subsection.
\end{RMK}

\subsection{Functoriality and invariance}
In this subsection, we would like to prove the following theorem:
\begin{Thm}\label{invariance1} Let $U,U_1,U_2$ be open sets and let $U_1\xhookrightarrow{i} U_2$ be an inclusion. For $T\geq 0$, we have  
\begin{enumerate}[fullwidth]
    \item There exists a morphism $C^{S^1}_T(U_2,\bK) \xrightarrow{i^*} C^{S^1}_T(U_1,\bK)$, which is functorial with respect to inclusions of open sets.
    \item For a compactly supported Hamiltonian homeomorphism $\varphi:T^*X  \rightarrow T^*X $ in the sense of \cite{Oh-MullerHOMEO}, there exists an isomorphism $\Phi^{S^1}_{T }:C^{S^1}_T(U,\bK) \xrightarrow{\cong} C^{S^1}_T\left(\varphi(U),\bK\right)$ in the mixed derived category. The isomorphism $\Phi^{S^1}_{T}$ is functorial with respect to the restriction morphisms in (1). When $U=T^*X$, we have $\Phi^{S^1}_{T}=\id$.
\end{enumerate}
Notably, we are able to prove the corresponding results for all versions of $F^{S^1}(U,\bK)_T$, and the results for $C_T^{S^1}(U,\bK)$ follow directly after a linear dual.
\end{Thm}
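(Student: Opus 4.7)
The plan, in line with the remark following the statement, is to construct everything at the level of $F^{S^1}_\bullet(U,\bK)_T\in D_{dg}(\bK[\epsilon]-\Mod)$ and then obtain the statements for $C^{S^1}_T(U,\bK)$ by applying the contravariant dg functor $\RHOM_{\bK[\epsilon]}(-,\bK[-d])$. Since by \eqref{equation: defining CT} the object $F^{S^1}_\bullet(U,\bK)_T$ is the cocone of $F^{S^1}_\bullet(T^*X,\bK)_T\to F^{S^1,out}_\bullet(U,\bK)_T$, and both the cyclic cochain functor $CC^*$ and the cocone construction are strictly functorial, it suffices to produce the relevant morphism/isomorphism at the level of $F^{out}_\bullet(U,\bK)_T\in D_{dg}(\Delta C_{0,dg}(\bK-\Mod))$, compatibly with the structure map \eqref{equation: structure morphism of S^1} from $F_\bullet(T^*X,\bK)_T$.

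For part (1), \autoref{functorial} supplies a morphism $b:Q_{U_1}\to Q_{U_2}$ of defining triangles, natural with respect to inclusions. Choose fibrant resolutions $Q_{U_j}\to IQ_{U_j}$. By the lifting property in the slice model structure on ${^{\triangleright}C(X^2\times\bR)}$ recalled in \autoref{section: cyclic structure}, the composition $\bK_{\Delta_{X^2}\times\{0\}}\to IQ_{U_1}\to Q_{U_2}$ factors through $IQ_{U_2}$ up to chain homotopy, producing a canonical morphism $[\bK_{\Delta_{X^2}\times\{0\}}\to Q_{U_1}]\to[\bK_{\Delta_{X^2}\times\{0\}}\to Q_{U_2}]$ in $ho({^{\triangleright}C(X^2\times\bR)})$. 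Applying the derived functor $F_{\bullet,T}$ yields a morphism $F^{out}_\bullet(U_1,\bK)_T\to F^{out}_\bullet(U_2,\bK)_T$, and the naturality of diagram \eqref{diagram of Q} in $Q$ ensures compatibility with the structure map from $F_\bullet(T^*X,\bK)_T$. Passing to cocones after $CC^*$ and then applying $\RHOM_{\bK[\epsilon]}(-,\bK[-d])$ delivers the restriction $i^*$, and functoriality in inclusions is inherited from that of $b$ and the functoriality of $F_{\bullet,T}$.

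For part (2), first treat a compactly supported Hamiltonian diffeomorphism $\varphi$. Following the $\bZ/\ell$-version in \cite[Theorem 2.5]{Capacities2021}, convolution with the GKS sheaf quantization $K_\varphi$ of a generating isotopy induces an isomorphism $Q_U\cong Q_{\varphi(U)}$ in $\cD(X^2)$ intertwining the canonical maps from $\bK_{\Delta_{X^2}\times[0,\infty)}$, and reducing to the identity when $U=T^*X$ (both sides vanish). Lifting this isomorphism to $ho({^{\triangleright}C(X^2\times\bR)})$ exactly as in part (1), then running it through $F_{\bullet,T}$, $CC^*$, the cocone, and the linear dual, produces the isomorphism $\Phi^{S^1}_T$ at the level of the mixed derived category; functoriality with respect to the restriction morphisms of (1) is immediate from the naturality of $K_\varphi\star(-)$ applied to $Q_{U_1}\to Q_{U_2}$.

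The main obstacle is upgrading from Hamiltonian diffeomorphisms to Hamiltonian homeomorphisms $\varphi$ in the sense of Oh-Müller. Fix a Cauchy sequence $(\varphi_n)$ of compactly supported Hamiltonian diffeomorphisms converging uniformly to $\varphi$ with uniformly bounded generating Hamiltonians, and exhaust $U=\bigcup_k U_k$ with $\overline{U_k}\subset U_{k+1}$. Uniform convergence provides $N(k)$ such that $\varphi(U_k)\subset\varphi_n(U_{k+1})\subset\varphi(U_{k+2})$ for all $n\ge N(k)$. Combining the already-established isomorphisms $\Phi^{S^1}_T$ for the $\varphi_n$ with the restrictions from part (1) yields a tower of morphisms which, by an $S^1$-analog of \autoref{coro: colimit formula} (valid because $\RHOM_{\bK[\epsilon]}(-,\bK[-d])\circ CC^*$ turns homotopy colimits of microlocal kernels into homotopy limits), assembles in the limit into a well-defined isomorphism $\Phi^{S^1}_T$ for $\varphi$. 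Independence of the sequence $(\varphi_n)$ follows from the GKS compatibility under composition applied to $\varphi_m^{-1}\circ\varphi_n$ (which is $C^0$-close to the identity for $m,n$ large), while functoriality with respect to restrictions and the normalization $\Phi^{S^1}_T=\id$ for $U=T^*X$ hold at every finite level of the tower and hence in the limit.
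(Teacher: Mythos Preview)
Your argument for part~(1) matches the paper's. The genuine problem is in part~(2).

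You assert that convolution with $K_\varphi$ yields an isomorphism $Q_U\cong Q_{\varphi(U)}$ in $\cD(X^2)$ under $\bK_{\Delta_{X^2}\times[0,\infty)}$, which you then lift to $ho({^{\triangleright}C(X^2\times\bR)})$ ``exactly as in part~(1)''. No such isomorphism exists. What Chiu's proposition gives is $Q_{\varphi(U)}\cong\cK\star Q_U\star\overline{\cK}$ together with a compatible identification $\bK_{\Delta}\cong\cK\star\bK_{\Delta}\star\overline{\cK}$; in other words, the autoequivalence $\cK\star(-)\star\overline{\cK}$ of $\cD(X^2)$ carries the arrow $b_U$ to $b_{\varphi(U)}$. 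But an autoequivalence does not furnish an isomorphism between an object and its image: there is no morphism $Q_U\to Q_{\varphi(U)}$ in $\cD(X^2)$ making the triangle under $\bK_\Delta$ commute, hence nothing in the slice category to lift. This is precisely why the paper remarks that ``the adjunction argument used in \cite{chiu2017,Capacities2021} does not apply here''.

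The paper's fix is to abandon the slice category for part~(2) and work directly at the level of the complexes $F^{nd}_n$. The key input is the trace identity of \autoref{cyclic invariance of trace}: cyclically permuting the $3n$ tensor factors gives a $\bZ/n$-equivariant isomorphism $F^{nd}_n(K\star IQ_0\star\overline{K})\cong F^{nd}_n(IQ_0\star\overline{K}\star K)$, an identification that has no counterpart at the level of the kernels themselves. Composing with the map induced by a chain representative $f:\bK_{\Delta_{X^2}\times\{0\}}\to K\star\overline{K}$ of the unit isomorphism yields a levelwise map $\Phi_n:F^{nd}_n(IQ_0)\to F^{nd}_n(IQ_1')$ which one checks is a morphism of pre-cocyclic complexes. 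This $\Phi_\bullet$ is not visibly invertible; the paper concludes by using conservativity of the forgetful functor \eqref{equation: forgetful functor} together with the known non-equivariant isomorphism $F_1^{out}(\varphi(U),\bK)_T\cong F_1^{out}(U,\bK)_T$.

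For Hamiltonian homeomorphisms the paper does not approximate by diffeomorphisms: the same trace-and-conservativity argument is run directly with the Asano--Ike quantization of $\varphi$. Your limit argument would additionally require an $S^1$-analog of the colimit formula for $F^{S^1,out}_\bullet$ over exhaustions, which is neither stated nor proved in the paper.
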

\begin{RMK}
The theorem also holds for the $\bZ/\ell$-theory \cite{chiu2017,Capacities2021}. However, in the cited references, the invariance for Hamiltonian homeomorphisms was not explicitly stated, although it is indeed true. In the following discussion, we will explain why our previous argument extends to the case of Hamiltonian homeomorphisms.

On the other hand, due to the absence of $S^1$-equivariant sheaves, we require a different argument to establish corresponding results for the $S^1$-theory. Actually, the present proof for the $S^1$-theory can also provide a new proof for the corresponding result in the $\bZ/\ell$-theory. 
\end{RMK}
{\bf Functoriality}:

\begin{proof}[{{Proof of} {\em (1)}}]In the derived category $D(X^2\times \bR)$, we have a commutative diagram (\autoref{functorial})
\begin{equation}\label{diagram of Q functorial-derived}
    \begin{tikzcd}
{{\bK_{\Delta_{X^2}\times [0,\infty)}}} \arrow[r] \arrow[rr, bend left] & Q_U \arrow[r] & Q_V
\end{tikzcd}
\end{equation}
Now, we take a fibrant resolution $Q_U\rightarrow IQ_U$, a flat and c-soft resolution $\bK_X\rightarrow I$, and some chain maps such that the diagram \eqref{diagram of Q} is commutative. 

On the other hand, we take a fibrant resolution $Q_V\rightarrow IQ_V$. As $IQ_V$ is fibrant, we have 
\[Hom_D(Q_U,Q_V)\cong Hom_K(IQ_U,IQ_V).\]
So, one can take a chain map $IQ_U\rightarrow IQ_V$ representing $Q_U\rightarrow Q_V$ in $D(X^2\times \bR)$.

Therefore, the chain map $IQ_U\rightarrow IQ_V$ and the diagram \eqref{diagram of Q} define the following commutative diagram
\begin{equation}\label{diagram of Q functorial-non-derived}
    \begin{tikzcd}
\bK_{\Delta_{X^2}\times \{0\}} \arrow[r] \arrow[rr, bend left]          & IQ_{U} \arrow[r]                                               & IQ_V \\
{{\bK_{\Delta_{X^2}\times [0,\infty)}} } \arrow[u] \arrow[r] \arrow[ru] & {{\delta_{!}I\boxtimes \bK_{[0,\infty)}}} \arrow[u] \arrow[ru] &  .   
\end{tikzcd}
\end{equation}
Arrows in the diagram \eqref{diagram of Q functorial-non-derived} define a morphism 
\[ [\bK_{\Delta_{X^2}\times \{0\}} \rightarrow Q_U] \rightarrow [\bK_{\Delta_{X^2}\times \{0\}} \rightarrow Q_V]\]
in $ ho({^{\triangleright}C(X^2\times\bR)})$, which represent the diagram \eqref{diagram of Q functorial-derived} in $D(X^2\times \bR)$. Therefore, we have a morphism
\[F_{\bullet}^{S^1,out}(U,\bK)_{T} \rightarrow F_{\bullet}^{S^1,out}(V,\bK)_{T}\]
since $CC^*\circ F_{\bullet,T}$ is a functor.
Moreover, the diagram \eqref{diagram of Q functorial-non-derived} also verifies the following commutative diagram:
\begin{equation*}
   \begin{tikzcd}
{F_{\bullet}^{S^1}(T^*X,\bK)_{T}} \arrow[d,"\id"] \arrow[r] & {F_{\bullet}^{S^1,out}(U,\bK)_{T}} \arrow[d] \\
{F_{\bullet}^{S^1}(T^*X,\bK)_{T}} \arrow[r]             & {F_{\bullet}^{S^1,out}(V,\bK)_{T}} ,          
\end{tikzcd}  
\end{equation*}
in the mixed derived category. Consequently, we have the morphism, by taking cocone,
\[F_{\bullet}^{S^1}(U,\bK)_{T} \rightarrow F_{\bullet}^{S^1}(V,\bK)_{T}.\qedhere\]
\end{proof}

{\bf Invariance} 
The proof of invariance is more intricate since the adjunction argument used in \cite{chiu2017,Capacities2021} does not apply here due to the lack of sufficient $S^1$-equivariant sheaves to apply the six operations. However, since the forgetful functor is conservative, it suffices to construct a morphism $F_{\bullet}^{S^1,out}(U,\bK)_{T}$ and $F_{\bullet}^{S^1,out}(\varphi(U),\bK)_{T}$  in the mixed derived category such that we obtain an isomorphism between the non-equivariant Chiu-Tamarkin invariants after forgetting the $S^1$-action.

\begin{Lemma}\label{cyclic invariance of trace}For $n\in \bN$ and complexes of sheaves $K_1,\, K_2,\cdots,K_n$ on $X^2\times \bR$, there exists a natural isomorphism that is invariant under cyclic permutation such that
\[\pi_{X^n!}\Tilde{\Delta}_{n}^{-1}s_{t!}^n(K_1\boxtimes K_2 \boxtimes\cdots \boxtimes K_n)\cong \pi_{X^n!}\Tilde{\Delta}_{n}^{-1}s_{t!}^n(K_n\boxtimes K_1 \boxtimes\cdots \boxtimes K_{n-1}),\]
where $\pi_{X^n}: X^n\times \bR_t\rightarrow \bR_t$ is the projection map.
\end{Lemma}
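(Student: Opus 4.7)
The plan is to realize the cyclic rotation of the factors $K_1,\dots,K_n$ as pullback along explicit diffeomorphisms of the ambient spaces, and then exploit the invariance of the remaining functors $s_t^n$ and $\pi_{X^n}$ under these diffeomorphisms. Concretely, the proof reduces to producing three interchange squares, one per functor in the composition $\pi_{X^n!}\Tilde{\Delta}_{n}^{-1}s_{t!}^n$, and chaining the resulting natural isomorphisms.

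Let $\tau$ denote the cyclic permutation of factors of $(X^2\times\bR)^n$, so that $\tau^{-1*}(K_1\boxtimes\cdots\boxtimes K_n)=K_n\boxtimes K_1\boxtimes\cdots\boxtimes K_{n-1}$. Under the identification $(X^2\times\bR)^n\cong (X^2)^n\times\bR^n$ used in the definition of $s_t^n$, write $\tau=\tau_X\times\tau_{\bR}$. Since addition is cyclic-invariant, $s_t^n\circ\tau_{\bR}=s_t^n$, so proper base change for the diffeomorphism $\tau_{\bR}$ yields a canonical natural isomorphism
\[
s_{t!}^n\circ\tau^{-1*}\cong (\tau_X\times\id_{\bR})^{-1*}\circ s_{t!}^n.
\]

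Next, introduce the diffeomorphism $\sigma:X^n\times\bR\to X^n\times\bR$, $\sigma(\bq_1,\dots,\bq_n,t)=(\bq_n,\bq_1,\dots,\bq_{n-1},t)$. A direct coordinate check, using that the $i$-th block of $\Tilde{\Delta}_n(\bq_1,\dots,\bq_n,t)$ is $(\bq_{i-1},\bq_i)$ with indices mod $n$, verifies $(\tau_X\times\id_{\bR})\circ\Tilde{\Delta}_n=\Tilde{\Delta}_n\circ\sigma$. This gives $\Tilde{\Delta}_n^{-1}(\tau_X\times\id_{\bR})^{-1*}\cong \sigma^{-1*}\Tilde{\Delta}_n^{-1}$. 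Finally, $\pi_{X^n}\circ\sigma=\pi_{X^n}$, so one last base change gives $\pi_{X^n!}\sigma^{-1*}\cong \pi_{X^n!}$. Composing the three natural isomorphisms produces the claimed identification.

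Cyclic invariance is built in: every step is natural in the $K_i$'s, and the whole construction is equivariant for the $\bZ/n$-action because $\sigma^n=\id_{X^n\times\bR}$, $\tau^n=\id$, and the interchange squares iterate consistently. I do not anticipate a conceptual obstacle; the only care required is the bookkeeping in the coordinate identity $\tau_X\circ\Tilde{\Delta}_n=\Tilde{\Delta}_n\circ\sigma$, which pins down the sign and ordering conventions used in the surrounding cyclic-structure arguments.
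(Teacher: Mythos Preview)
Your proposal is correct and follows essentially the same approach as the paper: both identify the cyclic rotation with pullback/pushforward along the cyclic permutation diffeomorphism and then use that $s_t^n$, $\Tilde{\Delta}_n$, and $\pi_{X^n}$ are all equivariant for these permutations. The paper compresses your three interchange squares into the single sentence ``observe that all $\pi_{X^n}$, $\Tilde{\Delta}_{n}$, and $s_{t}^n$ are equivariant with respect to cyclic permutations,'' and explicitly notes the Koszul sign rule when commuting the box-tensor factors, which you should also mention.
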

\begin{proof}Let $\tau$ denote the cyclic permutation of $n$ factors in a $n$-fold product space (such as $X^n$, $(X^2)^n$, or $(X^2\times \bR)^n$). We have the following isomorphism:
\[K_1\boxtimes K_2 \boxtimes\cdots \boxtimes K_n \cong \tau_!(K_n\boxtimes K_1 \boxtimes\cdots \boxtimes K_{n-1}),\]
where the tensor factors commute according to the Koszul rule. 
 
Now, observe that all $\pi_{X^n}$, $\Tilde{\Delta}_{n}$, and $s_{t}^n$ are equivariant with respect to cyclic permutations. Therefore, we have a natural isomorphism of functors: $\pi_{X^n!}\Tilde{\Delta}_{n}^{-1}s_{t!}^n=\pi_{X^n!}\Tilde{\Delta}_{n}^{-1}s_{t!}^n\tau_!$ which is equivariant with respect to the cyclic permutation. The desired result follows.
\end{proof}
To illustrate our idea and understand why the morphism we are going to construct is an isomorphism in the non-equivariant case, let us provide a new proof of the invariance of $F_1(U,\bK)$ under Hamiltonian homeomorphisms.

The crucial thing is that there exists a pair of sheaves, denoted as $\cK$ and $\overline{\cK}$, satisfying the property $\overline{\cK}\star {\cK}\cong \bK_{\Delta_{X^2}\times \{0\}}$, which quantize the Hamiltonian homeomorphism $\varphi$, its inverse $\varphi^{-1}$, and the composition $\varphi^{-1}\varphi=\id$.

In the smooth case, we assume that $\varphi=\varphi^H_1$ for a compactly supported Hamiltonian function $H$. In this scenario, we can choose $\cK=\cK(\varphi^H)_1$ and $\overline{\cK}=\cK^{-1}=\cK(\varphi^H)_{-1}$, as we did in \cite{Capacities2021}. If $\varphi$ is a Hamiltonian homeomorphism, we can utilize the quantization $\cK$ and $\overline{\cK}$ constructed by Asano-Ike using the homotopy colimit (\cite[Section 5]{asanoike2022complete}).

Then we can apply {\cite[Proposition 4.5]{chiu2017}} to conclude that $P_{\varphi(U)}\cong \cK\star P_U\star \overline{\cK}$. It is worth noting that the argument therein is applicable to Hamiltonian homeomorphisms constructed in \cite{asanoike2022complete} as well. Therefore, we conclude that
\[F_1(\varphi(U),\bK)\cong\tnR\pi_{X!} \Delta_{X^2}^{-1}(\cK\star P_U\star \overline{\cK})\cong \tnR\pi_{X^3!}\Tilde{\Delta}_{3}^{-1}\tnR s_{t!}^3 (\cK\dboxtimes P_U \dboxtimes \overline{\cK}).\]
Now, the derived version of \autoref{cyclic invariance of trace} shows that
\[ \tnR\pi_{X^3!}\Tilde{\Delta}_{3}^{-1}\tnR s_{t!}^3 (\cK\dboxtimes P_U \dboxtimes \overline{\cK})\cong \tnR\pi_{X^3!}\Tilde{\Delta}_{3}^{-1}\tnR s_{t!}^3 ( P_U \dboxtimes \overline{\cK}\dboxtimes \cK)\cong \tnR\pi_{X!} \Delta_{X^2}^{-1}(P_U\star \overline{\cK}\star\cK ).\]
Recall that the quantizations $\cK, \overline{\cK}$ satisfy $\overline{\cK}\star\cK \cong \bK_{\Delta_{X^2}\times \{0\}}$. Then we have that 
\[F_1(\varphi(U),\bK)\cong \tnR\pi_! \Delta_{X^2}^{-1}(P_U\star \overline{\cK}\star\cK )\cong \tnR\pi_!\Delta_{X^2}^{-1} (P_U \star \bK_{\Delta_{X^2}\times \{0\}})\cong \tnR\pi_! \Delta_{X^2}^{-1}(P_U ) =F_1(U,\bK).\]
In general, we can extend the proof to the $\bZ/\ell$-case and provide a new proof of the Hamiltonian invariance for the $\bZ/\ell$-equivariant Chiu-Tamarkin invariant using the morphism $\Phi_\ell$ we constructed in \eqref{equation: Phi_n}. Now, let us return to the $S^1$ case.
\begin{proof}[{{Proof of} {\em (2)}}]We consider the sheaf quantization $\cK$ of $\varphi$ as mentioned earlier. It follows from {\cite[Proposition 4.5]{chiu2017}} that $Q_{\varphi(U)}\cong \cK\star Q_U\star \overline{\cK}$. To complete the proof, we take resolutions explicitly and work within the category of complexes.

We fix fibrant resolutions $Q_{\varphi(U)}\rightarrow IQ_1$ and $Q_U\rightarrow IQ_0$ along with the associated morphisms, as discussed in \autoref{subsection: s1-theory}. Additionally, we select fibrant resolutions $\cK\rightarrow K$ and $\overline{\cK}\rightarrow \overline{K}$. Throughout the following discussion, we will use the notation of convolution, both in the derived and non-derived sense, interchangeably. We will explicitly mention the derived situations when necessary. However, since we have chosen fibrant resolutions, we can ensure that the chain maps we employ descend to the corresponding morphisms in the derived category, as outlined in \autoref{proposition: derived comparision}.

{\bf Step 1}: Realize the isomorphism $Q_{\varphi(U)}\cong \cK\star Q_U\star \overline{\cK}$, in $D(X^2\times \bR)$, as an isomorphism in $ho({^{\triangleright}C(X^2\times\bR)})$. Utilizing the chosen resolutions, we obtain the following morphism:
\[   IQ_1  \cong Q_{\varphi(U)}  \cong \cK\star Q_U\star  \overline{\cK}\cong K\star IQ_0\star \overline{K}  \qquad \in D(X^2\times \bR) .\]
Since $IQ_1$ is fibrant, we can lift the aforementioned isomorphism to a chain map, which is in fact a quasi-isomorphism:
\[IQ_1'\coloneqq K\star IQ_0\star \overline{K}  \rightarrow IQ_1.\]
As we have a chain map $\bK_{\Delta_{X^2}\times \{0\}}\rightarrow IQ_0$, we obtain the following chain maps:
\[ K\star \overline{K} \cong  K\star \bK_{\Delta_{X^2}\times \{0\}}  \star \overline{K} \rightarrow  IQ_1' = K\star IQ_0\star \overline{K} \rightarrow IQ_1,\]
where the first isomorphism is an isomorphism of chain complexes.

However, due to the construction of the quantization, we have an isomorphism in the derived category: $ \bK_{\Delta_{X^2}\times \{0\}} \cong K\star \overline{K} $. Although $K\star \overline{K}$ is not fibrant, it is c-soft. Additionally, $\bK_{\Delta_{X^2}\times \{0\}}$ is a bounded complex. Therefore, we can also find a chain map (which is actually a quasi-isomorphism) that represents this isomorphism. We denote the resulting chain map as:
\begin{equation}\label{equation: chain rep of GKS inverse iso.}
  f:\bK_{\Delta_{X^2}\times \{0\}} \rightarrow K\star \overline{K}.  
\end{equation}
Therefore, we obtain a sequence of chain maps:
\begin{equation*}
    \begin{tikzcd}
\bK_{\Delta_{X^2}\times \{0\}} \arrow[r, "f"] & K\star \overline{K} \arrow[r] & IQ_1'  \arrow[r] &  IQ_1,
\end{tikzcd}
\end{equation*}
where $IQ_1'\rightarrow IQ_1$ is a quasi-isomorphism. In particular, we have
\[[\bK_{\Delta_{X^2}\times \{0\}} \rightarrow IQ_1']\cong [\bK_{\Delta_{X^2}\times \{0\}} \rightarrow IQ_1]  \in  ho({^{\triangleright}C(X^2\times\bR)}),\]
and, consequently, we have
\[F_{\bullet}^{out}(\varphi(U),\bK)_{T} \cong F_{\bullet,T}^{out}(\bK_{\Delta_{X^2}\times \{0\}} \rightarrow IQ_1')\in D_{dg}(\Delta C_{0,dg}(\bK-\Mod)). \]
{\bf Step 2}: We will construct a morphism of pre-cocyclic complexes 
\[\Phi_\bullet :F^{nd}_{\bullet,T}([\bK_{\Delta_{X^2}\times \{0\}} \rightarrow IQ_1']) \rightarrow F^{nd}_{\bullet,T}([\bK_{\Delta_{X^2}\times \{0\}} \rightarrow IQ_0])\]
such that 
\[\Phi_1: F^{nd}_{1}( IQ_1')_T \rightarrow F^{nd}_{1}(IQ_0)\]
representing the isomorphism
\[F^{out}_1(\varphi(U),\bK)_T \cong F^{out}_1(U,\bK)_T \qquad \in D(\bK-\Mod).\]
Then the result follows from that the forgetful functor \eqref{equation: forgetful functor} is conservative, and we have an isomorphism $F^{S^1,out}_{\bullet}(U,\bK)_T \cong F^{out}_1(U,\bK)_T$ in $D(\bK-\Mod)$ for all open sets $U$.

The construction is based on a careful improvement of \autoref{cyclic invariance of trace}.

We start from an isomorphism of chain complexes:
\[IQ_1' = K \star IQ_0 \star \overline{K} \cong \pi_{\textnormal{in}!}s^3_{u!}d^{-1}  (K \boxtimes IQ_0\boxtimes \overline{K}). \]
Let us carefully name coordinates for products of $X$ and $\bR$:
\begin{align*}
    d: &\qquad X^4  \times \bR^3_u& &\rightarrow \qquad X^6 \times \bR^3_u, \\
         & (y_1,y_2,y_3,y_4,u_1,u_2,u_3)& &\mapsto (w_1,\dots,w_6,u_1,u_2,u_3)=(y_1,y_2,y_2,y_3,y_3,y_4,u_1,u_2,u_3);\\
    s^3_{u}: & \qquad X^4  \times \bR^3_u & &\rightarrow \qquad X^4  \times \bR_t,\\
    &(y_1,y_2,y_3,y_4,u_1,u_2,u_3) & &\mapsto (y_1,y_2,y_3,y_4,t)=(y_1,y_2,y_3,y_4,u_1+u_2+u_3); \\
\pi_{\textnormal{in}}: & \qquad X^4  \times \bR_t & &\rightarrow \qquad X^2  \times \bR_t,\\
    &(y_1,y_2,y_3,y_4,t) & &\mapsto (x_1,x_2,t)=(y_1,y_4,t).
\end{align*}
Under the coordinate convention, we require that $K$ is over $(w_1,w_2,u_1)$, $IQ_0$ is over $(w_3,w_4,u_2)$, and $\overline{K}$ is over $(w_5,w_6,u_3)$. Furthermore, under these identifications, we have $x_1=y_1=w_1$ and $x_2=y_4=w_6$.

When we have $n$ copies of $x_a, y_b, w_c$, etc., we use a superscript to distinguish them, such as $x^i_a,y^i_b,w^i_c$ for $i\in [n]$. Therefore, we have an isomorphism of chain complexes
\begin{equation}\label{equation: resolve def F_n IQz'}
\begin{split}
   &F_n^{nd}(IQ_1')=\pi_{n !}(\Tilde{\Delta}_{n}^{-1}s_{t!}^n((IQ_1')^{\boxtimes n})) \\
   \cong &\pi_{3n !}(\Tilde{\Delta}_{3n}^{-1}s_{u!}^{3n}( K\boxtimes IQ_0 \boxtimes \overline{K} \boxtimes\cdots \boxtimes K\boxtimes IQ_0 \boxtimes \overline{K}  )).    
\end{split}
 \end{equation}
Again, let us be careful with the coordinates. In the middle term, we use the coordinate $(\bq_i;t_i)_{i\in [n]}$ as usual, while on the right-hand side, we use the coordinates $(w_c^i,u_d^i)_{c,d;i\in [n]}$ for $c=1,2,3,4,5,6$ and $d=1,2,3$. Here, we identify $[n]$ with $\bZ/n$.

The map $\Tilde{\Delta}_{n}$ identifies $\bq_i=x_{2}^i=x_{1}^{i+1}$ for all $i\in \bZ/n$, while the map $\Tilde{\Delta}_{3n}$ identifies $w_2^i=w_3^i$, $w_4^i=w_5^i$ and $w_6^i=w_1^{i+1}$. Under the identification $x_1^i=w_1^i$ and $x_2^i=w_6^i$, $\Tilde{\Delta}_{n}$ and $\Tilde{\Delta}_{3n}$ are compatible. Notice that the $\bZ/n$-cyclic permutation operator on the right hand side of \eqref{equation: resolve def F_n IQz'} is given by 
\[(w_c^i,u_d^i)_{c,d;i\in [n]}\mapsto (w_c^{i+1},u_d^{i+1})_{c,d;i\in [n]}.\]
Then we have that \eqref{equation: resolve def F_n IQz'} is an equivariant isomorphism of chain complexes with respect to the $\bZ/n$-cyclic permutation. 

Now, similar to \autoref{cyclic invariance of trace}, we have an $\bZ/n$-equivariant isomorphism of chain complexes:
\begin{equation}\label{equation: move bar K isomorphism}
    \begin{split}
       & \pi_{3n !}(\Tilde{\Delta}_{3n}^{-1}s_{t!}^{3n}( K\boxtimes IQ_0 \boxtimes \overline{K} \boxtimes\cdots \boxtimes K\boxtimes IQ_0 \boxtimes \overline{K}  ))\\ \cong& \pi_{3n !}(\Tilde{\Delta}_{3n}^{-1}s_{t!}^{3n}(  IQ_0 \boxtimes \overline{K}\boxtimes K \boxtimes\cdots \boxtimes IQ_0 \boxtimes \overline{K} \boxtimes K)),
    \end{split}
\end{equation}
which is induced by the $\bZ/n$-equivariant map 
\[(w_1^i,w_2^i,w_3^i,w_4^i,w_5^i,w_6^i,u_1^i,u_2^i,u_3^i)_{i\in [n]}\mapsto (w_3^i,w_4^i,w_5^i,w_6^i,w_1^i,w_2^i,u_2^i,u_3^i,u_1^i)_{i\in [n]}.\]
Again, similar to \eqref{equation: resolve def F_n IQz'}, we have an $\bZ/n$-equivariant isomorphism of chain complexes:
\begin{equation}\label{equation: reverse def of F_n}
\begin{split}
   & \pi_{3n !}(\Tilde{\Delta}_{3n}^{-1}s_{t!}^{3n}(  IQ_0 \boxtimes \overline{K}\boxtimes K \boxtimes\cdots \boxtimes IQ_0 \boxtimes \overline{K} \boxtimes K))  \\
 \cong & \pi_{n !}(\Tilde{\Delta}_{n}^{-1}s_{t!}^n((IQ_0\star\overline{K}\star K )^{\boxtimes n}))= F_n^{nd}(IQ_0\star \overline{K}\star K ).
\end{split}
 \end{equation}
To summarize, we constructed an $\bZ/n$-equivariant isomorphism of chain complexes by composing \eqref{equation: resolve def F_n IQz'}, \eqref{equation: move bar K isomorphism} and \eqref{equation: reverse def of F_n}, say:
\[ \mu_n:F_n^{nd}(IQ_0 \star \overline{K}\star K) \cong F_n^{nd}(IQ_1').  \]
On the other hand, we have the morphism of chain complexes:
\begin{equation}\label{equation: Phi_n}
\Phi_n: F_n^{nd}(IQ_0 )\cong F_n^{nd}(IQ_0\star \bK_{\Delta_{X^2}\times \{0\}}) \xrightarrow{F_n^{nd}(IQ_0\star f)} F_n^{nd}(IQ_0\star \overline{K}\star K) \xrightarrow{\mu_n} F_n^{nd}(IQ_1').
\end{equation}
We already verify that the map $\Phi_n$ is $\bZ/n$-equivariant, so we only need to verify that $\Phi_\bullet$ defines a map of pre-cosimplical map, which is a long exercise.

Then we apply the cyclic cochain functor and descent to the mixed derived category. Consequently, we have a morphism
\[\Phi^{S^1,out}_{T}:F_{\bullet}^{S^1,out}(\varphi(U),\bK)_{T} \rightarrow F_{\bullet}^{S^1,out}(U,\bK)_{T}.\]
One can also check that, after applying the forgetful functor, $\Phi^{S^1,out}_{T}$ is conjugate to the isomorphism
\[F^{out}_1(\varphi(U),\bK)_T \cong F^{out}_1(U,\bK)_T \qquad \in D(\bK-\Mod).\]
Then we have that $\Phi^{S^1,out}_{T}$ defines an isomorphism in the mixed derived category.

Similarly, we obtain an isomorphism
\[\Phi^{S^1}_{T}: F_{\bullet}^{S^1}(\varphi(T^*X),\bK)_{T} \rightarrow F_{\bullet}^{S^1}(T^*X,\bK)_{T},\]
and one can check that it is the identity map in the mixed derived category.

Finally, our construction also gives a commutative diagram, in the mixed derived category,
\begin{equation*}
 \begin{tikzcd}
{F_{\bullet}^{S^1}(T^*X,\bK)_{T}} \arrow[d,"\id"] \arrow[r] & {F_{\bullet}^{S^1,out}(\varphi(U),\bK)_{T}} \arrow[d,"\Phi^{S^1,out}_{T}","\cong"'] \\
{F_{\bullet}^{S^1}(T^*X,\bK)_{T}} \arrow[r]             & {F_{\bullet}^{S^1,out}(U,\bK)_{T}} .          
\end{tikzcd}   
\end{equation*}
Then one obtains the isomorphism, in the mixed derived category,
\[\Phi^{S^1}_{T}: F_{\bullet}^{S^1}(\varphi(U),\bK)_{T} \rightarrow F_{\bullet}^{S^1}(U,\bK)_{T}.\qedhere\]
\end{proof}
\subsection{Integral coefficient and computation}\label{section: Z coefficient}
In this subsection, we will discuss some results regarding the definition over integral coefficient, and some computation problems. Therefore, we will not assume that $\bK$ is a field in this subsection.

Consider the following assumptions and related consequences: 

For a given open set $U\subset T^*X$, we say that it satisfies the {\em action cut-off condition} if there exists some $t_0\leq 0$ such that $(P_U)_{\{t<t_0\}}\cong 0$ in $D(X^2\times \bR)$. Alternatively, we can also require $(Q_U){{t<t_0}}\cong 0$. If the condition holds over $\mathbb{Z}$, it is valid for all coefficient rings.

In this case, it is evident that the morphism \[ [\bK_{\Delta_{X^2}\times \{0\}} \rightarrow Q_U] \rightarrow [\bK_{\Delta_{X^2}\times \{0\}} \rightarrow (Q_U)_{\{t_0 \leq t\leq N\}}] \in ho({^{\triangleright}C(X^2\times\bR)})\]
induced by the closed inclusion $[t_0,N]\subset \bR$ gives an isomorphism
\[F^{out}_\bullet(U,\bK)_T =F_{\bullet,T}([\bK_{\Delta_{X^2}\times \{0\}} \rightarrow Q_U])\cong F_{\bullet,T}([\bK_{\Delta_{X^2}\times \{0\}} \rightarrow (Q_U)_{\{t_0 \leq t\leq N\}}])\]
for $T\geq 0$ and a sufficiently large $N\in \mathbb{N}$ depending on $T$.

If $U$ satisfies the action cut-off condition, we can compute $F^{out}_\bullet(U,\bK)_T$ using $ (Q_U)_{\{t_0 \leq t\leq N\}}$. The advantage of $(Q_U)_{\{t_0 \leq t\leq N\}}$ is that it is likely a bounded complex and has a special form in the following sense.

We say that an object $F$ in $D(X^2\times \bR)$ is of {\em projective constant (PC) type} if there exists a continuous map of topological spaces $\pi: Z\rightarrow X^2\times \bR$ and a locally closed subset $W\subset Z$ such that $F\cong \tnR \pi_{!}\bK_W[i]$ for some $i\in \bZ$. If we replace $\bK_W[i]$ with some flat complex $G\in D^b(Z)$, we will call $F$ to be of {\em projective flat (PF) type}. For example, constant sheaves over $X^2\times \bR$ are trivially PC type. If $\bK$ is a field, then all sheaves are trivial of PF type. If $F$ is of PC type, it is a bounded complex.

When $F$ is of PF type, a variant of the construction in \autoref{section: cyclic structure} can be applied without assuming that $\bK$ is a field, as we do not need to derive tensor products for flat complexes. Consequently, it becomes possible to define the $S^1$-theory over $\bZ$. Furthermore, when microlocal kernels are of PC type, we will observe that the computation of $F^{S^1}$ reduces to the computation of the $S^1$-equivariant cohomology of an $S^1$-space. Therefore, we consider the following definition.
\begin{Def}\label{def: well-behaved microlocal kernels}For an open set $U\subset T^*X$, we say $U$ admits {\em well-behaved microlocal kernels} if the following conditions are satisfied:
\begin{enumerate}[fullwidth]
\item (Action cut-off condition) There exists some $t_0\leq 0$ such that $(P_U)_{\{t<t_0\}}\cong 0$ in $D(X^2\times \bR)$.

\item For every $N\in \bN$, there exists a topological space $Z_N$, a locally closed subset $W_{N} \subset X^2\times Z_N\times [t_0,N]$ and a closed subset $\Delta \times [0,N] \subset W_{N}$. Denote the projection map by $\pi_{Z_N}: X^2\times Z_N\times \bR \rightarrow  X^2 \times \bR$, the following properties hold: 
\begin{itemize}
    \item $W_N$ is a Euclidean neighborhood retracts (ENR);

    \item$\Delta =\{(x,x,z(x)):x\in X\}$, where $z:X\rightarrow Z_N$ is a continuous map. In particular, $\pi_{Z_N}|_{\Delta \times [0,N]}: \Delta \times [0,N] \rightarrow X^2\times [0,N]$ induces a homeomorphism to $\Delta_{X^2}\times [0,N]$.

    \item We have the following isomorphism of distinguished triangles:
    \begin{equation*}
     \begin{tikzcd}
\tnR\pi_{Z_N!}\bZ_{W_{N}} \arrow[r] \arrow[d] & {\tnR\pi_{Z_N!}\bZ_{\Delta \times [0,N]}} \arrow[r] \arrow[d] & {\tnR\pi_{Z_N!}\bZ_{W_{N}\setminus \Delta \times [0,N]}[1]} \arrow[r, "+1"] \arrow[d] & {} \\
{P_U|_{[t_0,N]}} \arrow[r]               & {\bZ_{\Delta_{X^2} \times [0,N]}} \arrow[r]                    & {Q_U|_{[t_0,N]}} \arrow[r, "+1"]                                                       & {.}
\end{tikzcd}   
    \end{equation*}
    \end{itemize}
\end{enumerate}
\end{Def}
\begin{RMK}
Confirming whether an open set $U$ admits well-behaved microlocal kernels can be challenging. However, it holds true for many interesting examples. For convex toric domains, this can be established using the generating function description provided in \cite[Subsection 3.1]{Capacities2021}. Regarding the unit disk bundle, we will demonstrate this fact in \autoref{kernel of unit disk bundle}. Generally, it is plausible to consider the well-behaved property as being generic for microlocal kernels of bounded open sets. In fact, the PC-type property is preserved by the convolution and composition operations of sheaves. Since the GKS quantization is locally constructed by convolutions of constant sheaves, it is highly probable that, under suitable compactness conditions, we can construct a GKS quantization that yields well-behaved microlocal kernels. 
\end{RMK}
From here until the end of this subsection, we make the assumption that $U$ admits well-behaved microlocal kernels. We will now explain how to define and compute $F_\bullet^{S^1}(U,\bZ)_T$ and $F^{S^1,out}\bullet(U,\bZ)_T$ in an ad-hoc manner. Typically, when $T$ is fixed, we take a specific $N$, which determines $Z_N$ and $W_N$. Consequently, the notation $Z=Z_N$, $W=W_N$, and $\pi_{Z!}=\pi_{Z_N!}$ will not cause much confusion.
 
For $n\geq 1$, consider 
\[\mathcal{W}_n= s_t^n \Tilde{\Delta}^{-1}_{n,Z}\left(W^n 
 \cap \{\sum_i t_i\leq T\}\right)\subset X^n  \times Z^n \times \bR,\]
where 
\[\Tilde{\Delta}_{n,Z}=\Tilde{\Delta}_{n}\times \id_{Z^n}: X^n\times Z^n \times \bR^n \rightarrow X^{2n}\times Z^n \times \bR^n.\]
It is direct to check that 
\begin{align*}
\mathcal{W}_n\qquad \qquad & \rightarrow s_t^{n+1} \Tilde{\Delta}^{-1}_{n+1,Z}\left((\Delta\times \{0\} ) \times W^{n}\cap \{\sum_i t_i\leq T\}\right),\\
 (x_1,\cdots,x_n,w_1,\cdots,w_n,t)&\mapsto (x_1,x_1,x_2\cdots,x_n,z(x_1),w_1,\cdots,w_n,t)
\end{align*}
defines a homeomorphism. Therefore, we define the following closed embedding
\begin{align*}
d_n: \mathcal{W}_n\rightarrow s_t^{n+1} \Tilde{\Delta}^{-1}_{n+1,Z}\left((\Delta\times \{0\} ) \times W^{n}\cap \{\sum_i t_i\leq T\}\right) \\ \rightarrow  s_t^{n+1} \Tilde{\Delta}^{-1}_{n+1,Z}\left((\Delta\times [0,N] ) \times W^{n}\cap \{\sum_i t_i\leq T\}\right) \rightarrow \mathcal{W}_{n+1}.
\end{align*}
Similarly, we can permute the positions of $\Delta\times [0,N]$ to obtain some closed inclusions $d_i$ for $i\in [n-1]_0$. We also denote $t_n$ as the cyclic permutation on $\mathcal{W}_n$. By doing so, we obtain a pre-cocyclic space denoted as $\mathcal{W}_\bullet=( \mathcal{W}_{n+1},d_i,t_n)_{n\in \bN_0}$.

Similarly, by replacing $W$ with $\Delta\times [0,N]$, we can define a pre-cocyclic space $\Delta_{\bullet}$ and a pre-cyclic chain complex $C_c^*(\Delta_{\bullet},\bZ)$. The closed inclusion $\Delta\times [0,N]\subset W$ induces a closed embedding of pre-cocyclic spaces $\Delta_{\bullet}\rightarrow \mathcal{W}_\bullet$. As a result, we obtain a chain map of pre-cyclic complexes:
\[C_c^*(\mathcal{W}_\bullet,\bZ) \rightarrow C_c^*(\Delta_{\bullet},\bZ).\]

\begin{RMK}Here, we choose the Alexander-Spanier cochain complex, as it naturally computes the hypercohomology of the Godement resolution of the constant sheaf, as explained in \cite[Chapter 4, Section 14]{demailly1997complex}. In particular, we have a commutative diagram that arises tautologically:
\begin{equation}\label{equation: diagram compute S1}
 \begin{tikzcd}
{C_c^*(\mathcal{W}_\bullet,\bZ)} \arrow[r] \arrow[d, "="] & {C_c^*(\Delta_{\bullet},\bZ)} \arrow[d, "="] \\
{\tnR\Gamma_c(\mathcal{W}_\bullet,\bZ_{\mathcal{W}_\bullet})} \arrow[r]           & {\tnR\Gamma_c(\Delta_\bullet,\bZ_{\Delta_\bullet})}     ,       
\end{tikzcd}
\end{equation}
where ${\tnR\Gamma_c(\mathcal{W}_\bullet,\bZ_{\mathcal{W}_\bullet})}$ denotes the pre-cyclic complex $({\tnR\Gamma(\mathcal{W}_n,\bZ_{\mathcal{W}_n}}))_n$ computed using corresponding the Godement resolutions.

Additionally, as all $\mathcal{W}_n$ is ENR, the Alexander-Spanier cohomology, and the singular cohomology are isomorphic with or without support conditions.
\end{RMK}

We have the following computational result for $S^1$-equivariant Chiu-Tamarkin invariant.
\begin{Prop}\label{prop: computation}If $\bK$ is a field and $T\geq 0$, then for an open set $U$ admitting well-behaved microlocal kernels, with the notations as above, we have
\[ CC^*(C_c^*(\mathcal{W}_\bullet,\bK)) \cong F^{S^1}_{\bullet}(U,\bK)_T,\qquad \in D_{dg}( \bK[\epsilon]-\Mod).\]
\end{Prop}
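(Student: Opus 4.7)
The plan is to bypass the cocone definition of $F_\bullet^{S^1}(U,\bK)_T$ by using the PC-type description of $P_U$ to run the chain-level construction directly on $P_U$, and then to match the result with $C_c^*(\mathcal{W}_\bullet,\bK)$ via proper base change. Unwinding \eqref{equation: defining CT} together with the functoriality of $CC^*$, the tautological triangle $F_\bullet(U,\bK)_T\to F_\bullet(T^*X,\bK)_T\to F_\bullet^{out}(U,\bK)_T$ gives $F_\bullet^{S^1}(U,\bK)_T\cong CC^*(F_\bullet(U,\bK)_T)$, so it is enough to lift $F_\bullet(U,\bK)_T$ to a pre-cyclic chain complex that is isomorphic to $C_c^*(\mathcal{W}_\bullet,\bK)$ in $D_{dg}(\Delta C_{0,dg}(\bK-\Mod))$.

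Using the action cut-off, first replace $P_U$ by $P_U|_{[t_0,N]}$ for $N$ large enough that the stalk at $T$ is unaffected. The well-behaved hypothesis then gives $P_U|_{[t_0,N]}\cong \tnR\pi_{Z!}\bZ_W$ and realizes the defining triangle by the closed inclusion $\Delta\times[0,N]\hookrightarrow W$. Because $\bZ_W$ is flat, every relevant external product, proper pushforward along a locally closed immersion, and summation can be performed non-derived. Combining proper base change for $\tnR\pi_{Z^n!}$ with the $\gamma$-sheaf identity $(P_U)_T\cong \tnR\Gamma((-\infty,T],P_U)$, I identify
\[\bigl(\tnR\pi_{\underline{\bq}!}\Tilde{\Delta}_n^{-1}\tnR s_{t!}^n(P_U^{\boxtimes n})\bigr)_T \;\cong\; \tnR\Gamma_c(\mathcal{W}_n,\bK),\]
with $\mathcal{W}_n$ emerging exactly as the image in $X^n\times Z^n\times\bR$ of the successive twisted diagonal, summation, and sublevel cut-off $\{\sum_i t_i\leq T\}$. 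Since each $\mathcal{W}_n$ is ENR, the Alexander--Spanier complex $C_c^*(\mathcal{W}_n,\bK)$, aligned with the Godement resolution as in \eqref{equation: diagram compute S1}, provides a strict chain-level model.

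The next step is to promote these level-wise identifications to one of pre-cyclic chain complexes. The algebraic face $d_i$ comes from applying $P_U\to\bK_{\Delta_{X^2}\times[0,\infty)}$ in the $i$-th slot, which under the PC-type identification is the restriction $\tnR\pi_{Z!}\bZ_W\to\tnR\pi_{Z!}\bZ_{\Delta\times[0,N]}$ along the closed embedding; by functoriality of proper pushforward this transports to the closed embedding $d_i\colon\mathcal{W}_n\hookrightarrow\mathcal{W}_{n+1}$ in the definition of $\mathcal{W}_\bullet$. The cyclic automorphism $t_n$ is transported analogously because the functors involved are equivariant with respect to permutation of factors. Thus we obtain $F_\bullet(U,\bK)_T\cong C_c^*(\mathcal{W}_\bullet,\bK)$ in $D_{dg}(\Delta C_{0,dg}(\bK-\Mod))$, and applying $CC^*$ concludes.

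The principal obstacle lies in promoting the pointwise identifications to a strictly compatible morphism of pre-cyclic objects: a priori the base change isomorphisms are natural only up to homotopy, so one must verify that the face and cyclic maps on both sides commute strictly rather than only up to a coherent system of homotopies. Both ingredients of the well-behaved hypothesis address this: PC-type flatness makes the base change, Künneth, and projection formulas hold non-derived and hence strictly functorial, while the ENR condition lets Alexander--Spanier cochains functorialize honestly along the closed embeddings $d_i$ and cyclic permutations that define $\mathcal{W}_\bullet$.
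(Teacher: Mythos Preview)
Your proposal has a genuine gap in the very first reduction step. You write that ``the tautological triangle $F_\bullet(U,\bK)_T\to F_\bullet(T^*X,\bK)_T\to F_\bullet^{out}(U,\bK)_T$ gives $F_\bullet^{S^1}(U,\bK)_T\cong CC^*(F_\bullet(U,\bK)_T)$'', but no such pre-cocyclic object $F_\bullet(U,\bK)_T$ has been defined. In the paper, $F_\bullet^{S^1}(U,\bK)_T$ is \emph{defined} as a cocone in the mixed derived category (equation~\eqref{equation: defining CT}), precisely because the chain-level construction of \autoref{section: cyclic structure} only works for arrows \emph{out of} $\bK_{\Delta_{X^2}\times\{0\}}$; the Remark there explicitly says the authors do not know how to run it from $P_U\to\bK_\Delta$. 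Even granting your PC-type model, the object you build from $P_U$ via the maps $P_U\to\bK_{\Delta_{X^2}\times[0,\infty)}$ has face maps going $n\to n-1$ and is therefore pre-\emph{cyclic}, whereas the cocone of $F_\bullet(T^*X,\bK)_T\to F_\bullet^{out}(U,\bK)_T$ in $D_{dg}(\Delta C_{0,dg}(\bK-\Mod))$ is pre-\emph{cocyclic}. These live in different diagram categories, and you never construct a comparison map between their $CC^*$'s; you only assert that both are ``$CC^*(F_\bullet(U,\bK)_T)$'' for an object whose existence and variance are not pinned down. The fact that all face maps happen to be quasi-isomorphisms does not by itself produce a morphism of mixed complexes.

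The paper circumvents this entirely by never attempting to lift the cocone to the pre-(co)cyclic level. Instead it works inside $D_{dg}(\bK[\epsilon]-\Mod)$: using the PC model it writes down sheaf-level maps $\bK_{\mathcal{W}_n}\to\bK_{\Delta_n}\to s_{t!}\Tilde{\Delta}_{n,Z}^{-1}(\ldots)$ whose composition vanishes cohomologically, applies $CC^*$ to obtain morphisms $CC^*(\overline{C}_c^*(\mathcal{W}_\bullet,\bK))\to F_\bullet^{S^1}(T^*X,\bK)_T\to F_\bullet^{S^1,out}(U,\bK)_T$ with zero composite, and then invokes the \emph{universal property of the cocone} in the dg setting to obtain a single morphism into $F_\bullet^{S^1}(U,\bK)_T$. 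That morphism is then checked to be an isomorphism by the conservativity of the forgetful functor \eqref{equation: forgetful functor} and a $0$-skeleton reduction (which is where idempotence of $P_U$ enters). A final comparison of resolutions passes from $\overline{C}_c^*$ to $C_c^*$. Your chain-level identifications of levels, faces, and cyclic operators are useful and largely correct, but they feed into the paper's argument at the stage of constructing the composable pair of morphisms, not as a shortcut that bypasses the cocone.
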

\begin{proof}Recall that $F^{S^1}_{\bullet}(U,\bK)_T$ is defined as the cocone of
\[F^{S^1}_{\bullet}(T^*X,\bK)_T \rightarrow F^{S^1,out}_{\bullet}(U,\bK)_T.\]
It is direct to see that 
\[F^{S^1}_{\bullet}(T^*X,\bK)_T \cong CC^*(C_c^*(\Delta_\bullet,\bK)).\]
On the other hand, thanks to the well-behaved assumption, we can consider the Godement resolution $\bK_{X^2\times Z \times [t_0,N]} \rightarrow A$. This gives us a c-soft chain model:
\[Q_N\Big|_{[t_0,N]} = \pi_{Z!}(A\otimes [\bK_W\rightarrow \bK_{\Delta\times [0,N]}]),\]
where $[\bK_W\rightarrow \bK_{\Delta\times [0,N]}]$ means the chain complex that represents the cone of the natural morphism. Consequently, we have
\begin{align*}
    F^{out}_{n}(U,\bK)_T \cong & \pi_{n !}(\Tilde{\Delta}_{n}^{-1}s_{t!}^n(\pi_{Z!}(A\otimes [\bK_W\rightarrow \bK_{\Delta\times [0,N]}])^{\boxtimes n}))_T\\
     \cong & \Gamma_c\left((X^2\times Z)^n\times [0,N], s_{t!}\Tilde{\Delta}_{n,Z}^{-1}(\bK_{\{\sum_i t_i\leq T\}} \otimes A^{\boxtimes n} \otimes [\bK_W\rightarrow \bK_{\Delta\times [0,N]}]^{\boxtimes n})\right)\\
     \cong & \Gamma_c\left((X\,\times\, Z)^n\times [0,N], s_{t!}(\bK_{\{\sum_i t_i\leq T\}} \otimes \Tilde{\Delta}_{n,Z}^{-1}A^{\boxtimes n} \otimes \Tilde{\Delta}_{n,Z}^{-1}[\bK_W\rightarrow \bK_{\Delta\times [0,N]}]^{\boxtimes n})\right).
\end{align*}
Notice that ${\Delta}_n=s_{t}\Tilde{\Delta}_{n,Z}^{-1}(\Delta\times [0,N])^{ n} \cap \{\sum_i t_i\leq T\} $. Then we have the following natural morphisms for each $n$,
\begin{equation}\label{equation: two shitty morphisms}
  \bK_{\mathcal{W}_n}\rightarrow \bK_{{\Delta}_n} \rightarrow s_{t!}\Tilde{\Delta}_{n,Z}^{-1}(\bK_{\{\sum_i t_i\leq T\}} \otimes[\bK_W\rightarrow \bK_{\Delta\times [0,N]}]^{\boxtimes n}),  
\end{equation}
and their composition vanishes on the cohomology level. In particular, they induce two morphisms in the mixed derived category, say:
\[CC^*(\overline{C}_c^*(\mathcal{W}_\bullet,\bK)) \rightarrow F^{S^1}_{\bullet}(T^*X,\bK)_T \rightarrow F^{S^1,out}_{\bullet}(U,\bK)_T,\]
where $\overline{C}_c^*(\mathcal{W}_\bullet,\bK)$ denotes the derived section computed using the resolution $\bK_{(X^2\times Z_N\times [t_0,N])^n} \rightarrow A^{\boxtimes n}$. Their composition vanishes $0$ in $D_{dg}(\bK[\epsilon]-\Mod)$.

Then there exists a unique morphism in $D_{dg}(\bK[\epsilon]-\Mod)$:
\[ CC^*(\overline{C}_c^*(\mathcal{W}_\bullet,\bK)) \rightarrow F^{S^1}_{\bullet}(U,\bK)_T.\]
To establish that this morphism is an isomorphism, we only need to verify that it is an isomorphism in $D_{dg}(\bK-\Mod)$ since the forgetful functor \eqref{equation: forgetful functor} is conservative. We can follow a similar discussion as in \autoref{section: gysin} and test two things: i) We can reduce the problem to the 0-skeleton of $\overline{C}_c^*(\mathcal{W}_\bullet,\bK)$, which is proved using that $P_U$ is idempotent and $P_U\cong \pi_{Z!}(A_W)$. ii) On the 0-skeletons, it is evident that $F^{S^1}_{\bullet}(U,\bK)_T$ is isomorphic to $\tnR\Gamma_c(\mathcal{W}_1,\bK)$, which is also isomorphic to $F_1(U,\bK)_T$. Therefore, we have the isomorphism:
\[CC^*(\overline{C}_c^*(\mathcal{W}_\bullet,\bK)) \cong F^{S^1}_{\bullet}(U,\bK)_T\quad \in D_{dg}(\bK[\epsilon]-\Mod).\]

Lastly, we need to compare $CC^*(\overline{C}_c^*(\mathcal{W}_\bullet,\bK))$ and $CC^*(C_c^*(\mathcal{W}_\bullet,\bK))$. The proof of the Kunneth formula for Alexander-Spanier cohomology (see \cite[Théorème 6.2.1]{Godement1960TopologieAE}) shows that we can find a suitable resolution $A$ such that it is multiplicative. Moreover, for the Godement resolution $\bK_{(X^2\times Z_N\times [t_0,N])^n} \rightarrow B_n$, there exists a commutative diagram of quasi-isomorphisms:
\begin{equation*}
    \begin{tikzcd}
                           & \bK_{(X^2\times Z_N\times [0,N])^n} \arrow[ld] \arrow[rd] &     \\
A^{\boxtimes n} \arrow[rr] &                                               & B_n.
\end{tikzcd} 
\end{equation*}
Then we only need to verify that the morphisms of resolutions are compatible with the morphisms induced by closed inclusions, which is a straightforward task.
\end{proof}
Therefore, we can provide an ad-hoc definition of the $S^1$-equivariant Chiu-Tamarkin invariant over any commutative ring.
\begin{Def}\label{Def: Z def}For a commutative ring $\bK$, $T\geq 0$ and an open set $U\subset T^*X$ admitting well-behaved microlocal kernels, we define
\[F_{\bullet}^{S^1}(U,\bK)_T\coloneqq \textnormal{CC}^*(C_c^*(\mathcal{W}_\bullet,\bK))  \in D_{dg}(\bK[\epsilon]-\Mod).\]  
We also define Chiu-Tamarkin invariants over $\bK$ as follows:
\begin{align*}
C^{S^1}_{T}(U,\bK)\coloneqq& \RHOM_{\bK[\epsilon]}(F_{\bullet}^{S^1}(U,\bK)_{T},\bK[-d])\in D_{dg}(\bK[\epsilon]-\Mod).
\end{align*} 
\end{Def}
\begin{Prop}\label{prop: computation 2}
For a commutative ring $\bK$, $T\geq 0$, and an open set $U$ that admitting well-behaved microlocal kernels, the object $F_{\bullet}^{S^1}(U,\bK)_T$ computes the compactly supported $S^1$-equivariant cohomology of the fat geometric realization $|\mathcal{W}_\bullet|$ of the pre-cocyclic space $\mathcal{W}_\bullet$.

The $S^1$-equivariant Chiu-Tamarkin invariant $C_T^{S^1}(U,\bK)$ computes the Borel-Moore $S^1$-equivariant homology of the fat geometric realization $|\mathcal{W}_\bullet|$ of the pre-cocyclic space $\mathcal{W}_\bullet$.\end{Prop}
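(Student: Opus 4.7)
The plan is to recognize the functor $F_{\bullet}^{S^1}(U,\bK)_T = CC^*(C_c^*(\mathcal{W}_\bullet,\bK))$ as an instance of a general machine that computes $S^1$-equivariant cohomology of fat realizations of pre-cocyclic spaces, and then to deduce the second statement by the duality $C^{S^1}_T = \RHOM_{\bK[\epsilon]}(F^{S^1}, \bK[-d])$ built into \autoref{Def: Z def}.

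First, I would recall the classical fact (Burghelea--Fiedorowicz, Goodwillie, Jones): the fat geometric realization $|X_\bullet|$ of a pre-cocyclic (equivalently, semi-cyclic) space $X_\bullet$ carries a canonical $S^1$-action, and for any ring $\bK$ the pre-cocyclic cochain complex $C_c^*(X_\bullet,\bK)$, assembled via the cyclic cochain construction $CC^*$ of \autoref{subsection: mixed complexes}, computes $\tnR\Gamma_c^{S^1}(|X_\bullet|,\bK)$ as an object of $D_{dg}(\bK[\epsilon]\textnormal{-Mod})$. This is the standard translation between the $(b,B)$-bicomplex and the $ES^1\times_{S^1}(-)$ Borel construction, and it is valid with any coefficient ring since only flat operations intervene once the cochains are fixed. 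The only subtle points are (i) working with the fat realization rather than the thin one, which is necessary because $\mathcal{W}_\bullet$ is only pre-cocyclic (no codegeneracies), and (ii) keeping the compact support condition, which is automatic once one works with the compactly supported Alexander--Spanier cochain, since each $\mathcal{W}_n$ is ENR by the well-behaved hypothesis and the face maps $d_i$ are closed embeddings.

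Applying this to $X_\bullet = \mathcal{W}_\bullet$ then gives
\begin{equation*}
F_{\bullet}^{S^1}(U,\bK)_T = CC^*(C_c^*(\mathcal{W}_\bullet,\bK)) \cong \tnR\Gamma_c^{S^1}(|\mathcal{W}_\bullet|,\bK)
\end{equation*}
in $D_{dg}(\bK[\epsilon]\textnormal{-Mod})$, which is the first claim. For the second claim, I would apply the functor $\RHOM_{\bK[\epsilon]}(-,\bK[-d])$ and use the equivariant Verdier duality identification
\begin{equation*}
\RHOM_{\bK[\epsilon]}(\tnR\Gamma_c^{S^1}(|\mathcal{W}_\bullet|,\bK),\bK[-d]) \cong H^{BM,S^1}_{d-*}(|\mathcal{W}_\bullet|,\bK),
\end{equation*}
where the right-hand side is the $S^1$-equivariant Borel--Moore homology (defined via the dualizing complex of the Borel construction). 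The degree shift $d$ matches the normalization chosen in \autoref{Def: Z def}, where the non-equivariant piece $F_1^{nd}$ already lives on a $d$-dimensional diagonal.

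The main obstacle will be step (i)--(ii) above: writing a careful proof that the cyclic cochain of the Alexander--Spanier complex of a pre-cocyclic ENR space really computes compactly supported $S^1$-equivariant cohomology of the fat realization over an arbitrary ring. The cleanest route is to reduce to the known statement for singular cochains (as in Jones, \emph{loc.~cit.}) by using the diagram \eqref{equation: diagram compute S1} together with the ENR assumption, which guarantees that singular cohomology, Alexander--Spanier cohomology and sheaf cohomology all agree, and then to upgrade from the underlying chain complex to the mixed complex structure by the Eilenberg--Zilber style comparison between the $(b,B)$-complex and the Borel construction. The rest is formal: persistence of mixed structures under dg-quotients and closed inclusions, and conservativity of the forgetful functor \eqref{equation: forgetful functor} to reduce all checks to the non-equivariant level already performed in \autoref{prop: computation}.
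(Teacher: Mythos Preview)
Your proposal is correct and takes essentially the same approach as the paper: invoke the Jones/Loday machinery identifying the cyclic cochain of a (pre-)cocyclic space with the $S^1$-equivariant cohomology of its fat realization, noting that the arguments adapt to the pre-cocyclic setting. The paper's proof is a one-line citation to \cite{Jones1987} and \cite[Theorem 7.2.3]{LodayCyclic}; your expansion of the technical points (fat vs.\ thin realization, compact supports via ENR and closed face maps, the duality for the Borel--Moore statement) is exactly the content those citations are meant to carry.
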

\begin{proof}The proof follows a similar approach as in \cite{Jones1987} or \cite[Theorem 7.2.3]{LodayCyclic}. Although those references consider cyclic spaces, the technical arguments can be adapted to pre-cocyclic spaces without significant modifications.
\end{proof}
Regarding the restriction morphisms defined in \autoref{subsection: Restriction morphisms}, we obtain the following result:
\begin{Coro}For an open set $U$ admitting well-behaved microlocal kernels, we have the following commutative diagram:
\begin{equation*}
    \begin{tikzcd}
{H^qC_T^{S^1}(U,\bK)} \arrow[d,"\cong"] \arrow[r]        & {H^qC_T^{\bZ/\ell}(U,\bK)} \arrow[d,"\cong"]   \\
{H_{d-q}^{S^1}(|\mathcal{W}_\bullet|,\bK)} \arrow[r] & {H_{d-q}^{\bZ/\ell}(\mathcal{W}_\ell,\bK)}
\end{tikzcd}
\end{equation*}   
\end{Coro}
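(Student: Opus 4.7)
The proof is essentially a naturality statement: I need to check that the algebraic restriction map from \autoref{subsection: Restriction morphisms} is intertwined by the identifications in \autoref{prop: computation 2} with a natural geometric restriction from $S^1$-equivariant homology of the fat realization $|\mathcal{W}_\bullet|$ to $\bZ/\ell$-equivariant homology of its $\ell$-th term $\mathcal{W}_\ell$.

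The first step is to establish the $\bZ/\ell$-analogue of \autoref{prop: computation 2}, namely the right vertical isomorphism $H^qC^{\bZ/\ell}_T(U,\bK)\cong H^{\bZ/\ell}_{d-q}(\mathcal{W}_\ell,\bK)$. This follows the same strategy as \autoref{prop: computation} and its dual, but is simpler: applying the functor $R_\ell$ from \autoref{subsection: Restriction morphisms} to the pre-cocyclic cochain model $C_c^*(\mathcal{W}_\bullet,\bK)$ of $F^{out}_\bullet(U,\bK)_T$ yields the $\bZ/\ell$-complex $(C_c^*(\mathcal{W}_\ell,\bK),t_{\ell-1})$, which after $\RHOM_{\bK[\bZ/\ell]}(-,\bK[-d])$ computes Borel-Moore $\bZ/\ell$-equivariant homology of $\mathcal{W}_\ell$ by the same ad-hoc duality used in \autoref{Def: Z def}.

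The second step is to make the bottom horizontal map explicit. Following Jones, the fat geometric realization $|\mathcal{W}_\bullet|$ carries a canonical $S^1$-action, and for each $\ell$ there is a natural $\bZ/\ell$-equivariant closed embedding $\iota_\ell:\mathcal{W}_\ell\hookrightarrow|\mathcal{W}_\bullet|$ (sending $x$ to its class at a $\bZ/\ell$-invariant point of the $(\ell-1)$-simplex), where $\bZ/\ell$ acts on the source by $t_{\ell-1}$ and on the target through the subgroup inclusion $\bZ/\ell\subset S^1$. Composing pullback in compactly supported equivariant cohomology with the restriction along this subgroup inclusion gives a map $H^*_{c,S^1}(|\mathcal{W}_\bullet|,\bK)\to H^*_{c,\bZ/\ell}(\mathcal{W}_\ell,\bK)$, whose dualization is the bottom arrow of the diagram.

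The third and most technical step is the commutativity check. I would use an explicit Jones-type bicomplex model for the $S^1$-equivariant cohomology of $|\mathcal{W}_\bullet|$ whose $n$-th column is $C_c^*(\mathcal{W}_{n+1},\bK)$; this is essentially the cyclic cochain complex $CC^*$ of \autoref{subsection: mixed complexes} applied to the pre-cocyclic cochains $C_c^*(\mathcal{W}_\bullet,\bK)$. Under this model the functor $R_\ell$ is given by projection onto the $\ell$-th column, and geometrically it realizes the composition of pullback along $\iota_\ell$ with the subgroup restriction. The main obstacle is identifying the algebraic functor $R_\ell$ with this geometric map on the nose at the derived chain level; once naturality is verified for the functor $F_{\bullet,T}$ of \autoref{section: cyclic structure} with respect to the assignment $\bK_{\Delta_{X^2}\times\{0\}}\to IQ_U$, the diagram commutes because both horizontal arrows are induced by the same functor $R_\ell$, and the vertical isomorphisms of \autoref{prop: computation 2} are natural in this chain-level input.
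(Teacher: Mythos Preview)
Your proposal is correct and matches the paper's approach: the corollary is stated there without proof, as an immediate consequence of the restriction construction in \autoref{subsection: Restriction morphisms} together with \autoref{prop: computation 2}, and your outline spells out exactly those ingredients. One small slip: the pre-cocyclic model $C_c^*(\mathcal{W}_\bullet,\bK)$ computes $F^{S^1}_\bullet(U,\bK)_T$ directly (not $F^{out}_\bullet$), per \autoref{prop: computation} and \autoref{Def: Z def}, so you can apply $R_\ell$ to it without going through the cone construction.
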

Based on the topological description, we can state the following proposition, which follows from \cite[Appendix]{viterbo1997}:
\begin{Prop}\label{prop: finite field reduction}For an open set $U$ admitting well-behaved microlocal kernels, $\ell\geq 3$, $T\geq 0$ and any prime factor $\mu$ of $\ell$, if $H^{*}C^{S^1}_T(U,\bZ)$ has no $\mu$-torsion as an abelian group, then we have
\begin{align*}
    \begin{aligned}
         H^{*}C^{\bZ/\ell}_{T}(U,\bZ/\mu)& \cong H^{*}C^{S^1}_{T}(U,\bZ)\otimes H^*(S^1,\bZ/\mu) , \\
        H^{*}C^{S^1}_{T}(U,\bQ)& \cong H^{*}C^{S^1}_{T}(U,\bZ)\otimes\bQ.
    \end{aligned}
\end{align*}
\end{Prop}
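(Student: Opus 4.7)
The plan is to reduce both isomorphisms to topological statements about the $S^1$-space $|\mathcal{W}_\bullet|$ via \autoref{prop: computation 2}, and then apply the argument from \cite[Appendix]{viterbo1997}. First, \autoref{prop: computation 2} identifies $H^{*}C^{S^1}_{T}(U,\bK)$ with the Borel--Moore $S^1$-equivariant homology $H^{BM,S^1}_{d-*}(|\mathcal{W}_\bullet|,\bK)$, and the preceding corollary identifies $H^{*}C^{\bZ/\ell}_{T}(U,\bK)$ with $H^{BM,\bZ/\ell}_{d-*}(\mathcal{W}_\ell,\bK)$, which in turn coincides with $H^{BM,\bZ/\ell}_{d-*}(|\mathcal{W}_\bullet|,\bK)$ once we restrict the $S^1$-action on $|\mathcal{W}_\bullet|$ along the inclusion $\bZ/\ell \hookrightarrow S^1$ (this identification is a general fact about (pre-)cyclic spaces and is compatible with the diagram in that corollary). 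With these dictionaries in hand, the second claimed isomorphism $H^{*}C^{S^1}_{T}(U,\bQ) \cong H^{*}C^{S^1}_{T}(U,\bZ)\otimes \bQ$ becomes the universal coefficient statement for rationalization, which is automatic.

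For the first isomorphism, the key topological input is the homotopy fiber sequence
\[ S^1 \;\longrightarrow\; |\mathcal{W}_\bullet|_{h\bZ/\ell} \;\longrightarrow\; |\mathcal{W}_\bullet|_{hS^1} \]
obtained by pulling back the Borel construction along $B\bZ/\ell \to BS^1$ (whose fiber is $S^1/(\bZ/\ell)\simeq S^1$). I would run the associated Serre spectral sequence with $\bZ/\mu$ coefficients: the $E_2$-page is $H^{p}(|\mathcal{W}_\bullet|_{hS^1},\bZ/\mu)\otimes H^{q}(S^1,\bZ/\mu)$ and the only potentially nonzero differential $d_2$ is cup product with the Euler class of this oriented $S^1$-bundle. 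That Euler class is $\pm \ell\, u \in H^2(BS^1;\bZ)$, and it vanishes in $H^2(BS^1;\bZ/\mu)$ because $\mu\mid \ell$; hence the spectral sequence collapses at $E_2$. The analogous collapse for the Borel--Moore spectral sequence yields
\[ H^{BM,\bZ/\ell}_{*}(|\mathcal{W}_\bullet|,\bZ/\mu) \;\cong\; H^{BM,S^1}_{*}(|\mathcal{W}_\bullet|,\bZ/\mu)\otimes H^{*}(S^1,\bZ/\mu). \]

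The last step is to replace $\bZ/\mu$ by $\bZ$-coefficients in the first tensor factor. By the universal coefficient theorem, the no $\mu$-torsion hypothesis on $H^{*}C^{S^1}_{T}(U,\bZ)$ is exactly what is needed to identify $H^{BM,S^1}_{*}(|\mathcal{W}_\bullet|,\bZ/\mu)$ with $H^{BM,S^1}_{*}(|\mathcal{W}_\bullet|,\bZ)\otimes \bZ/\mu$, and combining this with the previous display gives the desired formula. The main technical obstacle I foresee is verifying that the restriction morphism constructed in \autoref{subsection: Restriction morphisms} corresponds, under the dictionary of \autoref{prop: computation 2}, to the map of Borel constructions induced by $\bZ/\ell\hookrightarrow S^1$; this amounts to a functoriality check built into the pre-cocyclic structure of $\mathcal{W}_\bullet$, and it is essential in order to know that the edge morphism of the Serre spectral sequence above is the map that actually appears in the statement.
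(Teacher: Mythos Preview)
Your proposal is correct and follows the same approach as the paper: the paper's own proof consists of a single sentence invoking the topological description of \autoref{prop: computation 2} and citing \cite[Appendix]{viterbo1997}, and what you have written is precisely a detailed unpacking of that Viterbo-appendix argument (the Serre/Gysin spectral sequence for $S^1\to B(\bZ/\ell)\to BS^1$ pulled back to the Borel construction, collapse via $\ell u\equiv 0\pmod{\mu}$, then universal coefficients under the no-$\mu$-torsion hypothesis). The compatibility check you flag at the end is exactly the content of the corollary preceding \autoref{prop: finite field reduction}, so there is no missing step.
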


\begin{eg}\label{example: convex toric domain}By combining the results of \autoref{prop: computation} and \autoref{prop: computation 2}, we can compute $H^*C_T^{S^1}(X_\Omega,\bK)$ for a convex toric domain $X_\Omega$. For additional notations and more details, we refer to \cite[Section 4.1]{zhangthesis}.

In particular, we have the $S^1$-version of the structure theorem for the Chiu-Tamarkin invariant. Specifically, for $T\geq 0$ we have
\begin{itemize}[fullwidth]
\item For each $Z\in \Omega^\circ_T$, the inclusion of the segment $\overline{OZ} \subset \Omega^\circ_T$ induces a decomposition of the fundamental class $\eta_{T}^{S^1}(X_{\Omega},\bQ)=u^{I(Z)}\Lambda_{Z}^{S^1}$ for a non-torsion element $\Lambda_{Z}^{S^1}\in H^{-2I(Z)}C_{T}^{S^1}(X_\Omega,\bQ)$. In particular, $\eta_{T}^{S^1}(X_{\Omega},\bQ)$ is non-zero.
\item The minimal cohomology degree of $H^*C_{T}^{S^1}(X_\Omega,\bQ)$ is exactly $-2I(\Omega^\circ_T)$, i.e.,
\[ H^*C_{T}^{S^1}(X_\Omega,\bQ)\cong  H^{\geq -2I(\Omega^\circ_T)}C_{T}^{S^1}(X_\Omega,\bQ), \] 
    and
\[ H^{-2I(\Omega^\circ_T)}C_{T}^{S^1}(X_\Omega,\bQ)\neq 0 . \] 
\item $H^*C_{T}^{S^1}(X_\Omega,\bQ)$ is a finitely generated $\bQ[u]$-module. The free part is isomorphic to $A=\bQ[u]$, so $H^*C_{T}^{S^1}(X_\Omega,\bQ)$ is of rank $1$ over $\bQ[u]$. 

The torsion part is given by $H^*C_{T}^{S^1,out}(X_\Omega,\bQ)$, which is located in cohomology degree $[-2I(\Omega^\circ_T)+1,-1]$.  $H^*C_{T}^{S^1}(X_\Omega,\bQ)$ is torsion free when $X_\Omega$ is an open ellipsoid.
\end{itemize}
\end{eg}

\subsection{Capacities}\label{section: capacity}
In this subsection, we will explore the applications of our $S^1$-theory to symplectic capacities, including the construction of a sequence of symplectic capacities and comparisons with other numerical invariants that we have previously developed. Throughout this subsection, we assume that the manifold $X$ is orientable.

The basic constructions and proofs are similar to those in \cite{Capacities2021}, so we will omit most of the proofs and only highlight the differences. In this subsection, we will only consider rational coefficients, even though most of the results hold for all fields.

For an open set $U \subset T^*X$, we have a morphism (\autoref{invariance1}-(1)) in the mixed derived category:
\[C^{S^1}_T(T^*X,\bQ) \xrightarrow{i_U^*} C^{S^1}_T(U,\bQ),\]
which induces a morphism of $\textnormal{Ext}^*_{S^1}(\bQ,\bQ) \cong \bQ[u]$-modules on cohomology:
\[H^{BM}_{d-*}(X,\bQ)\otimes  \bQ[u]\cong H^*C^{S^1}_T(T^*X,\bQ) \xrightarrow{i_U^*} H^*C^{S^1}_T(U,\bQ).\]

Since $X$ is connected and orientable, we have the fundamental class $[X]$ of $X$ in $H^{BM}_d(X,\bQ)$, which is defined via $1 \in H^0(X,\bQ) \cong H^{BM}_d(X,\bQ)$. We set $[X]^{S^1} = [X] \otimes 1$, where $1 \in \bQ[u]$ is the identity element.
\begin{Def}\label{definition of fundamental class}For an open set $U\xhookrightarrow{i_U} T^*X$ and $T\geq 0$, we define its {\em fundamental class} $\eta_T^{S^1}(U,\bQ)$ as the image of $[X]^{S^1}$ under $i_U^*$, i.e., $\eta^{S^1}_T(U,\bQ)\coloneqq i_U^*([X]^{S^1})\in H^0C^{S^1}_T(U,\bQ)$.
\end{Def}

As a corollary of \autoref{invariance1}, we have
\begin{Prop}\label{functorial fundamental class} \begin{enumerate}[fullwidth]
    \item Let $U\subset U' \subset T^*X$ be an inclusion of open sets. Through the natural morphism
\[H^0{C^{S^1}_T(U',\bQ)}\rightarrow H^0{C^{S^1}_T(U,\bQ)}\]
we have
\[\eta^{S^1}_T(U',\bQ) \mapsto \eta^{S^1}_T(U,\bQ).\]
\item Let $\varphi:T^*X \rightarrow T^*X$ be a compactly supported Hamiltonian homeomorphism and $U$ be an open set. For the isomorphism, defined in \autoref{invariance1},  \[H^*(\Phi^{S^1}_{T}): H^*C^{S^1}_T(U,\bQ) \xrightarrow{\cong} H^*C^{S^1}_T(\varphi(U),\bQ),\]
we have $H^0(\Phi^{S^1}_{T})(\eta^{S^1}_T(\varphi(U),\bQ))=\eta^{S^1}_T(U,\bQ)$.
\end{enumerate}
\end{Prop}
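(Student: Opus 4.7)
The plan is to reduce both statements to the functoriality results already proven in \autoref{invariance1} by a direct diagram chase, using the fact that $\eta^{S^1}_T(U,\bQ)$ is by definition the image of the canonical class $[X]^{S^1}\in H^0C^{S^1}_T(T^*X,\bQ)$ under the restriction $i_U^*$. There is no genuine hard content beyond what \autoref{invariance1} provides; the work is bookkeeping.

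For (1), I would factor the inclusion $i_U\colon U\hookrightarrow T^*X$ through $U'$ and invoke the functoriality of the restriction morphism stated in \autoref{invariance1}(1). This gives a commutative triangle in the mixed derived category
\begin{equation*}
\begin{tikzcd}
C^{S^1}_T(T^*X,\bQ) \arrow[r, "i_{U'}^*"] \arrow[dr, "i_U^*"'] & C^{S^1}_T(U',\bQ) \arrow[d, "\rho"] \\
& C^{S^1}_T(U,\bQ),
\end{tikzcd}
\end{equation*}
where $\rho$ is the morphism induced by $U\subset U'$. Applying $H^0$ and tracking the class $[X]^{S^1}$, the upper path sends it to $\eta^{S^1}_T(U',\bQ)$ and then to $\rho(\eta^{S^1}_T(U',\bQ))$, whereas the diagonal sends it directly to $\eta^{S^1}_T(U,\bQ)$. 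Commutativity yields the claim.

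For (2), the two inputs I need from \autoref{invariance1}(2) are: the isomorphism $\Phi^{S^1}_T$ is natural with respect to the restriction morphisms of part (1), and $\Phi^{S^1}_T=\id$ when the open set is $T^*X$ (using that $\varphi(T^*X)=T^*X$ since $\varphi$ is a homeomorphism of the whole cotangent bundle). Applying naturality to the pair of inclusions $U\subset T^*X$ and $\varphi(U)\subset T^*X$ produces the commutative square
\begin{equation*}
\begin{tikzcd}
C^{S^1}_T(T^*X,\bQ) \arrow[r, "\id"] \arrow[d, "i^*_U"'] & C^{S^1}_T(T^*X,\bQ) \arrow[d, "i^*_{\varphi(U)}"] \\
C^{S^1}_T(U,\bQ) \arrow[r, "\Phi^{S^1}_T"'] & C^{S^1}_T(\varphi(U),\bQ).
\end{tikzcd}
\end{equation*}
Chasing $[X]^{S^1}$ from the upper-left corner on $H^0$, the two routes yield $\Phi^{S^1}_T(\eta^{S^1}_T(U,\bQ))$ and $\eta^{S^1}_T(\varphi(U),\bQ)$, which must therefore agree; the form asserted in the proposition follows by inverting the isomorphism. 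The only point demanding care is the direction of arrows after linear dualization from the chain-level constructions to the Chiu-Tamarkin invariants, but this is already bookkept in \autoref{invariance1}, so the proof is entirely formal.
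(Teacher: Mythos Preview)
Your proposal is correct and is exactly the argument the paper has in mind: the proposition is stated there only as ``a corollary of \autoref{invariance1}'' with no further proof, and your diagram chase using functoriality of $i^*$ for (1) and the naturality of $\Phi^{S^1}_T$ together with $\Phi^{S^1}_T=\id$ on $T^*X$ for (2) is the intended justification. Your remark about inverting the isomorphism to match the stated direction is also appropriate.
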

We have $\eta^{S^1}_T(T^*X,\bQ)=[X]^{S^1}$ for all $T\geq 0$. So, if there exists an open set $X'\subset X$ such that $U\subset T^*X'\subset T^*X$, we have $\eta_T(U,\bQ)=i_U^*([X]^{S^1})=i_U^*([X']^{S^1})$ by \autoref{functorial fundamental class}-(1).

\begin{Def}\label{definition of capacities}For an open set $U$ and $k\in \bN$ we define
\begin{equation*}\label{definition of c_k} \textnormal{Spec}(U,k) \coloneqq
\left\lbrace
  T \geq 0:\begin{aligned} \eta^{S^1}_T (U,\bQ)  \in u^kH^{*}C^{S^1}_T(U,\bQ)
  \end{aligned}
\right\rbrace ,
\end{equation*}
and
\begin{equation}
 \overline{c}_k(U)\coloneqq \inf \textnormal{Spec}(U,k) \in [0,+\infty ].
\end{equation}
\end{Def}
Similar to \cite[Theorem 2.23]{Capacities2021}, the functions $\overline{c}_k$ define a sequence of non-trivial symplectic capacities.
\begin{Thm}\label{capacity property symplectic} The functions $\overline{c}_k:\text{Open}(T^*X)\rightarrow [0,\infty]$ satisfy the following:
\begin{enumerate}[fullwidth]
    \item $\overline{c}_k \leq \overline{c}_{k+1}$ for all $k\in \bN$.
    \item For two open sets $U_1 \subset U_2$, we have $\overline{c}_k(U_1) \leq \overline{c}_k(U_2)$.
    \item For a compactly supported Hamiltonian homeomorphism $\varphi: T^*X \rightarrow T^*X$, we have 
    $\overline{c}_k(U)=\overline{c}_k(\varphi(U)).$ 
     \item If $X=\bR^d$, then $\overline{c}_k(rU)=r^2\overline{c}_k(U)$ for all $k\in \bN$ and $r>0$.
    \item If $U=\{H<1\}$ is bounded, and $\partial U=\{H=1\}$ is a non-degenerated hypersurface of restricted contact type defined by a Hamiltonian function $H$. If $\overline{c}_k(U) < \infty $, then $\overline{c}_k(U)$ is represented by the action of a closed characteristic in the boundary  $\partial U$.
    
    \item $\overline{c}_k(U)>0$ for all open sets $U$.
\end{enumerate}
\end{Thm}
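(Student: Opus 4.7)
The plan is to carry out, for each of (1)--(6), the $S^1$-theory analogue of the argument establishing the corresponding property for the $\bZ/\ell$-capacities $c_k$ in \cite{Capacities2021}. The bulk of the work is substituting the $\bZ/\ell$-equivariant inputs by the $S^1$-equivariant ones of \autoref{intro-thm: prop of S^1-thy} and \autoref{functorial fundamental class}. Parts (1)--(3) are formal: (1) reduces to the containment of ideals $u^{k+1}H^{*} \subset u^{k}H^{*}$, which gives $\textnormal{Spec}(U,k+1)\subset \textnormal{Spec}(U,k)$; (2) and (3) both follow because the restriction morphism $i_U^*$ and the Hamiltonian invariance isomorphism $\Phi^{S^1}_T$ of \autoref{invariance1} are defined at the level of the mixed derived category $D_{dg}(\bQ[\epsilon]-\Mod)$, hence induce $\bQ[u]$-module maps on cohomology, and by \autoref{functorial fundamental class} both carry fundamental class to fundamental class, so they identify the relevant spectrum sets.

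For (4), I would promote the conformal symplectomorphism $\rho_r(\bq,\bp)=(r\bq,r\bp)$ of $T^*\bR^d$, paired with the rescaling $t \mapsto r^2 t$ of the action coordinate, to a compatible isomorphism of microlocal kernels $P_{rU} \cong \phi_{r\ast} P_U$, where $\phi_r$ is the induced self-diffeomorphism of $(\bR^d)^2 \times \bR_t$. Propagating this identification through the functors $F^{nd}_{\bullet,\,\cdot}$ of \autoref{section: cyclic structure} and the cyclic cochain functor $CC^*$ of \autoref{subsection: mixed complexes} yields $F^{S^1}_{\bullet}(rU,\bQ)_T \cong F^{S^1}_{\bullet}(U,\bQ)_{T/r^2}$ in $D_{dg}(\bQ[\epsilon]-\Mod)$, compatibly with fundamental classes, whence $\overline{c}_k(rU) = r^2 \overline{c}_k(U)$.

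For (6), I would argue that for $T > 0$ sufficiently small the fundamental class $\eta^{S^1}_T(U,\bQ)$ does not lie in $u \cdot H^*C^{S^1}_T(U,\bQ)$. Since $\eta^{S^1}_T$ sits in cohomological degree $0$, its membership in $u H^*$ would require a nonzero class in $H^{-2}C^{S^1}_T(U,\bQ)$; for the ambient space $T^*X$ this group is $H^{BM}_{d+2}(X,\bQ)\otimes 1 = 0$ for dimensional reasons, and this vanishing transports to $U$ for $T$ smaller than the minimal Reeb action thanks to the non-equivariant analogue established in \cite{Capacities2021}, which we import via the forgetful isomorphism \eqref{equation: forgetful S^1 to nonequivariant} and the Gysin sequence \eqref{Gysin long exact sequence}.

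The main obstacle is (5), spectrality. I would follow the strategy of \cite{Capacities2021}: given $U = \{H<1\}$ with non-degenerate, restricted contact-type boundary, for any $T_0$ not lying in the action spectrum of closed characteristics on $\partial U$, one constructs a sheaf quantization of a contact isotopy that perturbs $U$ slightly without creating new Reeb chords of action $\leq T_0$, and extracts from it a $\bQ[u]$-linear continuation isomorphism on a neighborhood of $T_0$ in $T$. This shows that the persistence module $T \mapsto H^*C^{S^1}_T(U,\bQ)$ is locally constant off the action spectrum of $\partial U$, forcing $\overline{c}_k(U)$, when finite, to be realized by the action of some closed characteristic. The delicate technical point, and the principal adaptation beyond the $\bZ/\ell$-case, is that these continuation isomorphisms must be produced in the mixed derived category $D_{dg}(\bQ[\epsilon]-\Mod)$ rather than merely in $D_{dg}(\bQ-\Mod)$; this requires lifting the sheaf-level quantizations and their structural morphisms compatibly with the pre-cocyclic chain-level model $F^{nd}_{\bullet,T}$, so that after applying $CC^*$ they define genuinely $\bQ[u]$-linear maps on the persistence module.
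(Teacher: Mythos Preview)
Your treatment of (1)--(4) and (6) matches the paper: these follow verbatim from the $\bZ/\ell$ arguments in \cite{Capacities2021} once one has the $S^1$-equivariant functoriality and invariance of \autoref{invariance1} and \autoref{functorial fundamental class}.

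For (5) your plan works in principle but is substantially harder than necessary, and also mischaracterizes the original $\bZ/\ell$ argument. In \cite{Capacities2021} spectrality is obtained from a \emph{microsupport estimate} on the sheaf $F_\ell(U,\bK)$, not from a sheaf quantization of a contact isotopy; the paper points out that this microsupport estimate is unavailable in the $S^1$ setting. The paper's replacement is much lighter than what you propose: the persistence structure maps $F^{S^1}_\bullet(U,\bQ)_{T'}\to F^{S^1}_\bullet(U,\bQ)_T$ are \emph{already} morphisms in $D_{dg}(\bQ[\epsilon]-\Mod)$ by construction (\autoref{s1 persistence}), so the only question is whether they are isomorphisms when $(T,T']$ misses the action spectrum. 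Since the forgetful functor $D_{dg}(\bQ[\epsilon]-\Mod)\to D_{dg}(\bQ-\Mod)$ is conservative, this reduces immediately to the non-equivariant statement $F_1(U,\bQ)_{T'}\xrightarrow{\cong} F_1(U,\bQ)_T$, which is exactly \cite[Lemma~2.19]{Capacities2021}. There is therefore no need to lift any additional continuation isomorphisms through the pre-cocyclic model or through $CC^*$; the conservativity trick does all the work.
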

The proof is the same as \cite[Theorem 2.23]{Capacities2021}, except for point (5). Regardding on (5), the proof of the corresponding proposition in \cite{Capacities2021} relies on a microsupport estimate of the sheaf $F_\ell(U,\bK)$, which is not available in our case. However, we can replace the microsupport estimate with the following \autoref{S1 actionspectrumestimate} based on the persistence structure. With this replacement, the theorem follows.
\begin{Lemma}\label{S1 actionspectrumestimate}For a bounded open set $U=\{H<1\}$ defined by a Hamiltonian function $H$ such that $\partial U=\{H=1\}$ is a non-degenerated hypersurface of restricted contact type, and for $T'>T\geq 0$, if 
\[\left\lbrace t\in \bR: t= \bigg|\int_c \bp d\bq \bigg|\text{ for a closed orbit }  c\text{ of }\varphi_z^H \text{ in } \partial U \right\rbrace \cap (T,T'] =\varnothing,\]
then the structural morphism \eqref{equation: structure maps of persistence module} of the persistence modules $F^{S^1}_\bullet(U,\bQ)_T$:
\[F^{S^1}_\bullet(U,\bQ)_{T'}\rightarrow F^{S^1}_\bullet(U,\bQ)_T,\]
is an isomorphism in $D_{dg}(\bQ[\epsilon]-\Mod)$.
\end{Lemma}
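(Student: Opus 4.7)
The plan is to reduce the statement to the non-equivariant persistence analog using the conservativity of the forgetful functor, and then to invoke a microsupport estimate on $F_1(U,\bQ)$ as a sheaf on $\bR$.

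First, the forgetful functor $\mathrm{For}: D_{dg}(\bQ[\epsilon]-\Mod) \to D_{dg}(\bQ-\Mod)$ is conservative (see \autoref{subsection: mixed complexes}), so it suffices to prove that $\mathrm{For}$ applied to the structure morphism \eqref{equation: structure maps of persistence module} is an isomorphism in $D_{dg}(\bQ-\Mod)$. By \eqref{equation: forgetful S^1 to nonequivariant}, there is an isomorphism $\mathrm{For}(F^{S^1}_\bullet(U,\bQ)_T) \cong F_1(U,\bQ)_T$. Tracing through the chain-level construction in \autoref{s1 persistence}, whose input is the natural transformation $\tau_c: P_U \to \tnT_{c*}P_U$, one verifies that under this isomorphism the $S^1$-persistence map descends (after forgetting the $B$ operator and passing to the $0$-skeleton of the underlying pre-cocyclic object, as in the Gysin computation of \autoref{section: gysin}) to the non-equivariant persistence map $F_1(U,\bQ)_{T'} \to F_1(U,\bQ)_T$ of \eqref{persistent struc non-eq}. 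Both maps are built from the same $\tau_c$, so the intertwining is natural.

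Second, one invokes a microsupport estimate: for $U = \{H<1\}$ with $\partial U$ a non-degenerate restricted contact type hypersurface, the sheaf $F_1(U,\bQ) = \tnR\pi_{X!}\Delta^{-1}_{X^2}P_U$ on $\bR_t$ has its positive microsupport concentrated over the action spectrum of closed Reeb orbits on $\partial U$. Concretely, combining the functorial microsupport estimates \cite[Theorem 5.4]{KS90} for the pullback $\Delta_{X^2}^{-1}$ and the proper pushforward $\tnR\pi_{X!}$ with the Lagrangian microsupport control on $P_U$ (coming from the explicit structure of the microlocal kernel near a contact type boundary, cf.\ \cite{chiu2017}), one obtains
\[
\dot{SS}(F_1(U,\bQ)) \cap \{\tau>0\} \;\subset\; \bigl\{(t,\tau)\in T^*\bR : |t/\tau| \in \mathrm{Spec}(\partial U)\bigr\},
\]
where $\mathrm{Spec}(\partial U)$ denotes the set of absolute values of $\int_c \bp\, d\bq$ over closed Reeb orbits.

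Third, since the hypothesis asserts $\mathrm{Spec}(\partial U)\cap (T,T']=\varnothing$, the sheaf $F_1(U,\bQ)$ has no positive microsupport over the interval $(T,T']$. The microlocal Morse lemma \cite[Corollary 5.4.19]{KS90}, applied to the coordinate function on $[T,T']$, then yields that the stalk restriction $F_1(U,\bQ)_{T'}\to F_1(U,\bQ)_{T}$ is an isomorphism, which by Step 1 and conservativity of $\mathrm{For}$ proves the lemma.

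The main obstacle is the microsupport estimate of Step 2, which requires both the non-degeneracy and the restricted contact type structure of $\partial U$ to identify the jump locus of $F_1(U,\bQ)$ with the action spectrum. This is the direct non-equivariant analog of the estimate used for $F_\ell(U,\bK)$ in \cite[Theorem 2.23]{Capacities2021}; indeed, it is strictly simpler because no Steenrod construction is involved, and it can be extracted either by the explicit form of $P_U$ near $\partial U$ or by a GKS-quantization argument as in \cite{chiu2017}.
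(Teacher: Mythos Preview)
Your proposal is correct and follows essentially the same approach as the paper: reduce to the non-equivariant persistence map via conservativity of the forgetful functor, then use the action-spectrum estimate on $F_1(U,\bQ)$. The paper simply cites \cite[Lemma 2.19]{Capacities2021} for this last step, whereas you sketch its content (microsupport estimate plus microlocal Morse). One small slip: since $F_1(U,\bQ)\in D(\bR)$, its microsupport lies in $T^*\bR$, and the correct constraint is on the base coordinate $t$, not on $|t/\tau|$; the inclusion should read $\dot{SS}(F_1(U,\bQ))\cap\{\tau>0\}\subset\{(t,\tau): t\in\mathrm{Spec}(\partial U)\}$.
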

\begin{proof}We still use the fact that the forgetful functor \eqref{equation: forgetful functor} is conservative. Then the structural morphism \eqref{equation: structure maps of persistence module} is an isomorphism if and only if it is an isomorphism in $D_{dg}(\bQ-\Mod)$. However, in $D_{dg}(\bQ-\Mod)$, the structural morphism is the same as the structural morphism of the persistence modules 
\[F_1(U,\bQ)_{T'}\rightarrow F_1(U,\bQ)_T\qquad \in D_{dg}(\bQ-\Mod).\]
Then the result follows from \cite[Lemma 2.19]{Capacities2021}.
\end{proof}

The last part of this subsection presents two comparison results concerning various numerical invariants defined using sheaves.

The first result concerns the comparison between $\overline{c}_1$ and the sheaf energy $e$ (see \autoref{sheafenergy}). Precisely, we have
\begin{Prop}\label{prop: c1=e}For an open set $U$, we have
\[\overline{c}_1(U)= e(P_U),\]
where $P_U$ is over $\bQ$.
\end{Prop}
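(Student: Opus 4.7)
The plan is to reduce $\overline{c}_1(U)$ to a non-equivariant vanishing condition via the Gysin sequence, and then identify the relevant class with the sheaf-level natural transformation $\tau_T(P_U)$ via the Yoneda description of \autoref{non-equivariant CT and yoneda form}. From the degree-$0$ segment
\[
H^{-2}C_T^{S^1}(U,\bQ)\xrightarrow{\cdot u} H^{0}C_T^{S^1}(U,\bQ)\to H^{0}C_T(U,\bQ)
\]
of the Gysin long exact sequence \eqref{Gysin long exact sequence}, exactness says that $\eta_T^{S^1}(U,\bQ)$ lies in $u\cdot H^*C_T^{S^1}(U,\bQ)$ if and only if its image in $H^{0}C_T(U,\bQ)$ vanishes. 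The map $H^*C_T^{S^1}\to H^*C_T$ is induced by the forgetful morphism; it is natural with respect to the restriction maps $i_U^*$ (this is the $\ell=1$ case of the construction in \autoref{subsection: Restriction morphisms}) and, when $U=T^*X$, sends $[X]^{S^1}$ to $[X]$. By naturality applied to $U\subset T^*X$, the image of $\eta_T^{S^1}(U,\bQ)$ is therefore the non-equivariant fundamental class $\eta_T(U,\bQ)$, and we obtain
\[
\overline{c}_1(U)=\inf\{T\geq 0:\eta_T(U,\bQ)=0\ \text{in}\ H^{0}C_T(U,\bQ)\}.
\]

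Next, I will identify $\eta_T(U,\bQ)$ with $\tau_T(P_U)$ under the Yoneda isomorphism $H^0C_T(U,\bQ)\cong\HOM(P_U,\tnT_{T*}P_U)$. The cleanest route is to post-compose this with the iso $\HOM(P_U,\tnT_{T*}P_U)\cong\HOM(P_U,\bQ_{\Delta_{X^2}\times[T,\infty)})$ given by $\tnT_{T*}(a_U)$ from \autoref{functorial lemma}-(3), yielding a combined iso $H^0C_T(U,\bQ)\cong\HOM(P_U,\bQ_{\Delta_{X^2}\times[T,\infty)})$. On this side, the restriction morphism $i_U^*$ is simply precomposition with $a_U:P_U\to P_{T^*X}=\bQ_{\Delta_{X^2}\times[0,\infty)}$. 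For $U=T^*X$, $[X]$ is the tautological generator of $H^0C_T(T^*X,\bQ)\cong\bQ$, which matches the generator $\tau_T:\bQ_{\Delta_{X^2}\times[0,\infty)}\to\bQ_{\Delta_{X^2}\times[T,\infty)}$; hence $\eta_T(U,\bQ)\leftrightarrow \tau_T\circ a_U$. Naturality of the transformation $\tau_T$ gives $\tau_T\circ a_U=\tnT_{T*}(a_U)\circ\tau_T(P_U)$, so unwinding \autoref{functorial lemma}-(3) shows that $\eta_T(U,\bQ)$ corresponds precisely to $\tau_T(P_U)\in\HOM(P_U,\tnT_{T*}P_U)$.

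Combining the two steps, $\eta_T(U,\bQ)=0$ iff $\tau_T(P_U)=0$, so
\[
\overline{c}_1(U)=\inf\{T\geq 0:\tau_T(P_U)=0\}=e(P_U)
\]
by \autoref{sheafenergy}. The main technical point is the bookkeeping in the second step: one must track which iso's in the proof of \autoref{non-equivariant CT and yoneda form} are used, verify that each is natural with respect to $i_U^*$, and confirm that the tautological class $[X]$ corresponds to $\tau_T$ under the adjunction for $U=T^*X$. All remaining ingredients---exactness of the Gysin sequence, naturality of the forgetful morphism, and the natural-transformation property of $\tau$---are formal.
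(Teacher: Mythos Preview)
Your proof is correct and follows essentially the same approach as the paper: both use the Gysin long exact sequence to reduce the condition $\eta_T^{S^1}\in uH^*C_T^{S^1}$ to the vanishing of the non-equivariant class $\eta_T(U,\bQ)$, and then identify $\eta_T(U,\bQ)$ with $\tau_T(P_U)$ via the Yoneda description of \autoref{non-equivariant CT and yoneda form}. You spell out the second identification in more detail than the paper does, but the argument is the same.
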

\begin{proof}
Under the isomorphism in \autoref{non-equivariant CT and yoneda form}, we can identify the non-equivariant fundamental class $\eta_T(U,\bQ)$ (see \cite[Definition 2.20]{Capacities2021}) with the natural morphism $\tau_T(P_U)$ that we defined in \autoref{section: reeb action}. Therefore, we have the relation:
\[e(P_U)=\inf\{T\geq 0: \eta_T(U,\bQ)=0\}.\]
On the other hand, consider the forgetful map in \autoref{subsection: Restriction morphisms}:
\begin{equation}\label{eq: forgetful capacity}
  H^{p}C_{T}^{S^1 }(U,\bQ)\rightarrow H^pC _{T}(U,\bQ).  
\end{equation}
It can be verified that
\[\eta^{S^1}_T(U,\bQ)\rightarrow \eta_T(U,\bQ)\]
under the map \eqref{eq: forgetful capacity}, and the class $u\in \textnormal{Ext}^2_{S^1}(\bQ,\bQ)$ is mapped to $0$.

Therefore, by the Gysin sequence \eqref{Gysin long exact sequence}, we conclude that $\eta_{T}(U,\bQ)=0$ if and only if $\eta_{T}^{S^1}(U,\bQ)\in 
 uH^*C^{S^1}_T(U,\bQ)$. Consequently, we obtain the equality $\overline{c}_1(U)= e(P_U)$.
\end{proof}
The second result concerns comparing the $S^1$-equivariant capacities $\overline{c}_k$ defined in this article with the $\bZ/\ell$-equivariant capacities $c_k$ defined in \cite{Capacities2021}. Specifically, we aim to show that
\begin{Thm}\label{c_k=cbar_k}For an open set $U$ admitting well-behaved microlocal kernels, if $H^qC^{S^1}_T(U,\bZ)$ is finitely generated as abelian groups for all $T\geq 0$ and all $q$, then we have
\[\overline{c}_k(U)=c_k(U).\]
\end{Thm}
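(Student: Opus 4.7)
The plan is to bridge the $S^1$-theory and the $\bZ/\ell$-theory via the restriction morphism of \autoref{subsection: Restriction morphisms}, using the finite-field comparison \autoref{prop: finite field reduction} to translate the $u$-divisibility condition defining $\overline{c}_k$ over $\bQ$ into the $v$-divisibility condition defining $c_k$ in the $\bZ/\ell$-theory with $\bZ/\mu$-coefficients.

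First I would fix $T \geq 0$ and $k \in \bN$. Finite generation of $H^* C^{S^1}_T(U, \bZ)$ implies that only finitely many primes divide its torsion, so I can choose a prime $\mu$ avoiding this finite set and set $\ell = \mu^N$ for $N$ sufficiently large. Then \autoref{prop: finite field reduction} provides a natural isomorphism
\[
H^* C^{\bZ/\ell}_T(U, \bZ/\mu) \;\cong\; H^* C^{S^1}_T(U, \bZ) \otimes_{\bZ} H^*(S^1, \bZ/\mu),
\]
compatible with the $\bZ/\mu[v]$-module structure, where $v$ corresponds to $u$ under the restriction $H^*(BS^1, \bZ/\mu) \to H^*(B\bZ/\ell, \bZ/\mu)$; simultaneously $H^* C^{S^1}_T(U, \bQ) \cong H^* C^{S^1}_T(U, \bZ) \otimes \bQ$.

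Next, using \autoref{functorial fundamental class} together with the dg-functoriality of $R_\ell$, I will show that the $S^1$-fundamental class $\eta^{S^1}_T(U, \bZ)$ maps to $\eta^{\bZ/\ell}_T(U, \bZ/\mu)$ under \eqref{equation: restriction morphism 2}, and that the Künneth decomposition above is $v$-equivariant with $v$ acting trivially on the $H^*(S^1, \bZ/\mu)$-factor. Consequently $\eta^{\bZ/\ell}_T(U, \bZ/\mu) \in v^k H^*C^{\bZ/\ell}_T(U, \bZ/\mu)$ if and only if $\eta^{S^1}_T(U, \bZ/\mu) \in u^k H^*C^{S^1}_T(U, \bZ/\mu)$. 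By $\mu$-torsion-freeness together with a universal coefficient argument applied to the finitely generated $\bZ[u]$-module $E = H^*C^{S^1}_T(U, \bZ)$ (comparing $E/u^k E$ after tensoring with $\bQ$ and with $\bZ/\mu$ for $\mu$ in a cofinite set of primes), this last condition is equivalent to the condition over $\bQ$, namely $\eta^{S^1}_T(U, \bQ) \in u^k H^*C^{S^1}_T(U, \bQ)$; that is, $T \in \textnormal{Spec}(U, k)$.

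Taking infima will then match the spectrum defining $\overline{c}_k(U)$ with the $\bZ/\ell$-equivariant spectrum defining $c_k(U)$ from \cite{Capacities2021}, yielding $\overline{c}_k(U) = c_k(U)$. The main obstacle will be the compatibility asserted in the middle step: tracking the restriction $R_\ell$ of \autoref{subsection: Restriction morphisms} and the isomorphism of \autoref{prop: finite field reduction} simultaneously, to confirm that the two fundamental classes and the two multiplier actions genuinely correspond. This is a diagram chase at the chain level, relying on the dg-functoriality of $R_\ell$ and on the compatibility of the Künneth formula with the inclusion of groups $\bZ/\ell \hookrightarrow S^1$ at the level of classifying spaces.
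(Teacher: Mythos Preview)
Your proposal is correct and follows essentially the same strategy as the paper: pick a large prime $\mu$ avoiding the torsion of $H^*C^{S^1}_T(U,\bZ)$, invoke \autoref{prop: finite field reduction} to compare the $\bZ/\ell$-theory over $\bZ/\mu$ with the $S^1$-theory, track the fundamental classes through \eqref{equation: restriction morphism 2}, and conclude that the $u^k$-divisibility conditions over $\bQ$ and over $\bZ/\mu$ coincide. The only cosmetic differences are that the paper takes $\ell=\mu$ directly (rather than $\ell=\mu^N$) and pivots through the integral class $\eta^{S^1}_T(U,\bZ)$ as a common hub for the equivalences, which slightly streamlines the diagram chase you describe.
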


\begin{proof}
Since $U$ admits well-behaved microlocal kernels, we can define $H^qC_T^{S^1}(U,\bZ)$ and $\eta^{S^1}_T(U,\bZ)$. Although we will not verify all the properties it deserves at the moment, we can proceed with these definitions.

If the conditions of \autoref{prop: finite field reduction} hold, it is straightforward to observe that the fundamental class satisfies the following properties:
\begin{equation}\label{eq: finite field reduction-fundamental class}
    \eta^{\bZ/\mu}_T(U,\bZ/\mu)=\eta^{S^1}_T(U,\bZ)\otimes 1_{\bZ/\mu},\qquad \eta^{S^1}_T(U,\bQ)=\eta^{S^1}_T(U,\bZ)\otimes 1_{\bQ}.
\end{equation}
Then we only need to choose $\ell = \mu$ to be a sufficiently large odd prime number to ensure that $H^qC_T^{S^1}(U,\bZ)$ has no $\mu$-torsion. This is possible because we assume that $H^qC_T^{S^1}(U,\bZ)$ is a finitely generated abelian group. Consequently, the result follows from:
\begin{align*}
    &\eta^{S^1}_T(U,\bZ) \in u^kH^*C_T^{S^1}(U,\bZ)\\
    \iff  &\eta^{S^1}_T(U,\bQ) \in u^kH^*C_T^{S^1}(U,\bQ)\\
     \iff  &\eta^{\bZ/\ell}_T(U,\bZ/\ell) \in u^kH^*C_T^{\bZ/\ell}(U,\bZ/\ell).\qedhere
\end{align*}
\end{proof}For example, we know that convex toric domains admit well-behaved microlocal kernels, and our computation in \autoref{example: convex toric domain} shows that $H^qC_T^{S^1}(X_\Omega,\bZ)$ is finitely generated. Therefore, for a convex toric domain $X_\Omega$, we have $\overline{c}_k(X\Omega) = c_k(X_\Omega)$. 

We have already derived a combinatorial formula for $c_k(X_\Omega)$ in \cite[Theorem 3.7]{Capacities2021}. Therefore, this formula is also valid for $\overline{c}_k(X_\Omega)$ now. We would like to emphasize that the computation for $\overline{c}_k(X_\Omega)$ can also be directly obtained from \autoref{example: convex toric domain}.

\section{Viterbo isomorphism} 
Our goal in this section is to compute the Chiu-Tamarkin invariant of unit disk bundles. We begin from a precise formula for the GKS sheaf quantization of the normalized geodesic flow. As a result, we show that unit disk bundles admit well-behaved microlocal kernels in \autoref{kernel of unit disk bundle}. This allows us to apply the results from \autoref{section: Z coefficient}. Furthermore, we will also consider the product structure in our computations.

For a Riemannian manifold $(X,g)$, the open unit (co)disk bundle is $D^*X=\{(\bq,\bp):|\bp|_g<1\}$. Let us take $H(\bq,\bp)=|\bp|_g$, then $D^*X=\{(\bq,\bp):H<1\}$. We denote $d$ the distance induced by $g$. Assume $(X,g)$ is a complete Riemannian manifold and the convex radius $r_{\text{conv}}(X,g)>2$, then the injective radius $r_{\text{inj}}(X,g)>4$. Such a metric $g$ always exists by introducing a suitable conformal factor to an arbitrary metric.

\subsection{Sheaf quantization of the geodesic flow}
For the homogeneous Hamiltonian function $H(\bq,\bp)=|\bp|_g$, the associated $\bR_{>0}$-equivariant Hamiltonian flow is the normalized geodesic flow $\varphi_z^{\text{geo}}$ (where we identify $T^*X$ and $TX$ using the metric $g$). It is known that there exists a GKS sheaf quantization $K_g\in D(\mathbb{R}_z\times X^2)$ that quantizes the geodesic flow in the sense
\[\dot{SS}(K_g)=\left\lbrace (z, - {H_z}\circ{\varphi}_z^{\text{geo}}(\bq,\bp)  ,(\bq,-\bp),{\varphi}_z^{\text{geo}}(\bq,\bp) ) : (\bq,\bp) \in \dot{T}^*X, z\in I \right \rbrace. \]
Below, we provide an explicit formula for $K_g$, which is known among experts, but the author was unable to locate a specific reference.

In\cite{Thickeningkernel}, the authors show that if we restrict to $I = (-2,2) \subset \bR_z$, we have
\begin{equation}\label{small time geodesic flow}
    \bK_{(z,\bq_1,\bq_2): \{d(\bq_1,\bq_2)<z\}}\dotimes q_2^{-1}\omega_{X }\rightarrow K_g|_{(-2,2)_z} \rightarrow \bK_{\{(z,\bq_1,\bq_2): d(\bq_1,\bq_2)\leq -z\}}\xrightarrow{+1}.
\end{equation}
In the following, we need to extend the formula to larger values of $z$. Since $H$ is autonomous, at the flow level, we have $\varphi_{z_1+z_2}=\varphi_{z_1}\circ\varphi_{z_2}$. This property also holds for the kernel $K_{g,z}\coloneqq K_g|_z$, meaning that for $z_1,z_2\in \bR$, we have $K_{g,z_1}\circ K_{g,z_2}\cong K_{g,z_1+z_2}$, where $\circ$ is the sheaf composition (\cite[Subsection 1.6]{GKS2012}). In particular, for $N\in \bN$, we have
\begin{align*}
    K_{g,Nz}\cong & K_{g,z}\circ \cdots \circ K_{g,z}.
\end{align*}
To obtain the family version of the slicewise formula, we need to use the relative composition denoted as $\circ_I$, which is defined in \cite[(1.13)]{GKS2012}. Let us denote $K_g^1 = K_g|_{(-2,2)_z}$, and then consider
\begin{equation}
    {K}_{g}^N\cong r_{N}^{-1}(K_{g}^1\circ_I \cdots \circ_I K_{g}^1), 
\end{equation}
where $r_N(z,\bq_1,\bq_2)=(z/N,\bq_1,\bq_2)$.

Then we have the following microsupport estimates using \cite[(1.15)]{GKS2012}:
\begin{equation*}
    \dot{SS} (K_{g}^N) \subset \{(z,-|p|_g,\bq,-\bp,\bq',\bp'): (\bq',\bp')=\varphi^{\text{geo}}_{z}(\bq,\bp),z\in (-2N,2N)\}.
\end{equation*}
Besides, it is direct to verify that $K_{g}^N|_{\{z=0\}}\cong \bK_{\Delta_{X^2}}$.
Then the uniqueness part of the GKS quantization shows
\begin{equation}
    K_g|_{(-2N,2N)_z}\cong K_{g}^N.
\end{equation} 
In particular,
\begin{align}\label{sheaf of geodesic flow along negative time}
\begin{aligned}
     K_g|_{(-2N,0]}  \cong{\tnR }\pi_{(\bq_1,\dots,\bq_{N-1})!}\bK_{\mathcal{M}^{N}X} \eqqcolon K_{g,-}^N ,
\end{aligned}
  \end{align}
  where 
  \begin{equation}
      \mathcal{M}^{N}X={\{(z,\bq_0,\dots,\bq_N):d(\bq_i,\bq_{i+1})\leq - z/N,\, i\in [N-1]_0,\,-2N<z\leq 0\}}
  \end{equation}
   is the discrete Moore path space and $[N-1]_0=\{0,1,\dots,N-1\}$.

\subsection{Microlocal kernel of unit disk bundles}
To compute $P_{D^*X}$, we can use \cite[Proposition 2.7]{Capacities2021}. To apply this proposition, we require a sheaf quantization of the normalized geodesic flow in the sense of \cite[(2.3)]{Capacities2021}. A natural choice for this quantization is $\cK_g = K_g \boxtimes \bK_{[0,\infty)}$. However, the normalized geodesic flow cannot be extended to the zero-section as a $C^\infty$-Hamiltonian flow. Luckily, the flow preserves the zero section and can be extended to the zero-section as a $C^0$-Hamiltonian flow, i.e. a Hamiltonian homeomorphism. The $C^0$-natural of microsupport makes sure that it makes sense to think of $\cK_g$ as a sheaf quantization of the normalized geodesic flow in the sense of \cite[(2.3)]{Capacities2021}. Furthermore, it has been shown in \cite{Chiu-kernel2021} that the completeness of the flow is sufficient to establish the proof of \cite[Proposition 2.7]{Capacities2021}. In particular, the geodesic flow is tautologically complete due to our assumption for the completeness of the Riemannian metric $g$. Therefore, we can proceed with the computation of $P_{D^*X}$ using the aforementioned tools.

Therefore, for $\Omega=\{\zeta <1\}$ and $\widetriangle{\bK_{\Omega}}=\bK_{\{(z,t):-t\leq z \leq 0\}}$, we have 
\[P_{D^*X}\cong   \widehat{\cK_g} \circ \bK_{(-\infty,1)}\cong \cK_g \star\bK_{\{(z,t):-t\leq z \leq 0\}} \cong K_g\circ \bK_{\{(z,t):-t\leq z \leq 0\}}.  \]If we restrict $P_{D^*X}$ to $t\leq N$ for $N\in \bN$, we have 
\[P_{D^*X}|_{\{t\leq N\}}\cong K_g\circ \bK_{\{(z,t):-N \leq -t\leq z \leq 0\}}\cong (K_{g})_{\{-N \leq z\leq 0\}} \circ  \bK_{\{(z,t):-N \leq -t\leq z \leq 0\}}.\]
Recall \eqref{sheaf of geodesic flow along negative time}, we can take 
\[     K_g|_{(-2N,0]_z}\cong K_{g,-}^N\cong {\tnR } \pi_{(\bq_1,\dots,\bq_{N-1})!}\bK_{\mathcal{M}^{N}X},\]
where \begin{equation*}
      \mathcal{M}^{N}X={\{(z,\bq_0,\dots,\bq_N):d(\bq_i,\bq_{i+1})\leq - z/N,\, i\in [N-1]_0,\,-2N<z\leq 0\}}
  \end{equation*}
   is the discrete Moore path space and $[N-1]_0=\{0,1,\dots,N-1\}$.
Therefore, we have
\begin{align*}
P_{D^*X}|_{\{t \leq  N\}}&\cong\tnR  \pi_{z!} {\tnR } \pi_{(\bq_1,\dots,\bq_{N-1})!}\bK_{\widetilde{\mathcal{M}}^{N}X}\cong  {\tnR } \pi_{(\bq_1,\dots,\bq_{N-1})!}\tnR  \pi_{z!}\bK_{\widetilde{\mathcal{M}}^{N}X},
\end{align*}
where 
\[\widetilde{\mathcal{M}}^{N}X=\{(z,\bq_0,\dots,\bq_N,t):d(\bq_i,\bq_{i+1})\leq - z/N,\, i\in[N-1]_0,\,-N\leq -t\leq z\leq 0\}.\]
The restriction of the projection $\pi_z(z,\bq_0,\dots,\bq_N,t)=(\bq_0,\dots,\bq_N,t)$ on $\widetilde{\mathcal{M}}^{N}X$ is proper, and its fibers are closed intervals. This property allows us to apply the Vietoris-Begle theorem. Therefore, we can conclude that
\begin{equation*}
    \tnR  \pi_{z!}\bK_{\widetilde{\mathcal{M}}^{N}X} \cong  \bK_{\mathcal{M}_0^N X}, 
\end{equation*}
where
\begin{equation}
\begin{split}
 &\mathcal{M}_0^N X\coloneqq \pi_{z}(\widetilde{\mathcal{M}}^{N}X) \\=&\lbrace(\bq_0,\dots,\bq_N,t):d(\bq_i,\bq_{i+1})\leq t/N,\, i\in[N-1]_0,\,0\leq t \leq N\rbrace\\
 \subset &\lbrace(\bq_0,\dots,\bq_N,t): d(\bq_i,\bq_{i+1})\leq 1,\,t\geq 0\rbrace.
 \end{split}
\end{equation}
Therefore, we conclude that  
\begin{Prop}\label{kernel of unit disk bundle}
For a complete Riemannian manifold $(X,g)$ and $N\in \bN$, the microlocal kernel of its open unit disk bundle $D^*X$ is given by
\[ P_{D^*X}|_{\{t \leq N\}}\cong  {\tnR } \pi_{(\bq_1,\dots,\bq_{N-1})!}\bK_{\mathcal{M}_0^N X}.\]
Moreover, one can verify that 
\begin{align*}
   {\bK}_{\Delta_{X^2} \times[0,N]}&\cong {\tnR } \pi_{(\bq_1,\dots,\bq_{N-1})!}\bK_{\Delta_{X^N}\times [0,N]},\\
   Q_{D^*X}|_{\{t \leq N\}}&\cong  {\tnR } \pi_{(\bq_1,\dots,\bq_{N-1})!}\bK_{\mathcal{M}_0^N X \setminus (\Delta_{X^N}\times [0,N])}[1],
\end{align*}
and the defining triangle is the image, under ${\tnR } \pi_{(\bq_1,\dots,\bq_{N-1})!}$, of the following excision triangle
\[    \bK_{\mathcal{M}_0^N X} \rightarrow \bK_{\Delta_{X^N}\times [0,N]}  \rightarrow \bK_{\mathcal{M}_0^N X \setminus (\Delta_{X^N}\times [0,N])}[1] \xrightarrow{+1}.\]
In other words, the unit disk bundle admits well-behaved microlocal kernels in the sense of \autoref{section: Z coefficient}.
\end{Prop}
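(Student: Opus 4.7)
The proof essentially assembles the computation already carried out in the paragraphs preceding the statement. The plan is to realize $P_{D^*X}$ as a convolution of the GKS quantization $K_g$ of the normalized geodesic flow with a cut-off sheaf, substitute the explicit Moore-path formula for $K_g$ on negative times, and then integrate out the auxiliary parameter $z$ by Vietoris-Begle.

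Concretely, I first invoke \cite[Proposition 2.7]{Capacities2021}, whose hypotheses hold here by the completeness of $g$ and the $C^0$-extension of the geodesic flow across the zero section discussed above, to obtain $P_{D^*X}\cong K_g\star \bK_{\{(z,t):-t\leq z\leq 0\}}$. Restricting to $\{t\leq N\}$ confines the relevant $z$-values to $[-N,0]$, where formula \eqref{sheaf of geodesic flow along negative time} allows me to replace $K_g$ by ${\tnR}\pi_{(\bq_1,\dots,\bq_{N-1})!}\bK_{\mathcal{M}^{N}X}$. Commuting this proper pushforward past the convolution yields
\[ P_{D^*X}|_{\{t\leq N\}}\cong {\tnR}\pi_{(\bq_1,\dots,\bq_{N-1})!}\,{\tnR}\pi_{z!}\bK_{\widetilde{\mathcal{M}}^{N}X},\]
with $\widetilde{\mathcal{M}}^{N}X\subset \bR_z\times X^{N+1}\times \bR_t$ defined by $-N\leq -t\leq z\leq 0$ and $d(\bq_i,\bq_{i+1})\leq -z/N$.

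The core step is then to eliminate $z$. For fixed $(\bq_0,\dots,\bq_N,t)$ the constraints on $z$ take the form $\max\{-t,\,-N\max_i d(\bq_i,\bq_{i+1})\}\leq z\leq 0$, so each nonempty fiber of $\pi_z|_{\widetilde{\mathcal{M}}^{N}X}$ is a closed, hence contractible, interval, and the restriction of $\pi_z$ is proper since $z$ is bounded. Vietoris-Begle then gives ${\tnR}\pi_{z!}\bK_{\widetilde{\mathcal{M}}^{N}X}\cong \bK_{\mathcal{M}_0^N X}$, yielding the stated formula. A short algebraic check confirms that the projection of $\widetilde{\mathcal{M}}^{N}X$ to the $(\bq_0,\dots,\bq_N,t)$-variables is exactly $\mathcal{M}_0^N X$ as defined in the proposition.

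For the moreover part, applying ${\tnR}\pi_{(\bq_1,\dots,\bq_{N-1})!}$ to the excision triangle
\[ \bK_{\mathcal{M}_0^N X}\to \bK_{\Delta_{X^N}\times [0,N]}\to \bK_{\mathcal{M}_0^N X\setminus (\Delta_{X^N}\times[0,N])}[1]\xrightarrow{+1}\]
produces a distinguished triangle; its middle term is identified with $\bK_{\Delta_{X^2}\times [0,N]}$ because the restriction of the projection to $\Delta_{X^N}$ is a homeomorphism onto $\Delta_{X^2}$. Comparing with the defining triangle of $D^*X$ through \autoref{functorial} and \autoref{remark: one kernel is enough} identifies the new triangle with the defining one and reads off the formula for $Q_{D^*X}|_{\{t\leq N\}}$. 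Well-behavedness in the sense of \autoref{def: well-behaved microlocal kernels} then holds with $Z_N=X^{N-1}$ and $W_N=\mathcal{M}_0^N X$; the ENR property of $W_N$ follows from its description as a closed subset of the smooth manifold $X^{N+1}\times [0,N]$ cut out by finitely many continuous inequalities. The main technical obstacle is the careful justification of the Vietoris-Begle step at the boundary of $\widetilde{\mathcal{M}}^{N}X$, where one must check that the collapsing of degenerate fibers to constant paths is compatible with the sheaf-theoretic formulation; this is the only nontrivial geometric input.
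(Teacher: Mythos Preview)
Your proposal is correct and follows exactly the argument the paper gives in the paragraphs leading up to the proposition: realize $P_{D^*X}$ via \cite[Proposition 2.7]{Capacities2021}, plug in the Moore-path formula \eqref{sheaf of geodesic flow along negative time} for $K_g$ on $[-N,0]$, and collapse the $z$-variable by Vietoris--Begle. The ``moreover'' part is likewise handled the same way, by pushing forward the excision triangle and invoking the uniqueness of the defining triangle.

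One small slip: your displayed interval for the $z$-fiber is inverted. The constraints $d(\bq_i,\bq_{i+1})\leq -z/N$ force $z\leq -N\max_i d(\bq_i,\bq_{i+1})$, while $-t\leq z$ gives the lower bound, so the fiber is $[-t,\,-N\max_i d(\bq_i,\bq_{i+1})]$, not $[\max\{-t,-N\max_i d\},0]$. Your version would make every fiber nonempty and hence misidentify the image; the correct interval is nonempty exactly when $\max_i d(\bq_i,\bq_{i+1})\leq t/N$, which is precisely the defining condition for $\mathcal{M}_0^N X$. This is only a bookkeeping error and does not affect the structure of the argument.
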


\subsection{The Viterbo isomorphism}
\begin{Lemma}\label{lemma: comparison cyclic homology}If $f:X_\bullet \rightarrow Y_\bullet$ a morphism between pre-cocyclic spaces such that $f$ induces a levelwise isomorphism on cohomology, then it also induces an isomorphism
\[CC^*(C^*(Y_\bullet,\bK))\rightarrow CC^*(C^*(X_\bullet,\bK))\qquad \in D_{dg}(\bK[\epsilon]-\Mod).\]
\end{Lemma}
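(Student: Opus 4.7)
The plan is to reduce the assertion to a statement in the ordinary derived category of $\bK$-modules, exploiting that the forgetful functor $D_{dg}(\bK[\epsilon]-\Mod)\to D_{dg}(\bK-\Mod)$ is conservative (as recalled in \autoref{subsection: mixed complexes}). Applying the contravariant functor $C^*(-,\bK)$ levelwise to the given morphism $f:X_\bullet\to Y_\bullet$ of pre-cocyclic spaces produces a morphism $f^*:C^*(Y_\bullet,\bK)\to C^*(X_\bullet,\bK)$ of pre-cyclic cochain complexes (see \autoref{remark: simplicial cyclic chain} for the passage between cyclic and cocyclic conventions), and we need only show that the induced map on cyclic cochains is a quasi-isomorphism of underlying $\bK$-complexes.

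Recalling from \autoref{subsection: mixed complexes} that, as a plain complex, $CC^*(N)=(Tot(N)\oplus Tot(N)[-1],\overline{b})$ is the mapping cone of the chain map $(1-t)\colon(Tot(N),b)\to(Tot(N),b')$, it suffices by the two-out-of-three property for mapping cones to prove that $f^*$ induces quasi-isomorphisms between the totalizations $Tot\,C^*(Y_\bullet,\bK)\to Tot\,C^*(X_\bullet,\bK)$ for both differentials $b$ and $b'$.

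To do this I would fix non-negatively graded functorial cochain models (for example, singular or Alexander-Spanier cochains), so that each $C^q(X_p,\bK)$ vanishes for $p<0$ or $q<0$. The resulting bicomplexes are then first-quadrant, in particular the totalizations are, in each fixed total degree $k$, finite direct sums $\bigoplus_{p+q=k}C^q(-,\bK)$. The canonical filtration by cosimplicial degree $p$ yields a convergent spectral sequence with $E_1^{p,q}=H^q(C^*(-,\bK))$ abutting to $H^{p+q}(Tot)$. The hypothesis that $f$ induces a levelwise isomorphism on cohomology is precisely the statement that $f^*$ is an isomorphism on every $E_1^{p,q}$, so the comparison theorem of spectral sequences gives an isomorphism on the abutment, and the mapping cone conclusion follows.

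The main (and essentially only) non-formal point is choosing a uniformly first-quadrant, functorial cochain model so that the levelwise quasi-isomorphisms genuinely assemble into a morphism of bicomplexes, ensuring that the spectral-sequence comparison applies. Once that mild setup is in place, the remainder is a standard convergence argument, and the conservativity of the forgetful functor to $D_{dg}(\bK-\Mod)$ upgrades the chain-level quasi-isomorphism to the desired isomorphism in $D_{dg}(\bK[\epsilon]-\Mod)$.
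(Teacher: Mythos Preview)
Your proposal is correct and follows essentially the same approach as the paper: reduce via conservativity of the forgetful functor to $D_{dg}(\bK-\Mod)$, then compare spectral sequences coming from the filtration by (co)simplicial degree. The paper's proof is a two-sentence sketch of exactly this argument; your version simply spells out more of the details, in particular the mapping-cone decomposition of $CC^*$ and the first-quadrant convergence check.
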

\begin{proof}We only need to prove the morphism is an isomorphism in $D_{dg}(\bK-\Mod)$ by virtue of \eqref{equation: forgetful functor}. Then the isomorphism follows from the comparison between the two spectral sequences induced by the skeleton filtration of the pre(semi)-simplicial complexes $C^*(Y_\bullet,\bK)$.
\end{proof}
Compare to notations in \autoref{section: Z coefficient}, we have
\[W_N={\mathcal{M}_0^N X} ,\qquad \Delta\times [0,N]=\Delta_{X^N}\times [0,N].\]
Then for $T\geq 0$, $T\leq n N$ with $N\gg 1$, we have
\begin{equation} 
   \mathcal{W}_n=\mathcal{L}^N_{n}X \coloneqq\left\lbrace(\bq_i^j,t)_{i,j}:\begin{aligned}
  &d(\bq_i^j,\bq_{i+1}^j)\leq 1 ,\,\sum_{j=1}^{n} d(\bq_i^j,\bq_{i+1}^j)\leq t/N,\\ & i\in[N-1]_0,\, j\in \bZ/n,\, 0 \leq t \leq T 
\end{aligned}\right\rbrace,  
\end{equation}
where $j$ indicates points in the $j^{th}$-copy of $\mathcal{M}_0^N X$, and we require $\bq^j=\bq^j_N=\bq^{j+1}_0$. It forms a pre-cocyclic space $\mathcal{W}_{\bullet}=\mathcal{L}^N_{\bullet}X$. 

Then, by \autoref{Def: Z def}, we have
\[F^{S^1}_{\bullet}(D^*X,\bK)_T=CC^*(C^*_c(\mathcal{L}^N_{\bullet}X,\bK)).\]
To simplify computation, we consider
\begin{equation}\label{discretegeodesic space}  \mathcal{L}^N_{n,T}X \coloneqq \left\lbrace   (\bq_i^j)_{i,j}: \begin{aligned}
&d(\bq_i^j,\bq_{i+1}^j)\leq 1 ,\,\sum_{j=1}^{n} d(\bq_i^j,\bq_{i+1}^j)\leq T/N,\\ &i\in[N-1]_0,\, j\in \bZ/n
\end{aligned}
\right\rbrace,  
\end{equation}
and corresponding pre-cocyclic space $\mathcal{L}^N_{\bullet,T}X$. Obviously, the pre-cocyclic closed inclusion
\[\mathcal{L}^N_{\bullet,T}X\rightarrow \mathcal{L}^N_{\bullet}X,\qquad (\bq_i^j)_{i,j}\mapsto (\bq_i^j,T)_{i,j}\]
is a levelwise deformation retraction of pre-cocyclic spaces. Then it induces a morphism
\begin{equation}\label{equation: comparision between homology of two spaces}
    F^{S^1}_{\bullet}(D^*X,\bK)_T = CC^*(C_c^*(\mathcal{L}^N_{\bullet}X,\bK))  \rightarrow  CC^*(C_c^*(\mathcal{L}^N_{\bullet,T}X,\bK)) \in D_{dg}(\bK[\epsilon]-\Mod).
\end{equation}
Therefore, we can apply \autoref{lemma: comparison cyclic homology} to show that \eqref{equation: comparision between homology of two spaces} is an isomorphism in $D_{dg}(\bK[\epsilon]-\Mod)$. It is important to note that the argument used for cohomology in the lemma also holds for compactly supported cohomology.

Now, let us consider the free loop space $\mathcal{L}X=C^\infty(S^1,X)$. The free loop space equips an $S^1$-action given by $(e^{i\theta}\cdot c)(t)=c(t+\theta)$. We define the length function $L: \mathcal{L}X\rightarrow,\, c \mapsto \int_{c} |\dot{c}|_g$, which is $S^1$-invariant. Then
\[\mathcal{L}_{\leq T}X= \{c \in \mathcal{L}X: L(c) \leq T\}\]
is an $S^1$-space by the invariance of the length function. In particular, $\mathcal{L}_{\leq T}X$ is restricted to a $\bZ/n$-space, where the $\bZ/n$-action on $\mathcal{L}_{\leq T}X$ is defined by $(\sigma\cdot c)(t)=c(t+1/n)$ ($\sigma$ is the generator of $\bZ/n$). With this $\bZ/n$-action, we can regard $\mathcal{L}_{\leq T}X$ as a constant cocyclic space denoted by $\mathcal{L}_{\leq T}X_\bullet$. In this constant cocyclic space, the face and degeneracy maps are defined as identity maps, and the cyclic permutations are induced by the restriction $\bZ/n$-action.

Now, we assume that $X$ is compact. We observe that $\mathcal{L}_{\ell,T}^N X$ is also levelwise compact by its definition. Following the discussion in \cite{MilnorMorse}, we note that the closed subset $\mathcal{L}_{\leq T} X$ is homeomorphic to a closed subset of the finite-dimensional compact manifold $X^{N}$, where $N=\lfloor T/2\rfloor$. Consequently, $\mathcal{L}_{\leq T} X$ is compact, we can conclude that
\begin{Prop}\label{Zl loop space homotopy}We have a levelwise homotopy equivalence of pre-cocyclic spaces
\[{\mathcal{L}^N_{\bullet,T}X} \rightarrow \mathcal{L}_{\leq T}X_\bullet.\]
\end{Prop}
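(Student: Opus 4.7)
The strategy is to realize $\mathcal{L}^N_{n,T}X$ as a finite-dimensional Morse-theoretic approximation of $\mathcal{L}_{\leq T}X$ via piecewise geodesics, and then to match this levelwise equivalence with the pre-cocyclic structure on both sides.

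I would define the comparison map $\varphi_n \colon \mathcal{L}^N_{n,T}X \to \mathcal{L}_{\leq T}X$ by piecewise geodesic interpolation: given $(\bq_i^j)_{i,j}$ with $\bq_N^j = \bq_0^{j+1}$, the hypothesis $r_{\textup{conv}}(X,g) > 2$ combined with $d(\bq_i^j, \bq_{i+1}^j) \leq 1$ yields a unique minimizing geodesic between consecutive vertices that depends smoothly on the endpoints. I parameterize the resulting loop so that the $j$-th block (the $N$ arcs for fixed $j$) occupies the parameter interval $[(j-1)/n, j/n] \subset S^1$; the total-length bound $\sum_{i,j} d(\bq_i^j, \bq_{i+1}^j) \leq T$ places the image in $\mathcal{L}_{\leq T}X$. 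That each $\varphi_n$ is a homotopy equivalence is the classical piecewise-geodesic approximation of the loop space (cf. \cite{MilnorMorse}). A homotopy inverse $\psi_n$ samples a loop at the $nN$ designated parameter points; compactness of $\mathcal{L}_{\leq T}X$, realized in \cite{MilnorMorse} as a closed subset of a finite power of $X$, ensures this can be done uniformly and that consecutive samples satisfy the required distance bounds. The two compositions $\varphi_n \circ \psi_n$ and $\psi_n \circ \varphi_n$ are joined to the respective identities by straight-line geodesic homotopies inside geodesic balls of radius $< r_{\textup{conv}}$.

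To promote $\varphi_\bullet$ to a morphism of pre-cocyclic spaces, I check two compatibilities. The cyclic map $t_n$ on $\mathcal{L}^N_{n,T}X$ permutes $j \mapsto j+1$; by our block parameterization this becomes rotation of $S^1$ by $1/n$, which matches the $\bZ/n$-restriction of the $S^1$-action on $\mathcal{L}_{\leq T}X_\bullet$, so cyclic compatibility is strict. The face map $d_i$ inserts the diagonal block $\Delta \times [0,N]$; in the loop picture this inserts a trivial constant block between the $i$-th and $(i+1)$-th blocks and rescales the remaining $n$ blocks from parameter length $1/n$ down to $1/(n+1)$.

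The principal obstacle is that this face-map insertion does not yield strict equality $\varphi_{n+1} \circ d_i = \varphi_n$: the two loops have the same image in $X$ but differ by a canonical reparameterization of $S^1$. I plan to handle this in one of two ways. The cleanest is to replace $\mathcal{L}_{\leq T}X$ by its homotopy-equivalent Moore-loop model, in which each loop carries its own parameter interval; there, inserting a trivial block genuinely adds no parameter time and both compatibilities hold strictly, while the inclusion into $\mathcal{L}_{\leq T}X$ is a levelwise homotopy equivalence for compact $X$. Alternatively, one may keep the $S^1$-parameterized model and produce the canonical reparameterization homotopy explicitly, then argue via a mapping cylinder construction on pre-cocyclic spaces that the resulting homotopy-coherent morphism still induces a levelwise isomorphism on cohomology, which is precisely the input required by \autoref{lemma: comparison cyclic homology} in the subsequent argument.
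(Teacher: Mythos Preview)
Your approach is essentially the same as the paper's: both construct the piecewise geodesic map levelwise, verify $\bZ/\ell$-equivariance by matching the block permutation with rotation by $1/\ell$, and invoke the Milnor interpolation homotopy to conclude the levelwise equivalence. The paper's proof is terser---it simply asserts that ``piecewise geodesic maps commute with face maps'' and that the Milnor homotopy can be taken blockwise, hence $\bZ/\ell$-equivariantly.

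You are in fact more careful than the paper on the face-map compatibility. With the paper's parameterization (each geodesic arc on an interval of fixed length $1/(N\ell)$), inserting a diagonal block produces a loop with a constant segment occupying positive parameter time, so $\varphi_{\ell+1}\circ d_i$ and $\mathrm{id}\circ\varphi_\ell$ differ by the reparameterization you identify; the paper's strict-commutativity claim is not literally correct as stated. Your Moore-loop fix is the standard and cleanest way to make the diagram commute on the nose while keeping cyclic compatibility strict, and is exactly what the downstream application via \autoref{lemma: comparison cyclic homology} requires. The alternative homotopy-coherent route you sketch also works but is heavier than necessary here.
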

\begin{proof}For a fixed level $n\in \bN_0$ and we set $\ell=n+1$. Since $d(\bq_i^j,\bq_{i+1}^j)\leq 1$ for all $i,j$, there exists a unique minimal geodesic $c_{i}^j:[0,1/(N\ell)]\rightarrow X$ from $\bq_i^j$ to $\bq_{i+1}^j$. In particular, we have $L(c_i^j)=d(\bq_i^j,\bq_{i+1}^j)$. Now, let $c^j:[0,1/\ell]\rightarrow X$ be the concatenation of $c_i^j$ for all $i$, and $c: [0,1]\rightarrow X$ be the concatenation of $c^j$ for all $j$. We have $c(\frac{i+1}{N\ell}+\frac{j-1}{\ell})=\bq_i^j$. The condition $\sum_{j=1}^{\ell} d(\bq_i^j,\bq_{i+1}^j)\leq T/N$ ensures that $L(c)=\sum_{i,j}L(c_i^j)\leq \sum _i T/N=T$. The construction defines a piecewise geodesic map $(\bq_i^j)\mapsto c$ for each $\ell$. Moreover, the cyclic permutation here is given by $(\bq_i^j)\mapsto (\bq_i^{j+1})$ and $c^j\mapsto c^{j+1}$. Therefore, the piecewise geodesic map
\[{\mathcal{L}^N_{\ell,T}X} \rightarrow\mathcal{L}_{\leq T}X\]
is $\bZ/\ell$-equivariant. Moreover, it can be checked that piecewise geodesic maps commute with face maps. Therefore, we have a map of pre-cocyclic spaces. 

Finally, recall that the interpolation homotopy in \cite{MilnorMorse} can be taken piecewise on each $c^j$. Therefore, we can construct a $\bZ/\ell$-equivariant homotopy equivalence between the identity map and the piecewise geodesic map of each $\ell$. This completes the proof of the proposition.
\end{proof}
Now, since both ${\mathcal{L}^N_{\bullet,T}X}$ and $\mathcal{L}_{\leq T}X_\bullet$ are levelwise compact, the piecewise geodesic map is proper, and it induces a levelwise isomorphism on cohomology. Therefore, we can apply \autoref{lemma: comparison cyclic homology} to conclude that
\[H^qC_T(D^*X,\bK)\cong \textnormal{Ext}^{q-d}_{\bK[\epsilon]}(CC^*(C^*(\mathcal{L}^N_{\bullet,T}X,\bK)),\bK) \cong \textnormal{Ext}^{q-d}_{\bK[\epsilon]}(CC^*(C^*(\mathcal{L}_{\leq T}X_\bullet,\bK)),\bK).\]
Now, let us focus on the righthand side of the isomorphism. Using a pre-cyclic version of \cite[Theorem 7.2.3]{LodayCyclic}, we have that 
\[\textnormal{Ext}^{q-d}_{\bK[\epsilon]}(CC^*(C^*(\mathcal{L}_{\leq T}X_\bullet,\bK)),\bK) \cong H_{d-q}^{S^1}(|\mathcal{L}_{\leq T}X_\bullet|,\bK),\]
where $|-|$ stands for the {\em fat} geometric realization of pre-cocyclic spaces.

Please note that the {\em fat} geometric realization of $\mathcal{L}_{\leq T}X_\bullet$ is NOT homeomorphic to $\mathcal{L}_{\leq T}X$. However, $\mathcal{L}_{\leq T}X_\bullet$ is actually a constant cocyclic space, and therefore, the geometric realization of $\mathcal{L}_{\leq T}X_\bullet$ as a cocyclic space is homeomorphic to $\mathcal{L}_{\leq T}X$. Moreover, $\mathcal{L}_{\leq T}X_\bullet$ is a good cocyclic space, which means that all degenerate maps are closed inclusions. Then the fat geometric realization and the geometric realization of the cocyclic space $\mathcal{L}_{\leq T}X_\bullet$ are weakly homotopy equivalent as $S^1$-spaces (refer to \cite[Proposition A.1]{Segal1974} for the simplicial version; it is sufficient for us to verify the equivariance of the maps). Therefore, we conclude that
\begin{Thm}\label{Viterbo isomorphism statement}For a compact manifold $X$, $T\in [0,\infty]$, we have 
\begin{equation*}
    H^{q}C_{T}^{S^1}(D^*X,\bK) \cong H_{d-q}^{S^1}(\mathcal{L}_{\leq T}X,\bK).
\end{equation*}
\end{Thm}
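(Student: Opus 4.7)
The plan is to assemble a zig-zag of isomorphisms in the mixed derived category $D_{dg}(\bK[\epsilon]-\Mod)$ and then apply $\RHOM_{\bK[\epsilon]}(-,\bK[-d])$ and take cohomology. Because the forgetful functor to $D_{dg}(\bK-\Mod)$ is conservative, at each step it suffices to check the isomorphism at the level of the underlying chain complex while being equivariant enough to descend to the mixed structure.

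The first step is to combine the explicit formula in \autoref{kernel of unit disk bundle} with \autoref{Def: Z def}. Taking $Z=X^{N-1}$, $W_N=\mathcal{M}_0^N X$, and $\Delta\times[0,N]=\Delta_{X^N}\times[0,N]$, a direct substitution identifies the pre-cocyclic space $\mathcal{W}_\bullet$ with the discrete loop space $\mathcal{L}^N_\bullet X$ equipped with the total length cut-off $\sum_j d(\bq_i^j,\bq_{i+1}^j)\le t/N$. This yields
\[F^{S^1}_\bullet(D^*X,\bK)_T \;\cong\; CC^*\bigl(C^*_c(\mathcal{L}^N_\bullet X,\bK)\bigr)\;\in\; D_{dg}(\bK[\epsilon]-\Mod).\]

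Next I would pass to the slice $t=T$ via the closed pre-cocyclic inclusion $\mathcal{L}^N_{\bullet,T}X \hookrightarrow \mathcal{L}^N_\bullet X$, which is a levelwise deformation retract and therefore induces a levelwise isomorphism on compactly supported cohomology; \autoref{lemma: comparison cyclic homology} then promotes this to an isomorphism of cyclic cochains. Then by \autoref{Zl loop space homotopy} the piecewise geodesic map $\mathcal{L}^N_{\bullet,T}X \to \mathcal{L}_{\leq T}X_\bullet$ is a levelwise equivariant homotopy equivalence of pre-cocyclic spaces. Crucially, compactness of $X$ forces both sides to be levelwise compact: the lefthand side sits as a closed subspace of a finite power of $X$, and the righthand side is compact by Milnor's finite-dimensional approximation. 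Hence the map is proper, induces a levelwise iso on (compactly supported) cohomology, and \autoref{lemma: comparison cyclic homology} again lifts this to an isomorphism of mixed complexes.

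After applying $\RHOM_{\bK[\epsilon]}(-,\bK[-d])$ and taking cohomology, I would invoke the pre-cyclic analogue of \cite[Theorem 7.2.3]{LodayCyclic} to identify the resulting group with $H^{S^1}_{d-q}(|\mathcal{L}_{\leq T}X_\bullet|,\bK)$, the Borel construction on the \emph{fat} geometric realization. The hard part will be the final identification: the fat realization of a constant cocyclic space is not literally the $S^1$-space itself. However, $\mathcal{L}_{\leq T}X_\bullet$ is a good cocyclic space — its degeneracies are identities and thus trivially closed cofibrations — so Segal's comparison between the fat and the ordinary realization applies equivariantly, and the ordinary realization of a constant cocyclic object is tautologically the original $S^1$-space $\mathcal{L}_{\leq T}X$. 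For the $T=\infty$ case I would pass to a filtered colimit: the cyclic cochain and Borel construction both commute with filtered colimits, $\mathcal{L}X=\varinjlim_T \mathcal{L}_{\le T}X$, and $H^*C_\infty^{S^1}$ is defined as the corresponding limit, so the finite-$T$ isomorphisms assemble into one in the limit.
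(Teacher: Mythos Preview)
Your proposal is correct and follows essentially the same route as the paper: identify $\mathcal{W}_\bullet$ with $\mathcal{L}^N_\bullet X$ via \autoref{kernel of unit disk bundle}, retract to the slice $\mathcal{L}^N_{\bullet,T}X$, apply \autoref{Zl loop space homotopy} and compactness to reach the constant cocyclic space $\mathcal{L}_{\leq T}X_\bullet$, invoke the pre-cyclic version of \cite[Theorem 7.2.3]{LodayCyclic}, and finish with Segal's fat-versus-ordinary realization comparison for good cocyclic spaces. Your explicit treatment of $T=\infty$ by passing to the colimit of the finite-$T$ isomorphisms is a small addition the paper leaves implicit.
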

The parallel result for the $\bZ/\ell$-equivariant Chiu-Tamarkin invariant is also true using similar arguments, by considering $\mathcal{L}_{\ell, T}^N X$ directly.

\subsection{Product structure}\label{Chas Sullivan product}
In this subsection, we will compute the product that we constructed in \autoref{sec: cup product}. Precisely, we will show that it is equivalent to the loop product, also known as the Chas-Sullivan product, on the homology of the free loop space. For the definition of the loop product, we use the Thom collapse approach given in \cite{Cohen_2002}. For further discussions on string topology, we refer to \cite{cohen2006stringtopology}.

Let us review the notation of \eqref{discretegeodesic space}. In particular, let us denote
\begin{equation}
 \mathcal{L}^N_{T}X=\mathcal{L}^N_{1,T}X=\lbrace(\bq_i)_{i}:  d(\bq_i,\bq_{i+1})\leq \min\{1,T/N\},\, i\in \bZ/N \rbrace.
\end{equation}
Then, when $X$ is orientable and $0\leq T \leq N$ for $N\in \bN$, we have (similar to \eqref{non-equivariant CT and yoneda form})
\begin{equation}\label{loop adjoint identity}
 H^{q}C_{T}(D^*X,\bK)\cong \textnormal{Ext}^{q}(\bK_{\mathcal{L}^N_{T}X},\bK_{X^N}[(N-1)d]).   
\end{equation}
Now, take $\alpha \in H^{a}C_{A}(D^*X,\bK)$ and $\beta \in H^{b}C_{B}(D^*X,\bK)$. Assume $0\leq A,B\leq N $, then the identification \eqref{loop adjoint identity} presents $\alpha$ and $\beta$ as follow morphisms:
\begin{align*}
    \alpha: \bK_{\mathcal{L}^N_{A}X}\rightarrow \bK_{X^N}[a+(N-1)d],\qquad
    \beta: \bK_{\mathcal{L}^N_{B}X}\rightarrow \bK_{X^N}[b+(N-1)d],
\end{align*}
and $\alpha\dboxtimes\beta$ corresponds to
\[\alpha\dboxtimes\beta:\bK_{\mathcal{L}^{N}_{A}X\times \mathcal{L}^N_{B}X}\rightarrow \bK_{X^{2N}}[a+b+2[N-1]d].\]
Apply the collapsing map, we have 
\[\bK_{\mathcal{L}^{N}_{A}X\times \mathcal{L}^N_{B}X}\rightarrow \bK_{X^{2N}}[a+b+2(N-1)d]\xrightarrow{i} \bK_{X^{2N}\cap\{\bq_N=\bq_{2N}\}}[a+b+2(N-1)d] .\]
It is a class $i\circ \alpha\dboxtimes\beta$ in 
\begin{align*}
     &\textnormal{Ext}^{a+b+2(N-1)d}(\bK_{\mathcal{L}^{N}_{A}X\times \mathcal{L}^N_{B}X},\bK_{X^{2N}\cap\{\bq_N=\bq_{2N}\}}) \\
\cong & \textnormal{Ext}^{a+b+2(N-1)d}(\bK_{\mathcal{L}^{N}_{A}X\times \mathcal{L}^N_{B}X\cap\{\bq_N=\bq_{2N}\}},\bK_{X^{2N}\cap\{\bq_N=\bq_{2N}\}}).
\end{align*}
Notice that, if we apply the piecewise geodesic map, we will see that $\mathcal{L}^{N}_{A}X\times \mathcal{L}^N_{B}X\cap\{\bq_N=\bq_{2N}\}$ is homotopy equivalent to the space of free composable loops:
\[\mathcal{F}_{A+B}X=\{(c_1,c_2)\in \mathcal{L}_{\leq A}X\times \mathcal{L}_{\leq B}X: c_1(0)=c_2(0)  \}.\]
Next, we apply the Gysin map associated with the pair $(X^{2N},X^{2N}\cap\{\bq_N=\bq_{2N}\})$, yielding a class $e\circ i\circ (\alpha\dboxtimes \beta)$:
\[\bK_{\mathcal{L}^{N}_{A}X\times \mathcal{L}^N_{B}X \cap\{\bq_N=\bq_{2N}\}}\rightarrow \bK_{X^{2N}\cap\{\bq_N=\bq_{2N}\}}[a+b+2(N-1)d]\xrightarrow{e} \bK_{X^{2N}}[a+b+(2N-1)d]\]
in
\[\textnormal{Ext}^{a+b+(2N-1)d}(\bK_{\mathcal{L}^{N}_{A}X\times \mathcal{L}^N_{B}X\cap\{\bq_N=\bq_{2N}\}},\bK_{X^{2N}}).\]
Under the piecewise geodesic construction, it corresponds to applying the Gysin map associated with the pair $(\mathcal{L}_{A+B}X,\mathcal{F}_{A+B}X)$.

On the other hand, we have the morphism induced by summation and restriction:
\begin{align*}
\textnormal{Ext}^{a+b+(2N-1)d}(\bK_{\mathcal{L}^{N}_{A}X\times \mathcal{L}^N_{B}X \cap\{\bq_N=\bq_{2N}\}},\bK_{X^{2N}})\xrightarrow {\tnR s_{t!}^2 ( j\circ -)} \textnormal{Ext}^{a+b+(2N-1)d}(\bK_{\mathcal{L}^{2N}_{A+B}X },\bK_{X^{2N}})  ,
\end{align*}
which is the tautological map that turns a figure $8$ type curve in $X$ into a closed curve by forgetting the crossing point of $8$. Under the morphism, $ e\circ i\circ (\alpha\dboxtimes \beta)$ is mapped to
\[\tnR s_{t!}^2(j\circ e\circ i\circ( \alpha\dboxtimes \beta)) \in \textnormal{Ext}^{a+b+(2N-1)d}(\bK_{\mathcal{L}^{2N}_{A+B}X },\bK_{X^{2N}})  \cong \textnormal{Ext}^{a+b}(F_1(D^*X,\bK)_{A+B},\bK[-d]).\]
Consequently, we track all steps and apply the piecewise geodesic construction in all steps, we see that $\tnR s_{t!}^2(j\circ e\circ i\circ( \alpha\dboxtimes \beta))$ represents the loop product of $\alpha,\beta$ in $H_{a+b+d}(\mathcal{L}_{\leq A+B}X,\bK)$.

Let us remark here that to compare with string topology, we start with the identification \eqref{loop adjoint identity}. However, in \autoref{sec: cup product}, we identify $H^{q}C_{T}(D^*X,\bK)$ with $\textnormal{Ext}^{q}(P_{D^*X},\bK_{\Delta_{X^2}})$. Let us denote the corresponding classes of $\alpha, \beta \in \textnormal{Ext}^{q}(\bK_{\mathcal{L}^N_{T}X},\bK_{X^N}[(N-1)d])$ by $\overline{\alpha}, \overline{\beta}$. Then, by carefully comparing adjunction isomorphisms and using the naturality of the Euler class, we can see that $j\circ e\circ i\circ \alpha\dboxtimes\beta= e\circ \Tilde{\Delta}_X^{-1}(\overline{\alpha}\dboxtimes \overline{\beta})$. Consequently, we have $\tnR s_{t!}^2(j\circ e\circ i\circ( \alpha\dboxtimes \beta))=\overline{\alpha}\cup \overline{\beta}$ up to natural identifications, and
\begin{Thm}\label{theorem: viterbo isomorphism-product}For a compact orientable manifold $X$, the Viterbo isomorphism 
\[H^{q}C_{T}(D^*X,\bK)\cong H_{d-q}(\mathcal{L}_{\leq T}X,\bK)\]
is an isomorphism of $\bK$-algebras that intertwines the cup product on the Chiu-Tamarkin cohomology and the filtered version of the loop product on the string topology.
\end{Thm}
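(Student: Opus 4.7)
The plan is to track the cup product defined in \autoref{definition: cup product} through the chain of isomorphisms that yielded the Viterbo isomorphism of \autoref{Viterbo isomorphism statement} for $\ell=1$, and identify the end result with the Cohen-Jones description of the loop product via the Thom collapse construction from \cite{Cohen_2002}. Concretely, for $0\leq A,B\leq N$ with $N\in\bN$ large, I first use the piecewise geodesic model \autoref{kernel of unit disk bundle} together with \autoref{non-equivariant CT and yoneda form} to obtain the adjunction identification
\[
H^{q}C_{T}(D^*X,\bK)\;\cong\;\textnormal{Ext}^{q}\bigl(\bK_{\mathcal{L}^N_{T}X},\,\bK_{X^N}[(N-1)d]\bigr),
\]
and represent cohomology classes $\alpha,\beta$ by explicit morphisms in $D(X^N)$ at this level.

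Next, I would apply the alternative formula \eqref{formula: cup product} for the cup product, which expresses $\alpha\cup\beta$ as $\tnR s^2_{t!}(e\circ \Tilde{\Delta}_X^{-1}(\alpha\dboxtimes\beta))$ with $e$ the Euler class of the pair $(X^2,\Delta_{X^2})$. Under the piecewise geodesic identification, the external tensor $\alpha\dboxtimes\beta$ is a map on $\bK_{\mathcal{L}^N_A X\times\mathcal{L}^N_B X}$, the twisted diagonal restriction $\Tilde{\Delta}_X^{-1}$ corresponds to restricting to tuples with $\bq_N=\bq_{2N}$, and this subspace is levelwise homotopy equivalent to the space of composable loops $\mathcal{F}_{A+B}X$. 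The Euler class of $(X^2,\Delta_{X^2})$ then matches, by naturality, the Gysin morphism for the codimension-$d$ embedding $\mathcal{F}_{A+B}X\hookrightarrow \mathcal{L}_{\leq A}X\times\mathcal{L}_{\leq B}X$ — this is precisely the Thom collapse step. Finally $\tnR s^2_{t!}$ corresponds to concatenating two composable loops at the marked point to produce a single loop in $\mathcal{L}_{\leq A+B}X$, which is the join step in the Cohen-Jones construction.

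Assembling these identifications, I obtain that the diagram relating $\alpha\cup\beta$ on the sheaf side and the loop product $\alpha\bullet\beta$ on the topological side commutes up to the natural Viterbo isomorphism of \autoref{Viterbo isomorphism statement}. Filtration compatibility is automatic since every step occurs within the length window $[0,A+B]$: the Thom collapse and join maps preserve the filtration by total length. The main technical obstacle I anticipate is keeping track of orientations and degree shifts: the Euler class lives in degree $d$ while the Gysin map for $\mathcal{F}\hookrightarrow \mathcal{L}\times\mathcal{L}$ has codimension $d$, and one must verify that the signs produced by the Koszul rule in $\Tilde{\Delta}_X^{-1}(\alpha\dboxtimes\beta)$ and by the Thom isomorphism of the normal bundle of the diagonal agree under the piecewise geodesic retraction. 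This requires choosing compatible orientations on $\Delta_{X^2}\subset X^2$ and on the evaluation map $\mathrm{ev}_0:\mathcal{L}X\to X$, and checking the naturality of the Thom class under the homotopy equivalence $\mathcal{L}^N_{\bullet,T}X\simeq \mathcal{L}_{\leq T}X_\bullet$ established in \autoref{Zl loop space homotopy}. Once these compatibilities are in place, the identification $\tnR s^2_{t!}(e\circ \Tilde{\Delta}_X^{-1}(\alpha\dboxtimes\beta))=\alpha\bullet\beta$ holds on the nose, proving that the Viterbo isomorphism is an isomorphism of graded $\bK$-algebras intertwining the cup product with the filtered loop product.
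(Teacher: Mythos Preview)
Your proposal is correct and follows essentially the same approach as the paper: both arguments use the piecewise-geodesic identification \eqref{loop adjoint identity}, unwind the cup product via the formula \eqref{formula: cup product}, and match the three ingredients (diagonal restriction, Euler/Gysin map, summation) against the Cohen--Jones Thom-collapse description of the loop product. The paper carries this out by first constructing the loop-product side in the finite model (collapse $i$, Gysin $e$ for the pair $(X^{2N},X^{2N}\cap\{\bq_N=\bq_{2N}\})$, then $\tnR s_{t!}^2\circ j$) and only afterward comparing with the sheaf-side cup product via the identity $j\circ e\circ i\circ(\alpha\dboxtimes\beta)=e\circ\Tilde{\Delta}_X^{-1}(\overline{\alpha}\dboxtimes\overline{\beta})$; you approach it in the reverse direction, but the content is the same and the technical point you flag about orientations and naturality of the Euler class is exactly the comparison the paper invokes.
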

\appendix
\section{Admissibility}\label{appendix: existence kernel}
The main objective of this appendix is to establish the admissibility of all open sets (i.e., \autoref{all open sets admissible}) within the framework of triangulated categories. 
\begin{Thm}All open sets in a cotangent bundle are admissible.    
\end{Thm}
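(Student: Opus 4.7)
My plan is to reduce to the already-known admissibility of bounded open sets by an exhaustion argument combined with a homotopy colimit carried out in the framework of triangulated derivators. First I would fix an exhaustion $U_1\subset U_2\subset\cdots\subset U$ by relatively compact open sets with $\overline{U_n}\subset U_{n+1}$ and $\bigcup_n U_n=U$. Each $U_n$ is bounded and hence already known to be admissible, yielding a defining triangle $P_{U_n}\to \bK_{\Delta_{X^2}\times[0,\infty)}\to Q_{U_n}\xrightarrow{+1}$. By \autoref{functorial}, the inclusions $U_n\hookrightarrow U_{n+1}$ induce a direct system of defining triangles in which the middle term stays the identity. Writing $Z=T^*X\setminus U$ and $Z_n=T^*X\setminus U_n$, one has $Z\subset Z_n$ and $\cD_Z(X)\subset\cD_{Z_n}(X)$ for every $n$.

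Next I would set $P_U:=\operatorname{hocolim}_n P_{U_n}$ and $Q_U:=\operatorname{hocolim}_n Q_{U_n}$, interpreting the sequential homotopy colimit through the telescope $\operatorname{hocolim}_n F_n=\operatorname{cone}\bigl(\bigoplus_n F_n \xrightarrow{1-\textnormal{shift}} \bigoplus_n F_n\bigr)$ inside the triangulated category $D(X^2\times\bR)$, which admits small coproducts. To make the construction canonical and to lift the morphisms of triangles coherently I would work in the triangulated derivator enhancement of $D(X^2\times\bR)$ (cf.\ \autoref{remark: kernel in infinity category}), since this is exactly where a bare triangulated category fails to give functorial cones. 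Exactness of $\operatorname{hocolim}$ then produces a distinguished triangle $P_U\to \bK_{\Delta_{X^2}\times[0,\infty)}\to Q_U\xrightarrow{+1}$ in $\cD(X^2)$. To verify that $\star P_U$ is the left adjoint to the quotient $\cD(X)\to\cD_U(X)$, pick $F\in\cD(X)$ and $G\in\cD_Z(X)\subset\cD_{Z_n}(X)$; admissibility of $U_n$ gives $\HOM(F\star P_{U_n},G)=0$ for every $n$, and the Milnor short exact sequence
\begin{equation*}
0\to {\lim}^{1}_{n}\HOM(F\star P_{U_n},G[-1])\to \HOM(F\star P_U,G)\to \lim_n\HOM(F\star P_{U_n},G)\to 0
\end{equation*}
forces $\HOM(F\star P_U,G)=0$, so $F\star P_U\in{}^{\perp}\cD_Z(X)$. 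Dually, for $G\in\cD_Z(X)$ the defining adjunction of each $P_{U_n}$ gives $G\star P_{U_n}\cong 0$, whence $G\star P_U\cong \operatorname{hocolim}_n(G\star P_{U_n})\cong 0$, so $\cD_Z(X)\subset\ker(\star P_U)$.

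The main obstacle I expect is the reverse inclusion $\ker(\star P_U)\subset\cD_Z(X)$, equivalently showing $\mu s(F\star Q_U)\subset Z$ for every $F\in\cD(X)$. The naive microsupport bound is too weak here: $\mu s(Q_{U_n})\subset Z_n$ and the union $\bigcup_n Z_n=Z_1$ is far larger than the intersection $Z$, so $SS$ of the homotopy colimit a priori only lies in $Z_1$. To close this gap I would test germ-wise at each $(\bq,\bp)\in U$: since $(\bq,\bp)\in U_n$ for $n\geq N$, the successive cofibers $\operatorname{cone}(Q_{U_n}\to Q_{U_{n+1}})\cong\operatorname{cone}(P_{U_n}\to P_{U_{n+1}})[1]$, which encode the passage from $U_n$ to $U_{n+1}$, are microsupported away from $(\bq,\bp)$ for $n\geq N$ by the explicit microlocal description of the bounded kernels $P_{U_n}$. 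A local stabilization argument then upgrades the microsupport bound on the hocolim from $Z_1$ to $Z$, and this refined bound propagates through convolution by the usual 6-operations microsupport estimates, yielding $\mu s(F\star Q_U)\subset Z$ and completing the verification. Once the $P_U$-side is in place, the $Q_U$-side follows from the orthogonal decomposition recorded in \autoref{remark: one kernel is enough}.
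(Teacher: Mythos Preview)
Your overall strategy matches the paper's: exhaust $U$ by bounded open sets, use a triangulated derivator (or stable $\infty$-) enhancement to make the homotopy colimit of defining triangles into a genuine distinguished triangle, and then verify the two orthogonality conditions. Your treatment of the $P_U$-side via the Milnor $\lim/\lim^1$ sequence is exactly how the paper handles $\RHOM(F\star P_\infty,G)=0$ for $G\in\cD_Z(X)$.

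The place where you diverge from the paper is the microsupport bound for $F\star Q_U$, and here you are making the step harder than it is. You correctly note that the crude estimate $SS(\operatorname{hocolim}F_n)\subset\overline{\bigcup_n SS(F_n)}$ only yields $Z_1$, and then you propose a germ-wise ``local stabilization'' argument via the successive cofibers. The paper instead invokes the sharper \emph{limsup} estimate for sequential homotopy colimits (\cite[Proposition~7.3]{guillermouviterbo_gammasupport}):
\[
SS(\operatorname{hocolim}_n F_n)\ \subset\ \bigcap_{N\geq 1}\overline{\bigcup_{n\geq N}SS(F_n)}.
\]
Since the $Z_n$ are decreasing, $\overline{\bigcup_{n\geq N}SS(F\star Q_{U_n})}\subset cone(Z_N)\cup 0_{X\times\bR}$ is already closed, and the intersection over $N$ is $cone(Z)\cup 0_{X\times\bR}$. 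This one line replaces your entire stabilization step. Your cofiber idea is in fact a disguised proof of this limsup bound (truncating the system at stage $N$ does not change the hocolim, so only $\bigcup_{n\geq N}SS(F_n)$ matters), but as written it is vague, and the justification ``by the explicit microlocal description of the bounded kernels $P_{U_n}$'' is off: the relevant bound is on $Q_{U_n}$ via \eqref{eq: microsupport estimate for kernel}, not on $P_{U_n}$, whose microsupport you do not control directly.
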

\begin{RMK}As we have mentioned in \autoref{remark: kernel in infinity category}, the result itself is not new. However, the proof and the techniques employed here, especially those related to triangulated derivators, are not widely known. Therefore, the proof presented here is new and provides a fresh perspective. We are delighted to present this application of Grothendieck derivators.
\end{RMK}
First, let us recall the following microsupport estimate: If $U$ is a bounded open set and $F\in \cD(X)$, then we have
\begin{equation}\label{eq: microsupport estimate for kernel} SS(F \star Q_U)\subset cone(Z ) \cup 0_{X\times \bR}.   
\end{equation}
This estimate can be directly found in the proof of \cite[Corollary 2.9]{Capacities2021}.

Now, for an arbitrary open the $U\subset T^*X$, we take a bounded exhaustion $(U_n)_{n\in \bN}$ of $U$, i.e. a sequence of bounded open set such that $\overline{U_n}\subset U_{n+1}$ and $U=\bigcup_n U_n$. We also define $Z_n=T^*X\setminus U_n$. It follows that $Z_n$ is a decreasing sequence of closed sets that converges to $Z$. Since bounded open sets $U_n$ are admissible, we have a sequence of defining triangles
\[P_n\xrightarrow{a_n} \bK_{ \Delta_{X^2} \times {[0,\infty)} } \xrightarrow{b_n} Q_n \xrightarrow{+1}.\]
Then, according to \autoref{functorial}, we can construct an inductive system of distinguished triangles in $D(X^2\times \bR)$, as $\cD(X^2)$ is a full triangulated subcategory of $D(X^2\times \bR)$:
\begin{equation*}
   \begin{tikzcd}
P_n \arrow[r, "a_n"] \arrow[d, "p_n"] & {\bK_{ \Delta_{X^2} \times {[0,\infty)} } } \arrow[r, "b_n"] \arrow[d, "\id"] & Q_n \arrow[r, "+1"] \arrow[d, "q_n"] & {} \\
P_{n+1} \arrow[r, "a_{n+1}"]          & {\bK_{ \Delta_{X^2} \times {[0,\infty)} } } \arrow[r, "b_{n+1}"]              & Q_{n+1} \arrow[r, "+1"]              & . 
\end{tikzcd}  
\end{equation*}
Here, we expect that the homotopy colimit of the inductive system still forms a distinguished triangle. However, this property does not generally hold for triangulated categories. At this point, we only have two morphisms available:
\[ P_\infty \xrightarrow{a_\infty} {\bK_{ \Delta_{X^2} \times {[0,\infty)} } }\xrightarrow{b_\infty} Q_\infty,\]
where $\textnormal{hocolim}(P_n)=P_\infty$, $\textnormal{hocolim}(Q_n)=Q_\infty$, $\textnormal{hocolim}(a_n)=a_\infty$ and $\textnormal{hocolim}(b_n)=b_\infty$. Notice that $P_\infty$ and $Q_\infty$ are unique upto (not necessarily unique) isomorphisms and $a_\infty$ and $b_\infty$ are not unique.

Nevertheless, it has been shown in \cite[Example 5.17]{MV_in_stable_derivators} that for the stable model category $C(X\times\bR)=C(Sh(X\times\bR))$ (since $Sh(X\times\bR)$ is Grothendieck), there exists a triangulated derivator $\mathbb{D}{Sh(X\times\bR)}$ such that $\mathbb{D}_{Sh(X\times\bR)}(e)=D({Sh(X\times\bR)})=D(X\times\bR)$, where $e$ represents the 1-point category. Therefore, we can utilize \cite[Corollary 11.4]{functorial_cone_Keller_Pedro} twice to obtain two distinguished triangles that relate to homotopy colimits. It should be noted that our homotopy colimit is referred to as the Milnor colimit in {\it loc. cit.} Precisely, we have
\begin{Lemma}There exists two inductive systems $(P_n',\,P_n'\xrightarrow{p_n'} P'_{n+1})$ and $(Q_n',\,Q_n'\xrightarrow{q_n'} Q'_{n+1})$ such that $P_n\cong P_n'$, $Q_n\cong Q_n'$ and we have following distinguished triangles:
\begin{equation}\label{distinguished triangle in general}
\begin{split}
P_\infty \xrightarrow{a_\infty} \bK_{ \Delta_{X^2} \times {[0,\infty)} } \rightarrow  \textnormal{hocolim}(Q_n') \xrightarrow{+1},\\
    \textnormal{hocolim}(P'_n)\rightarrow \bK_{ \Delta_{X^2} \times {[0,\infty)} } \xrightarrow{b_\infty} Q_\infty \xrightarrow{+1}.
    \end{split}
\end{equation}
\end{Lemma}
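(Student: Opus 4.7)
The plan is to apply \cite[Corollary 11.4]{functorial_cone_Keller_Pedro} twice inside the triangulated derivator $\mathbb{D}_{Sh(X^2\times\bR)}$ to produce the two listed distinguished triangles. The philosophy is standard: while the incoherent inductive systems $(P_n), (Q_n)$ in $D(X^2\times\bR)$ do not uniquely determine a homotopy colimit, and taking cones need not commute with hocolim in a bare triangulated category, the derivator framework promotes them to coherent diagrams on $\bN$, where both operations become strictly functorial.

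For the first triangle, I would lift $(P_n, p_n)$ to a coherent diagram $P_\bullet : \bN \to \mathbb{D}_{Sh(X^2\times\bR)}$ and assemble the morphisms $(a_n)$ into a coherent morphism $a_\bullet : P_\bullet \to \bK$ with constant target. The functorial cone from Keller--Nicol\'as then produces a coherent diagram $C_\bullet \coloneqq \textnormal{cone}(a_\bullet)$ whose value at each level $n$ is (non-canonically) isomorphic to $Q_n$; setting $Q'_n \coloneqq C_\bullet(n)$ and taking hocolim, with the key input that cone commutes with hocolim in the derivator, yields the distinguished triangle
\[ P_\infty \xrightarrow{a_\infty} \bK \to \textnormal{hocolim}(Q'_n) \xrightarrow{+1}, \]
where $P_\infty = \textnormal{hocolim}(P_\bullet)$ and $a_\infty = \textnormal{hocolim}(a_\bullet)$. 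The second triangle is obtained by the symmetric construction: starting from the coherent lift $Q_\bullet$ of $(Q_n, q_n)$ and the coherent morphism $b_\bullet : \bK \to Q_\bullet$, I apply the functorial fiber (that is, the shifted cone) to obtain a coherent diagram $F_\bullet$ with $F_\bullet(n) \cong P_n$, set $P'_n \coloneqq F_\bullet(n)$, and take hocolim to get $\textnormal{hocolim}(P'_n) \to \bK \xrightarrow{b_\infty} Q_\infty \xrightarrow{+1}$.

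The main obstacle is the careful bookkeeping of the derivator framework. The primed systems carry objects levelwise isomorphic to the originals but with transition maps produced by the functorial cone/fiber construction, which are in general different from $p_n, q_n$. Consequently, $\textnormal{hocolim}(P'_n)$ is not a priori identified with $P_\infty = \textnormal{hocolim}(P_n)$, and similarly on the $Q$ side; this is exactly why the lemma outputs two separate triangles rather than a single distinguished triangle $P_\infty \to \bK \to Q_\infty \xrightarrow{+1}$. This mismatch is the essential feature of working with triangulated derivators rather than stable $\infty$-categories, and it is precisely what will force the remaining adjunction arguments in the proof of \autoref{all open sets admissible} to treat the two triangles separately.
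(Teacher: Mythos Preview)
Your proposal is correct and follows exactly the same approach as the paper: the paper's proof consists of the single sentence preceding the lemma, which says to apply \cite[Corollary 11.4]{functorial_cone_Keller_Pedro} twice inside the triangulated derivator $\mathbb{D}_{Sh(X\times\bR)}$ (noting that the homotopy colimit here is the Milnor colimit in the cited reference). Your elaboration on lifting to coherent diagrams, taking functorial cones/fibers, and the resulting mismatch between the primed and unprimed transition maps is precisely the content of that corollary and accurately explains why two separate triangles are produced.
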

We set 
\begin{equation*}
a'_\infty: P'_\infty\coloneqq \textnormal{hocolim}(P'_n)\rightarrow \bK_{ \Delta_{X^2} \times {[0,\infty)} },\quad 
b'_\infty:\bK_{ \Delta_{X^2} \times {[0,\infty)} } \rightarrow  \textnormal{hocolim}(Q_n')\eqqcolon Q'_\infty.    
\end{equation*}\begin{Prop}The open set $U$ is admissible according to both of the distinguished triangles in \eqref{distinguished triangle in general}.
\end{Prop}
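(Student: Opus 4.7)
The proof reduces to verifying, for each of the two triangles in \eqref{distinguished triangle in general} and every $F \in \cD(X)$, that convolving with $F$ yields a distinguished triangle $L \to F \to R \xrightarrow{+1}$ with $L \in {}^{\perp}\cD_Z(X)$ and $R \in \cD_Z(X)$. Such a triangle realizes the semi-orthogonal decomposition of $F$ relative to the reflective pair $({}^{\perp}\cD_Z(X), \cD_Z(X))$, from which the required adjoint properties of $\star L$ and $\star R$ follow. Throughout, convolution is a triangulated bifunctor preserving arbitrary coproducts (it is built from proper pushforward, inverse images, and derived tensor products, each coproduct-preserving), so it commutes with Milnor colimits; hence $F \star P'_\infty \cong \textnormal{hocolim}(F \star P'_n)$, $F \star Q_\infty \cong \textnormal{hocolim}(F \star Q_n)$, and analogously for the remaining two extreme terms, where $P'_n \cong P_n$ and $Q'_n \cong Q_n$.

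\textbf{The right term lies in $\cD_Z(X)$.} Since $Z = \bigcap_n Z_n$, we have $\cD_Z(X) = \bigcap_n \cD_{Z_n}(X)$, and each $\cD_{Z_n}(X)$ is stable under countable coproducts, because the microsupport of an infinite coproduct is contained in the closure of the union of the microsupports and $Z_n$ is closed. Fix $n_0$. The inclusion $\{n \geq n_0\} \hookrightarrow \{n \geq 0\}$ of filtered index categories is cofinal, so the triangulated derivator structure on $D(X^2\times \bR)$ gives a canonical isomorphism $F \star Q_\infty \cong \textnormal{hocolim}_{n \geq n_0}(F \star Q_n)$. Each term in this tail lies in $\cD_{Z_n}(X) \subset \cD_{Z_{n_0}}(X)$, hence the hocolim lies in $\cD_{Z_{n_0}}(X)$; varying $n_0$ yields $F \star Q_\infty \in \cD_Z(X)$, and the same argument applies to $F \star Q'_\infty$.

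\textbf{The left term lies in ${}^{\perp}\cD_Z(X)$, and conclusion.} Take $K \in \cD_Z(X)$; since $Z \subset Z_n$ we have $K \in \cD_{Z_n}(X)$ for every $n$. The admissibility of $U_n$ gives $F \star P_n \in {}^{\perp}\cD_{Z_n}(X)$, so $\HOM(F \star P_n, K[i]) = 0$ for every integer $i$. The Milnor exact sequence
\[
0 \to \lim{}^{1} \HOM(F \star P_n, K[-1]) \to \HOM(F \star P'_\infty, K) \to \lim \HOM(F \star P_n, K) \to 0
\]
then forces $\HOM(F \star P'_\infty, K) = 0$, and the same reasoning applies to $F \star P_\infty$. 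Together with the previous step, each of the convolved triangles is the sought semi-orthogonal decomposition of $F$, proving admissibility via both triangles in \eqref{distinguished triangle in general}. The main subtlety is the cofinal truncation of the Milnor colimit in the second paragraph: the resulting isomorphism is not canonical within the bare triangulated category $D(X^2\times \bR)$, which is precisely why the derivator structure was invoked to construct \eqref{distinguished triangle in general} to begin with, and why we must rely on it here as well.
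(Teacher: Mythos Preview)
Your proof is correct and follows essentially the same strategy as the paper's: both verify that the right-hand term lands in $\cD_Z(X)$ via a ``lim sup'' control on microsupports and that the left-hand term is left-orthogonal to $\cD_Z(X)$ by turning the hocolim into a holim on $\RHOM$. The only cosmetic difference is that for the right term the paper invokes the ready-made estimate $SS(\textnormal{hocolim}\,F_n)\subset\bigcap_N\overline{\bigcup_{n\ge N}SS(F_n)}$ from \cite[Proposition~7.3]{guillermouviterbo_gammasupport}, whereas you unpack the same ``pass to the tail'' idea via cofinality in the derivator together with closure of $\cD_{Z_{n_0}}(X)$ under coproducts; and for the left term the paper writes $\RHOM(\textnormal{hocolim},G)\cong\textnormal{holim}\,\RHOM(-,G)$ while you spell out the Milnor $\lim$--$\lim^1$ sequence.
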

\begin{proof}We will prove the statement for the first distinguished triangle. The proof for the second one is identical. 

We only need to verify
\[\mu s(F \star Q'_\infty)\subset Z,\text{ and }\RHOM(F\star P_\infty,G)=0, \qquad \forall G \in \cD_Z(M).\]
The main observation is that for all $F\in D(X\times \bR)$, we also have an inductive system $(F\star Q_n',\,F\star Q_n'\rightarrow F\star Q'_{n+1})$. So, we have a well-defined homotopy colimit $\text{hocolim}(F \star Q'_n)$. Moreover, $\star$ is cocontinuous,  then 
\[\text{hocolim}(F \star Q'_n) \cong F\star (\text{hocolim}Q'_n)=F\star Q'_\infty.\]
Similarly, we have that \[\text{hocolim}(F \star P_n) \cong F\star (\text{hocolim}P_n)=F\star P_\infty.\]
For $F\star Q'_\infty$, we have the microsupport estimate (see \cite[Proposition 7.3]{guillermouviterbo_gammasupport} )
\[SS(F\star Q'_\infty)\subset SS(\text{hocolim}(F \star Q'_n))\subset \bigcap_{N\geq 1}\overline{\bigcup_{n\geq N}SS(F \star Q'_n)}.\]
However, since $Q_n\cong Q_n'$, we know from \eqref{eq: microsupport estimate for kernel} that
\[SS(F \star Q'_n)=SS(F \star Q_n)\subset cone(Z_n) \cup 0_{X\times \bR}.\]
As $\{Z_n\}$ is a decreasing sequence, we have 
\[\bigcup_{n\geq N} SS(F \star Q'_n) \subset   cone (Z_N)\cup 0_{X\times \bR}.\]
But one can check the right-hand side is already closed.
Then we have 
\begin{equation}\label{equation: unbounded A1}
  SS(F\star Q'_\infty)\subset \bigcap_{N\geq 1} \left(cone (Z_N)\cup 0_{X\times \bR} \right) \subset cone (Z)\cup 0_{X\times \bR}.  
\end{equation}
In particular, we have $\mu s(F\star Q'_\infty) \subset Z$. 

Next, for all $G\in \cD(M)$, we have
\[\RHOM(F\star P_\infty,G)\cong\RHOM(\text{hocolim}(F \star P_n),G)\cong\text{holim}\,\RHOM(F \star P_n,G).\]
Now, if $\mu s(G) \subset Z$, we have $\mu s(G) \subset Z\subset Z_n$ for all $n$. Then we have $\RHOM(F \star P_n,G)=0$ for all $n$ and all $G\in \cD_Z(M)$. So we have, for all $G\in \cD_Z(M)$,
\[\RHOM(F\star P_\infty,G)=\text{holim}\,\RHOM(F \star P_n,G)=0.\qedhere\]
\end{proof}
Moreover, the uniqueness part of \autoref{functorial} shows that we can choose a canonical isomorphism of distinguished triangles in \eqref{distinguished triangle in general}:
\begin{equation*}
 \begin{tikzcd}
P_\infty \arrow[d, "p", "\cong"'] \arrow[r, "a_\infty"] & {{\bK}_{\Delta_{X^2}\times[0,\infty)}} \arrow[r, "b'_\infty"] \arrow[d, "\id"] & Q'_\infty \arrow[d, "q", "\cong"'] \arrow[r, "+1"] & {} \\
P'_\infty \arrow[r, "a'_\infty"]                & {{\bK}_{\Delta_{X^2}\times[0,\infty)}} \arrow[r, "b_\infty"]                  & Q_\infty \arrow[r, "+1"]                &  { .}
\end{tikzcd}   
\end{equation*}
Then we can conclude that 
\[ P_\infty \xrightarrow{a_\infty} {\bK_{ \Delta_{X^2} \times {[0,\infty)} } }\xrightarrow{b_\infty} Q_\infty\xrightarrow{+1}\]
form a distinguished triangle, and $U$ is admissible by this distinguished triangle.

Upon establishing the existence of microlocal kernels in general, the colimit construction still works even if we do not assume that the exhaustion is given by bounded open sets (noting that \eqref{equation: unbounded A1} provides the unbounded version of \eqref{eq: microsupport estimate for kernel}).
\begin{Coro}\label{coro: colimit formula for kernel in general}For an exhaustion of open sets $(U_n)_{n\in \bN}$ of $U$, i.e. $\overline{U_n}\subset U_{n+1}$ and $U=\bigcup_n U_n$, set 
\[P_n\xrightarrow{a_n} \bK_{ \Delta_{X^2} \times {[0,\infty)} } \xrightarrow{b_n} Q_n \xrightarrow{+1}\]   
to be defining triangles for all $U_n$, we have a defining triangle 
\[ P_\infty \xrightarrow{a_\infty} {\bK_{ \Delta_{X^2} \times {[0,\infty)} } }\xrightarrow{b_\infty} Q_\infty\xrightarrow{+1}\]
for $U$ where $\textnormal{hocolim}(P_n)=P_\infty$, $\textnormal{hocolim}(Q_n)=Q_\infty$.
\end{Coro}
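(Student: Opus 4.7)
The plan is to run the proof of \autoref{all open sets admissible} essentially verbatim, but with the bounded exhaustion replaced by the given (not necessarily bounded) exhaustion $(U_n)$. The key point making this substitution legal is that the unbounded microsupport estimate \eqref{equation: unbounded A1}, derived during the proof of the theorem, plays the role that \eqref{eq: microsupport estimate for kernel} played for bounded domains.

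First, since \autoref{all open sets admissible} has already been established, each $U_n$ carries a defining triangle $P_n\xrightarrow{a_n}\bK_{\Delta_{X^2}\times [0,\infty)}\xrightarrow{b_n}Q_n\xrightarrow{+1}$. Applying \autoref{functorial} to the nested inclusions $U_n\subset U_{n+1}$ gives morphisms $p_n\colon P_n\to P_{n+1}$ and $q_n\colon Q_n\to Q_{n+1}$ assembling the defining triangles into a compatible inductive system in $D(X^2\times\bR)$. Invoking the triangulated derivator $\mathbb{D}_{Sh(X^2\times\bR)}$ and \cite[Corollary 11.4]{functorial_cone_Keller_Pedro} twice, exactly as in the theorem, produces two distinguished triangles
\begin{equation*}
P_\infty\xrightarrow{a_\infty}\bK_{\Delta_{X^2}\times [0,\infty)}\to \textnormal{hocolim}(Q_n')\xrightarrow{+1},\qquad \textnormal{hocolim}(P_n')\to \bK_{\Delta_{X^2}\times [0,\infty)}\xrightarrow{b_\infty}Q_\infty\xrightarrow{+1},
\end{equation*}
where $P_\infty = \textnormal{hocolim}(P_n)$, $Q_\infty = \textnormal{hocolim}(Q_n)$, and $P_n\cong P_n'$, $Q_n\cong Q_n'$.

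Next, I would verify the two admissibility conditions for $U$. Cocontinuity of convolution gives $F\star Q_\infty\cong \textnormal{hocolim}(F\star Q_n)$ (and similarly for $P_\infty$); combining the homotopy colimit microsupport bound of \cite[Proposition 7.3]{guillermouviterbo_gammasupport} with \eqref{equation: unbounded A1} applied to each $U_n$ — this is the step where the unbounded exhaustion is accommodated — yields
\begin{equation*}
SS(F\star Q_\infty)\subset \bigcap_{N\geq 1}\overline{\bigcup_{n\geq N}SS(F\star Q_n)}\subset \bigcap_{N\geq 1}\bigl(cone(Z_N)\cup 0_{X\times\bR}\bigr)\subset cone(Z)\cup 0_{X\times\bR},
\end{equation*}
so $\mu s(F\star Q_\infty)\subset Z$. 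For the orthogonality, any $G\in\cD_Z(X)$ has $\mu s(G)\subset Z\subset Z_n$, whence $\RHOM(F\star P_n,G)=0$ by admissibility of $U_n$, and therefore $\RHOM(F\star P_\infty,G)\cong \textnormal{holim}\,\RHOM(F\star P_n,G)=0$. Finally, the uniqueness clause of \autoref{functorial} supplies a canonical isomorphism between the two derivator-produced triangles, confirming that $P_\infty\xrightarrow{a_\infty}\bK_{\Delta_{X^2}\times [0,\infty)}\xrightarrow{b_\infty}Q_\infty\xrightarrow{+1}$ is itself a distinguished triangle and thus a defining triangle for $U$.

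I do not anticipate any serious obstacle, since all the heavy machinery — existence of $P_{U_n}$ and $Q_{U_n}$ for arbitrary open $U_n$, the derivator lemma, and the unbounded microsupport estimate — is already available from \autoref{all open sets admissible}. The only point requiring a small amount of care is matching up the two derivator-produced triangles via the natural comparison of \autoref{functorial}, so that one does not conflate the two a priori distinct candidate triangles produced by the two separate applications of the derivator lemma.
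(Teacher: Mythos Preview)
Your proposal is correct and follows exactly the paper's approach: the paper's proof of the corollary is a single sentence observing that, once \autoref{all open sets admissible} is in hand, the colimit argument runs verbatim for an arbitrary (possibly unbounded) exhaustion because \eqref{equation: unbounded A1} furnishes the unbounded analogue of \eqref{eq: microsupport estimate for kernel}. You have simply spelled out the details of that sentence.
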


\bibliographystyle{bingyu}
\clearpage
\phantomsection
\bibliography{bibtex}

\begin{thebibliography}{CFHW96}
\providecommand{\url}[1]{\texttt{#1}}
\providecommand{\urlprefix}{URL }
\expandafter\ifx\csname urlstyle\endcsname\relax
  \providecommand{\doi}[1]{doi:\discretionary{}{}{}#1}\else
  \providecommand{\doi}{doi:\discretionary{}{}{}\begingroup
  \urlstyle{rm}\Url}\fi

\bibitem[{Abo}15]{abouzaid2015symplectic}
Mohammed {Abouzaid}.
\newblock \emph{Symplectic cohomology and Viterbo's theorem}.
\newblock In Alexandru~{Oancea} Janko~{Latschev}, editor, \emph{Free Loop
  Spaces in Geometry and Topology}. European Mathematical Society [2015],
  271--485.
\newblock \doi{10.4171/153}.

\bibitem[AI20]{asano2017persistence}
Tomohiro Asano and Yuichi Ike.
\newblock \emph{Persistence-like distance on Tamarkin's category and symplectic
  displacement energy}.
\newblock Journal of Symplectic Geometry, \textbf{volume~18}, no.~3, 613–649
  [2020].
\newblock \doi{10.4310/jsg.2020.v18.n3.a1}.

\bibitem[AI22]{asanoike2022complete}
Tomohiro {Asano} and Yuichi {Ike}.
\newblock \emph{Completeness of derived interleaving distances and sheaf
  quantization of non-smooth objects} [2022].
\newblock \href{https://arxiv.org/abs/2201.02598}{arXiv:2201.02598}.

\bibitem[Amb23]{FilteredFukayaAmbrosioni}
Giovanni Ambrosioni.
\newblock \emph{Filtered Fukaya categories} [2023].
\newblock \href{https://arxiv.org/abs/2306.13600}{arXiv:2306.13600}.

\bibitem[Ano81]{Anosov}
Dmitry~V Anosov.
\newblock \emph{Certain homotopies in the space of closed curves}.
\newblock Mathematics of the {USSR}-Izvestiya, \textbf{volume~17}, no.~3,
  423--453 [1981].
\newblock \doi{10.1070/im1981v017n03abeh001367}.

\bibitem[AS06]{abbondandolo2006floer}
Alberto Abbondandolo and Matthias Schwarz.
\newblock \emph{On the Floer homology of cotangent bundles}.
\newblock Communications on Pure and Applied Mathematics: A Journal Issued by
  the Courant Institute of Mathematical Sciences, \textbf{volume~59}, no.~2,
  254--316 [2006].
\newblock \doi{10.1002/cpa.20090}.

\bibitem[BCZ21]{TPC}
Paul Biran, Octav Cornea, and Jun Zhang.
\newblock \emph{Triangulation and persistence: Algebra 101} [2021].
\newblock \href{https://arxiv.org/abs/2104.12258}{arXiv:2104.12258}.

\bibitem[BCZ23]{TPF}
Paul Biran, Octav Cornea, and Jun Zhang.
\newblock \emph{Triangulation, Persistence, and Fukaya categories} [2023].
\newblock \href{https://arxiv.org/abs/2304.01785}{arXiv:2304.01785}.

\bibitem[BL94]{BernsteinLunts}
Joseph {Bernstein} and Valery {Lunts}.
\newblock \emph{Equivariant sheaves and functors}, \emph{Lecture Notes in
  Mathematics}, volume 1578.
\newblock Springer [1994].
\newblock \doi{10.1007/bfb0073549}.

\bibitem[BO16]{BO2016}
Fr{\'{e}}d{\'{e}}ric Bourgeois and Alexandru Oancea.
\newblock \emph{$S^1$-equivariant symplectic homology and linearized contact
  homology}.
\newblock International Mathematics Research Notices [2016].
\newblock \doi{10.1093/imrn/rnw029}.

\bibitem[Bre97]{BredonSheaf}
Glen~E. Bredon.
\newblock \emph{Sheaf Theory}.
\newblock Springer [1997].
\newblock \doi{10.1007/978-1-4612-0647-7}.

\bibitem[CFH95]{CFH}
Kai Cieliebak, Andreas Floer, and Helmut Hofer.
\newblock \emph{Symplectic homology {II}}.
\newblock Mathematische Zeitschrift, \textbf{volume 218}, no.~1, 103--122
  [1995].
\newblock \doi{10.1007/bf02571891}.

\bibitem[CFHW96]{CFHW}
Kai Cieliebak, Andreas Floer, Helmut Hofer, and Krzysztof Wysocki.
\newblock \emph{Applications of symplectic homology {II}: Stability of the
  action spectrum}.
\newblock Mathematische Zeitschrift, \textbf{volume 223}, no.~1, 27--45 [1996].
\newblock \doi{10.1007/bf02621587}.

\bibitem[{Chi}17]{chiu2017}
Sheng-Fu {Chiu}.
\newblock \emph{Non-squeezing property of contact balls}.
\newblock Duke Mathematical Journal, \textbf{volume 166}, no.~4, 605--655
  [2017].
\newblock \doi{10.1215/00127094-3715517}.

\bibitem[{Chi}21]{Chiu-kernel2021}
Sheng-Fu {Chiu}.
\newblock \emph{Microlocal projector for complete flow} [2021].
\newblock \href{https://arxiv.org/abs/2112.00483}{arXiv:2112.00483}.

\bibitem[CHO23]{Cieliebak-Hingston-Oancea}
Kai Cieliebak, Nancy Hingston, and Alexandru Oancea.
\newblock \emph{Loop coproduct in Morse and Floer homology}.
\newblock Journal of Fixed Point Theory and Applications, \textbf{volume~25},
  no.~2 [2023].
\newblock \doi{10.1007/s11784-023-01061-z}.

\bibitem[CHV06]{cohen2006stringtopology}
Ralph~L. Cohen, Kathryn Hess, and Alexander~A. Voronov.
\newblock \emph{String topology and cyclic homology}.
\newblock Springer Science \& Business Media [2006].
\newblock \doi{10.1007/3-7643-7388-1}.

\bibitem[Cis19]{Cisinski-homotopical-algebra}
Denis-Charles Cisinski.
\newblock \emph{Higher categories and homotopical algebra}.
\newblock Cambridge Studies in Advanced Mathematics. Cambridge University Press
  [2019].
\newblock \doi{10.1017/9781108588737}.

\bibitem[CJ02]{Cohen_2002}
Ralph~L. Cohen and John~D.S. Jones.
\newblock \emph{A homotopy theoretic realization of string topology}.
\newblock Mathematische Annalen, \textbf{volume 324}, no.~4, 773--798 [2002].
\newblock \doi{10.1007/s00208-002-0362-0}.

\bibitem[CV21]{Eightflavors}
Kai Cieliebak and Evgeny Volkov.
\newblock \emph{{Eight flavors of cyclic homology}}.
\newblock Kyoto Journal of Mathematics, \textbf{volume~61}, no.~2, 495 -- 541
  [2021].
\newblock \doi{10.1215/21562261-2021-0008}.

\bibitem[Dem12]{demailly1997complex}
Jean-Pierre Demailly.
\newblock \emph{Complex analytic and differential geometry}.
\newblock
  \href{https://www-fourier.ujf-grenoble.fr/~demailly/manuscripts/agbook.pdf}{Available
  online} [2012].

\bibitem[DSV15]{DSV}
Vladimir Dotsenko, Sergey Shadrin, and Bruno Vallette.
\newblock \emph{De Rham cohomology and homotopy Frobenius manifolds}.
\newblock Journal of the European Mathematical Society, \textbf{volume 017},
  no.~3, 535--547 [2015].
\newblock \doi{10.4171/JEMS/510}.

\bibitem[Gan19]{GanatraS1CY}
Sheel Ganatra.
\newblock \emph{Cyclic homology, $S^1$-equivariant Floer cohomology, and
  Calabi-Yau structures}.
\newblock Geometry \& Topology, to appear [2019].
\newblock \doi{10.48550/arxiv.1912.13510}.

\bibitem[GH18]{gutthutchings2018capacities}
Jean {Gutt} and Michael {Hutchings}.
\newblock \emph{Symplectic capacities from positive $S^1$--equivariant
  symplectic homology}.
\newblock Algebraic \& Geometric Topology, \textbf{volume~18}, no.~6,
  3537--3600 [2018].
\newblock \doi{10.2140/agt.2018.18.3537}.

\bibitem[GKM98]{goresky-kottwitz-macpherson-koszulduality}
Mark Goresky, Robert Kottwitz, and Robert MacPherson.
\newblock \emph{Equivariant cohomology, Koszul duality, and the localization
  theorem}.
\newblock Inventiones mathematicae, \textbf{volume 131}, no.~1, 25--84 [1998].
\newblock \doi{10.1007/s002220050197}.

\bibitem[GKS12]{GKS2012}
St{\'e}phane {Guillermou}, Masaki {Kashiwara}, and Pierre {Schapira}.
\newblock \emph{Sheaf quantization of Hamiltonian isotopies and applications to
  nondisplaceability problems}.
\newblock Duke Mathematical Journal, \textbf{volume 161}, no.~2, 201--245
  [2012].
\newblock \doi{10.1215/00127094-1507367}.

\bibitem[God58]{Godement1960TopologieAE}
Roger Godement.
\newblock \emph{Topologie alg{\'e}brique et th{\'e}orie des faisceaux}.
\newblock Hermann [1958].

\bibitem[GPS14]{MV_in_stable_derivators}
Moritz Groth, Kate Ponto, and Michael Shulman.
\newblock \emph{Mayer-Vietoris sequences in stable derivators}.
\newblock Homology, Homotopy and Applications, \textbf{volume~16}, no.~1,
  265--294 [2014].
\newblock \doi{10.4310/HHA.2014.v16.n1.a15}.

\bibitem[GPS18a]{GPS3}
Sheel {Ganatra}, John {Pardon}, and Vivek {Shende}.
\newblock \emph{Microlocal Morse theory of wrapped Fukaya categories} [2018].
\newblock \href{https://arxiv.org/abs/1809.08807}{arXiv: 1809.08807}.

\bibitem[GPS18b]{GPS2}
Sheel {Ganatra}, John {Pardon}, and Vivek {Shende}.
\newblock \emph{Sectorial descent for wrapped Fukaya categories} [2018].
\newblock \href{https://arxiv.org/abs/1809.03427}{arXiv: 1809.03427}.

\bibitem[GS14]{GS2014}
St{\'e}phane {Guillermou} and Pierre {Schapira}.
\newblock \emph{Microlocal theory of sheaves and Tamarkin's non Displaceability
  theorem}.
\newblock In \emph{Homological mirror symmetry and tropical geometry},
  Springer, 43--85 [2014].
\newblock \doi{10.1007/978-3-319-06514-4\_3}.

\bibitem[GV22a]{GV2022viterboconj}
St{\'e}phane {Guillermou} and Nicolas Vichery.
\newblock \emph{Viterbo's spectral bound conjecture for homogeneous spaces}
  [2022].
\newblock \href{https://arxiv.org/abs/2203.13700}{arXiv:2203.13700}.

\bibitem[GV22b]{guillermouviterbo_gammasupport}
St{\'e}phane {Guillermou} and Claude {Viterbo}.
\newblock \emph{The singular support of sheaves is gamma-coisotropic} [2022].
\newblock \href{https://arxiv.org/abs/2203.12977}{arXiv:2203.12977}.

\bibitem[IK23]{IK-NovikovTamarkinCategory}
Yuichi Ike and Tatsuki Kuwagaki.
\newblock \emph{Microlocal categories over the Novikov ring I: cotangent
  bundles} [2023].
\newblock \href{https://arxiv.org/abs/2307.01561}{arXiv:2307.01561}.

\bibitem[{Jon}87]{Jones1987}
John~D.S. {Jones}.
\newblock \emph{Cyclic homology and equivariant homology.}
\newblock Inventiones mathematicae, \textbf{volume~87}, 403--424 [1987].
\newblock \doi{10.1007/BF013894247}.

\bibitem[Kas87]{kassel1987cyclic}
Christian Kassel.
\newblock \emph{Cyclic homology, comodules, and mixed complexes}.
\newblock Journal of Algebra, \textbf{volume 107}, no.~1, 195--216 [1987].
\newblock \doi{10.1016/0021-8693(87)90086-x}.

\bibitem[KN12]{functorial_cone_Keller_Pedro}
Bernhard Keller and Pedro Nicolás.
\newblock \emph{{Weight structures and simple dg modules for positive dg
  algebras}}.
\newblock International Mathematics Research Notices, \textbf{volume 2013},
  no.~5, 1028--1078 [2012].
\newblock ISSN 1073-7928.
\newblock \doi{10.1093/imrn/rns009}.

\bibitem[Kra18]{kragh2018viterbo}
Thomas Kragh.
\newblock \emph{The Viterbo transfer as a map of spectra}.
\newblock Journal of Symplectic Geometry, \textbf{volume~16}, no.~1, 85--226
  [2018].
\newblock \doi{10.4310/JSG.2018.v16.n1.a3}.

\bibitem[KS90]{KS90}
Masaki {Kashiwara} and Pierre {Schapira}.
\newblock \emph{Sheaves on manifolds: With a short gistory. Les d{\'e}buts de
  la th{\'e}orie des faisceaux . By Christian Houzel}, volume 292.
\newblock Springer [1990].
\newblock \doi{10.1007/978-3-662-02661-8}.

\bibitem[KS06]{KS2006}
Masaki {Kashiwara} and Pierre {Schapira}.
\newblock \emph{Categories and sheaves}, volume 332.
\newblock Springer [2006].
\newblock \doi{10.1007/3-540-27950-4}.

\bibitem[Kuo23]{kuo2021wrapped}
Christopher Kuo.
\newblock \emph{Wrapped sheaves}.
\newblock Advances in Mathematics, \textbf{volume 415}, 108882 [2023].
\newblock ISSN 0001-8708.
\newblock \doi{10.1016/j.aim.2023.108882}.

\bibitem[Lod98]{LodayCyclic}
Jean-Louis Loday.
\newblock \emph{Cyclic homology}.
\newblock Springer [1998].
\newblock \doi{10.1007/978-3-662-11389-9}.

\bibitem[Lon21]{lonergan_2021}
Gus Lonergan.
\newblock \emph{Steenrod operators, the Coulomb branch and the Frobenius
  twist}.
\newblock Compositio Mathematica, \textbf{volume 157}, no.~11, 2494–2552
  [2021].
\newblock \doi{10.1112/S0010437X21007569}.

\bibitem[LQ84]{Loday-Quillen-cyclic}
Jean-Louis Loday and Daniel Quillen.
\newblock \emph{Cyclic homology and the Lie algebra homology of matrices}.
\newblock Commentarii Mathematici Helvetici, \textbf{volume~59}, no.~1,
  565--591 [1984].
\newblock \doi{10.1007/bf02566367}.

\bibitem[Mil63]{MilnorMorse}
John Milnor.
\newblock \emph{Morse theory. (AM-51)}.
\newblock Princeton University Press [1963].
\newblock \doi{10.1515/9781400881802}.

\bibitem[MO07]{Oh-MullerHOMEO}
Stefan Müller and Yong-Geun Oh.
\newblock \emph{The group of Hamiltonian homeomorphisms and C{\^{}}0-symplectic
  topology}.
\newblock Journal of Symplectic Geometry, \textbf{volume~5}, no.~2, 167--219
  [2007].
\newblock \doi{10.4310/jsg.2007.v5.n2.a2}.

\bibitem[NS22]{NS20Weinstein}
David {Nadler} and Vivek {Shende}.
\newblock \emph{Sheaf quantization in Weinstein symplectic manifolds} [2022].
\newblock \href{https://arxiv.org/abs/2007.10154v3}{arXiv: 2007.10154v3}.

\bibitem[NZ09]{nadler2009constructible}
David {Nadler} and Eric {Zaslow}.
\newblock \emph{Constructible sheaves and the Fukaya category}.
\newblock Journal of the American Mathematical Society, \textbf{volume~22},
  no.~1, 233--286 [2009].
\newblock \doi{10.1090/s0894-0347-08-00612-7}.

\bibitem[Oan04]{OanceaSHsurvey}
Alexandru Oancea.
\newblock \emph{A survey of Floer homology for manifolds with contact type
  boundary or symplectic homology}.
\newblock Ensaios Matem{\'{a}}ticos, \textbf{volume~7}, no.~2 [2004].
\newblock \doi{10.21711/217504322004/em72}.

\bibitem[PRSZ20]{2020topological_persistence}
Leonid Polterovich, Daniel Rosen, Karina Samvelyan, and Jun Zhang.
\newblock \emph{Topological persistence in geometry and analysis}, volume~74.
\newblock American Mathematical Soc. [2020].
\newblock \doi{10.1090/ulect/074}.

\bibitem[PS23]{Thickeningkernel}
Fran{\c{c}}ois Petit and Pierre Schapira.
\newblock \emph{Thickening of the diagonal and interleaving distance}.
\newblock Selecta Mathematica, \textbf{volume~29}, no.~5 [2023].
\newblock \doi{10.1007/s00029-023-00875-6}.

\bibitem[Seg74]{Segal1974}
Graeme Segal.
\newblock \emph{Categories and cohomology theories}.
\newblock Topology, \textbf{volume~13}, no.~3, 293--312 [1974].
\newblock ISSN 0040-9383.
\newblock \doi{https://doi.org/10.1016/0040-9383(74)90022-6}.

\bibitem[Sei06]{seidel2006biased}
Paul Seidel.
\newblock \emph{A biased view of symplectic cohomology}.
\newblock Current developments in mathematics, \textbf{volume 2006}, no.~1,
  211--254 [2006].

\bibitem[SW06]{salamon2006floer}
Dietmar~A Salamon and Joa Weber.
\newblock \emph{Floer homology and the heat flow}.
\newblock Geometric \& Functional Analysis GAFA, \textbf{volume~16}, no.~5,
  1050--1138 [2006].
\newblock \doi{10.1007/s00039-006-0577-4}.

\bibitem[Tam18]{tamarkin2013}
Dmitry Tamarkin.
\newblock \emph{Microlocal condition for non-displaceability}.
\newblock In Michael Hitrik, Dmitry Tamarkin, Boris Tsygan, and Steve Zelditch,
  editors, \emph{Algebraic and Analytic Microlocal Analysis}. Springer [2018],
  99--223.
\newblock \doi{10.1007/978-3-030-01588-6\_3}.

\bibitem[Tsy83]{Tsygan-homology-Lie-algebra}
Boris.~L. Tsygan.
\newblock \emph{Homologies of matrix {Lie} algebras over rings and {Hochschild}
  homologies}.
\newblock Uspekhi. Mat. Nauk, \textbf{volume~38}, no. 2(230), 217--218 [1983].
\newblock ISSN 0042-1316.

\bibitem[Vit97]{viterbo1997}
Claude Viterbo.
\newblock \emph{Exact Lagrange submanifolds, periodic orbits and the cohomology
  of free loop spaces}.
\newblock J. Differential Geom., \textbf{volume~47}, no.~3, 420--468 [1997].
\newblock \doi{10.4310/jdg/1214460546}.

\bibitem[Vit99]{viterbo-functorsI}
Claude Viterbo.
\newblock \emph{Functors and computations in Floer homology with applications,
  I}.
\newblock Geometric \& Functional Analysis GAFA, \textbf{volume~9}, no.~5,
  985--1033 [1999].
\newblock \doi{10.1007/s000390050106}.

\bibitem[Vit18]{viterbo-functorsII}
Claude Viterbo.
\newblock \emph{Functors and computations in Floer homology with applications,
  II} [2018].
\newblock \href{https://arxiv.org/abs/1805.01316}{arXiv:1805.01316}.

\bibitem[{Vit}19]{viterbo2019sheafquantizationoflagrangian}
Claude {Viterbo}.
\newblock \emph{Sheaf quantization of Lagrangians and {F}loer cohomology}
  [2019].
\newblock \href{https://arxiv.org/abs/1901.09440}{arXiv:1901.09440}.

\bibitem[Zha19]{zhaoperiodic}
Jingyu Zhao.
\newblock \emph{Periodic symplectic cohomologies}.
\newblock Journal of Symplectic Geometry, \textbf{volume~17}, no.~5, 1513--1578
  [2019].
\newblock \doi{10.4310/jsg.2019.v17.n5.a9}.

\bibitem[{Zha}20]{zhang2020quantitative}
Jun {Zhang}.
\newblock \emph{Quantitative Tamarkin theory}.
\newblock Springer [2020].
\newblock \doi{10.1007/978-3-030-37888-2}.

\bibitem[{Zha}21]{Capacities2021}
Bingyu {Zhang}.
\newblock \emph{Capacities from the Chiu-Tamarkin complex}.
\newblock Journal of Symplectic Geometry, to appear [2021].
\newblock \doi{10.48550/arXiv.2103.05143}.

\bibitem[Zha22]{zhangthesis}
Bingyu Zhang.
\newblock \emph{Sheaf techniques and symplectic geometry}.
\newblock Ph.D. thesis, Université Grenoble Alpes [2022].
\newblock \urlprefix\url{https://byzhang.cn/Files/PhD_Thesis.pdf}.

\end{thebibliography}

\noindent
\parbox[t]{28em}
{\scriptsize{
\noindent
Bingyu Zhang\\
Centre for Quantum Mathematics, University of Southern Denmark\\
Campusvej 55, 5230 Odense, Denmark\\
Email: {bingyuzhang@imada.sdu.dk}
}}

\end{document}